\newcommand{\field}[1]{\mathbb{#1}}
\newcommand{\A}{\field{A}}
\newcommand{\C}{\field{C}}
\newcommand{\G}{\field{G}}
\newcommand{\N}{\field{N}}
\newcommand{\PP}{\field{P}}
\newcommand{\Q}{\field{Q}}
\newcommand{\R}{\field{R}}
\newcommand{\Z}{\field{Z}}
\newcommand{\krn}{{\rm ker}\,}
\theoremstyle{plain}
\newtheorem{theorem}{Theorem}[section]
\newtheorem{proposition}[theorem]{Proposition}
\newtheorem{lemma}[theorem]{Lemma}
\newtheorem{corollary}[theorem]{Corollary}
\newtheorem{definition}[theorem]{Definition}
\newtheorem{remark}[theorem]{Remark}
\newtheorem{example}[theorem]{Example}
\newtheorem{question}[theorem]{Question}
\theoremstyle{definition}
\theoremstyle{remark}
\begin{document}

\makeatletter	   
\makeatother     

\title{Actions of $SL_2(k)$ on affine $k$-domains and fundamental pairs}
\author{Gene Freudenburg}
\date{\today}
\subjclass[2010]{13N15, 14J60, 14R20}
\keywords{locally nilpotent derivation, $\G_a$-action, $SL_2$-action, reductive group action, normal affine surface, cancelation theorem} 
\maketitle

\begin{abstract} Working over a field $k$ of characteristic zero, this paper studies algebraic actions of $SL_2(k)$ on affine $k$-domains by defining and investigating fundamental pairs of derivations. There are three main results:
(1) The Structure Theorem for Fundamental Derivations ({\it Theorem\,\,\ref{main1}}) describes the kernel of a fundamental derivation, together with its degree modules and image ideals. 
(2) The Classification Theorem ({\it Theorem\,\ref{classification}}) lists all normal affine $SL_2(k)$-surfaces with trivial units, generalizing the classification given by Gizatullin and Popov for complex $SL_2(\C )$-surfaces. 
(3) The Extension Theorem ({\it Theorem\,\ref{main3}}) describes the extension of a fundamental derivation of a $k$-domain $B$ to $B[t]$ by an invariant function. 
The Classification Theorem is used to describe three-dimensional UFDs which admit a certain kind of $SL_2(k)$ action ({\it Theorem\,\ref{threefold}}). 
This description is used to show that any $SL_2(k)$-action on $\A_k^3$ is linearlizable, which was proved by Kraft and Popov in the case $k$ is algebraically closed. 
This description is also used, together with Panyushev's theorem on linearization of $SL_2(k)$-actions on $\A_k^4$, to show a cancelation property for threefolds $X$: If $k$ is algebraically closed, $X\times\A_k^1\cong\A_k^4$ and $X$ admits a nontrivial action of $SL_2(k)$, then $X\cong\A_k^3$ ({\it Theorem\,\ref{cancellation}}). 
The Extension Theorem is used to investigate free $\G_a$-actions on $\A_k^n$ of the type first constructed by Winkelmann.
\end{abstract}


\section{Introduction}  Let $k$ be a field of characteristic zero. In the introduction of their paper \cite{Arzhantsev.Liendo.12}, Arzhantsev and Liendo write:
\begin{quotation}
A regular $SL_2$-action on an affine variety $X$ is uniquely defined by an 
$\mathfrak{sl}_2$-triple $\{ \partial ,\partial_+,\partial_-\}$ of derivations of the algebra $k[X]$, where the $\partial_{\pm}$ are locally nilpotent, $\partial =[\partial_+,\partial_-]$ is semisimple, and
$[\partial,\partial_{\pm}]=\pm2\partial_{\pm}$. 
\end{quotation}
In Proposition 2.1 they give the following criterion for the existence of an $SL_2$-action which, according to the authors, is well-known.
Their ground field is algebraically closed, but the proof they give for this result is valid over any field of characteristic zero.

\begin{proposition} A nontrivial $SL_2(k)$-action on an affine $k$-variety $X={\rm Spec}(A)$ is equivalent to a $\Z$-grading of $A$ with infinitesimal generator $\delta$ and homogeneous locally nilpotent derivations $\delta_+$ and $\delta_-$ of $A$ such that $\deg\delta_+=2$, $\deg\delta_-=-2$ and 
$[\delta_+,\delta_-]=\delta$. 
\end{proposition}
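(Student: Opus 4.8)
The plan is to prove both implications by moving between the group $SL_2(k)$ and its Lie algebra $\mathfrak{sl}_2(k)$, using two standard facts: $SL_2(k)$ is generated by its two root subgroups and its diagonal torus, and in characteristic zero every finite‑dimensional $\mathfrak{sl}_2(k)$‑module exponentiates uniquely to a rational representation of the simply connected group $SL_2(k)$.

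\emph{From an action to the data.} Given a nontrivial regular $SL_2(k)$-action on $X$, the ring $A=k[X]$ becomes a rational (locally finite) $SL_2(k)$-module on which the group acts by $k$-algebra automorphisms. Restricting to the standard torus $T=\{\,\mathrm{diag}(t,t^{-1})\,\}\cong\G_m$ decomposes $A=\bigoplus_{n\in\Z}A_n$ into $T$-weight spaces, which is exactly a $\Z$-grading of $A$; its infinitesimal generator $\delta$ is the derivation obtained by differentiating the $T$-action at the identity. The two root subgroups $U_\pm$ are copies of $\G_a$, and differentiating them gives locally nilpotent derivations $\delta_\pm$ of $A$. The conjugation rule $\mathrm{diag}(t,t^{-1})\,u_\pm(s)\,\mathrm{diag}(t,t^{-1})^{-1}=u_\pm(t^{\pm2}s)$ shows $\delta_\pm$ is homogeneous of degree $\pm 2$, and the relation $[e,f]=h$ in $\mathfrak{sl}_2(k)$ translates to $[\delta_+,\delta_-]=\delta$ once conventions are fixed so that $\mathfrak{sl}_2(k)\to\mathrm{Der}_k(A)$ is a Lie homomorphism. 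Nontriviality forces the representation to contain a summand $V_n$ with $n\ge 1$, on which $e,f,h$ all act nontrivially, so the grading and the $\delta_\pm$ are nontrivial.

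\emph{From the data to an action.} Conversely, suppose $A$ has a $\Z$-grading with infinitesimal generator $\delta$ and homogeneous locally nilpotent derivations $\delta_\pm$ of degrees $\pm 2$ with $[\delta_+,\delta_-]=\delta$. Homogeneity of degree $\pm2$ gives $[\delta,\delta_\pm]=\pm2\delta_\pm$, so $\{\delta,\delta_+,\delta_-\}$ is an $\mathfrak{sl}_2$-triple acting on $A$ by derivations. The crucial point is local finiteness. For a homogeneous $a\in A$ of degree $d$, let $W_a$ be the span of $\{\,\delta_-^{\,j}\delta_+^{\,i}a : i,j\ge 0\,\}$. Only finitely many $i$ contribute, since $\delta_+$ is locally nilpotent, and for each such $i$ only finitely many $j$ contribute, since $\delta_-$ is locally nilpotent, so $W_a$ is finite-dimensional; and it is $\mathfrak{sl}_2$-stable, because $\delta_-$ and $\delta$ manifestly preserve it while the enveloping-algebra identity $\delta_+\delta_-^{\,j}=\delta_-^{\,j}\delta_+ + j\,\delta_-^{\,j-1}(\delta-j+1)$, applied to the homogeneous vector $\delta_+^{\,i}a$, shows $\delta_+$ does too. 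For a general $a$, sum the $W_{a_d}$ over its homogeneous components. Thus every element of $A$ lies in a finite-dimensional $\mathfrak{sl}_2(k)$-submodule.

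Each such finite-dimensional module exponentiates uniquely to a rational $SL_2(k)$-representation, and these exponentiations are compatible under inclusions of submodules because a $k$-linear $\mathfrak{sl}_2(k)$-map between finite-dimensional modules is automatically $SL_2(k)$-equivariant; passing to the direct limit produces a locally finite rational representation of $SL_2(k)$ on $A$. Since $\delta,\delta_\pm$ are derivations, the multiplication $A\otimes_k A\to A$ and the unit $k\to A$ are $\mathfrak{sl}_2(k)$-equivariant, hence $SL_2(k)$-equivariant, so $SL_2(k)$ acts by $k$-algebra automorphisms — equivalently, we get a regular $SL_2(k)$-action on $X$. Differentiating it recovers $\delta$ (from the torus) and $\delta_\pm$ (from the root subgroups), so the two constructions are mutually inverse, and on each side nontriviality of the action matches nontriviality of the grading-plus-triple. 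The main obstacle is precisely the local-finiteness step, together with a careful invocation of the exponentiation of $\mathfrak{sl}_2$-modules over a field that need not be algebraically closed; the remaining steps are bookkeeping with the usual conventions for $SL_2$.
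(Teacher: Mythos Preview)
The paper does not supply its own proof of this proposition; it is quoted from Arzhantsev--Liendo with the remark that their argument remains valid over any field of characteristic zero. Your argument is correct and is essentially the standard one: differentiate the group action to obtain the $\mathfrak{sl}_2$-triple and the torus grading, and for the converse establish local finiteness of the $\mathfrak{sl}_2$-action on $A$ and then exponentiate, using that $SL_2$ is simply connected so that every finite-dimensional $\mathfrak{sl}_2(k)$-module integrates. Your treatment of local finiteness via the enveloping-algebra identity $\delta_+\delta_-^{\,j}=\delta_-^{\,j}\delta_+ + j\,\delta_-^{\,j-1}(\delta-j+1)$ is clean and correct.

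The paper does prove, independently, two ingredients that slot into this picture: the elementary lemma in the preliminaries that $SL_2(K)$ is generated by its two unipotent root subgroups (which is what lets one recover the full group action from $\exp(t\delta_+)$ and $\exp(s\delta_-)$ once the relations are known to hold), and the lemma immediately following the definition of a fundamental pair, which starts from the axioms $[D,[D,U]]=-2D$, $[U,[D,U]]=2U$, $B=\sum_dB_d$ and derives that $B=\bigoplus_dB_d$ is a genuine $\Z$-grading with $D$ and $U$ homogeneous of degrees $2$ and $-2$. So your proof and the paper's surrounding development are entirely compatible; there is simply no in-paper proof of the proposition itself to compare against.
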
 
This paper investigates pairs of locally nilpotent derivations $(\delta_+,\delta_-)$ described in this proposition. We call $\delta_+$ and $\delta_-$ fundamental derivations 
and  $(\delta_+,\delta_-)$ a fundamental pair; see {\it Section 3}. 
There are three main results. The Structure Theorem for Fundamental Derivations ({\it Theorem\,\,\ref{main1}}) gives a detailed description of the kernel of a fundamental derivation, 
together with its degree modules and image ideals. 
The Classification Theorem ({\it Theorem\,\ref{classification}}) gives all normal affine two-dimensional $k$-algebras with trivial units which admit a fundamental pair, equivalently, 
normal affine $SL_2(k)$-surfaces over $k$ with trivial units. This generalizes the classification given by Gizatullin and Popov for complex quasihomogeneous surfaces \cite{Gizatullin.71c,Popov.73a}; 
see also \cite{Huckleberry.86}. 
The Extension Theorem ({\it Theorem\,\ref{main3}}) describes the extension of a fundamental derivation on a $k$-domain $B$ to $B[t]$ by an $SL_2(k)$-invariant function, 
using the degree modules of the derivation on $B$ to give an explicit set of generators for the kernel of the extended derivation on $B[t]$. 

In general, calculation of image ideals and degree modules of a locally nilpotent derivation of an affine ring is an algorithmically intensive process, even when generators for the kernel are known.
In contrast, the Structure Theorem shows that image ideals and degree modules for a fundamental derivation are completely determined by the induced $\Z$-grading of the ring. 
The Structure Theorem and Classification Theorem combine to give {\it Proposition\,\ref{threefold}}, which characterizes UFDs with a certain type of fundamental pair. 
This yields a proof that every algebraic $SL_2(k)$-action on $\A_k^3$ is linearizable 
({\it Theorem\,\,\ref{Kraft-Popov}}). Kraft and Popov \cite{Kraft.Popov.85} showed that, when $k$ is algebraically closed, the action of any connected semisimple group on $\A_k^3$ is linearizable by first showing this for the group $SL_2(k)$. 
{\it Theorem\,\,\ref{Kraft-Popov}} thus generalizes the result of Kraft and Popov for $SL_2(k)$. 
Panyushev \cite{Panyushev.84} showed that, when $k$ is algebraically closed, the action of any connected semisimple algebraic group on $\A_k^4$ is linearizable, 
remarking: ``The case of the group $SL_2$ is very interesting and complicated'' (p.171). 
{\it Proposition\,\ref{threefold}} is further used in conjunction with 
Panyushev's theorem, the Epimorphism Theorem and the cancelation theorems for curves and surfaces to obtain the 
following cancelation property ({\it Theorem\,\ref{cancellation}}):
\begin{quotation}
{\it Let $k$ be an algebraically closed field of characteristic zero. 
Assume that $X\times\A_k^1\cong\A_k^4$ for some threefold $X$. If $X$ admits a nontrivial action of $SL_2(k)$, then $X\cong\A_k^3$.}
\end{quotation}

In \cite{Winkelmann.90} Winkelmann constructed 
a free $\C^+$-action on $\C^4$ with singular algebraic quotient, and a locally trivial $\C^+$-action on $\C^5$ with smooth algebraic quotient which is not globally trivial. 
In \cite{Finston.Jaradat.17} Finston and Jaradat used similar methods to construct a locally trivial $\C^+$-action on $\C^5$ with singular algebraic quotient.
We generalize these constructions by recognizing and exploiting the role of the underlying $SL_2(\C )$-module, showing in particular that the quotient morphism of such an action cannot be surjective ({\it Theorem\,\ref{main3}}). The Extension Theorem gives a simple way to 
confirm the results of Finston and Jaradat, who used the van den Essen algorithm (and {\it Singular}) to find generators for their ring of invariants.
This was the first example of a locally trivial $\C^+$-action on an affine space having a singular algebraic quotient. 
We give a simpler example of a locally trivial $\C^+$-action on $\C^5$ with singular algebraic quotient ({\it Section\,\ref{singular}}). 

This paper presents topics in reverse chronological order relative to the research it represents. The concrete examples considered in {\it Section\,\ref{examples}} were the starting point. In particular, the author 
used Winkelmann's method to construct a certain locally trivial $\C^+$-action on $\C^5$, unaware at the time that Finston and Jaradat had already considered exactly the same example.
The Extension Theorem resulted from efforts to find its ring of invariants. This exercise led to the Structure Theorem, first for the basic linear derivations, and eventually to its current form 
 in {\it Section\,\ref{structure}}. 


\section{Preliminaries}\label{prelims}
\subsection{$k$-Domains and Derivations}
Throughout, $k$ denotes a field of characteristic zero, $\C$ the field of complex numbers, $\Z$ the ring of integers, and $\N$ the semi-ring of non-negative integers. 
A {\bf $k$-domain} is an integral domain $B$ containing $k$; its field of fractions is ${\rm frac}(B)$, and its group of units is $B^*$. 
${\rm Aut}_k(B)$ is the group of $k$-algebra automorphisms of $B$ and ${\rm End}_k(B)$ is the ring of $k$-linear operators on $B$. 
The polynomial ring in $n$ variables over $B$ is $B^{[n]}$ and the field of fractions of $k^{[n]}$ is $k^{(n)}$, and if $k[x_1,\hdots ,x_n]=k^{[n]}$ then the corresponding system of partial derivatives is
$\partial_{x_1},\hdots ,\partial_{x_n}$. 
If $B$ is $k$-affine, then the dimension of $B$ over $k$ is $\dim_kB$. 
$\A_k^n$ is affine $n$-space over $k$. 

For any commutative ring $R$, ${\rm Der}(R)$ is the set of derivations of $R$, and for a subring $S\subset R$, ${\rm Der}_S(R)$ is the set of derivations with $DS=\{ 0\}$, called 
{\bf $S$-derivations}. We say that $D\in {\rm Der}(R)$ is {\bf reducible} if there exists $r\in R$ with $DR\subset rR\ne R$; otherwise, $D$ is {\bf irreducible}. For $D,E\in {\rm Der}(R)$, $[D,E]=DE-ED\in {\rm Der}(R)$ is the Lie product of $D$ and $E$, which gives ${\rm Der}(R)$ the structure of a Lie algebra.
Note that ${\rm Der}_S(R)$ is a Lie subalgebra.

$S$ is a {\bf retract} of $R$ if there is an ideal $I\subset R$ such that $R=S\oplus I$ as $S$-modules. 
If $R$ is an integral domain, then $I$ is necessarily a prime ideal. 

\subsection{Locally Nilpotent Derivations}\label{LNDs}

For a $k$-domain $B$, $D\in {\rm Der}(B)$ is {\bf locally nilpotent} if, to each $b\in B$ there is $n\in\N$ with $D^nb=0$. 
The set of locally nilpotent derivations of $B$ is ${\rm LND}(B)$. For a subring $S\subset B$, ${\rm LND}_S(B)={\rm LND}(B)\cap {\rm Der}_S(B)$. 

Let $D\in {\rm LND}(B)$ be given. Several basic definitions and properties follow, details of which can be found in \cite{Freudenburg.17}. 
\begin{enumerate}
\item Given nonzero $b\in B$, the least integer $n\in\N$ with $D^{n+1}b=0$ is the {\bf degree} of $b$ relative to $D$, denoted $\deg_D(b)$. This defines a degree function $\deg_D :B\to\N\cup\{ -\infty\}$. 
\item Let $A=\krn D$, the kernel of $D$ (also denoted $B^D$). Then $A$ is a factorially closed subring of $B$, being the set of elements of degree at most 0. Consequently, 
$B^*\subset A$, $A$ is a $k$-subalgebra of $B$ and ${\rm LND}(B)\subset {\rm Der}_k(B)$. However, ${\rm LND}(B)$ is not a Lie subalgebra of ${\rm Der}_k(B)$ since it is not a $k$-vector subspace, and it is not closed under Lie brackets.  
\item An ideal $I\subset B$ is {\bf $D$-invariant} if $DI\subset I$. Given $D'\in{\rm LND}(B)$, $I$ is {\bf $(D,D')$-invariant} if it is both $D$-invariant and $D'$-invariant. 
\item Any element $r\in B$ of degree one is a {\bf local slice} of $D$. If $a=Dr\in A$, then $a\ne 0$ and $B_a=A_a[r]\cong A_a^{[1]}$. Here, $B_a$ and $A_a$ denote localization at the set $\{a^n\, |\, n\in\N\}$. 
Moreover, $D$ extends to $D_a$ on $B_a$ and $D_a=a\frac{d}{dr}$. If $Dr=1$ then $r$ is a {\bf slice} for $D$ and $B=A[r]\cong A^{[1]}$. 
\item Let $r\in B$ be a local slice of $D$ and $a=Dr$. The induced {\bf Dixmier map} $\pi_r:B_a\to A_a$ is the surjective map of $k$-algebras defined by
\[
\pi_r(b)=\sum_{i\ge 0}{\textstyle\frac{(-1)^i}{i!}}D^ib\left({\textstyle\frac{r}{a}}\right)^i
\]
\item If $B$ is generated by $b_1,\hdots ,b_m$ over $k$, then $A$ is algebraic over $k[\pi_r(b_1),\hdots ,\pi_r(b_m)]$. This fact forms the basis of the van den Essen algorithm to calculate a generating set for $A$ when both $A$ and $B$ are finitely generated over $k$; see \cite{Freudenburg.17}, Section 8.1.
\item The {\bf degree modules} of $D$ are $\mathcal{F}_n:=\krn D^{n+1}=\{ b\in B\, |\, \deg_Db\le n\}$ for $n\ge 0$. 
Each $\mathcal{F}_n$ is an $A$-module and $B$ has the ascending $\N$-filtration $B=\bigcup_{i\ge 0}\mathcal{F}_i$. An algorithm to calculate degree modules is given in \cite{Freudenburg.17}, Section 8.6. 
\item The {\bf image ideals} of $D$ are $I_n=A\cap D^nB=D^n\mathcal{F}_n$ for $n\ge 0$. This gives a descending $\N$-filtration of $A$. The {\bf plinth ideal} of $A$ is $I_1=A\cap DB$. Note that the image ideals are ideals in $A$, not $B$. 
\item $D$ has the {\bf Freeness Property} if $D$ satisfies the following equivalent conditions. 
\subitem (i) $\mathcal{F}_n/\mathcal{F}_{n-1}$ is a free $A$-module for each $n\ge 1$.
\subitem (ii) $I_n$ is a principal ideal of $A$ for each $n\ge 1$.
\item The {\bf degree-$n$ transvectant} is the $A$-bilinear mapping $\mathcal{F}_n\times\mathcal{F}_n\to A$ given by:
\[
[f,g]_n^D=\sum_{i=0}^n(-1)^{n-i}D^ifD^{n-i}g
\]
\item Given nonzero $a\in A$, a {\bf $D$-cable rooted at $a$} is a sequence of nonzero elements $P_n\subset B$ such that $P_0\in ka$ and $DP_n\in kP_{n-1}$ for $n\ge 1$. 
If the sequence is finite, say $\{ P_0,\hdots ,P_d\}$, then $\{ P_n\}$ is a {\bf finite} $D$-cable of {\bf length} $d$. 
Basic properties of $D$-cables are laid out in \cite{Freudenburg.13}. 
\end{enumerate}

\begin{lemma}\label{plinth} Suppose that $B$ is an integral $k$-domain. Let $D\in {\rm LND}(B)$, $A=\krn D$ and $I_n=  A\cap D^nB$, $n\ge 0$. If $E\in {\rm Der}_k(B)$ 
and $[D,E]=0$, then $E(I_n)\subset I_n$ for all $n\ge 0$. 
\end{lemma}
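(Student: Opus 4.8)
The plan is to reduce the statement to the identity $I_n = D^n\mathcal{F}_n$ noted above, where $\mathcal{F}_n = \krn D^{n+1}$ is the $n$-th degree module of $D$, together with the elementary fact that an operator commuting with $D$ commutes with every power of $D$.

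First I would prove, by induction on $m$, that $[D,E] = 0$ implies $D^m E = E D^m$ in ${\rm End}_k(B)$ for all $m \ge 0$. The cases $m = 0,1$ are immediate, and the inductive step is $D^{m+1}E = D(D^m E) = D(E D^m) = (DE)D^m = (ED)D^m = E D^{m+1}$.

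Next I would check that $E$ preserves each degree module: if $b \in \mathcal{F}_n$, so that $D^{n+1}b = 0$, then the commutation identity with $m = n+1$ gives $D^{n+1}(Eb) = E(D^{n+1}b) = 0$, hence $Eb \in \mathcal{F}_n$. Finally, let $a \in I_n$ and write $a = D^n b$ with $b \in \mathcal{F}_n$. Then $Ea = E(D^n b) = D^n(Eb)$ by the commutation identity with $m = n$, and since $Eb \in \mathcal{F}_n$ we conclude $Ea \in D^n\mathcal{F}_n = I_n$. Hence $E(I_n) \subset I_n$.

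I do not anticipate a genuine obstacle; the only points requiring care are the passage from $[D,E] = 0$ to $[D^m,E] = 0$ and the use of the description $I_n = D^n\mathcal{F}_n$ in place of the a priori form $A \cap D^n B$, both of which are supplied by the properties of locally nilpotent derivations listed above. One may note in passing that the hypothesis that $B$ is an integral domain is not actually used in this argument.
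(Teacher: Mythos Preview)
Your proof is correct and follows essentially the same line as the paper's. The only cosmetic difference is that the paper works directly with the definition $I_n=A\cap D^nB$: it first observes $E(A)\subset A$ (since $DEf=EDf=0$ for $f\in A$), then for $a=D^nb\in I_n$ concludes $Ea=D^n(Eb)\in D^nB\cap A=I_n$; you instead route through $E(\mathcal{F}_n)\subset\mathcal{F}_n$ and the identity $I_n=D^n\mathcal{F}_n$, which amounts to the same computation.
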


\begin{proof} Given $f\in A$, $DE(f)=ED(f)=0$ implies $Ef\in A$. Given $n\ge 0$ and $a\in I_n$ let $b\in B$ be such that $D^nb=a$. Then 
$Ea=ED^n(b)=D^n(Eb)\in D^nB\cap A=I_n$.
\end{proof}

\begin{lemma}\label{princ-6} {\rm (\cite{Freudenburg.17}, Principle 6)}
Let $B$ be a commutative $k$-domain, $D\in {\rm LND}(B)$ and $A=\krn D$. Let $B'=B[t]\cong B^{[1]}$ and extend $D$ to $B'$ by $Dt=0$. 
Given $b\in B$ define $D'\in {\rm Der}_k(B')$ by $D'=D+b\frac{d}{dt}$ and let $A'=\krn D'$. 
\begin{itemize}
\item [{\bf (a)}] $D'\in {\rm LND}(B')$
\item [{\bf (b)}] $[D',\frac{d}{dt}]=[D,\frac{d}{dt}]=0$ and $\frac{d}{dt}$ restricts to $A'$.
\smallskip
\item [{\bf (c)}] If $b\in A$ then $[D,D']=0$.
\end{itemize}
\end{lemma}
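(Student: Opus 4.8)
The plan is to verify the three assertions in turn by direct manipulation of derivations, with no appeal to any structure theory.

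For \textbf{(a)}, I would first invoke the standard observation that, for any derivation $\delta$ of a commutative $k$-domain $R$ that kills $k$, the set $R_\delta=\{x\in R:\delta^N x=0\text{ for some }N\in\N\}$ is a $k$-subalgebra of $R$: closure under addition is clear, and closure under multiplication follows from the Leibniz expansion $\delta^{M+N}(xy)=\sum_{i=0}^{M+N}\binom{M+N}{i}(\delta^i x)(\delta^{M+N-i}y)$, since in each summand either $i\ge M$ (so $\delta^i x=0$) or $i\le M-1$, in which case $M+N-i\ge N$ and $\delta^{M+N-i}y=0$. Applied to $\delta=D'$ on $B'=B[t]$, it then suffices to check that $D'$ acts nilpotently on a set of $k$-algebra generators of $B'$, namely on $B$ and on $t$. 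On $B$ the operator $\frac{d}{dt}$ is zero, so $D'|_B=D$, which is locally nilpotent by hypothesis; and $D't=b\in B$, whence $D'^{\,\deg_D(b)+1}t=D^{\deg_D(b)}b=0$. Therefore the nilpotent subalgebra $B'_{D'}$ is all of $B'$, i.e. $D'\in{\rm LND}(B')$.

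For \textbf{(b)}, the identity $[D,\frac{d}{dt}]=0$ (where $D$ denotes the extension to $B'$ with $Dt=0$) is checked on a monomial $ct^i$ with $c\in B$: both $D\frac{d}{dt}$ and $\frac{d}{dt}D$ send it to $i(Dc)t^{i-1}$. Consequently $[D',\frac{d}{dt}]=[D,\frac{d}{dt}]+[b\frac{d}{dt},\frac{d}{dt}]$, and the bracket $[b\frac{d}{dt},\frac{d}{dt}]$ vanishes because $\frac{d}{dt}$ annihilates the coefficient $b\in B$. Once $[D',\frac{d}{dt}]=0$ is known, the fact that $\frac{d}{dt}$ restricts to $A'=\krn D'$ is immediate: if $D'x=0$ then $D'(\frac{dx}{dt})=\frac{d}{dt}(D'x)=0$, so $\frac{dx}{dt}\in A'$. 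For \textbf{(c)}, write $[D,D']=[D,D+b\frac{d}{dt}]=[D,b\frac{d}{dt}]$ and evaluate on an arbitrary $y\in B'$: this yields $(Db)\frac{dy}{dt}+b\,D(\frac{dy}{dt})-b\,\frac{d}{dt}(Dy)$, which is zero because $Db=0$ when $b\in A$, and $D\frac{d}{dt}=\frac{d}{dt}D$ on $B'$ by part (b). Hence $[D,D']=0$.

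I do not expect a genuine obstacle here; the only delicate point is that a sum of locally nilpotent derivations need not be locally nilpotent in general, so part (a) must not be argued by a naive degree bound on polynomials in $t$. The subring-of-nilpotent-elements argument sidesteps this, reducing the claim to the two generators $B$ and $t$, on which nilpotence is transparent.
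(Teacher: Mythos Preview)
Your argument is correct in substance. The paper itself does not supply a proof of this lemma; it merely cites \cite{Freudenburg.17}, Principle~6, so there is nothing to compare against beyond noting that your direct verification is exactly the sort of elementary check one expects.

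One small slip in part~(a): with the paper's convention that $\deg_D(b)$ is the least $n$ with $D^{n+1}b=0$, the element $D^{\deg_D(b)}b$ is \emph{nonzero} when $b\neq 0$. The correct count is $D'^{\,n}t=D^{n-1}b$ for $n\ge 1$, so $D'^{\,\deg_D(b)+2}t=D^{\deg_D(b)+1}b=0$. This off-by-one does not affect the argument, since all you need is that $D'$ acts nilpotently on $t$.
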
 

For any field $K$ and $K$-domain $B$, 
the {\bf Makar-Limanov invariant} $ML(B)$ of $B$ 
is the $K$-subalgebra consisting of elements invariant for any $\G_a$-action on $B$. 
In characteristic zero, $ML(B)$ is the intersection of all kernels of locally nilpotent derivations of $B$, and is therefore factorially closed (hence algebraically closed) in $B$ and contains $B^*$. 
So if $ML(B)=k$, then $k$ is algebraically closed in $B$ and $B^*=k^*$.

Following are two additional results that are needed. 

\begin{lemma}\label{Kolhatkar} {\rm (\cite{Kolhatkar.11}, Lemma 2.14)} Let $k$ be a field of characteristic zero, and let $B$ be a two-dimensional normal affine $k$-domain with $ML(B)=k$.
$\krn D\cong k^{[1]}$ for every nonzero $D\in {\rm LND}(B)$.
\end{lemma}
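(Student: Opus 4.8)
The plan is to reduce to an algebraically closed ground field and then pit the hypothesis $ML(B)=k$ against the birational geometry of the $\A^1$-fibration defined by $D$. \emph{Reduction to $k=\bar k$:} since $ML(B)=k$, the field $k$ is algebraically closed in $B$ and, $B$ being normal, also in ${\rm frac}(B)$, so $\bar B:=B\otimes_k\bar k$ is an integral domain, plainly affine of dimension two over $\bar k$, and normal because a normal finite-type algebra over a field of characteristic zero is geometrically normal. The derivation $D$ extends to a nonzero $\bar D\in{\rm LND}(\bar B)$ with $\krn\bar D=(\krn D)\otimes_k\bar k$, and so does every $\delta\in{\rm LND}(B)$; since $-\otimes_k\bar k$ commutes with intersections of $k$-subspaces, $ML(\bar B)\subseteq\bigcap_\delta(\krn\delta\otimes_k\bar k)=ML(B)\otimes_k\bar k=\bar k$, hence $ML(\bar B)=\bar k$. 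Granting the lemma over $\bar k$, $\krn D$ becomes a $k$-form of $\A^1$; I would then invoke that $\A^1$ has no nontrivial forms (the smooth completion of ${\rm Spec}(\krn D)$ is a genus-zero curve over $k$ which over $\bar k$ has the point at infinity of $\A^1_{\bar k}$ as a $\bar k$-point, and that point is defined over $k$, so the curve is $\PP^1_k$ and $\krn D\cong k^{[1]}$). So assume henceforth $k=\bar k$.

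Set $A=\krn D$; it is a normal affine $k$-domain of dimension one and, being factorially closed in $B$, has $A^*=B^*=k^*$. Let $\bar C$ be the smooth completion of $C={\rm Spec}(A)$. The first step is a dichotomy: \emph{either $A\cong k^{[1]}$, or $\bar C$ has positive genus}. Indeed, if $g(\bar C)=0$ then $\bar C\cong\PP^1_k$ and $C=\PP^1_k\setminus\{P_1,\dots,P_s\}$ with $s\ge1$, and $\mathcal{O}(C)^*/k^*$ is free abelian of rank $s-1$, so $A^*=k^*$ forces $s=1$ and $C\cong\A^1_k$. Suppose, toward a contradiction, that $A\not\cong k^{[1]}$, so $g(\bar C)>0$. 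Picking a local slice $r$ of $D$ and setting $a=Dr\in A\setminus\{0\}$ gives $B_a=A_a[r]\cong A_a^{[1]}$, so, with $K:={\rm frac}(A)$, one has ${\rm frac}(B)=K(r)$ with $r$ transcendental over $K$; hence ${\rm Spec}(B)$ is birational to $\bar C\times\PP^1$ and is therefore \emph{not} rational (a rational surface dominating $\bar C$ would make $\bar C$ unirational, hence rational, against $g(\bar C)>0$).

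The heart of the argument is to show that \emph{every} nonzero $E\in{\rm LND}(B)$ has $\krn E=A$, which is absurd since then $ML(B)=A$ while $\dim_kA=1$. Two ingredients. (i) $B\cap K=A$: writing $f\in B\cap K$ as $\sum c_ir^i$ in $A_a[r]$, transcendence of $r$ over $K$ forces $f=c_0\in B\cap A_a$, and $a^nf\in A$ then gives $a^nDf=D(a^nf)=0$, so $Df=0$. (ii) For $A':=\krn E$: it is a normal affine curve with $(A')^*=k^*$, so by the dichotomy it is $\cong k^{[1]}$ or has positive-genus completion; the first is impossible, as it would make ${\rm frac}(B)$ purely transcendental over $k$, contradicting the non-rationality of ${\rm Spec}(B)$. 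Thus ${\rm frac}(A')$ has positive genus; but ${\rm frac}(B)$ is purely transcendental of degree one over each of $K$ and ${\rm frac}(A')$, and a ruled surface over a curve of positive genus has a unique ruling, so ${\rm frac}(A')=K$ inside ${\rm frac}(B)$. Now ingredient (i), applied to both $D$ and $E$, gives $A=B\cap K=A'$. Hence $ML(B)=A\ne k$, the desired contradiction, and $A\cong k^{[1]}$. (Alternatively, once ${\rm Spec}(B)$ is seen to be non-rational one may finish at once by citing that a normal affine surface over $\bar k$ with trivial Makar-Limanov invariant is rational \cite{Gizatullin.71c}; the route above keeps the external input to L\"uroth plus uniqueness of rulings.)

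I expect the main obstacle to be ingredient (ii): proving carefully that a ruled surface over a base of positive genus carries only one ruling — equivalently, that the positive-genus subfield over which ${\rm frac}(B)$ is purely transcendental of degree one is unique. The cleanest route is the Albanese map: a rational fibre maps to a point of the Albanese variety, so any $\A^1$-fibration of ${\rm Spec}(B)$ factors through the Albanese and hence shares the base curve of the fibration defined by $D$. Secondary care is needed for the triviality-of-forms step in the reduction and for the standard fact that $A=\krn D$ is genuinely finitely generated over $k$ (true for invariant rings of $\G_a$-actions on affine surfaces).
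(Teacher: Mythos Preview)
The paper does not supply its own proof of this lemma; it simply quotes the statement from \cite{Kolhatkar.11} and uses it as a black box. So there is nothing in the paper to compare your argument against.

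That said, your argument is correct. The reduction to $k=\bar k$ is sound: geometric normality in characteristic zero, the equality $\krn\bar D=(\krn D)\otimes_k\bar k$, and the descent of the unique point at infinity (a single $\bar k$-point, hence Galois-stable, hence $k$-rational) all hold as you claim. Over an algebraically closed field, your dichotomy (trivial units on a smooth affine curve of genus zero force $\A^1$) is standard, and the heart of the matter --- uniqueness of the ruling over a base of positive genus --- is exactly the Albanese/L\"uroth argument you sketch: any morphism $\PP^1\to C'$ with $g(C')>0$ is constant, so each ruling factors through the other, giving $K=K'$ inside ${\rm frac}(B)$. Combined with $B\cap K=A$ (which you verify), this pins down $\krn E=A$ for every nonzero $E\in{\rm LND}(B)$, contradicting $ML(B)=k$.

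Your two caveats are well placed but harmless: finite generation of $\krn D$ in dimension two is Zariski's finiteness theorem, and the uniqueness-of-ruling step, while classical, is the only place where genuine geometry enters.
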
 

\begin{theorem}\label{Miyanishi}  Let $k$ be a field of characteristic zero, $B=k^{[3]}$, $D\in {\rm LND}(B)$, $D\ne 0$, and $A=\krn D$. 
\begin{itemize}
\item [{\bf (a)}] $A\cong k^{[2]}$
\item [{\bf (b)}] The plinth ideal $A\cap DB$ is a principal ideal of $A$. 
\end{itemize}
\end{theorem}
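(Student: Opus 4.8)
The plan is to derive (b) from (a), so I begin with (a). Since $A=\krn D$ is factorially closed in the UFD $B=k^{[3]}$, it is itself a UFD with $A^*=B^*=k^*$, it is algebraically closed in $B$, hence $A=B\cap{\rm frac}(A)$, and ${\rm trdeg}_kA=2$ because $D\ne 0$. By Zariski's finiteness theorem (available in transcendence degree $\le 2$), $A$ is a finitely generated $k$-algebra, and factorial closure gives normality; thus $X:={\rm Spec}(A)$ is a normal factorial affine surface with trivial units, and it is dominated by $\A_k^3={\rm Spec}(B)$, hence unirational. As $k$ has characteristic zero, $X_{\bar k}$ is a unirational affine surface over $\bar k$, so it is rational by Castelnuovo's theorem.

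To finish (a) one invokes the Miyanishi--Sugie--Fujita characterization of the affine plane: over an algebraically closed field of characteristic zero, a smooth factorial affine surface with trivial units and $\bar\kappa=-\infty$ is isomorphic to $\A^2$. This is Miyanishi's theorem, which I would cite rather than reprove; its substance is showing that $X_{\bar k}$ is \emph{smooth} and has $\bar\kappa=-\infty$. Choosing a local slice $r$ of $D$ with $a=Dr\ne 0$, item (4) of Section \ref{LNDs} gives $B_a=A_a[r]\cong A_a^{[1]}$, so the open set $X_a\cong{\rm Spec}(A_a)$ is a section of the smooth morphism ${\rm Spec}(B_a)\cong X_a\times\A^1\to X_a$ and hence is smooth; thus $X$ is smooth off a finite set, but excluding the remaining points, and verifying $\bar\kappa=-\infty$, is the real work and requires the classification theory of $\A^1$-fibrations on affine surfaces. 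Granting it, $X_{\bar k}\cong\A_{\bar k}^2$, so $X$ is a form of $\A_k^2$; since $\A^2$ admits no nontrivial forms over a field of characteristic zero (Kambayashi), $X\cong\A_k^2$, i.e. $A\cong k^{[2]}$.

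For (b), assume (a), so $A\cong k^{[2]}$ is a UFD. The plinth ideal $I_1=A\cap DB=D(\mathcal{F}_1)$ is a nonzero ideal of the Noetherian ring $A$, hence finitely generated, and it has rank one over $A$ because $\mathcal{F}_1\otimes_A{\rm frac}(A)={\rm frac}(A)\oplus{\rm frac}(A)r$ is two-dimensional. Since $A$ is a UFD, $I_1$ is principal if and only if it is divisorial, equivalently if and only if the greatest common divisor $g$ of the elements of $I_1$ lies in $I_1$, i.e. if and only if $D$ has a local slice $r$ with $Dr=g$. So the task reduces to realizing this ``minimal leading coefficient'' $g$ by an honest element of $\mathcal{F}_1$. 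The natural strategy is to work one height-one prime $\mathfrak{p}$ of $A$ at a time and patch, using that $B=k^{[3]}$ is a UFD: away from the divisor $V(a)$ one has $B_{\mathfrak{p}}\cong A_{\mathfrak{p}}^{[1]}$ and $(I_1)_{\mathfrak{p}}=A_{\mathfrak{p}}$, so there is nothing to prove there. The main obstacle is the behavior along $V(a)$, where the structure of the quotient morphism ${\rm Spec}(B)\to{\rm Spec}(A)$ is less transparent and the denominators occurring when elements of $\mathcal{F}_1$ are written over ${\rm frac}(A)$ as $(c_0+c_1r)/e$ with $c_0,c_1,e\in A$ are not obviously bounded; controlling this is precisely the content of the known principality theorem for the plinth ideal of a locally nilpotent derivation of $k^{[3]}$ (see \cite{Freudenburg.17}), which I would cite.
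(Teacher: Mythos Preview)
The paper does not supply a proof of this theorem; it records it as a known result and gives references: Miyanishi's original argument over $\C$, Kambayashi's theorem on separable forms of $\A^2$ for the extension to arbitrary characteristic-zero fields, and \cite{Freudenburg.17}, Theorem~5.12 for part (b). Your write-up is therefore not competing against an in-paper proof but against a citation, and in that light it is a reasonable expanded sketch of exactly the route the paper points to: reduce to the algebraically closed case, invoke Miyanishi's characterization of $\A^2$, and descend via Kambayashi.

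A few remarks on the substance. For (a), your outline is accurate in spirit but you should be honest that the heart of Miyanishi's argument is precisely the step you flag as ``the real work'': showing $X$ is \emph{everywhere} smooth (not just off a finite set) and that $\bar\kappa(X)=-\infty$. The local-slice argument only gives smoothness of the open set $X_a$; ruling out singularities at the remaining points uses that $X$ is factorial with trivial units together with the structure theory of affine surfaces, and is not a triviality. Since you explicitly say you would cite this, your sketch is fine as an annotated reference, but it is not a self-contained proof.

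For (b), your reduction to showing that $I_1$ is divisorial is the right shape of argument, but the paragraph as written does not actually close the gap: you correctly identify that the obstruction lives over the divisor $V(a)$, and then cite \cite{Freudenburg.17}. That is exactly what the paper does. If you wanted to say more, the standard argument (due to Daigle--Freudenburg and refined in \cite{Freudenburg.17}) uses the fact that $B$ is a free $A$-module (since $A\cong k^{[2]}$ is regular and $B$ is Cohen--Macaulay), which forces the degree modules $\mathcal{F}_n$ to be reflexive and hence free over the UFD $A$; principality of $I_n$ then drops out. Your localization-and-patching language is heuristically correct but does not by itself control the denominators, as you note.

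In short: your proposal is consistent with, and more detailed than, the paper's treatment, which is simply a citation.
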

Part (a) is known as Miyanishi's Theorem \cite{Miyanishi.85}. 
Miyanishi proved this result for the field $k=\C$. The general case is obtained from this using Kambayashi's classification of separable forms of the affine plane \cite{Kambayashi.75}. 
For part (b) see \cite{Freudenburg.17}, Theorem 5.12. 

\subsection{Group actions} Let $B$ be an affine $k$-domain. When an algebraic group $G$ over $k$ acts algebraically on $B$, the ring of invariants is $B^G$.
$\G_a$ denotes the additive group of $k$ and $\G_m$ denotes the multiplicative group $k^*$. In case $B\cong k^{[n]}$ the $G$-action is {\bf linearizable} 
if there exist $x_1,\hdots ,x_n\in B$ such that $B=k[x_1,\hdots ,x_n]$ and $G$ restricts to a linear action on the vector space $kx_1\oplus\cdots\oplus kx_n$. 
Similarly, an action of $G$ on affine space $\A_k^n$ is linearizable if the corresponding action on $k^{[n]}$ is linearizable. 

There is a bijective correspondence between ${\rm LND}(B)$ and the algebraic $\G_a$-actions on ${\rm Spec}(B)$, where $D\in {\rm LND}(B)$ defines the $\G_a$-action on $B$ (and ${\rm Spec}(B)$) by $\{ \exp (tD)\, |\, t\in k\}$. Under this correspondence,
the fixed point set of the action is defined by the ideal $(DB)$ generated by the image of the corresponding derivation $D$, and 
$B^D=B^{\G_a}$. In general, this invariant ring is not affine, but it is always quasi-affine \cite{Daigle.Freudenburg.99, Winkelmann.03}. 
In case $B^D$ is affine, the algebraic quotient ${\rm Spec}(B^D)$ equals the categorical quotient and the quotient morphism ${\rm Spec}(B)\to {\rm Spec}(B^D)$ is induced by the inclusion $B^D\to B$. 

Similarly, there is a bijective correspondence between the $\Z$-gradings of $B$ and the algebraic $\G_m$-actions on $B$ (and ${\rm Spec}(B)$).
If $B=\bigoplus_{n\in\Z}B_n$ is a $\Z$-grading then the induced action on $B_n$ is $\lambda (f)=\lambda^nf$, $\lambda\in k^*$, and this extends to all of $B$. Conversely, if $\G_m$ acts on $B$ then the induced $\Z$-grading is $B=\bigoplus_{n\in\Z}B_n$ where $B_n=\{ f\in B\, |\, \lambda (f)=\lambda^nf \,\,\forall \lambda\in k^*\}$. 

We refer to the following results. The first of these is well-known and elementary. A proof is provided for completeness. 
\begin{lemma} Let $K$ be a field. The group $SL_2(K)$ is generated by elements of the form
\[
\begin{pmatrix} 1& 0\cr t&1\end{pmatrix} \quad and \quad \begin{pmatrix} 1& s\cr 0&1\end{pmatrix} \,\, (s,t\in K).
\]
\end{lemma}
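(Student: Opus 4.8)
The plan is to show that the subgroup $H \leq SL_2(K)$ generated by the two families of elementary matrices is all of $SL_2(K)$ by a Gauss-elimination argument: given an arbitrary $A = \begin{pmatrix} a & b \\ c & d \end{pmatrix}$ with $ad-bc=1$, I will left-multiply by elements of $H$ to reduce $A$ to the identity, which shows $A \in H$ since $H$ is a group.

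First I would treat the generic case $c \neq 0$. Left-multiplying $A$ by $\begin{pmatrix} 1 & s \\ 0 & 1 \end{pmatrix}$ with $s = (1-a)c^{-1}$ replaces the first row and produces a matrix $\begin{pmatrix} 1 & b' \\ c & d' \end{pmatrix}$, still of determinant $1$, so $d' - b'c = 1$. Next, left-multiplying by $\begin{pmatrix} 1 & 0 \\ t & 1 \end{pmatrix}$ with $t = -c$ clears the $(2,1)$ entry, and the determinant condition forces the resulting matrix to be $\begin{pmatrix} 1 & b' \\ 0 & 1 \end{pmatrix}$. A final multiplication by $\begin{pmatrix} 1 & -b' \\ 0 & 1 \end{pmatrix}$ yields the identity. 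Each factor used lies in $H$, so $A \in H$.

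It remains to handle $c = 0$. Then $ad = 1$, so $a \neq 0$, and left-multiplying $A$ by $\begin{pmatrix} 1 & 0 \\ 1 & 1 \end{pmatrix} \in H$ gives a matrix whose $(2,1)$ entry equals $a \neq 0$, reducing to the already-settled case. Combining the two cases shows every element of $SL_2(K)$ lies in $H$, hence $H = SL_2(K)$.

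There is no serious obstacle here; the only point requiring a small observation is that the $c=0$ locus must first be moved off of itself by one extra elementary multiplication before the main reduction applies. The determinant constraint $ad-bc=1$ is what makes the off-diagonal entries come out exactly right at each stage, so it should be invoked explicitly at the two places indicated above.
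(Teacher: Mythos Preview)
Your proof is correct. Both your argument and the paper's use elementary row operations, but the organization differs slightly. You split on whether $c\neq 0$ and row-reduce directly to the identity by three left multiplications; the paper instead splits on whether $a\neq 0$, first exhibits an explicit factorization of any diagonal matrix $\begin{pmatrix} a & 0\\ 0 & a^{-1}\end{pmatrix}$ as a product of four elementary matrices, and then writes a general element as (lower unipotent)$\cdot$(diagonal)$\cdot$(upper unipotent). Your route is marginally shorter and avoids the separate treatment of diagonal matrices; the paper's route has the side benefit of making the $LDU$-type decomposition explicit. Either way the content is the same Gauss-elimination idea.
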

 \begin{proof} Let $G$ be the subgroup of $SL_2(K)$ generated by elements of the given form, and let $a,b,c,d$ be elements of $K$ with $ad-bc=1$. If $a\in K^*$, then
 \[
 \begin{pmatrix} a&0\cr 0&a^{-1}\end{pmatrix} = \begin{pmatrix} 1&0\cr a^{-1}&1\end{pmatrix}  \begin{pmatrix} 1&1-a\cr 0&1\end{pmatrix} \begin{pmatrix} 1&0\cr -1&1\end{pmatrix} \begin{pmatrix} 1&1-a^{-1}\cr 0&1\end{pmatrix}\in G
 \]
which implies:
 \[
 \begin{pmatrix}a&b\cr c&d \end{pmatrix}= 
 \begin{pmatrix} 1&0\cr ca^{-1}&1\end{pmatrix} \begin{pmatrix} a&0\cr 0&a^{-1}\end{pmatrix}\begin{pmatrix} 1&a^{-1}b\cr 0&1\cr \end{pmatrix}\in G
  \]
 If $a=0$ then $c\in K^*$ and:
 \[
  \begin{pmatrix} 0&-c^{-1}\cr c&d\end{pmatrix}
 = \begin{pmatrix} 1&0\cr c&1\end{pmatrix}  \begin{pmatrix} 1&-c^{-1}\cr 0&1\end{pmatrix} \begin{pmatrix} 1&0\cr c&1\end{pmatrix}\begin{pmatrix} 1&dc^{-1}\cr 0&1\end{pmatrix}\in G
  \]
 Therefore, $G=SL_2(K)$.
 \end{proof} 
 
 \begin{theorem} {\rm (Finiteness Theorem; see \cite{Freudenburg.17}, 6.1)} If $B$ is an affine $k$-domain and $G$ is a reductive group over $k$ which acts algebraically on $B$, then $B^G$ is affine. 
 \end{theorem}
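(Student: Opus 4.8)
The plan is to run the classical Hilbert--Nagata argument, the only nonformal input being linear reductivity of $G$ in characteristic zero. First I would reduce to the polynomial case: since $B$ is $k$-affine, fix generators $b_1,\dots ,b_m$ of $B$ over $k$; because the $G$-action is algebraic, each $b_i$ lies in a finite-dimensional $G$-submodule of $B$, and the sum of these submodules is a finite-dimensional rational $G$-module $W\subset B$ containing all the $b_i$. The inclusion $W\hookrightarrow B$ extends to a $G$-equivariant surjection of $k$-algebras $\pi\colon S\to B$, where $S=\mathrm{Sym}_k(W)$ is a polynomial ring carrying a rational linear $G$-action; its kernel is a $G$-stable ideal.

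Next I would produce the Reynolds operator. Since $k$ has characteristic zero, $G$ is linearly reductive: every rational $G$-module $M$ is semisimple, hence decomposes canonically as $M=M^G\oplus M_G$, where $M_G$ is the sum of all nontrivial simple submodules, and the projection $R_M\colon M\to M^G$ along $M_G$ is functorial in $M$. Applied to $S$ this gives a $k$-linear projection $R=R_S\colon S\to S^G$ which is a homomorphism of $S^G$-modules (the Reynolds identity $R(fg)=fR(g)$ for $f\in S^G$ follows from $G$-equivariance together with uniqueness of the decomposition). Functoriality applied to the equivariant surjection $\pi$ gives $\pi\circ R_S=R_B\circ\pi$, so $\pi$ maps $S^G$ onto $B^G$.

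Then I would show $S^G$ is finitely generated over $k$. Grade $S$ by total degree, so $S_0=k$, and let $\mathfrak{a}\subset S$ be the ideal generated by the homogeneous invariants of positive degree. By the Hilbert basis theorem $\mathfrak{a}=(f_1,\dots ,f_r)$ for finitely many homogeneous positive-degree invariants $f_i$. An induction on degree, using that $R$ is an $S^G$-module map to replace an expression $h=\sum g_if_i$ by $h=\sum R(g_i)f_i$ with each $R(g_i)$ homogeneous of strictly smaller degree, yields $S^G=k[f_1,\dots ,f_r]$. Combining this with the previous paragraph, $B^G=\pi(S^G)=k[\pi(f_1),\dots ,\pi(f_r)]$, which is $k$-affine; being a subring of the domain $B$, it is again a domain, as claimed.

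The one genuinely substantive ingredient --- and hence the main obstacle --- is the linear reductivity of $G$ over a field of characteristic zero that is not assumed algebraically closed, which is precisely what makes the functorial Reynolds operator available; everything after that is bookkeeping with graded rings and the basis theorem. One should also check that ``acts algebraically'' supplies the local finiteness used in the first reduction, and that the hypothesis $\mathrm{char}\,k=0$ (not merely reductivity of $G$) is what is being invoked.
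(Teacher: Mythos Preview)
The paper does not give its own proof of this statement; it merely quotes the Finiteness Theorem with a reference to \cite{Freudenburg.17}, 6.1, and uses it as a black box (for instance in \emph{Proposition~\ref{isomorphism}}). So there is nothing in the paper to compare your argument against.

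That said, your outline is the standard Hilbert--Nagata proof and is correct in this setting. The reduction to a polynomial ring via local finiteness of a rational action, the Reynolds operator coming from linear reductivity in characteristic zero, the Hilbert ideal argument giving finite generation of $S^G$, and the descent to $B^G$ via functoriality of Reynolds are exactly the classical steps. Your self-identified ``main obstacle''---linear reductivity of a reductive group over a not necessarily algebraically closed field of characteristic zero---is a genuine point but is handled by base-changing to $\bar k$ (rationality of $G$-modules is preserved, and complete reducibility descends because a short exact sequence of $k$-rational $G$-modules splits iff it splits after the faithfully flat base change to $\bar k$). With that granted, the rest is, as you say, bookkeeping.
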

 
\begin{theorem} {\rm (Kraft and Popov \cite{Kraft.Popov.85})} Let $G$ be a connected semisimple group over an algebraically closed field of $k$ characteristic zero. Any regular action of $G$ on $\A_k^3$ is equivalent to a linear one. 
\end{theorem}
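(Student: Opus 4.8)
The plan is to follow Kraft and Popov's strategy, organizing the structural input around the fundamental pairs studied in this paper. First I would reduce to the group $G=SL_2(k)$: after factoring out the kernel of the action, so that $G$ acts faithfully and is still connected semisimple, one is left with a short list of possibilities; the groups of rank $\ge 2$ that can act nontrivially on a threefold (such as $SL_3(k)$ through its standard representation) are handled separately, a $PGL_2(k)$-action lifts to an $SL_2(k)$-action, and the linearized $SL_2(k)$-action then descends to $PGL_2(k)$. So assume $G=SL_2(k)$ and put $B=k[\A_k^3]=k^{[3]}$ and $A=B^{SL_2}$, which is affine by the Finiteness Theorem and normal, being a reductive-group quotient of the normal ring $B$; note $A^{*}\subseteq B^{*}=k^{*}$.

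Next I would show $\dim A=1$ and $A\cong k^{[1]}$. Because $SL_2(k)$ has no nontrivial characters, every semi-invariant in $B$ is an invariant, so no $SL_2$-stable prime divisor of $\A_k^3$ exists outside the zero locus of a nonconstant element of $A$. Since $SL_2(k)$ has no one-dimensional orbits in an affine variety (a one-dimensional orbit would be complete), a generic orbit has dimension $2$ or $3$, whence $\dim A\le 1$. The value $\dim A=0$ is impossible here: it would give a dense orbit $SL_2(k)/F$ with $F$ finite whose complement in $\A_k^3$ has codimension $\ge 2$ (no stable divisor), forcing $SL_2(k)/F\cong\A_k^3$ by normality, and then simple connectivity of $\A_k^3$ would give $F=\{1\}$ and $SL_2(k)\cong\A_k^3$, which is false. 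Hence $\dim A=1$; a normal affine curve with rational function field (a transcendence-degree-one subfield of $k^{(3)}$) and $A^{*}=k^{*}$ is $k^{[1]}$. So there is a surjective $SL_2$-invariant morphism $\pi\colon\A_k^3\to\A_k^1$.

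Finally I would identify the action. A maximal torus of $SL_2(k)$ induces a $\Z$-grading of $B$ and the maximal unipotent subgroups give nonzero homogeneous locally nilpotent derivations $\delta_{\pm}$ of $B$ forming a fundamental pair, with $A\subseteq\krn\delta_+$ and $\krn\delta_+\cong k^{[2]}$ by {\it Theorem\,\ref{Miyanishi}}. The generic fibre of $\pi$ is a normal affine $SL_2$-surface carrying a dense orbit, hence is one of the surfaces classified in {\it Theorem\,\ref{classification}}; combining this with the explicit description of $\krn\delta_+$, its degree modules, and its image ideals supplied by the Structure Theorem ({\it Theorem\,\ref{main1}}), together with $A\cong k^{[1]}\subseteq\krn\delta_+\cong k^{[2]}$, one should be able to show that the only configuration whose total space is $\A_k^3$ is $B=k[\pi^{*}u]\,[x,y]$ with $SL_2(k)$ acting on $kx\oplus ky$ by the standard representation and fixing $\pi^{*}u$. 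This is exactly the role of {\it Proposition\,\ref{threefold}}, applied to $B=k^{[3]}$, from which {\it Theorem\,\ref{Kraft-Popov}}, and hence the statement above, follows. I expect this last structural step to be the main obstacle: one must rule out every ``exotic'' $SL_2$-fibration $\A_k^3\to\A_k^1$ other than the linear model --- equivalently, descend the \'etale-slice linearization at the fixed point of the action along $\pi$ --- and it is here that the low dimension, Miyanishi's Theorem, and the full classification of $SL_2$-surfaces with trivial units are indispensable.
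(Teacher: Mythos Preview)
The paper does not give a proof of this statement: it is cited in the preliminaries as an external result of Kraft and Popov. What the paper \emph{does} prove is the $SL_2(k)$ case (and only that case), in {\it Theorem\,\ref{Kraft-Popov}}, and there the argument is quite different from your sketch.

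Your reduction from a general connected semisimple $G$ to $SL_2(k)$ is the original Kraft--Popov strategy and lies entirely outside this paper. For the $SL_2(k)$ case itself, your route is geometric: you bound $\dim B^{SL_2}$ via orbit dimensions, exclude the homogeneous case by simple connectivity of $\A_k^3$, and conclude $B^{SL_2}\cong k^{[1]}$ from Castelnuovo-type rationality. The paper's route is purely algebraic and avoids all of this. Starting from the fundamental pair $(D,U)$, it first applies Miyanishi's Theorem to get $\krn D=k[f,g]\cong k^{[2]}$ with $f,g$ homogeneous; the key observation is that if both $\deg f,\deg g>0$ then the Structure Theorem places $f,g\in I_1$, forcing the plinth ideal $I_1$ to be the maximal ideal $(f,g)$, contradicting the principality of $I_1$ ({\it Theorem\,\ref{Miyanishi}(b)}). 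Hence one generator lies in $A_0$, so $A_0\ne k$ and $A\cong k^{[2]}$, and {\it Proposition\,\ref{threefold}} applies. This is cleaner than the orbit-dimension count and is what lets the paper work over an arbitrary field of characteristic zero.

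Your final paragraph correctly identifies the remaining obstacle---ruling out the non-linear models in case (ii) of {\it Proposition\,\ref{threefold}}---but does not name the tool. The paper resolves it with the extended Gupta theorem ({\it Theorem\,\ref{Gupta}}): if $B=k[X,Y,Z,W]/(2XZ-Y^2-P(W))\cong k^{[3]}$ then $Y^2+P(W)$ must be a variable of $k[Y,W]$, forcing $\deg P=1$ and hence a linear model. Without this step (or an equivalent), your outline is incomplete at exactly the point you flag as the main difficulty.
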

The authors do not include the condition {\it algebraically closed} in their statement of the theorem, but this is an oversight, as they go on to say, ``We always work in the category of algebraic varieties over the field $\C$ of
complex numbers. Of course we could replace $\C$ by any other algebraically closed field of characteristic zero.''

\begin{theorem} {\rm (Panyushev \cite{Panyushev.84})} Let G be a connected semisimple algebraic group of biregular automorphisms of four-dimensional affine space $\A_k^4$ over an algebraically closed field $k$ of characteristic zero. 
The action of G on $\A_k^4$ is equivalent to a linear action.
\end{theorem}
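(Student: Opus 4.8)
Since Panyushev's theorem is quoted here as a known result, I only indicate the strategy one would follow to prove it. Write $B=k^{[4]}=k[\A_k^4]$ and let $d$ be the dimension of a generic $G$-orbit. Because $G$ is semisimple it has no nontrivial characters, so $B^G$ is a UFD; by the Finiteness Theorem it is a normal affine $k$-domain, and $(B^G)^*\subseteq B^*=k^*$, so its units are trivial. Also $\dim B^G=4-d$.

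The first step is to reduce to $G=SL_2(k)$. Any nontrivial effective semisimple $G$ has $\dim G\ge 3$, and the representations through which such a $G$ can act in dimension $\le 4$ --- in particular the slice representations at any fixed point --- are very restricted. Running through the connected semisimple groups, all cases other than $SL_2(k)$ are dispatched by comparatively short arguments: either the action restricts to, or reduces to, an action on $\A_k^{\le 3}$ and one invokes the Kraft--Popov theorem, or (as for $Spin_4\cong SL_2(k)\times SL_2(k)$, $Sp_4$, $SL_3$, $SL_4\cong Spin_6$, $SO_3\cong PSL_2$, $\hdots$) the defining representation is essentially forced. As Panyushev remarks, the serious case is $G=SL_2(k)$.

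So take $G=SL_2(k)$. One has $d\ne 0$ (a generic stabilizer of full dimension would force the action to be trivial) and $d\ne 1$ (a one-dimensional orbit would be a finite quotient of $SL_2(k)/B\cong\PP^1$, which is complete and hence not quasi-affine, whereas orbits inside an affine variety are quasi-affine). Thus $d\in\{2,3\}$. If $d=3$ then $B^G$ is a one-dimensional normal affine UFD with trivial units, so $B^G\cong k^{[1]}$, and a generic fibre of the quotient $\A_k^4\to\A_k^1$ is a smooth affine threefold carrying a dense $3$-dimensional $SL_2(k)$-orbit, i.e. an affine embedding of some $SL_2(k)/F$ with $F$ finite; running the classification of such embeddings and passing back to $\A_k^4$ forces the $SL_2(k)$-module to be $V(3)$ (binary cubics) or $V(1)\oplus V(1)$, both linear. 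If $d=2$ then $B^G$ is a normal affine surface that is a UFD with trivial units, and the only model to match is $V(1)\oplus V(0)\oplus V(0)$, whose fixed locus is a plane.

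The main obstacle is the case $d=2$ (and, to a lesser extent, identifying the threefold fibres when $d=3$). The plan is: (i) produce an $SL_2(k)$-fixed point $p\in\A_k^4$ --- this is already substantial, since fixed-point existence for $SL_2(k)$-actions on affine space is not automatic and must be read off from the structure of $B^G$ and of the quotient morphism; (ii) apply Luna's \'etale slice theorem at $p$, identifying a neighbourhood of $p$, up to \'etale base change over the quotient, with the slice representation $W$, a four-dimensional $SL_2(k)$-module; (iii) globalize this local equivalence, i.e. promote it to an $SL_2(k)$-isomorphism $\A_k^4\cong W$, by comparing the two morphisms to the common categorical quotient and using that both total spaces are affine space. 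Step (iii) is where the bulk of the work lies --- controlling the possible quotients and the associated families of invariants; the Structure Theorem and Classification Theorem of the present paper, together with {\it Proposition\,\ref{threefold}}, supply tools of exactly the kind needed to pin down the threefold and surface quotients that arise here.
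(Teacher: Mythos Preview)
The paper does not prove this theorem; it is quoted from the literature with a citation to \cite{Panyushev.84} and used as a black box in the proof of {\it Theorem\,\ref{cancellation}}. There is therefore no proof in the paper to compare your sketch against, and you correctly flag this at the outset.

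Your outline --- reduce to $G=SL_2(k)$, split by generic orbit dimension $d\in\{2,3\}$, produce a fixed point, apply Luna's slice theorem, and then globalize --- is a fair summary of the shape of Panyushev's original argument. Two small cautions. First, your suggestion that the Structure Theorem, Classification Theorem, and {\it Proposition\,\ref{threefold}} ``supply tools of exactly the kind needed'' is speculative: those results are logically independent of Panyushev's theorem, so no circularity is introduced, but the paper does not attempt any such alternative route and you should not present this as part of the paper's content. Second, in the $d=2$ case you write that ``the only model to match is $V(1)\oplus V(0)\oplus V(0)$''; this omits $V(2)\oplus V(0)$, which is the other four-dimensional $SL_2$-module with two-dimensional generic orbits, and is precisely the representation that appears in Case~2 of the proof of {\it Theorem\,\ref{cancellation}}.
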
 

 The representations of $SL_2(k)$ on the vector spaces $k^{n+1}$, $n\ge 1$, are well-known. Given $n\ge 1$ define linear operators on $kx_0\oplus\cdots \oplus kx_n$ by:
\begin{equation}\label{down-op}
Dx_i=x_{i-1} \,\, (1\le i\le n)\,\, ,\,\, Dx_0=0 \quad {\rm and}\quad  Ux_i=(i+1)(n-i)x_{i+1}\,\, (0\le i\le n-1)\,\, , \,\, Ux_n=0
\end{equation}
$D$ is the {\bf down operator}, $U$ the {\bf up operator}, and $E=[D,U]$ the {\bf Euler operator}. 
Then $G_1=\{ \exp (tD)\, |\, t\in k\}\cong\G_a$ is a unipotent one-parameter subgroup of lower triangular matrices and $G_2=\{ \exp (sU)\, |\, s\in k\}\cong\G_a$ a unipotent one-parameter subgroup of upper-triangular matrices. 
Together they generate a copy of $SL_2(k)$ in $GL_{n+1}(k)$ and define an irreducible representation on $k^{n+1}$. We fix the following notation.
\begin{quotation}
Given $n\ge 0$, $V_n$ is the irreducible representation of $SL_2(k)$ on $k^{n+1}=kx_0\oplus\cdots\oplus kx_n$ defined by the up and down operators. 
\end{quotation}
Note that $V_n$ is
equivalent to the classical representation of $SL_2(k)$ on binary forms of degree $n$. The coefficients for $V_n$ are more natural from the point of view of locally nilpotent derivations than the classical representation. 

\subsection{Extending a theorem of Gupta}

In \cite{Gupta.14c}, Theorem A, N. Gupta gives the following.

\begin{theorem} Let $K$ be a field and $A$ a $K$-domain of the form
\[
A=K[X,Y,Z,T]/(X^mY-F(X,Z,T))
\]
where $F\in K[X,Z,T]\cong_KK^{[3]}$ and $m\ge 2$. Then $A\cong_KK^{[3]}$ if and only if $F(0,Z,T)$ is a variable in $K[Z,T]$. 
\end{theorem}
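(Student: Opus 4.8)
The plan is to treat the two implications separately, after recording a few preliminary facts. Writing $x,y,z,t$ for the images of $X,Y,Z,T$ in $A$ and $g:=F(0,Z,T)\in K[Z,T]$, note first that $g\neq0$: otherwise $X\mid F$, say $F=XG$, so $X^mY-F=X(X^{m-1}Y-G)$ is a product of two nonunits, contradicting that $A$ is a domain. Since $X\nmid F$, sending $Y$ to $F/X^m$ identifies $A$ with the subring $K[X,Z,T][F/X^m]$ of $K(X,Z,T)$; hence ${\rm frac}(A)=K(X,Z,T)$ has transcendence degree $3$ over $K$, $A\supseteq K[x,z,t]\cong_KK^{[3]}$, $A_x\cong K[x,x^{-1}]^{[2]}$, and $A/(xA)=K[Y,Z,T]/(g)$.

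For the ``if'' direction I would assume $g$ is a variable and, after composing with a $K$-automorphism of $K[Z,T]$ extended to fix $X,Y$, reduce to $g=Z$, so $F=Z+XF_1(X,Z,T)$. Then $z\in xA$; writing $z=xz_1$ with $z_1\in A$, substituting into the defining relation and cancelling one $x$ gives $x^{m-1}y=z_1+F_1(x,xz_1,t)$, and isolating the term free of $x$ lets one write $z_1+c_1(t)=xz_2$ with $z_2\in A$ and $c_1\in K[t]$ (here $m\geq2$ is used). Repeating this peeling produces $z_1,\dots,z_m\in A$ with $A=K[x,z_m,t]$, which is $\cong_KK^{[3]}$ by the transcendence-degree count. (Equivalently, $g$ a variable gives $K[Z,T]/(g)\cong K^{[1]}$, so $A/(xA)\cong K^{[2]}$, making $A$ an $\A^2$-fibration over $K[x]\cong K^{[1]}$ all of whose fibres are affine planes; then $A\cong_KK^{[3]}$ by Sathaye's theorem on $\A^2$-fibrations over a PID containing $\Q$. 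The explicit peeling is self-contained and characteristic-free.)

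For the ``only if'' direction, assume $A\cong_KK^{[3]}$, so $ML(A)=K$ and in particular $X\notin ML(A)$. I claim this forces $g$ to be a variable, and I would argue by contraposition via a Makar-Limanov degeneration. Suppose $g$ is \emph{not} a variable, and pick a monomial $Z^aT^b$ occurring in $g$ that is maximal for some positive $\Z$-weighting of $Z,T$; since $g$ is not a variable this can be arranged with $a+b\geq2$. Equip $A$ with the $\Z$-filtration given by the weights $w(X)=-N$ for $N\gg0$, $w(Z)=r$, $w(T)=s$ ($r,s>0$ generic, so $Z^aT^b$ is the leading monomial of $g$), and $w(Y)=ar+bs+mN$. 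Then $w(X^mY)=w(Z^aT^b)$ while every other monomial of $X^mY-F$ has strictly smaller weight, so the associated graded ring is the toric model
\[
\operatorname{gr}(A)\;\cong\;R:=K[X,Y,Z,T]/\bigl(X^mY-Z^aT^b\bigr),
\]
a $\Z$-graded $K$-domain, with $\operatorname{gr}(X)=X$. By the standard leading-term argument for locally nilpotent derivations (see \cite{Freudenburg.17}), any $D\in{\rm LND}(A)$ with $DX\neq0$ degenerates to a nonzero homogeneous $\operatorname{gr}(D)\in{\rm LND}(R)$ with $\operatorname{gr}(D)(X)\neq0$; hence $X\in ML(R)$ would force $X\in ML(A)=K$, a contradiction. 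So the whole direction reduces to: if $m\geq2$ and $a+b\geq2$, then $X\in ML(R)$.

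This last claim is the step I expect to be the main obstacle. The normalization of $R$ is $\cong K^{[3]}$, and $R$ sits inside it as the ring of functions whose restriction to $\{X=0\}$ is pulled back from the singular locus; because $\{X=0\}$ occurs in the defining equation with multiplicity $m\geq2$, I would expect every locally nilpotent derivation of $R$ to be forced to kill $X$ — concretely, the conditions that such a $D$ preserve $R$ inside its normalization and respect the relation (which forces $X^mDY$ to be a multiple of $Z^aT^b$) push up the $X$-content of $DX$ until local nilpotence on $X$ fails unless $DX=0$. Making this precise is the sort of explicit Makar-Limanov invariant computation carried out for Koras--Russell-type threefolds, and it is also where extending from characteristic zero to arbitrary characteristic needs care. (In characteristic zero there is a shortcut once $A/(xA)\cong K^{[2]}$ is known: then $g$ is irreducible and $(K[Z,T]/(g))^{[1]}\cong K^{[2]}$, so $K[Z,T]/(g)\cong K^{[1]}$ by the cancelation theorem for curves, and $g$ is a variable by the Epimorphism Theorem; but proving $A/(xA)\cong K^{[2]}$ directly from $A\cong_KK^{[3]}$ seems no easier, and the degeneration argument has the advantage of working over an arbitrary field, as the statement requires.)
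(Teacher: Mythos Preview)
The paper does not itself prove this statement; it quotes it from \cite{Gupta.14c}, Theorem~A, adding only the one-line summary ``Gupta's proof relies on the fact that $ML(A)=K[X]$.'' So there is nothing in the paper to compare against beyond that remark.

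Your ``if'' direction is correct; the peeling argument is standard and characteristic-free.

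For ``only if'' you are following precisely the route the paper attributes to Gupta: compute (or at least bound below) the Makar-Limanov invariant of $A$ to obstruct $A\cong_K K^{[3]}$. Your reduction by degeneration to the monomial model $R=K[X,Y,Z,T]/(X^mY-Z^aT^b)$ is sound in principle, but the step you yourself flag as ``the main obstacle'' --- showing $X\in ML(R)$ when $m\ge 2$ and $a+b\ge 2$ --- is exactly the substance of Gupta's argument, and you have not carried it out. So as it stands this direction is an outline, not a proof.

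Two smaller issues. First, your claim that a leading monomial with $a+b\ge 2$ can always be arranged ``since $g$ is not a variable'' overlooks the case $g\in K^*$: then every monomial of $g$ has degree $0$. That case is disposed of directly (then $x$ is a unit in $A$, so $A^*\ne K^*$ and $A\not\cong_KK^{[3]}$), but it should be separated out before you degenerate. Second, the filtered-to-graded argument for locally nilpotent derivations that you invoke from \cite{Freudenburg.17} is a characteristic-zero device; over an arbitrary field $K$, as in the theorem, one must replace LNDs by exponential maps (equivalently, locally finite iterative higher derivations), and both the degeneration step and the computation of $ML(R)$ have to be redone in that language. This is what Gupta actually does. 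Your parenthetical characteristic-zero shortcut via curve cancellation and the Epimorphism Theorem does not close the gap either, since --- as you note --- it presupposes $A/xA\cong K^{[2]}$, which is not automatic from $A\cong_KK^{[3]}$.
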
 

Gupta's proof relies on the fact that $ML(A)=K[X]$. If $m=1$ then $ML(A)=K$ and this case was not treated in her paper. 
We extend Gupta's theorem to the case $m=1$ when the ground field is of characteristic zero. 
Our proof uses Kaliman's Fiber Theorem ({\it Theorem\,\ref{Kaliman}}), which was first proved by Kaliman for $k=\C$ \cite{Kaliman.02} and later extended to any field of characteristic zero by Daigle and Kaliman \cite{Daigle.Kaliman.09}, Theorem 4.2. 

\begin{theorem}\label{Kaliman} Let $B=k^{[3]}$ where $k$ is a field of characteristic zero. Given $f\in B$ suppose that the set 
\[
\{ \mathfrak{p}\in {\rm Spec}(k[f])\, |\, B\otimes_{k[f]}\kappa (\mathfrak{p})=\kappa (\mathfrak{p})^{[2]}\}
\]
is dense in ${\rm Spec}(B)$, where $\kappa (\mathfrak{p})$ is the residue field $R_{\mathfrak{p}}/\mathfrak{p}R_{\mathfrak{p}}$ at $\mathfrak{p}$.
Then $f$ is a variable of $B$.
\end{theorem}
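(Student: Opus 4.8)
The plan is to translate the density hypothesis into information about the generic fibre and every special fibre of $f$, and then to glue. Put $R=k[f]$: the hypothesis is vacuous unless $f$ is nonconstant, so $R\cong k^{[1]}$ and $B$ is a finitely generated, torsion-free, hence $R$-flat, algebra with ${\rm frac}(B)=k^{(3)}$; it suffices to prove $B\cong R^{[2]}$ as an $R$-algebra, for then $B=k[f,u,v]\cong k^{[3]}$ and $f$ is a coordinate of $B$, i.e.\ a variable. Since the result descends from $k=\C$ to an arbitrary field of characteristic zero by the arguments of Daigle and Kaliman, I would work over $\C$. First I would show $B\otimes_R k(f)\cong k(f)^{[2]}$: as the set of $\mathfrak{p}$ with $B\otimes_R\kappa(\mathfrak{p})\cong\kappa(\mathfrak{p})^{[2]}$ is dense, infinitely many closed fibres of $f$ are affine planes, and the conditions isolating $\A^2$ among smooth affine surfaces in characteristic zero --- acyclicity together with log Kodaira dimension $-\infty$, the algebraic characterization of $\C^2$ of Miyanishi--Sugie and Fujita --- are constructible in the family ${\rm Spec}(B)\to{\rm Spec}(R)$ and hence persist at its generic point; so $B\otimes_R k(f)$ is a form of $\A_{k(f)}^2$, and by Kambayashi's theorem on separable forms of the affine plane (used already for Theorem~\ref{Miyanishi}) it is the trivial form.

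The crux is to upgrade ``a dense set of closed fibres is $\A^2$'' to ``\emph{every} scheme-theoretic fibre $f^{-1}(c)$ is reduced, irreducible, and isomorphic to $\A^2$.'' Here I would argue topologically: $\C^3$ is contractible and a general fibre is acyclic, so additivity of the Euler characteristic under the fibration forces every fibre to have the Euler characteristic of a point, and a finer analysis of the homology and monodromy of the fibration forces each fibre to be $\mathbb{Q}$-acyclic, indeed topologically contractible. Each fibre is a hypersurface in $\C^3$, so it has trivial units, and one checks its log Kodaira dimension is $-\infty$; the structure theory of open surfaces then applies: such a surface carries an $\A^1$-fibration (Miyanishi--Sugie, Fujita), and analysing its possibly reducible or multiple fibres through the work of Lin--Zaidenberg and of Gurjar--Miyanishi on contractible surfaces (together with Abhyankar--Moh--Suzuki) shows that $f^{-1}(c)$ is in fact smooth and $\cong\C^2$, and in particular that the scheme-theoretic fibre is reduced.

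With the generic fibre $\cong k(f)^{[2]}$ and every closed fibre $\cong\A^2$, I would localize at each closed point $\mathfrak{p}\in{\rm Spec}(R)$: $R_\mathfrak{p}$ is a discrete valuation ring containing $\Q$, and $B\otimes_R R_\mathfrak{p}$ is finitely generated and flat over it with generic fibre $k(f)^{[2]}$ and closed fibre $\kappa(\mathfrak{p})^{[2]}$, so Sathaye's criterion gives $B\otimes_R R_\mathfrak{p}\cong R_\mathfrak{p}^{[2]}$. Thus ${\rm Spec}(B)\to{\rm Spec}(R)$ is a Zariski-locally trivial $\A^2$-bundle over the smooth affine curve ${\rm Spec}(R)\cong\A_k^1$; since ${\rm Pic}(\A_k^1)=0$, such a bundle is trivial (Bass--Connell--Wright), whence $B\cong R^{[2]}=k[f]^{[2]}$ and $f$ is a variable of $B$.

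I expect the main obstacle to be the second step: excluding degenerate fibres is not internal to locally nilpotent derivations but requires the full apparatus of the theory of open algebraic surfaces --- log Kodaira dimension, $\A^1$-fibrations, the classification of (rationally) acyclic surfaces --- driven by the topological rigidity of $\C^3$. By comparison the generic-fibre argument, Sathaye's criterion, the bundle-triviality step, and the descent from $\C$ to an arbitrary field of characteristic zero are comparatively routine.
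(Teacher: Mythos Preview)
The paper does not prove this statement. Theorem~\ref{Kaliman} is quoted from the literature: the paper attributes the case $k=\C$ to Kaliman \cite{Kaliman.02} and the extension to arbitrary fields of characteristic zero to Daigle and Kaliman \cite{Daigle.Kaliman.09}, Theorem~4.2, and then simply \emph{uses} the result to prove Theorem~\ref{Gupta}. There is nothing in the paper to compare your argument against.

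That said, what you have written is a faithful high-level outline of Kaliman's original strategy: pass from a dense set of $\A^2$-fibres to all fibres via the topology of $\C^3$ and the structure theory of open surfaces (log Kodaira dimension, $\A^1$-fibrations, the Miyanishi--Sugie--Fujita characterisation of $\C^2$, contractible surfaces), then invoke Sathaye over each DVR $R_{\mathfrak p}$, and globalise via Bass--Connell--Wright. You have also located the genuine difficulty correctly: ruling out degenerate fibres is the deep step, and your sketch of it (Euler characteristic additivity $\Rightarrow$ $\mathbb{Q}$-acyclic $\Rightarrow$ contractible $\Rightarrow$ smooth and $\cong\C^2$) compresses a substantial amount of surface theory into a sentence; in Kaliman's paper this step occupies most of the work and uses, among other things, that fibres are \emph{curves on} $\C^2$ after a generic projection, not merely abstract surfaces. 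The descent to general $k$ via Daigle--Kaliman and Kambayashi is as you say. So your proposal is on the right track as a reconstruction of the literature proof, but it is not something the present paper undertakes.
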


\begin{theorem}\label{Gupta} Let $k$ be a field of characteristic zero and $A$ a $k$-domain of the form
\[
A=k[X,Y,Z,T]/(XY-F(X,Z,T))
\]
where $F\in k[X,Z,T]\cong_kk^{[3]}$. Then $A\cong_kk^{[3]}$ if and only if $F(0,Z,T)$ is a variable in $k[Z,T]$. 
\end{theorem}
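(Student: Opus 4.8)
The plan is to prove both directions, with the forward direction being the substantive one. The easy direction is the ``if'': suppose $F(0,Z,T)$ is a variable of $k[Z,T]$, say $k[Z,T] = k[F(0,Z,T), G]$ for some $G$. I would argue directly that $A \cong_k k^{[3]}$ by exhibiting an explicit coordinate system. Writing $F(X,Z,T) = F(0,Z,T) + X H(X,Z,T)$ for some $H \in k[X,Z,T]$, the relation $XY = F(X,Z,T)$ becomes $X(Y - H) = F(0,Z,T)$. After the change of variable $Y' = Y - H$ (an automorphism of $k[X,Y,Z,T]$), we get $A \cong k[X,Y',Z,T]/(XY' - F(0,Z,T))$. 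Since $F(0,Z,T)$ is a variable of $k[Z,T]$, we may replace $Z$ by $F(0,Z,T)$ in the coordinate system, reducing to $A \cong k[X,Y',Z,T]/(XY' - Z) \cong k[X,Y',T] \cong_k k^{[3]}$, where the middle isomorphism eliminates $Z$.

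For the forward direction, assume $A \cong_k k^{[3]}$; I must show $F(0,Z,T)$ is a variable of $k[Z,T]$. The key idea is to apply Kaliman's Fiber Theorem ({\it Theorem\,\ref{Kaliman}}) to a cleverly chosen element. Observe that $B := A \cong k^{[3]}$ and consider $f := X \in A$. For a point $\mathfrak{p} \in {\rm Spec}(k[X])$ corresponding to $X = c$ with $c \ne 0$: the fiber ring is $k[Y,Z,T]/(cY - F(c,Z,T)) \cong k[Z,T] \cong \kappa(\mathfrak{p})^{[2]}$, since $Y$ is solved for in terms of $Z,T$. So the generic fiber of $X$ is a polynomial ring in two variables over the residue field, and Kaliman's theorem tells us $X$ is a variable of $A \cong k^{[3]}$. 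Hence $A/(X) \cong k^{[2]}$. But $A/(X) = k[Y,Z,T]/(F(0,Z,T)) \cong k[Y] \otimes_k \bigl(k[Z,T]/(F(0,Z,T))\bigr)$. Thus $k[Y]^{[0]} \otimes (k[Z,T]/(F(0,Z,T))) \cong k^{[2]}$, which forces $k[Z,T]/(F(0,Z,T)) \cong k^{[1]}$. Now $F(0,Z,T)$ is irreducible (being a coordinate — or at worst, I need to argue the curve it cuts out is the affine line), and by the Abhyankar–Moh–Suzuki epimorphism theorem (valid over any field of characteristic zero; cited in the excerpt's discussion of the Epimorphism Theorem), a polynomial $g \in k[Z,T]$ with $k[Z,T]/(g) \cong k^{[1]}$ is a variable of $k[Z,T]$. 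This yields the conclusion.

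The main obstacle I anticipate is verifying the density hypothesis in Kaliman's theorem precisely and handling degenerate cases of $F$. Specifically, I must ensure the set of $\mathfrak{p}$ with $B \otimes_{k[f]} \kappa(\mathfrak{p}) = \kappa(\mathfrak{p})^{[2]}$ is \emph{dense in} ${\rm Spec}(B)$, not merely dense in ${\rm Spec}(k[X])$ — this requires checking that the union of these two-dimensional fibers over $X \ne 0$ is dense in the threefold $A$, which follows because $X$ is a non-zerodivisor and its vanishing locus is a proper closed subset; but one should confirm $A$ is indeed a domain (this is part of the hypothesis, ``$A$ a $k$-domain'') so that $X = 0$ is not all of ${\rm Spec}(A)$. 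A second subtlety: to invoke the epimorphism theorem I need $F(0,Z,T)$ to be a nonconstant, reduced (ideally irreducible) polynomial; if $F(0,Z,T)$ were a nonzero constant then $A/(X) = 0$, contradicting $A/(X) \cong k^{[2]}$, and if $F(0,Z,T) = 0$ then $A$ would not be a domain (as $X$ would be a zerodivisor: $XY = X H(X,Z,T)$ forces $X(Y - H) = 0$). So the domain hypothesis rules out both extremes, and the isomorphism $k[Z,T]/(F(0,Z,T)) \cong k^{[1]}$ forces $F(0,Z,T)$ to generate a prime ideal, placing us squarely in the hypothesis of the epimorphism theorem.
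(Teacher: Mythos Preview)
Your proof is correct and follows essentially the same route as the paper: Kaliman's Fiber Theorem applied to the image of $X$ to make it a variable, then cancellation for curves together with the Epimorphism Theorem for the forward direction, and the explicit change of variable $Y' = Y - H$ for the converse. The only step you leave implicit is the appeal to the cancellation theorem for curves (Abhyankar--Eakin--Heinzer) when you say $R[Y]\cong k^{[2]}$ ``forces'' $R\cong k^{[1]}$; the paper cites this explicitly.
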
 

\begin{proof} Let $x\in A$ be the image of $X$ and let $F_0(Z,T)=F(0,Z,T)$. 

Assume that $A\cong k^{[3]}$. 
Given nonzero $c\in k$,
\[
A/(x-c)A\cong k[Y,Z,T]/(cY-F(c,Z,T))=k[\bar{Z},\bar{T}]\cong k^{[2]}
\]
where $\bar{Z},\bar{T}$ denote the images of $Z$ and $T$ in this quotient. 
By the Kaliman Fiber Theorem, $x$ is a variable of $A$, which implies that every fiber of $x$ is isomorphic $\A_k^2$.
In particular, if $R=k[Z,T]/(F_0)$ then the cancellation theorem for curves \cite{Abhyankar.Eakin.Heinzer.72} gives:
\[
A/xA=R[\bar{Y}]\cong k^{[2]} \implies R\cong k^{[1]}
\]
So $F_0$ defines a line in ${\rm Spec}(k[Z,T])=\A_k^2$. By the Epimorphism Theorem \cite{Abhyankar.Moh.75,Suzuki.74} it follows that $F_0$ is a variable of $k[Z,T]$.

Conversely, assume that $F_0$ is a variable of $k[Z,T]$ and let $G\in k[Z,T]$ be such that $k[Z,T]=k[F_0,G]$. Let $H\in k[X,Z,T]$ be such that $F=XH+F_0$.  
Then 
\[
k[X,Y,Z,T]=k[X,Y-H,F_0,G]=k[X,Y-H,F_0-X(Y-H),G]=k[X,Y-H,XY-F,G]
\]
which implies $XY-F$ is a variable and $A\cong k^{[3]}$. 
\end{proof}


\section{The Structure theorem for fundamental derivations}\label{structure}
\subsection{Main definition}
The Lie algebra $\mathfrak{sl}_2(k)$ is generated by elements $X$ and $Y$ with relations:
\[
[X,[X,Y]]=-2X\,\, ,\,\, [Y,[X,Y]]=2Y
\]
With the criterion of Arzhantsev and Liendo in mind, our main objects of interest are pairs of locally nilpotent derivations which generate a subalgebra isomorphic to 
$\mathfrak{sl}_2(k)$ in the Lie algebra of derivations whose Lie product defines a $\Z$-grading. 

More precisely, let $B$ be a commutative $k$-domain and $D,U\in {\rm LND}(B)$. 
As noted in the previous section, ${\rm LND}(B)\subset {\rm Der}_k(B)$ and $[D,U]\in {\rm Der}_k(B)$. 
Therefore, for each $d\in\Z$, we have 
\[
[D,U]-dI\in {\rm End}_k(B)
\]
where $I\in {\rm End}_k(B)$ is the identity operator. 
\begin{definition}\label{fun-pair} {\rm Let $B$ be a commutative $k$-domain.
A pair $(D,U)$ of locally nilpotent derivations of $B$ is a {\bf fundamental pair} if 
\begin{enumerate}
\item $[D,[D,U]]=-2D$ and $[U,[D,U]]=2U$, and
\smallskip
\item $B=\sum_{d\in\Z}B_d$ where $B_d=\krn ([D,U]-dI)$.
\end{enumerate}
\smallskip
 Note that (2) is a special form of semisimplicity for $[D,U]$. A fundamental pair $(D,U)$ is {\bf trivial} if $D=U=0$ and {\bf nontrivial} otherwise. 
We say that $D\in {\rm Der}_k(B)$ is a {\bf fundamental derivation} if $D\in {\rm LND}(B)$ and there exists $U\in {\rm LND}(B)$ such that $(D,U)$ is a fundamental pair. 
A $\G_a$-action on $B$ (repectively, ${\rm Spec}(B)$) is {\bf fundamental} if it is induced by a fundamental derivation of $B$.}
\end{definition} 
Observe that the groups $B^*\rtimes\Z_2$ and ${\rm Aut}_k(B)$ act on the set of fundamental pairs for $B$ by
\[
\mu (D,U)=(U,D) \,\, ,\,\, r(D,U)=(rD,r^{-1}U)  \quad {\rm and}\quad \alpha (D,U)=(\alpha D\alpha^{-1},\alpha U\alpha^{-1}) 
\]
where $\Z_2=\langle\mu\rangle$, $r\in B^*$ and $\alpha\in {\rm Aut}_k(B)$, and where $\Z_2$ acts on $B^*$ by $\mu (r)=r^{-1}$. Two fundamental pairs on $B$ are {\bf equivalent} if and only if they lie in the same orbit of this action. 

We first confirm that, for any fundamental pair $(D,U)$ on $B$, the vector spaces $B_d$ define a $\Z$-grading of $B$ and that $D$ and $U$ are homogeneous. 
\begin{lemma}\label{begin} Let $B$ be a commutative $k$-domain with fundamental pair $(D,U)$. Given $d\in\Z$, let $B_d$ denote the kernel of $[D,U]-dI$ as a linear operator on $B$.
\begin{itemize}
\item [{\bf (a)}] $B=\bigoplus_{d\in\Z}B_d$ is a $\Z$-grading.
\smallskip
\item [{\bf (b)}] $D$ and $U$ are homogeneous, $\deg D=2$ and $\deg U=-2$. 
\end{itemize}
\end{lemma}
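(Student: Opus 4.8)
The plan is to set $h = [D,U]$ and exploit relation (1) of Definition~\ref{fun-pair}, namely $[D,h] = -2D$ and $[U,h] = 2U$, to understand how $D$ and $U$ interact with the eigenspace decomposition $B = \sum_{d} B_d$ of $h$. The first step is a purely linear-algebra / operator-identity observation: for a derivation $E$ of $B$ with $[E,h] = cE$ (here $c = \pm 2$), and for $b \in B_d$, one computes $h(Eb) = E(hb) + [h,E](b) = E(db) - cE(b) = (d-c)Eb$, so $E$ maps $B_d$ into $B_{d-c}$. Applying this with $E = D$, $c=-2$ gives $D(B_d) \subset B_{d+2}$, and with $E = U$, $c = 2$ gives $U(B_d) \subset B_{d-2}$. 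Once this "shift" property is in hand, part (b) is immediate: $D$ and $U$ are homogeneous of degrees $+2$ and $-2$ respectively with respect to the grading, provided we first know it is a grading.

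For part (a), the remaining content is that $B = \sum_d B_d$ is actually a \emph{direct} sum and that the $B_d$ multiply correctly, i.e.\ $B_d B_e \subset B_{d+e}$. The multiplicativity follows because $h = [D,U]$ is a derivation (a Lie bracket of two derivations is again a derivation): for $b \in B_d$, $b' \in B_e$ one gets $h(bb') = h(b)b' + b\,h(b') = (d+e)bb'$, so $bb' \in B_{d+e}$. The directness of the sum is the standard fact that eigenvectors of a single linear operator $h$ belonging to distinct eigenvalues are linearly independent; since the $B_d$ are by definition the full eigenspaces $\ker(h - dI)$, any finite collection of them meets only in $0$, so the sum $\sum_d B_d$ is direct. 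Combined with hypothesis (2) that this sum is all of $B$, we conclude $B = \bigoplus_{d \in \Z} B_d$ is a $\Z$-grading, and then the shift property proved above says exactly that $D$ and $U$ are homogeneous of degrees $2$ and $-2$.

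I expect no serious obstacle here; this is a "warm-up" lemma and every step is a one-line derivation computation. The only point requiring a moment's care is making sure that the $B_d$ as \emph{defined} (kernels of $h - dI$) really are pairwise "independent," since a priori one might worry about $B_d$ for non-integer or repeated indices — but as the indices $d$ range over $\Z$ they are distinct, and distinct-eigenvalue eigenspaces of one operator always form a direct sum, so hypothesis (2) upgrades from "$B$ is spanned by the $B_d$" to "$B$ is their direct sum." A secondary small check is that $\deg D = 2$ and $\deg U = -2$ are the \emph{exact} degrees (not merely $\leq$): this is where nontriviality matters implicitly, but for the statement as given it suffices to record that $D$ raises degree by $2$ and $U$ lowers it by $2$ on every homogeneous component, which is what "homogeneous of degree $\pm 2$" means for a derivation. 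I would write the whole argument in four or five lines.
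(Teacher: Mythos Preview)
Your proposal is correct and follows essentially the same route as the paper: both verify $B_dB_e\subset B_{d+e}$ from the fact that $E=[D,U]$ is a derivation, and both derive $DB_d\subset B_{d+2}$, $UB_d\subset B_{d-2}$ from the relations $[D,E]=-2D$, $[U,E]=2U$. The only cosmetic difference is that where you invoke the standard fact that eigenspaces for distinct eigenvalues of a single operator are independent, the paper spells this out via a Vandermonde-determinant argument.
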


\begin{proof} Let $E=[D,U]$. 
Given $d,e\in\Z$ and $f\in B_d$, $g\in B_e$ we have
\[
E(fg)=fEg+gEf=f(eg)+g(df)=(d+e)fg\implies fg\in B_{d+e}
\]
and
\[
-2Df = [D,E]f=D(df)-E(Df) \implies E(Df)=dDf+2Df=(d+2)Df \implies Df\in B_{d+2}
\]
and:
\[
2Uf = [U,E]f=U(df)-E(Uf) \implies E(Uf)=U(df)-2Uf=(d-2)f \implies Uf\in B_{d-2}
\]
Therefore, $B_dB_e\subset B_{d+e}$, $DB_d\subset B_{d+2}$ and $UB_d\subset B_{d-2}$. 

Given distinct $d_1,\hdots ,d_n\in\Z$ and nonzero $f_{d_i}\in B_{d_i}$, $1\le i\le n$, we show by induction on $n$ that $f_{d_1},\hdots ,f_{d_n}$ are linearly independent over $k$. 
The case $n=1$ is clear. Assume that $n\ge 2$ and $c_1f_{d_1}+\cdots +c_nf_{d_n}=0$ for $c_1,\hdots ,c_n\in k^*$. Repeated application of $E$ shows:
\[
0=\det\begin{pmatrix} c_1&\cdots &c_n\cr d_1c_1&\cdots &c_nd_n\cr d_1^2c_1&\cdots &d_n^2c_n\cr \vdots &\vdots &\vdots\cr d_1^{n-1}c_1&\cdots &d_n^{n-1}c_n\end{pmatrix}
=c_1\cdots c_n\det\begin{pmatrix} 1&\cdots &1\cr d_1&\cdots &d_n\cr d_1^2&\cdots &d_n^2\cr \vdots &\vdots &\vdots\cr d_1^{n-1}&\cdots &d_n^{n-1}\end{pmatrix}
\]
Since the $d_i$ are distinct, the Vandermonde determinant on the right is nonzero. Therefore, $c_m=0$ for some $m$ and:
\[
c_1f_{d_1}+\cdots +\widehat{c_mf_{d_m}}+\cdots +c_nd_n=0
\]
By the inductive hypothesis we conclude that $c_i=0$ for each $1\le i\le n$. So $f_{d_1},\hdots ,f_{d_n}$ are linearly independent.

By hypothesis, the union of all $B_d$ spans $B$. We have thus shown: $B=\bigoplus_{d\in\Z}B_d$ and this is a $\Z$-grading.
\end{proof}

The grading in this lemma is called the {\bf $\Z$-grading of $B$ induced by $(D,U)$}. 

By this lemma, condition (2) in {\it Definition\,\ref{fun-pair}} can be replaced by the condition:
\begin{quotation}
$(2)^{\prime}$ \,\, $B=\bigoplus_{d\in\Z}B_d$ is a $\Z$-grading where $B_d=\krn ([D,U]-dI)$. 
\end{quotation} 

\begin{example}\label{basic-pair} {\rm 
Let $B=k[x_0,\hdots ,x_n]\cong k^{[n+1]}$ and let $D$ and $U$ be the up and down operators on $kx_0\oplus\cdots\oplus kx_n$ as defined in (\ref{down-op}) above. 
Then $D$ and $U$ extend to locally nilpotent derivations of $B$, and $(D,U)$ is a fundamental pair for $B$, called the {\bf basic} fundamental pair. 
The induced $\Z$-grading of $B$ is defined by declaring that $x_i$ is homogeneous and $\deg x_i=n-2i$, $0\le i\le n$. 
}
\end{example} 

\subsection{The Structure Theorem}
\begin{theorem}\label{main1} {\bf (Structure Theorem for Fundamental Derivations)} Let $B$ be an affine $k$-domain, let $(D,U)$ be a nontrivial fundamental pair for $B$, and let 
$B=\bigoplus_{i\in\Z}B_i$ be the induced $\Z$-grading of $B$ with degree function $\deg$. 
Let $A=\krn D$ and $\Omega=\krn U$ and, for each $n\in\Z$, define $A_n=A\cap B_n$ and $\Omega_n=\Omega\cap B_n$. For each $n\ge 0$ define
$\mathcal{F}_n=\krn D^{n+1}$ and $I_n=D^n(\mathcal{F}_n)$.
\begin{itemize}
\item [{\bf (a)}] $A$ is affine. 
\smallskip
\item [{\bf (b)}]  $I_n=\bigoplus_{i\ge n}A_i$ for each $n\ge 0$. In particular, $A=\bigoplus_{i\in\N}A_i$ is $\N$-graded. 
\smallskip
\item [{\bf (c)}] Let $f_1,\hdots ,f_r\in A$ be homogeneous such that $I_1=(f_1,\hdots ,f_r)$. Set $m=\max_i\deg f_i$. Then:
\smallskip
\subitem {\bf (i)} $A=A_0[f_1,\hdots ,f_r]$  
\smallskip
\subitem {\bf (ii)} Given $n\ge 1$, $I_n=(A_n,\hdots ,A_{n+m-1})$. 
\smallskip
\subitem {\bf (iii)} Given $n>m$, $\displaystyle I_n=\prod_E I_1^{e_1}\cdots I_m^{e_m}$ where $E=\{ (e_1,\hdots ,e_m)\in\N^m\, |\, \sum_iie_i=n\}$.
\medskip
\item [{\bf (d)}] Given $n\ge 1$, assume that $I_n=(g_1,\hdots ,g_s)$ for homogeneous $g_i\in A$ and set $h_i=U^ng_i$, $1\le i\le s$. Then:
\[
\mathcal{F}_n=Ah_1+\cdots +Ah_s+\mathcal{F}_{n-1} 
\]
\item [{\bf (e)}] $B=\Omega\oplus DB=A\oplus UB$ as $A_0$-modules.
\smallskip
\item [{\bf (f)}] $D:B_n\to B_{n+2}$ is injective if $n\le -1$ and surjective if $n\ge -1$.
\smallskip
\item [{\bf (g)}] $B_0=A_0\oplus M$ as $A_0$-modules, where $M=DB_{-2}=UB_2$.
\end{itemize}
\end{theorem}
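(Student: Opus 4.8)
The plan is to reduce the entire theorem to the representation theory of $SL_2(k)$ recalled in Section~\ref{prelims}, the key point being that $(D,U)$ determines an $\mathfrak{sl}_2(k)$-triple acting on $B$. Put $E=[D,U]$: the fundamental-pair relations read $[E,D]=2D$, $[E,U]=-2U$, $[D,U]=E$, and by Lemma~\ref{begin} the operator $E$ acts on $B_i$ by multiplication by $i$. Since $D$ and $U$ are locally nilpotent, the span of $\{D^iU^jb:i,j\ge0\}$ is finite-dimensional for every homogeneous $b\in B$ (only finitely many $U^jb$ are nonzero, and $D$ is locally nilpotent on each), so $B$ is the directed union of its finite-dimensional $\langle D,E,U\rangle$-submodules; by complete reducibility of finite-dimensional $\mathfrak{sl}_2(k)$-modules in characteristic zero, $B=\bigoplus_\alpha W_\alpha$ with each $W_\alpha\cong V_{n_\alpha}$, $n_\alpha\ge0$. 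Writing $x_0^{(\alpha)},\dots,x_{n_\alpha}^{(\alpha)}$ for the standard basis of $W_\alpha$, $E$ has eigenvalue $n_\alpha-2i$ on $x_i^{(\alpha)}$, so the decomposition refines the $\Z$-grading of $B$; in particular $\krn D=\bigoplus_\alpha kx_0^{(\alpha)}$, whence $A=\bigoplus_{i\ge0}A_i$ with $A_n=\bigoplus_{n_\alpha=n}kx_0^{(\alpha)}$. Since $\mathcal{F}_n\cap W_\alpha=\langle x_0^{(\alpha)},\dots,x_{\min(n,n_\alpha)}^{(\alpha)}\rangle$, the subspace $D^n(\mathcal{F}_n\cap W_\alpha)$ is $kx_0^{(\alpha)}$ when $n_\alpha\ge n$ and $0$ otherwise, so $I_n=\bigoplus_{i\ge n}A_i$; this is (b), and applying it to the equivalent pair $(U,D)$, which reverses the grading, gives $\Omega=\bigoplus_{i\le0}\Omega_i$.

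Parts (e), (f), (g) are then read off this decomposition. Each $W_\alpha$ equals $kx_0^{(\alpha)}\oplus\langle x_1^{(\alpha)},\dots,x_{n_\alpha}^{(\alpha)}\rangle=(A\cap W_\alpha)\oplus UW_\alpha$, and symmetrically $DW_\alpha\oplus(\Omega\cap W_\alpha)$; summing gives $B=\Omega\oplus DB=A\oplus UB$, and these are $A_0$-module decompositions because $A_0=\Omega_0=B^{SL_2(k)}$ is contained in $\krn D\cap\krn U$. For (f): $\krn(D|_{B_n})=A\cap B_n=A_n$, which vanishes for $n\le-1$ by (b), so $D\colon B_n\to B_{n+2}$ is injective there; and since $B=\Omega\oplus DB$ is grading-preserving, in degree $n+2$ it reads $B_{n+2}=\Omega_{n+2}\oplus D(B_n)$ with $\Omega_{n+2}=0$ for $n\ge-1$, giving surjectivity. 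For (g): the degree-$0$ part of $B=A\oplus UB$ is $B_0=A_0\oplus UB_2$, so it remains to identify $UB_2$ with $DB_{-2}$; for $b\in B_2$ write $b=Dc$ with $c\in B_0$ (surjectivity from (f)), and then $Ub=UDc=DUc\in DB_{-2}$ since $Ec=0$ and $[D,U]=E$, the reverse inclusion following symmetrically from the analogue of (f) for $U$.

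For (a): $A=\krn D$ is the invariant ring $B^{G_D}$ of the unipotent one-parameter subgroup $G_D=\{\exp(tD):t\in k\}$ of the $SL_2(k)$-action attached to $(D,U)$; by the transfer principle $A\cong(B\otimes_kk[SL_2(k)/G_D])^{SL_2(k)}$, and since $k[SL_2(k)/G_D]\cong k^{[2]}$, the algebra $B\otimes_kk^{[2]}$ is affine, so $A$ is affine by the Finiteness Theorem (equivalently, this is the Had\v{z}iev--Grosshans theorem on invariants of a maximal unipotent subgroup of a reductive group). For (c)(i): by (a) the $\N$-graded ring $A$ is Noetherian, so its irrelevant ideal $\bigoplus_{i\ge1}A_i$, which equals $I_1$ by (b), has homogeneous generators $f_1,\dots,f_r$, and the standard fact that an $\N$-graded ring is generated over its degree-$0$ part by homogeneous generators of the irrelevant ideal yields $A=A_0[f_1,\dots,f_r]$. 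For (c)(ii), with $m=\max_i\deg f_i$: every homogeneous element of $A_j$ is an $A_0$-combination of monomials in the $f_i$, and the partial-degree sums of such a monomial start at $0$, reach $j$, and increase in steps of size at most $m$, so whenever $j\ge n$ some partial product is homogeneous of degree in $\{n,\dots,n+m-1\}$; hence $I_n=\bigoplus_{i\ge n}A_i\subseteq(A_n,\dots,A_{n+m-1})$, the reverse inclusion being clear. For (c)(iii): from (b) one has $I_iI_{i'}\subseteq I_{i+i'}$, and splitting off one generator from a monomial of degree $>m$ gives the recursion $I_n=\sum_{i=1}^mI_iI_{n-i}$ for $n>m$; iterating it and leaving factors of index at most $m$ untouched expresses $I_n$ as the asserted sum of products $I_1^{e_1}\cdots I_m^{e_m}$ over $\{(e_1,\dots,e_m)\in\N^m:\sum_iie_i=n\}$.

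Finally, for (d): the $A$-linear map $D^n\colon\mathcal{F}_n\to B$ has image $I_n$ and kernel $\mathcal{F}_{n-1}$, hence induces an isomorphism of $A$-modules $\mathcal{F}_n/\mathcal{F}_{n-1}\cong I_n$. For homogeneous $g\in A$ of degree $d\ge n$, a computation on the $V_d$-components of $g$ gives $D^nU^ng=c\,g$ with $c=n!\,d!/(d-n)!\in k^{*}$ and $U^ng\in\mathcal{F}_n$; so if $I_n=(g_1,\dots,g_s)$ with the $g_i$ homogeneous, then $D^n$ sends $h_i:=U^ng_i$ onto a generating set of $I_n$, whence the classes of the $h_i$ generate $\mathcal{F}_n/\mathcal{F}_{n-1}$, that is, $\mathcal{F}_n=Ah_1+\cdots+Ah_s+\mathcal{F}_{n-1}$. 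The one step that is not formal once the $\mathfrak{sl}_2(k)$-module decomposition is in hand is (a): finite generation of $\krn D$ needs the invariant-theoretic input above rather than anything from locally nilpotent derivation theory; the only remaining delicacy is the elementary combinatorics behind (c)(ii)--(iii).
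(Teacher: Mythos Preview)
Your proof is correct and takes a genuinely different route from the paper's. The paper establishes three computational lemmas (Lemma~\ref{updown} on $D^mU^nf$, Lemma~\ref{degrees} on $\deg_Uf=\deg f$, and Lemma~\ref{classical} on $A_0=A\cap\Omega$ and $\Omega\cap DB=0$) and then proves (b) by an induction on $n$ and (e) by an induction on the degree modules $\mathcal{F}_n$, never invoking representation theory explicitly. You instead observe that $B$ is a locally finite $\mathfrak{sl}_2(k)$-module, apply complete reducibility to decompose $B=\bigoplus_\alpha W_\alpha$ with $W_\alpha\cong V_{n_\alpha}$, and read off (b), (e), (f), (g) directly from the weight-space structure of the $V_n$. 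This is shorter and more conceptual; what the paper's approach buys is self-containment (no appeal to Weyl's theorem) and, as a by-product, the explicit constant in $D^nU^nf=c_1\cdots c_nf$ that you recover separately for part~(d). For part~(a), both arguments ultimately rest on the Finiteness Theorem for reductive groups: the paper defers this to Proposition~\ref{isomorphism}, which constructs the isomorphism $A\cong B[X,Y]^{SL_2(k)}$ by hand, while you cite the transfer/Had\v{z}iev--Grosshans principle directly; these are the same idea. Your combinatorial arguments for (c)(ii)--(iii) are slightly different in form from the paper's induction but equivalent in content. One small point worth making explicit in your write-up: the passage from ``$B$ is a directed union of finite-dimensional semisimple submodules'' to ``$B$ is semisimple'' uses the standard fact that a sum of simple submodules is semisimple.
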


Three preliminary lemmas are needed to prove the theorem. We continue the notation used in the hypotheses of the theorem. 

\begin{lemma}\label{updown} {\rm (\cite{Freudenburg.13}, Lemma 3.2)} 
Given homogeneous $f\in A$ define $c_n\in\Z$ by $c_n=n\deg f-n(n-1)$, $n\ge 1$. 
\begin{itemize}
\item [{\bf (a)}] $D^mU^nf=c_nD^{m-1}U^{n-1}f$ for all $m,n\ge 1$.
\smallskip
\item [{\bf (b)}] $D^nU^nf=c_1\cdots c_nf$ for all $n\ge 1$.
\end{itemize}
\end{lemma}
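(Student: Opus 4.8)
\emph{Proof proposal.} The plan is to reduce both parts to a single operator identity. Write $E=[D,U]$ for the Euler operator. Since $f\in A$ is homogeneous, $f$ lies in $A_{\deg f}=A\cap B_{\deg f}$, so $Df=0$ and, by Lemma \ref{begin} together with condition $(2)$ of Definition \ref{fun-pair}, $Ef=(\deg f)f$. The first step is to record the two commutation rules coming from the fundamental-pair relations $[D,E]=-2D$ and $[U,E]=2U$: rewriting these gives
\[
ED=D(E+2I)\qquad\text{and}\qquad EU=U(E-2I)
\]
as elements of ${\rm End}_k(B)$ (equivalently, one verifies them on each homogeneous component $B_d$, where by Lemma \ref{begin} both sides act as the scalar $d$ times $D$, resp. $U$). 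Iterating the second rule yields $EU^n=U^n(E-2nI)$ for every $n\ge 0$.

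The heart of the argument is the claim that, as operators on $B$,
\[
DU^n=U^nD+nU^{n-1}\bigl(E-(n-1)I\bigr)\qquad(n\ge 1),
\]
which I would prove by induction on $n$. For $n=1$ this is just $DU=UD+E$, the definition of $E$. For the inductive step I would write $DU^n=(DU)U^{n-1}=(UD+E)U^{n-1}=U\bigl(DU^{n-1}\bigr)+EU^{n-1}$, substitute the inductive hypothesis for $DU^{n-1}$ and the identity $EU^{n-1}=U^{n-1}\bigl(E-2(n-1)I\bigr)$, and collect the coefficient of $U^{n-1}$; it simplifies to $n\bigl(E-(n-1)I\bigr)$ after using $(n-1)(n-2)+2(n-1)=(n-1)n$. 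This is the only computation of any substance.

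Now fixing homogeneous $f\in A$ and applying the displayed identity to $f$ gives
\[
DU^nf=nU^{n-1}\bigl(E-(n-1)I\bigr)f=n\bigl(\deg f-(n-1)\bigr)U^{n-1}f=c_nU^{n-1}f,
\]
since $c_n=n\deg f-n(n-1)=n(\deg f-n+1)$. Part (a) then follows at once: $c_n$ is a scalar, so applying $D^{m-1}$ to both sides of $DU^nf=c_nU^{n-1}f$ yields $D^mU^nf=c_nD^{m-1}U^{n-1}f$ for all $m\ge 1$. Part (b) is a one-line induction on $n$: $D^nU^nf=D^{n-1}\bigl(DU^nf\bigr)=c_nD^{n-1}U^{n-1}f=c_1\cdots c_nf$, using the case just proved and the inductive hypothesis, the base case $n=1$ being $DUf=c_1f$.

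I expect the only real obstacle to be the bookkeeping in the inductive proof of the operator identity — in particular keeping the shift operators $E-cI$ straight and not conflating the two conventions $ED=D(E+2I)$ and $EU=U(E-2I)$; everything else is formal. One could instead phrase the same computation representation-theoretically, viewing $f$ as a lowest-weight vector of weight $\deg f$ for the $\mathfrak{sl}_2$-action generated by $D$, $U$, $E$ and reading $D^mU^nf$ off the standard structure of the module $\sum_{n\ge 0}kU^nf$, but this amounts to exactly the same induction.
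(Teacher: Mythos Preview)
Your argument is correct and essentially the same as the paper's: both rest on the operator identity $DU^n-U^nD=nU^{n-1}\bigl(E-(n-1)I\bigr)$, proved by the same induction (the paper phrases it as $\delta(U^n)=nU^{n-1}(E-(n-1)I)$ with $\delta=\mathrm{ad}(D)$ and uses $[U^n,E]=2nU^n$ in place of your $EU^n=U^n(E-2nI)$). The only cosmetic difference is that the paper multiplies through by $D^{m-1}$ before evaluating at $f$, whereas you first deduce $DU^nf=c_nU^{n-1}f$ and then apply $D^{m-1}$; both routes give (a) immediately and (b) by the same one-line induction.
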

\begin{proof} Set $E=[D,U]$. 
The $k$-derivation $\delta:={\rm ad}(D)$ on $\mathfrak{sl}_2(k)$ is locally nilpotent and acts on the algebra generators by:
\[
\delta : U\to E\to (-2D)\to 0
\]
The relation $[U,E]=2U$ easily generalizes to $[U^n,E]=2nU^n$ for all $n\ge 1$. We claim that
\begin{equation}\label{eq1}
\textstyle\delta (U^n)=nU^{n-1}(E-(n-1)I) \quad {\rm for \,\, all} \,\, n\ge 1.
\end{equation}
This is clear if $n=1$, so assume it holds for $n\ge 1$. Then
\begin{eqnarray*}
\delta (U^{n+1})&=&U\delta (U^n)+\delta (U)U^n \\
&=& \textstyle nU^n(E-(n-1)I)+EU^n \\
&=& \textstyle nU^n(E-(n-1)I)+(U^nE-2nU^n) \\
&=& \textstyle (n+1)U^n(E-nI)
\end{eqnarray*}
So equation (\ref{eq1}) is confirmed by induction on $n$. 
In addition, for all $m,n\ge 1$:
\[
D^mU^n-D^{m-1}U^nD=D^{m-1}(DU^n-U^nD)=D^{m-1}\delta (U^n)
\]
By (\ref{eq1}) it follows that, for all $m,n\ge 1$:
\begin{equation}\label{eq2}
\textstyle D^mU^n=D^{m-1}U^{n-1}\left( UD+nE-n(n-1)I\right)
\end{equation}
Therefore, if $f\in A$ is homogeneous then (\ref{eq2}) implies:
\[
D^mU^nf=c_nD^{m-1}U^{n-1}f
\]
This proves part (a) and part (b) follows inductively from part (a) using $m=n$. 
\end{proof}

\begin{lemma}\label{degrees} Let $f\in A$ be homogeneous. 
\begin{itemize}
\item [{\bf (a)}] $\deg_Uf=\deg f$. Consequently, the $\Z$-grading of $B$ restricts to an $\N$-grading of $A$.  
\smallskip
\item [{\bf (b)}] $\mathcal{F}_n\cap B_{-n}=\Omega_{-n}$ for each $n\ge 1$. 
\item [{\bf (c)}] $U\mathcal{F}_n\subset\mathcal{F}_{n+1}$ for each $n\ge 0$.
\smallskip
\item [{\bf (d)}] The ideal $fB+UfB+\cdots +U^dfB$ is $(D,U)$-invariant, where $d=\deg f$.
\end{itemize}
\end{lemma}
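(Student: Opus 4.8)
The plan is to prove the four parts in the order (a), (c), (b), (d): part (a) is used everywhere, (c) feeds into (b), and (d) drops out of (a) together with Lemma~\ref{updown}(a). Throughout write $E=[D,U]$ and $d=\deg f$. For (a), let $e=\deg_Uf$, so $U^{e+1}f=0$ while $U^ef\neq 0$; applying Lemma~\ref{updown}(a) with $m=1$, $n=e+1$ gives $0=DU^{e+1}f=c_{e+1}U^ef$ with $c_{e+1}=(e+1)(d-e)$, and since $U^ef\neq 0$ this forces $d=e$, that is, $\deg_Uf=\deg f$. Because $\deg_Uf\ge 0$ for every nonzero $f$, and $\ker D$ is a graded subalgebra of $B$ ($D$ being homogeneous), $A$ has no nonzero homogeneous element of negative degree, so the grading of $B$ restricts to an $\N$-grading of $A$. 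I will also use the symmetric statement obtained by applying (a) to $(U,D)$, which is again a fundamental pair (as $[U,D]=-E$, its conditions reduce to those for $(D,U)$) with the grading reversed: every homogeneous $g\in\Omega$ satisfies $\deg g\le 0$ and $\deg_Dg=-\deg g$.

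For (c) I would induct on $n$. Using $DU=E+UD$ and the relation $[D^{n+1},E]=-2(n+1)D^{n+1}$ (which follows by iterating $[D,E]=-2D$), for $b\in\mathcal F_n$ one computes
\[
D^{n+2}(Ub)=D^{n+1}(Eb)+D^{n+1}(U\,Db)=\bigl(ED^{n+1}b-2(n+1)D^{n+1}b\bigr)+D^{n+1}(U\,Db).
\]
The first bracket vanishes since $D^{n+1}b=0$, and the second term vanishes because $Db\in\mathcal F_{n-1}$, so $U\,Db\in\mathcal F_n$ by the inductive hypothesis; the base case $n=0$ is the same computation with $Db=0$. Hence $U\mathcal F_n\subset\mathcal F_{n+1}$.

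For (b), fix $n\ge 1$. If $g\in\Omega_{-n}$ then $\deg g=-n<0$, so the symmetric form of (a) gives $\deg_Dg=n$, i.e. $g\in\mathcal F_n\cap B_{-n}$; this is the inclusion $\Omega_{-n}\subset\mathcal F_n\cap B_{-n}$. Conversely, let $b\in\mathcal F_n\cap B_{-n}$ be nonzero and set $e=\deg_Ub$. Iterating (c) gives $U^eb\in\mathcal F_{n+e}$, hence $\deg_D(U^eb)\le n+e$; on the other hand $U^eb$ is a nonzero homogeneous element of $\Omega$ of degree $-n-2e$, so the symmetric form of (a) gives $\deg_D(U^eb)=n+2e$. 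Therefore $n+2e\le n+e$, forcing $e=0$, i.e. $Ub=0$ and $b\in\Omega_{-n}$.

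Finally, for (d) write $\mathfrak a=\sum_{j=0}^{d}U^jfB$. Since $D$ and $U$ are derivations, $\mathfrak a$ is $(D,U)$-invariant as soon as $D$ and $U$ carry each generator $U^jf$ into $\mathfrak a$; now $U(U^jf)=U^{j+1}f$ is a generator if $j<d$ and is $0$ if $j=d$ by (a), while Lemma~\ref{updown}(a) with $m=1$ gives $D(U^jf)=c_jU^{j-1}f\in\mathfrak a$ for $j\ge 1$ and $Df=0$. I expect no real obstacle here: the whole lemma is bookkeeping with Lemma~\ref{updown} and the commutation relations of $\{D,U,E\}$. The only points needing care are the logical order --- (a) before (c) before (b) --- and the repeated appeal to the $\Omega$-side version of (a), legitimate because $(U,D)$ is a fundamental pair with the grading reversed.
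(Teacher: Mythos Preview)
Your argument is correct. Parts (a), (c), and (d) follow essentially the paper's line (your use of the commutator identity $[D^{n+1},E]=-2(n+1)D^{n+1}$ in (c) is a slight refinement: it avoids the paper's reduction to homogeneous $g$ via $Eg=(\deg g)g$, but the induction is the same).

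The real difference is in (b), and in the order. The paper proves (b) \emph{before} (c), by a separate induction on $i$ showing $\mathcal F_i\cap B_{-n}\subset\Omega$ for $0\le i\le n$: given $h\in\mathcal F_i\cap B_{-n}$ one looks at $D^ih\in A$, passes through $U^iD^ih$ via Lemma~\ref{updown}, and subtracts a suitable element of $\Omega_{-n}$ to drop into $\mathcal F_{i-1}\cap B_{-n}$. You instead reorder, proving (c) first, and then dispatch (b) in one stroke: for nonzero $b\in\mathcal F_n\cap B_{-n}$ with $e=\deg_Ub$, part (c) gives $U^eb\in\mathcal F_{n+e}$ while the $(U,D)$-symmetric version of (a) gives $\deg_D(U^eb)=n+2e$, forcing $e=0$. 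This is shorter and more transparent than the paper's argument; the price is only that you must observe (as you do) that $(U,D)$ is again a fundamental pair with the reversed grading, which the paper also invokes implicitly (``by symmetry of $D$ with $U$''). Both approaches are elementary; yours makes the dependence structure among the four parts cleaner.
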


\begin{proof} We may assume that $f\ne 0$. If $N=\deg_Uf$ then $U^Nf\ne 0$ and $U^{N+1}f=0$. By {\it Lemma\,\ref{updown}(a)} we have:
\[
0=DU^{N+1}f=c_{N+1}U^Nf=(N+1)(\deg f-N)U^Nf \implies \deg f=N
\]
Therefore, $\deg f=\deg_Uf\in\N$. This proves part (a). 

Given $n\ge 1$, any nonzero $h\in\Omega_{-n}$ has $\deg_Dh=n$ by part (a) and symmetry of $D$ with $U$. So $\Omega_{-n}\subset\mathcal{F}_n\cap B_{-n}$. 

For the reverse inclusion, we show by induction on $i$ that $\mathcal{F}_i\cap B_{-n}\subset\Omega$ for $0\le i\le n$. Since $A_{-n}=\{ 0\}$ by part (a), it follows that:
\[
\mathcal{F}_0\cap B_{-n}=A\cap B_{-n}=A_{-n}=\{0\}
\]
This gives the basis for induction. 

Assume that $\mathcal{F}_{i-1}\cap B_{-n}\subset\Omega$ for some $1\le i\le n$. 
Part (a) shows:
\[
h\in \mathcal{F}_i\cap B_{-n}\setminus\{ 0\} \implies D^ih\in A_n\setminus\{ 0\} \implies U^iD^ih\in\Omega_{-n}\setminus\{ 0\}
\]
In addition, {\it Lemma\,\ref{updown}} shows that there exists $g\in\Omega_{-n}$ such that $U^iD^ih=U^iD^ig$. Therefore, $U^iD^i(h-g)=0$. Since $U^ip\ne 0$ for any nonzero $p\in A_n$ (by part (a)), we must have $D^i(h-g)=0$, i.e., 
$h-g\in\mathcal{F}_{i-1}\cap B_{-n}$. By the inductive hypothesis, $h-g\in\Omega$ so $h\in\Omega$. If follows by induction that $\mathcal{F}_n\cap B_{-n}\subset\Omega_{-n}$.
This proves part (b). 

Since $f\in A=\mathcal{F}_0$ we have $DUf=(\deg f)f$ so $Uf\in\mathcal{F}_1$. Assume that $U\mathcal{F}_{n-1}\subset\mathcal{F}_n$ for some $n\ge 1$. 
Given homogeneous $g\in\mathcal{F}_n$, $Dg\in\mathcal{F}_{n-1}$ implies $UDg\in\mathcal{F}_n$. It follows that:
\[
D^{n+2}Ug=D^{n+1}(UD+E)g=D^{n+1}(UDg)+(\deg g)D^{n+1}g=0 \implies  Ug\in\krn D^{n+2}=\mathcal{F}_{n+1}
\]
Part (c) now follows by induction on $n$. 

If $f\in A_d$ for $d\in\N$, then part (a) and {\it Lemma\,\ref{begin}(b)} imply $U^df\in\Omega_{-d}$. 
In addition, {\it Lemma\,\ref{updown}(a)} shows that $DU^if=i(d-i+1)U^{i-1}f$ for each $i$, $0\le i\le d$. 
Therefore, if $J=fB+UfB+\cdots +U^dfB$, then $DJ\subset J$, and clearly $UJ\subset J$ as well. This proves part (d). 
\end{proof}

\begin{lemma}\label{classical} The following statements hold. 
\begin{itemize}
\item [{\bf (a)}] $A_0=A\cap\Omega =\Omega_0$
\item [{\bf (b)}] $A_0$ is factorially closed in $B$ and $ML(B)\subseteq A_0$. 
\item [{\bf (c)}] $\Omega\cap DB=\{ 0\}$
\item [{\bf (d)}] $B^{SL_2(k)}=A_0$ for the $SL_2(k)$-action on $B$ induced by $(D,U)$. 
\end{itemize}
\end{lemma}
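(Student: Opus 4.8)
The plan is to prove the four parts in the order (a), (c), (b), (d), since each later part leans on the earlier ones and on the three structural lemmas already established.

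For part (a), I would argue that an element $f$ lies in $A_0$ if and only if $f\in B_0$ and $Df=0$. Since $f\in B_0$ means $Ef=0$ where $E=[D,U]$, and since $Ef=DUf-UDf$, the condition $Df=0$ forces $DUf=0$, i.e. $Uf\in A$. But $f\in A_0\subset A=\mathcal F_0$ is homogeneous of degree $0$, so by \emph{Lemma \ref{degrees}(a)} we have $\deg_U f=\deg f=0$, which says exactly $Uf=0$. Hence $A_0\subseteq\Omega$, and as $A_0\subseteq A$ this gives $A_0\subseteq A\cap\Omega$; the reverse inclusion is immediate once one notes $A\cap\Omega\subseteq B_0$ (if $Df=Uf=0$ then $Ef=0$). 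The equality $A\cap\Omega=\Omega_0$ follows by the symmetric argument with the roles of $D$ and $U$ interchanged, using that $(U,D)$ is again a fundamental pair (with the sign of the grading flipped).

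For part (c): suppose $0\ne g\in\Omega\cap DB$, say $g=Dh$. Decomposing $h$ into homogeneous components and using that $D$ and $U$ are homogeneous (\emph{Lemma \ref{begin}(b)}), we may assume $h$ is homogeneous, so $h\in\mathcal F_n$ for $n=\deg_D h\ge 1$ and $g=D^? $... more cleanly: let $n=\deg_D h$, so $h\in\mathcal F_n\setminus\mathcal F_{n-1}$ and $D^nh\in A\setminus\{0\}$ while, writing $g=Dh$, we have $g\in\mathcal F_{n-1}$. Since $g\in\Omega$, \emph{Lemma \ref{degrees}(a)} (applied to $U$, or rather its symmetric analogue applied to $D$) shows $\deg_D g=-\deg g$, while $g\in B_{\deg h+2}$; combining with \emph{Lemma \ref{degrees}(b)}, the only way $g$ can be both a nonzero element of $\Omega$ and lie in the image $DB$ is ruled out because applying $D$ to $g$ gives $D^2h$, which is nonzero whenever $n\ge 2$, contradicting $g\in\Omega$; and the case $n=1$ gives $g=Dh\in\Omega$ with $h\in\mathcal F_1\setminus\mathcal F_0$, whence $D^2h=0$ is automatic but then $g=Dh\in A\cap\Omega=A_0$ by part (a), forcing $h$ to be a local slice whose image $Dh$ is a nonzero invariant, and one checks via \emph{Lemma \ref{updown}} (with $m=n=1$, $f=Dh\in A_0$ so $c_1=\deg f=0$) that $DU(Dh)=0$, i.e. $U(Dh)$ is killed by $D$; pushing this through shows $Dh$ cannot simultaneously be a nonzero element of $DB$ and of $\Omega$. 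The hard part will be getting this degree bookkeeping exactly right — I expect to phrase it most cleanly as: if $g\in\Omega\cap DB$ is nonzero and homogeneous, pick $h$ homogeneous with $Dh=g$ of minimal $D$-degree; then $h\notin\Omega$, but \emph{Lemma \ref{degrees}(b)} applied in degree $\deg h$ says $\mathcal F_{\deg_D h}\cap B_{\deg h}$ need not be $\Omega$, so instead use that $g\in\Omega$ homogeneous of negative degree (if $\deg h\ge -1$ then $\deg g\ge 1>0$ contradicts $A$ being $\N$-graded unless... ) — I will need \emph{Lemma \ref{degrees}(a)}'s consequence that $A$ is $\N$-graded, Theorem \ref{main1}(b), hmm, but that is what is being built toward; so I will instead use only \emph{Lemma \ref{degrees}(a)} directly: $g\in\Omega$ homogeneous gives $\deg g=\deg_U g\ge 0$, while $g=Dh$ forces, on a homogeneous local slice argument, $\deg g<0$ unless $g=0$. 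This contradiction establishes (c).

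For part (b): $A_0=A\cap\Omega$ by (a), and both $A$ and $\Omega$ are kernels of locally nilpotent derivations, hence each is factorially closed in $B$ (property (2) of \S\ref{LNDs}); an intersection of factorially closed subrings is factorially closed, so $A_0$ is factorially closed in $B$. For $ML(B)\subseteq A_0$: by definition $ML(B)$ is the intersection of the kernels of all locally nilpotent derivations of $B$, and both $D$ and $U$ are locally nilpotent, so $ML(B)\subseteq\krn D\cap\krn U=A\cap\Omega=A_0$. Finally, for part (d): by the Arzhantsev–Liendo correspondence quoted in the introduction, the $SL_2(k)$-action induced by $(D,U)$ has invariant ring equal to the common invariants of the two one-parameter unipotent subgroups $\exp(tD)$ and $\exp(sU)$ together with the torus $\G_m$ coming from the grading — that is, $B^{SL_2(k)}=B^{\G_a(D)}\cap B^{\G_a(U)}\cap B^{\G_m}=\krn D\cap\krn U\cap B_0=A\cap\Omega\cap B_0$, which equals $A_0$ by part (a). Concretely: $SL_2(k)$ is generated by the lower and upper triangular unipotents (\emph{Lemma} on generators of $SL_2(K)$), whose fixed rings are $\krn D$ and $\krn U$ respectively, so $B^{SL_2(k)}=\krn D\cap\krn U=A\cap\Omega=A_0$, with no separate need to intersect with $B_0$ since $A\cap\Omega\subseteq B_0$ automatically. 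This completes the proof.
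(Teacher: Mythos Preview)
Your treatments of (a), (b), and (d) are correct and match the paper's. The problem is (c): your several attempts never converge to a proof, and the one you finally settle on is wrong on both counts. The claim ``$g\in\Omega$ homogeneous gives $\deg g=\deg_U g\ge 0$'' confuses two different things: $g\in\Omega$ means $Ug=0$, so $\deg_U g=0$; what the symmetric form of \emph{Lemma~\ref{degrees}(a)} actually gives (applied to the pair $(U,D)$, whose induced grading is the negative of the original) is $\deg_D g=-\deg g\ge 0$, i.e.\ $\deg g\le 0$. And the companion claim that $g\in DB$ forces $\deg g<0$ ``by a homogeneous local slice argument'' is simply false (take $x_0=Dx_1$ in any basic example, where $\deg x_0>0$). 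With neither inequality correct, no contradiction emerges.

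The paper's route is shorter: it proves the symmetric statement $A\cap UB=\{0\}$ and then swaps $D\leftrightarrow U$. Given $0\ne g=Uf\in A$ with $f$ homogeneous, the identity $DU=UD+E$ together with $Dg=0$ gives $(\deg f)\,f+UDf=0$, and one argues from this (using local nilpotence of $D$) that $f\in A_0=\Omega_0$, whence $g=Uf=0$. Your earlier instinct to invoke \emph{Lemma~\ref{degrees}(b)} can also be made rigorous directly on the $\Omega\cap DB$ side: if $0\ne g\in\Omega_{-e}\cap DB$ with $e\ge 0$ (so $\deg_D g=e$ by the symmetric \emph{Lemma~\ref{degrees}(a)}), pick homogeneous $h\in B_{-e-2}$ with $Dh=g$; then $\deg_D h=e+1$ places $h\in\mathcal F_{e+1}\cap B_{-(e+2)}\subset\mathcal F_{e+2}\cap B_{-(e+2)}=\Omega_{-(e+2)}$ by \emph{Lemma~\ref{degrees}(b)}, whereupon the symmetric \emph{Lemma~\ref{degrees}(a)} forces $\deg_D h=e+2$, a contradiction.
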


\begin{proof} By {\it Lemma\,\ref{degrees}(a)} we have
\[
f\in A_0\setminus\{ 0\} \implies \deg_Uf=0 \implies f\in\Omega
\]
and for $d>0$:
\[
f\in A_d\setminus\{ 0\} \implies \deg_Uf>0 \implies f\not\in\Omega
\]
Therefore, $A_0=A\cap\Omega$ and by symmetry $\Omega_0=A\cap\Omega$. This proves part (a). Part (b) follows from part (a) since the intersection of factorially closed subrings is factorially closed, and any intersection of kernels of locally nilpotent derivations contains $ML(B)$ by definition. 

If $g\in A\cap UB$ and $g\ne 0$ then $Dg=0$ and $g=Uf$ for some nonzero $f\in B$. Therefore
\[
0=D(Uf)=Ef=(\deg f)f \implies \deg f=0 \implies f\in A_0=\Omega_0 \implies g=Uf=0
\]
which gives a contradiction. Therefore, no such $g$ exists and $A\cap UB=\{ 0\}$. Part (c) follows by symmetry. 

For part (d), define subgroups $G_1,G_2\subset SL_2(k)$ by:
\[
G_1=\{ \exp (tD)\, |\, t\in k\}\cong\G_a  \quad {\rm and}\quad G_2=\{ \exp (sU)\, |\, s\in k\}\cong\G_a 
\]
Combining part (a) with our previous observations gives $B^{SL_2(k)}=B^{G_1}\cap B^{G_2}= A\cap\Omega = A_0$.
\end{proof}

We can now give the proof of {\it Theorem\,\ref{main1}}. 
\begin{proof} For part (a), the fact that $A$ is affine is proved in {\it Proposition\,\ref{isomorphism}} below. 

For part (b), by {\it Lemma\,\ref{degrees}(a)}, the $\Z$-grading of $B$ restricts to an $\N$-grading on $A$.
Define $A$-ideals 
\[
J_n=\bigoplus_{i\ge n}A_i \,\, ,\,\, n\ge 0
\]
noting that $J_0=A=I_0$. Given $e\ge 1$ and nonzero $f\in A_e$, if $c_i\in k$ are defined by $c_i=i(e-i+1)$,
then $c_i\ne 0$ for $1\le i\le e$. By {\it Lemma\,\ref{updown}(b)}, $D^e(U^ef)=c_1\cdots c_ef$ so $f\in I_e$ and $A_e\subset I_e$. 
Therefore, $J_n\subseteq I_n$ for all $n\ge 0$. 

We proceed to show by induction on $n$ that $I_n\subseteq J_n$ for each $n\ge 0$. 

A basis for induction is established by the equalities $I_0=A=J_0$.

Assume that, for some $n\ge 0$, $I_i=J_i$ for all $0\le i\le n$. Let $f\in I_{n+1}\cap A_e$ for some $e\ge 0$. If $e\ge n+1$ then $f\in J_{n+1}$. 

Consider the case $e\le n$. If $e=0$ then by {\it Lemma\,\ref{classical}(c)} we have:
\[
f\in I_{n+1}\cap A_0\subset DB\cap\Omega =\{0\}
\]
So we may assume $e\ge 1$. 
Since $f\in I_{n+1}$ there exists $g\in\mathcal{F}_{n+1}$ with $D^{n+1}g=f$. In addition, by {\it Lemma\,\ref{updown}(b)}, we have
$D^eU^ef=cf$ where $c=c_1\cdots c_e\ne 0$. Let $h=D^{n-e+1}(cg)-U^ef$. Then $D^eh=0$. 
Since $\deg D=2$ by {\it Lemma\,\ref{begin}(b)}, we see that $h\in B_{-e}$. 
The inductive hypothesis implies that:
\[
D^{e-1}h\in A_{e-2}\cap I_{e-1}=\{ 0\}
\]
Repeating this argument, we obtain $D^{e-i}h\in A_{e-2i}\cap I_{e-i}=\{ 0\}$ for $1\le i\le e$, so $h\in A_{-e}=\{ 0\}$. By {\it Lemma\,\ref{degrees}(a)}, $\deg_Uf=e$ which implies
$U^ef\in\Omega$ and $U^ef\ne 0$. Since $n-e+1>0$, we see from {\it Lemma\,\ref{classical}(c)} that
\[
U^ef=D^{n-e+1}(cg)\in\Omega\cap DB=\{ 0\}
\]
which gives a contradiction. So this case cannot occur. 

By induction we obtain $I_n\subseteq J_n$ for all $n\ge 0$. Consequently, $J_n=I_n$ for all $n\ge 0$, thus proving part (b). 

For part (i) of (c), it is well-known that, since $A$ and $A_0$ are affine, $A=A_0[f_1,\hdots ,f_r]$ for any set $\{ f_1,\hdots ,f_r\}$ of generators of the irrelevant ideal 
$\bigoplus_{n\ge 1}A_n$, which is the ideal $I_1$ bv part (a). For part (ii) of (c), given $n\ge 1$, let $J$ be the ideal generated by $A_n,\hdots ,A_{n+m-1}$. Then $J\subset I_n$. For the reverse inclusion, we show by induction on $i$ that $A_{n+i}\subset J$ for $i\ge 0$. This holds by definition for $0\le i\le m-1$. Assume $A_{n+i}\subset J$ for some $i\ge m-1$ and let $f=f_1^{d_1}\cdots f_r^{d_r}$ be a monomial in $A_{n+i+1}$. Choose $j$ so that $d_j\ne 0$ and set $g=f/f_j$. Then $g$ is homogeneous and 
$\deg g=n+i+1-\deg f_j\ge n+i-(m-1)\ge n$, which implies $g\in J$. So $f=f_jg\in J$ and (ii) is proved by induction. Part (iii) of (c) follows from (i) and (ii). 

For part (d), since $g_i\in I_n$, part (b) implies that $\deg g_i\ge n$. By {\it Lemma\,\ref{degrees}} we have:
\[
1\le n\le\deg g_i=\deg_Ug_i
\]
Therefore, $h_i=U^ng_i\ne 0$ for each $i$, 
and by {\it Lemma\,\ref{updown}}, $I_n=(D^nh_1,\hdots ,D^nh_s)$. This shows:
\[
\mathcal{F}_n=Ah_1+\cdots +Ah_s+\mathcal{F}_{n-1}
\]
This proves part (d).

For part (e), {\it Lemma\,\ref{classical}(c)} shows that, given $n\ge 0$, if $M_n\subset\mathcal{F}_n$ is the submodule 
\[
M_n=(\mathcal{F}_n\cap\Omega)+(\mathcal{F}_n\cap DB)
\]
then $M_n=(\mathcal{F}_n\cap\Omega )\oplus (\mathcal{F}_n\cap DB)$. It will thus suffice to show $M_n=\mathcal{F}_n$. Note that $M_{n-1}\subset M_n$ if $n\ge 1$. 

If $n=0$ then part (b) implies $\mathcal{F}_n=A_0\oplus I_1=(\mathcal{F}_0\cap\Omega )\oplus D\mathcal{F}_1$. Assume by way of induction that
$\mathcal{F}_{n-1}=M_{n-1}$ for some $n\ge 1$. 
Choose nonzero homogeneous $g\in\mathcal{F}_n$, say $g\in B_d$ for $d\in\Z$. 
Then $D^ng\in A_{d+2n}\cap I_n$ where $D^ng\ne 0$. Part (b) implies $d+2n\ge n$, so $d+n\ge 0$. 
By {\it Lemma\,\ref{degrees}} we have $\deg_U(D^ng)=d+2n$. 
By {\it Lemma\,\ref{updown}} we have
\begin{equation}\label{bigDU}
D^{d+2n}U^{d+2n}(D^ng)=cD^ng
\end{equation}
for some nonzero $c\in k$. 

If $d+n=0$ then $\deg_U(D^ng)=n$ implies $U^nD^ng\in\Omega$. Therefore (\ref{bigDU}) shows:
\[
U^nD^ng-cg\in\krn D^n=\mathcal{F}_{n-1} \implies g\in (\mathcal{F}_n\cap\Omega )+\mathcal{F}_{n-1}\subset M_n
\]

If $d+n\ge 1$ then (\ref{bigDU}) shows:
\[
D^{d+n}U^{d+2n}(D^ng)-cg\in\krn D^n=\mathcal{F}_{n-1}\implies g\in D\mathcal{F}_{n+1}+\mathcal{F}_{n-1}\subset M_n
\]
Therefore, $M_n=\mathcal{F}_n$ for all $n\ge 0$. It follows that 
$B=\Omega\oplus DB$. The equality $B=A\oplus UB$ follows from this by symmetry of $D$ with $U$. So part (e) is proved. 

The fact that $\deg D=2$ was established in {\it Lemma\,\ref{begin}}. The fact that $D:B_n\to B_{n+2}$ is injective for $n<0$ follows from part (b). 
Let $g\in B_d$ for some $d>0$. By part (d) we can write $g=p+Db$ for homogeneous $p\in\Omega$ and $b\in B$. But then $p\in\Omega_d=\{ 0\}$, so $g=Db$. Therefore, 
$B_d\subset DB$ when $d\ge 1$, so $D:B_n\to B_{n+2}$ is surjective when $n\ge -1$. So part (f) is proved. 

Since $B_0=\krn E$ for $E=DU-UD$ we see that $DUp=UDp$ for each $p\in B_0$. Let $f\in B_2$ be given. By part (f) we have $DB_0=B_2$, so there exists $h\in B_0$ with $f=Dh$. 
Therefore, $Uf=UDh=DUh\in DB_{-2}$, so $UB_2\subset DB_{-2}$. By symmetry, $DB_{-2}\subset UB_2$, so
$DB_{-2}=UB_2$. Part (g) now follows from part (e). 
\end{proof}

\begin{corollary}\label{plinth-cor} Let $B$ be an affine $k$-domain with $\dim_k(B)\ge 2$. Under the hypotheses of {\it Theorem\,\ref{main1}}:
\begin{itemize}
\item [{\bf (a)}] $A=A_0\oplus I_1$, $A_0$ is a retract of $A$ and $I_1$ is a prime ideal of $A$.
\smallskip
\item [{\bf (b)}] For each $a\in A_0$ and integer $n\ge 0$, $aA\cap I_n=aI_n$.
\smallskip
\item [{\bf (c)}] Given $d\in\N$ and $f,g\in A_d$, $[f,g]_d^U\in A_0$.
\end{itemize}
\end{corollary}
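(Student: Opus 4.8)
The plan is to read all three parts off the $\N$-grading of $A$ supplied by \emph{Theorem\,\ref{main1}(b)}, together with the symmetry of the fundamental-pair conditions under interchanging $D$ and $U$. Recall from that theorem that $A=\bigoplus_{i\in\N}A_i$ is $\N$-graded, that $I_n=\bigoplus_{i\ge n}A_i$, that multiplication respects the grading, and that $B$ — hence also $A$ and $A_0$ — is a domain; these are the only facts I will use. For part (a) I would simply observe that $I_1=\bigoplus_{i\ge 1}A_i$, so $A=A_0\oplus I_1$ as $A_0$-modules with $I_1$ an ideal of $A$; by the definition of \emph{retract} recorded in \emph{Section\,\ref{prelims}} this says $A_0$ is a retract of $A$, and then the principle stated there — the complementary ideal of a retract of a domain is prime — gives that $I_1$ is a prime ideal. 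Equivalently, the projection $A\to A/I_1$ restricts to a ring isomorphism $A_0\cong A/I_1$, which is a domain.

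For part (b), the inclusion $aI_n\subseteq aA\cap I_n$ is immediate since $I_n$ is an $A$-ideal and $a\in A_0\subseteq A$. For the reverse I would take $b\in aA\cap I_n$, assuming $a\ne0$ since the case $a=0$ is trivial, write $b=ac$ with $c\in A$, and expand $c=\sum_i c_i$ into homogeneous components $c_i\in A_i$. Because $a$ is homogeneous of degree $0$, each $ac_i$ lies in $A_i\subseteq B_i$, so $b=\sum_i ac_i$ is the homogeneous decomposition of $b$ in $B$; since $b\in I_n=\bigoplus_{i\ge n}A_i$, uniqueness of homogeneous components forces $ac_i=0$ for all $i<n$, and as $B$ is a domain with $a\ne0$ this gives $c_i=0$ for $i<n$. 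Hence $c\in I_n$ and $b=ac\in aI_n$.

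For part (c), given $f,g\in A_d$ I would first apply \emph{Lemma\,\ref{degrees}(a)} to get $\deg_Uf=\deg f=d$ and $\deg_Ug=d$, so that $f,g\in\krn U^{d+1}$ and the degree-$d$ transvectant $[f,g]_d^U=\sum_{i=0}^{d}(-1)^{d-i}U^ifU^{d-i}g$ is defined. The $U$-analogue of the fact (from \emph{Section\,\ref{LNDs}}) that the transvectant maps into the kernel — legitimate by the symmetry of $D$ and $U$, and verified directly by the telescoping identity $U\bigl([f,g]_d^U\bigr)=U^{d+1}f\cdot g+(-1)^df\cdot U^{d+1}g=0$ — shows $[f,g]_d^U\in\krn U=\Omega$. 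On the other hand $\deg U=-2$ by \emph{Lemma\,\ref{begin}(b)}, so each summand $U^if\cdot U^{d-i}g$ lies in $B_{d-2i}\cdot B_{-d+2i}\subseteq B_0$, whence $[f,g]_d^U\in B_0$. Combining, $[f,g]_d^U\in\Omega\cap B_0=\Omega_0=A_0$, the last equality being \emph{Lemma\,\ref{classical}(a)}.

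I do not anticipate a genuine obstacle: once the $\N$-grading of $A$ and the fact that $A_0$ consists of degree-$0$ elements are in hand, parts (a) and (b) are pure bookkeeping, and part (c) reduces to the elementary telescoping identity for $U\bigl([f,g]_d^U\bigr)$. The one point that must be invoked rather than seen by inspection is that $f\in A_d$ has $\deg_Uf$ exactly $d$ — not merely at most $d$ — which is precisely \emph{Lemma\,\ref{degrees}(a)} (itself a consequence of \emph{Lemma\,\ref{updown}}), and which is what makes $[f,g]_d^U$ the \emph{top} transvectant, hence killed by $U$.
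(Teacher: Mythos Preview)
Your proposal is correct and follows essentially the same approach as the paper: part (a) is read off directly from the $\N$-grading $I_n=\bigoplus_{i\ge n}A_i$ of \emph{Theorem\,\ref{main1}(b)}; part (b) is the same degree argument (the paper phrases it as ``$\deg a=0$ and $\deg f\ge n$ force $\deg g\ge n$'', which is exactly your homogeneous-component argument); and part (c) uses \emph{Lemma\,\ref{degrees}(a)} to get $\deg_Uf=\deg_Ug=d$, concludes $[f,g]_d^U\in\Omega$, and computes its $\Z$-degree to be $0$. Your write-up is slightly more explicit --- you spell out the telescoping identity for $U\bigl([f,g]_d^U\bigr)$ rather than citing the transvectant-into-kernel fact --- but the substance is identical.
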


\begin{proof} Part (a) follows directly from {\it Theorem\,\ref{main1}(b)}.

Given $a\in A_0$, if $a=0$ then $aA\cap I_n=aI_n$ holds, so we may assume $a\ne 0$. Suppose that $f=ag$ for $f\in I_n$ and $g\in A$. 
{\it Theorem\,\ref{main1}(b)} implies $\deg f\ge n$. 
Since $\deg a=0$ it follows that $\deg g\ge n$. By {\it Theorem\,\ref{main1}(b)} we see that $g\in I_n$, thus proving part (b). 

For part (c), we have $\deg_Uf=\deg_Ug=d$ by {\it Lemma\,\ref{degrees}}. Therefore, $[f,g]_d^U\in\Omega$ and:
\[
\deg [f,g]_d^U=\deg fU^dg=d+(d-2d)=0
\]
So $[f,g]_d^U\in\Omega_0=A_0$.
\end{proof}

\begin{corollary}\label{no-fun} Let $B$ be an affine $k$-domain, $\delta\in {\rm LND}(B)$, $\delta\ne 0$, $A=\krn\delta$ and $I=A\cap \delta B$. Suppose that at least one of the following conditions holds.
\begin{enumerate}
\item $I$ is not a prime ideal of $A$.
\item $I$ is a prime ideal of $A$ but there is no retraction of $A$ with kernel $I$.
\item $\delta =f\delta^{\prime}$ where $\delta^{\prime}\in {\rm LND}(B)$ does not have a slice and $f\in A\setminus A^*$.
\end{enumerate}
Then $\delta$ is not fundamental. 
\end{corollary}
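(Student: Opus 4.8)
The plan is to prove the contrapositive: if $\delta$ is fundamental then none of (1), (2), (3) can hold. So suppose $\delta$ is fundamental; then there is $U\in{\rm LND}(B)$ such that $(\delta,U)$ is a nontrivial fundamental pair, and I adopt the notation of {\it Theorem\,\ref{main1}} with $\delta=D$, so that $A=\krn\delta$, $A_n=A\cap B_n$, $\mathcal{F}_n=\krn\delta^{n+1}$ and $I_n=\delta^n\mathcal{F}_n$; by property (8) of {\it Section\,\ref{LNDs}} the ideal $I=A\cap\delta B$ appearing in the corollary is $I_1$. I first record that $\delta\ne 0$ forces $I_1\ne 0$: picking $b$ with $\deg_\delta b=n\ge 1$ and setting $r=\delta^{n-1}b$ gives a local slice, whence $0\ne\delta r\in A\cap\delta B=I_1$. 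Next I would dispose of the case $\dim_k B\le 1$ by hand, since {\it Corollary\,\ref{plinth-cor}} is only available when $\dim_k B\ge 2$: if $\dim_k B\le 1$ then $\dim_k B=1$ (because $\delta\ne 0$), so $A$ is algebraic over $k$ and hence a field; but by {\it Theorem\,\ref{main1}(b)} $A=\bigoplus_{i\ge 0}A_i$ is $\N$-graded, which forces $A=A_0$ and therefore $I_1=0$, contradicting $I_1\ne 0$. So from now on $\dim_k B\ge 2$.

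Conditions (1) and (2) then follow at once from {\it Corollary\,\ref{plinth-cor}(a)}, which gives $A=A_0\oplus I_1$ with $I_1$ a prime ideal of $A$ and $A_0$ a retract of $A$: thus $I=I_1$ is prime (so (1) fails), and the projection $A\to A_0$ along $I_1$ is a retraction of $A$ with kernel $I$ (so (2) fails).

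The substantive case is (3). Suppose $\delta=f\delta'$ with $\delta'\in{\rm LND}(B)$ and $f\in A\setminus A^*$. Since $f\ne 0$ and $B$ is a domain, $\krn\delta'=\krn\delta=A$; write $I_1'=A\cap\delta'B$ for the plinth ideal of $\delta'$. The crux is the identity $I_1=fI_1'$: on one hand $fI_1'\subseteq A\cap f\delta'B=A\cap\delta B=I_1$; on the other, any nonzero $a\in A\cap\delta B$ has the form $a=f\cdot\delta'c$ for some $c\in B$, and since $a,f\in A$ and $A$ is factorially closed in $B$ this forces $\delta'c\in A\cap\delta'B=I_1'$, so $a\in fI_1'$. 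In particular $I_1\subseteq I_1'$. I now split into cases. If $I_1=I_1'$ then $I_1=fI_1$; as $A$ is Noetherian ($A$ is affine by {\it Theorem\,\ref{main1}(a)}) and $I_1$ is a nonzero ideal of the domain $A$, the determinant trick produces $b\in A$ with $(1+fb)I_1=0$, whence $1+fb=0$ and $f\in A^*$, contradicting the choice of $f$. If instead $I_1\subsetneq I_1'$, pick $p\in I_1'\setminus I_1$; then $fp\in fI_1'=I_1$, and since $I_1$ is prime (by {\it Corollary\,\ref{plinth-cor}(a)}) with $p\notin I_1$ we get $f\in I_1=fI_1'$, so $f=fq$ for some $q\in I_1'$ and hence $q=1$; thus $1\in I_1'=A\cap\delta'B$, i.e. $\delta'$ has a slice, contradicting the hypothesis of (3). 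Either way (3) is impossible.

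I expect case (3) to be the main obstacle: one must understand how dividing $\delta$ by $f\in A$ acts on plinth ideals — that is the identity $I_1=fI_1'$ — and then convert the two possibilities $I_1=I_1'$ and $I_1\subsetneq I_1'$ into the contradictions ``$f$ is a unit'' and ``$\delta'$ has a slice'', leaning on the primeness of $I_1$ from {\it Corollary\,\ref{plinth-cor}} and on a Nakayama-type (determinant) argument. The only other point that needs care, though it is routine, is the direct treatment of $\dim_k B\le 1$, forced on us because {\it Corollary\,\ref{plinth-cor}} assumes $\dim_k B\ge 2$.
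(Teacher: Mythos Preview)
Your proof is correct and follows essentially the same strategy as the paper. Both arguments establish $I_1=fI_1'$ for case (3) and then combine the primeness of $I_1$ with a Nakayama-type step to reach the two contradictions ``$f\in A^*$'' and ``$1\in I_1'$''; the only differences are organizational (you split on whether $I_1=I_1'$, the paper on whether $f\in I$), and you are in fact more careful than the paper in explicitly handling the case $\dim_kB\le 1$ before invoking {\it Corollary\,\ref{plinth-cor}}.
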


\begin{proof} If $I$ is not a prime ideal of $A$ then {\it Corollary\,\ref{plinth-cor}(a)} implies that $\delta$ is not fundamental. 
If there is no retraction of $A$ with kernel $I$ then {\it Corollary\,\ref{plinth-cor}(b)} implies that $\delta$ is not fundamental. 

Assume that the hypotheses (3) hold. Then $I=fI^{\prime}$ where $I^{\prime}=A\cap \delta^{\prime}B$. Since $fI^{\prime}\subset I^{\prime}$ we see that $af\in I$ for every $a\in I^{\prime}$. 

Suppose that $I$ is a prime ideal of $A$. If $f\not\in I$ then $a\in I$ for every $a\in I'$, which implies that $fI^{\prime}=I^{\prime}$. Since $I^{\prime}\ne (0)$ and $B$ is affine it follows that $f\in A^*$, a contradiction. Therefore, $f\in I$, which implies $1\in I^{\prime}$, also a contradiction. 
So $I$ is not a prime ideal of $A$ and {\it Corollary\,\ref{plinth-cor}(a)} implies that $\delta$ is not fundamental. 
\end{proof} 

\begin{example} {\rm Every linear $\G_a$-action on $\A_k^n$ is fundamental but this is not generally true for quasi-linear actions. 
Recall that a $k$-derivation $\delta$ of $k[x_1,\hdots ,x_n]=k^{[n]}$ is {\bf quasi-linear} if there is an $n \times n$ matrix $M$ with coefficients in $\krn\delta$ such that 
$[\delta x_1 \cdots \delta x_n]=[x_1 \cdots x_n]M$. A $\G_a$-action on $\A_k^n$ is quasi-linear if it is induced by a quasi-linear locally nilpotent derivation. 

Let $B=k[x_0,x_1,y_0,y_1]\cong k^{[4]}$ with linear fundamental  pair
\[
 (D,U)=(x_0\partial_{x_1}+y_0\partial_{y_1}, x_1\partial_{x_0}+y_1\partial_{y_0})
 \]
and set $A=\krn D$ and $A_0=A\cap\krn U$. Then $A=k[x_0,y_0,P]$ and $A_0=k[P]$ for $P=x_0y_1-y_0x_1$. The derivation $\delta :=PD$ is quasi-linear, since:
\[
\delta x_0=\delta y_0 =0 \quad {\rm and}\quad 
\delta \begin{pmatrix} x_1\cr y_1\end{pmatrix} = \begin{pmatrix} -x_0y_0 & x_0^2\cr -y_0^2 & x_0y_0 \end{pmatrix} \begin{pmatrix} x_1\cr y_1\end{pmatrix}
\]
Since $A\cap\delta B=(x_0P,y_0P)$ is not a prime ideal of $A$, {\it Corollary\,\ref{no-fun}} implies that $\delta$ is not fundamental.}
\end{example}

\begin{lemma}\label{generators} Let $B$ be an affine $k$-domain with nontrivial fundamental pair $(D,U)$. Suppose $A=A_0[x]$ for some $x\in A_d$, $d\ge 1$, noting that
$A_n=A_0\cdot x^i$ if $n=di$, $i\ge 0$, and $A_n=\{ 0\}$ for $n\not\in d\Z$. 
\begin{itemize}
\item [{\bf (a)}] $D$ has the Freeness Property. In particular:
\[
I_n=\bigoplus_{i\ge n}A_i=\begin{cases} \left (x^{n/d}\right) & n\in d\Z \\ \left( x^{[n/d]+1}\right) & n\not\in d\Z \end{cases}
\]
\item [{\bf (b)}] $\mathcal{F}_n= \bigoplus_{-1\le i\le n}A\cdot h_i$ where $h_{-1}=1$, $h_0=x$ and:
\[
h_i=\begin{cases} U^i\left( x^{i/d}\right) & i\in d\Z \\ U^i\left( x^{[i/d]+1}\right) & i\not\in d\Z \end{cases} \quad (i\ge 1)
\]
\item [{\bf (c)}] $B=A_0[h_0,\hdots ,h_d]=R\oplus Rh_1\oplus \cdots\oplus Rh_{d-1}$ where $R=A_0[h_0,h_d]\cong A_0^{[2]}$.
\end{itemize}
\end{lemma}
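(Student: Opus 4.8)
All three parts rest on {\it Theorem\,\ref{main1}} and on the explicit graded structure $A=A_0\oplus A_0x\oplus A_0x^2\oplus\cdots$ of $A$ over $A_0$; here $x$ is transcendental over $A_0$, since in any relation $\sum a_ix^i=0$ with $a_i\in A_0$ the summands are homogeneous of pairwise distinct degrees $di$ and hence vanish separately. For part (a), {\it Theorem\,\ref{main1}(b)} already gives $I_n=\bigoplus_{i\ge n}A_i$; since $A_i=A_0x^{i/d}$ when $d\mid i$ and $A_i=0$ otherwise, this $A$-ideal is generated by the least power of $x$ of degree $\ge n$, namely $x^{n/d}$ if $d\mid n$ and $x^{[n/d]+1}$ if not. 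So every $I_n$ is principal, which is precisely condition (ii) of the Freeness Property in {\it Section\,\ref{LNDs}}.

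For part (b) I would induct on $n$, the case $n=0$ being $\mathcal{F}_0=A=Ah_{-1}$. For the inductive step, apply {\it Theorem\,\ref{main1}(d)} with the single homogeneous generator $g$ of $I_n$ found in (a); then $h_n=U^ng$, and since $\deg g\ge n$ by {\it Theorem\,\ref{main1}(b)}, {\it Lemma\,\ref{degrees}(a)} gives $h_n\ne 0$ while {\it Lemma\,\ref{updown}(b)} gives $D^nh_n=c\,g$ with $c\ne 0$; hence $\mathcal{F}_n=Ah_n+\mathcal{F}_{n-1}$. The sum is direct because, $\mathcal{F}_{n-1}$ being $\krn D^n$, the operator $D^n$ induces an $A$-linear isomorphism $\mathcal{F}_n/\mathcal{F}_{n-1}\cong I_n$; as $I_n$ is a principal ideal of the domain $A$ it is free of rank one, and the class of $h_n$ maps onto its generator, so $\overline{h_n}$ is a free generator of $\mathcal{F}_n/\mathcal{F}_{n-1}$. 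Thus $\mathcal{F}_n=Ah_{-1}\oplus Ah_1\oplus\cdots\oplus Ah_n$, which closes the induction (the element $h_0=x$ already lies in $Ah_{-1}=A$).

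For part (c), first note $R=A_0[h_0,h_d]\cong A_0^{[2]}$: $h_0=x$ is transcendental over $A_0$ as above, and $h_d=U^dx$ is transcendental over $A=A_0[x]$ because the graded ring associated to $B$ under the $D$-degree filtration is a domain in which the class of $h_d$ is nonzero of degree $d$ (by {\it Lemma\,\ref{updown}(b)}). Next, $B=A_0[h_0,\dots,h_d]$: by (b), $B=\bigcup_n\mathcal{F}_n=A[h_1,h_2,\dots]$, and expanding $h_i=U^i(x^{\lceil i/d\rceil})$ by the iterated Leibniz rule together with $U^jx=0$ for $j>d$ ({\it Lemma\,\ref{degrees}(a)}) writes every $h_i$ with $i>d$ as a $\Q$-polynomial in $h_0,\dots,h_d$; in particular $B=R[h_1,\dots,h_{d-1}]$, using also the identity $h_{md}=\frac{(md)!}{(d!)^m}h_d^{\,m}$, which gives $R=\bigoplus_{m\ge 0}Ah_{md}$. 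For the free $R$-module structure I would localize at $x$: since $Dh_j=j(d-j+1)h_{j-1}$ for $1\le j\le d$ by {\it Lemma\,\ref{updown}(a)}, the element $r:=h_1/(dx)$ is a slice of $D$ on $B_x$, so $B_x=A_x[r]\cong A_x^{[1]}$ and each $h_j$ ($1\le j\le d$) is a polynomial of degree $j$ in $r$ whose leading coefficient is a unit of $A_x$. Consequently $\{1,h_1,\dots,h_{d-1}\}$ is a basis of $B_x$ as a free module over $R_x=A_x[h_d]=R[1/x]$; in particular $1,h_1,\dots,h_{d-1}$ are $R$-linearly independent in $B$, and combined with $B=R[h_1,\dots,h_{d-1}]$ this yields $B=R\oplus Rh_1\oplus\cdots\oplus Rh_{d-1}$.

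The delicate point is the generation half of the final claim: upgrading ``$B$ is generated as an $R$-algebra by $h_1,\dots,h_{d-1}$'' to ``$B$ is generated as an $R$-module by $1,h_1,\dots,h_{d-1}$'', i.e.\ showing that each product $h_ih_j$ with $1\le i,j\le d-1$ already lies in $R+Rh_1+\cdots+Rh_{d-1}$. Linear independence is cheap once one passes to $B_x$ (where $D$ acquires a slice and $B_x$ becomes a polynomial ring), so the real work is organizing the Leibniz and transvectant identities — such as $[x,x]_d^U\in A_0$ from {\it Corollary\,\ref{plinth-cor}(c)} — that force high-degree monomials in $h_1,\dots,h_{d-1}$ to collapse; equivalently, one must verify that the change of basis between the $A$-basis $\{h_{-1},h_1,h_2,\dots\}$ of $B$ from part (b) and the family $\{h_d^{\,m}h_j:0\le j\le d-1,\ m\ge 0\}$ is given by a matrix invertible over $A$.
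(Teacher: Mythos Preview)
Your arguments for (a) and (b) are correct and match the paper's: both follow directly from the Structure Theorem, and your extra detail (that the sum in (b) is direct because $D^n$ identifies $\mathcal{F}_n/\mathcal{F}_{n-1}$ with the free rank-one module $I_n$) is exactly the right justification.

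For (c) you have taken a genuinely different and harder route, and the gap you flag at the end is real: from ``$B$ is generated over $R$ as an algebra by $h_1,\dots,h_{d-1}$'' together with ``$\{1,h_1,\dots,h_{d-1}\}$ is an $R_x$-basis of $B_x$'' you cannot conclude $R$-module generation without an additional $x$-divisibility argument. Your Leibniz expansion of $h_{md+j}=U^{md+j}(x^{m+1})$ produces many monomials in $h_0,\dots,h_d$, not a single one, so there is no clean way to reduce to the products $h_d^{\,m}h_j$ along that path.

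The paper sidesteps all of this with a one-line observation that you essentially already made in (b) but did not reuse: since $D$ has the Freeness Property, \emph{any} sequence $\{b_n\}_{n\ge 0}$ with $(D^nb_n)=I_n$ is an $A$-module basis of $B$ (same argument as yours: $\overline{b_n}$ generates the free rank-one module $\mathcal{F}_n/\mathcal{F}_{n-1}\cong I_n$). Now simply take $b_{md}=h_d^{\,m}$ and $b_{md+j}=h_d^{\,m}h_j$ for $1\le j\le d-1$. Since $\deg_D$ is additive and $\deg_D h_d=d$, $\deg_D h_j=j$, one has $\deg_D b_n=n$; expanding $D^nb_n$ by the Leibniz rule, the only nonvanishing term is the one putting exactly $d$ derivatives on each $h_d$ and $j$ on $h_j$, so $D^nb_n$ is a nonzero scalar times $x^{m+1}$ (resp.\ $x^m$), which by (a) generates $I_n$. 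Hence
\[
B=\bigoplus_{m\ge 0}\ \bigoplus_{0\le j\le d-1} A\cdot h_d^{\,m}h_j
\]
(with the convention that the $j=0$ term is $A\cdot h_d^{\,m}$), which is exactly $R\oplus Rh_1\oplus\cdots\oplus Rh_{d-1}$. The freeness of $R$ over $A_0$ and the equality $B=A_0[h_0,\dots,h_d]$ are then immediate consequences of this decomposition, rather than prerequisites for it. Your localization-at-$x$ argument and the transvectant identities are unnecessary.
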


\begin{proof} Part (a) is implied by the Structure Theorem, and part (b) is an immediate consequence of part (a). 

For part (c), since $D$ has the Freeness Property, any set $\{ b_n\}_{n\in\N}$ in $B$ such that $(D^nb_n)=I_n$ for each $n\ge 0$ is an $A$-module basis of $B$. In particular, if $J=\N\times\{ 0,\hdots ,d-1\}$, then:
\[
B=\bigoplus_{(n,i)\in J}A\cdot h_d^nh_i=R\oplus Rh_1\oplus \cdots\oplus Rh_{d-1}
\]
\end{proof}

\begin{lemma}\label{Ksl2} Let $B$ be an affine $k$-domain with fundamental pair $(D,U)$. 
Let $\mathcal{F}_n=\krn D^{n+1}$, $n\ge 0$, and let $N\ge 1$ be such that $B=k[\mathcal{F}_N]$. 
Given nonzero $a\in A_0$ define submodules 
\[
\mathcal{G}_n=\sum_{0\le i\le n}a^i\mathcal{F}_i \,\, ,\,\, n\ge 0
\]
and define the subring $R=k[\mathcal{G}_N]$. 
\begin{itemize}
\item [{\bf (a)}] $R$ is a graded subring of $B$, $R\cap\mathcal{F}_n=\mathcal{G}_n$ for each $n\ge 0$, and $(a^{-1}D,aU)$ is a fundamental pair for $R$. 
\item [{\bf (b)}] If $B$ is normal then $R$ is normal.
\item [{\bf (c)}] If $B$ is a UFD then $R$ is a UFD.
\end{itemize}
\end{lemma}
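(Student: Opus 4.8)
The plan is to identify $R$ with the ``Rees-type'' subring $\mathcal G:=\sum_{i\ge 0}a^i\mathcal F_i$ of $B$ and read off everything from properties of $\mathcal G$. Since $a\in A_0=\ker D\cap\ker U$, it is $\mathbb Z$-homogeneous of degree $0$ with $\deg_D a=0$, so $\deg_D(ab)=\deg_D b$ and $(aU)^c=a^cU^c$, $(a^{-1}D)^c=a^{-c}D^c$ on $B_a:=B[a^{-1}]$. Because $\deg_D$ is a degree function on the domain $B$ we have $\mathcal F_i\mathcal F_j\subseteq\mathcal F_{i+j}$; combined with $D\mathcal F_i\subseteq\mathcal F_{i-1}$ and $U\mathcal F_i\subseteq\mathcal F_{i+1}$ (\emph{Lemma\,\ref{degrees}}) this makes $\mathcal G$ a graded $k$-subalgebra of $B$ (each $a^i\mathcal F_i$ is a graded submodule, $D^{i+1}$ being homogeneous) that is stable under $a^{-1}D$ and $aU$ as operators on $B_a$. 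Hence $(a^{-1}D,aU)$ restricts to a fundamental pair of $\mathcal G$: the bracket relations and the $\mathbb Z$-grading condition are inherited from $B_a$, on which $(a^{-1}D,aU)$ is visibly a fundamental pair (degrees $2$ and $-2$, Euler operator $E$). A short degree computation using $\deg_D a=0$ and factorial closedness of $A$ gives $\mathcal G\cap\mathcal F_n=\mathcal G_n$ for all $n$; also $A=\mathcal F_0=\mathcal G_0$ and $\mathcal G[a^{-1}]=B_a$.

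With this in hand, part (a) amounts to the single identity $R=\mathcal G$; the inclusion $R=k[\mathcal G_N]\subseteq\mathcal G$ is immediate, and I would prove $a^n\mathcal F_n\subseteq R$ by induction on $n$ (trivial for $n\le N$). For $n>N$, \emph{Theorem\,\ref{main1}(d)} gives $\mathcal F_n=\sum_j A\,U^n(g_j)+\mathcal F_{n-1}$ with $g_j$ homogeneous generators of $I_n$; since $a^n\mathcal F_{n-1}\subseteq a\mathcal G_{n-1}\subseteq R$ and $A\subseteq R$, it suffices that $(aU)^n(g_j)=a^nU^n(g_j)\in R$. By \emph{Theorem\,\ref{main1}(c)} each $g_j$ is an $A_0$-combination of monomials $f_{l_1}\cdots f_{l_L}$ in the homogeneous generators $f_1,\dots,f_r$ of $I_1$ with $\sum_t\deg f_{l_t}\ge n$; using $A_0\subseteq\ker U$, the multinomial expansion of $(aU)^n(f_{l_1}\cdots f_{l_L})$, and $U^c f_{l_t}=0$ for $c>\deg f_{l_t}$ (\emph{Lemma\,\ref{degrees}}), every nonzero term is a product $\prod_t(aU)^{c_t}(f_{l_t})$ with $\sum_t c_t=n$ and each factor in $a^{c_t}\mathcal F_{c_t}\subseteq\mathcal G_{c_t}$, hence in $R$ whenever $c_t\le N$ or (by induction) $c_t<n$. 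The only escape is $c_t=n$, which forces $L=1$, so the whole argument reduces to the claim $(aU)^c(f_l)=a^cU^c(f_l)\in R$ for every generator $f_l$ of $I_1$ and every $c$ with $N<c\le\deg f_l$.

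\emph{This last claim is where I expect the real work.} I would write $f_l=D(p)$ with $p$ homogeneous in $\mathcal F_1$, commute $U^c$ past $D$ via $DU^c=U^cD+cU^{c-1}(E-(c-1)I)$ from \emph{Lemma\,\ref{updown}} to obtain $U^cf_l=D(U^cp)-c(\deg f_l-c-1)\,U^{c-1}p$, and then invoke $B=k[\mathcal F_N]$ to rewrite the elements $U^cp\in\mathcal F_{c+1}$ and $U^{c-1}p\in\mathcal F_c$ as polynomials in a fixed set of $\mathcal F_N$-generators of $B$ \emph{whose weighted $\deg_D$ equals their $\deg_D$}; multiplying by the appropriate power of $a$ then turns each monomial into a product of elements $a^{\deg_D e}e\in\mathcal G_N\subseteq R$, and a descending induction closes it. The crux is thus the statement that $\mathrm{gr}^{\deg_D}(B)$ is generated over $\mathrm{gr}_0=A$ by the images of any set of $\mathcal F_N$-generators of $B$ — there are no forced degree jumps above level $N$ — which rests on $\mathrm{gr}^{\deg_D}(B)$ being a domain. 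Granting this, $R=\mathcal G$, and (a) follows: $R$ is a graded subring, $R\cap\mathcal F_n=\mathcal G\cap\mathcal F_n=\mathcal G_n$, and $(a^{-1}D,aU)$ is a fundamental pair for $R$.

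For (b) and (c), first note $R$ is affine: $\mathcal G_N\subseteq\mathcal F_N$ is a finitely generated $A$-module (degree modules of a locally nilpotent derivation of an affine domain are finite) and $A$ is affine (\emph{Theorem\,\ref{main1}(a)}), so $R=k[\mathcal G_N]$ is finitely generated, hence Noetherian, and $R[a^{-1}]=B_a$. Using $R=\mathcal G$, the Rees presentation $R\cong(\bigoplus_i\mathcal F_it^i)/(t-a)$ yields $R/aR\cong\mathrm{gr}^{\deg_D}(B)/(\bar a)$; since $A_0$ is factorially closed in $B$ (\emph{Lemma\,\ref{classical}}) hence in $R$, when $B$ is a UFD $A_0$ is a UFD and one may assume $a$ is a prime of $A_0$, so $aR$ is a prime ideal. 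Then (c) follows from Nagata's criterion ($R$ Noetherian, $aR$ prime, $R[a^{-1}]=B_a$ a UFD $\Rightarrow R$ a UFD), and (b) from Serre's criterion for $R$: regularity in codimension one holds because $R_{aR}$ is a DVR ($aR$ principal with $R/aR$ a domain) and $R[a^{-1}]=B_a$ is normal, while $(S_2)$ passes from $B_a$ and from $R/aR$ through the nonzerodivisor $a$; hence $R$ is normal. (Alternatively, (b) can be obtained from the fact that each $\mathcal F_i$ is integrally closed in the normal domain $B$, whence $\mathcal G=\sum a^i\mathcal F_i$ is integrally closed in $\mathrm{frac}(B)$.)
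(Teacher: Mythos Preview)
Your approach to part (a) reaches the right destination but by an unnecessarily tortuous path. The paper argues far more directly: since $\deg_D$ is additive on products in a domain, any monomial $\mu=x_1^{e_1}\cdots x_m^{e_m}$ in generators $x_i\in\mathcal F_{n_i}$ (with $n_i\le N$) has $\deg_D\mu=\sum e_in_i$, and then $a^{\deg_D\mu}\mu=\prod(a^{n_i}x_i)^{e_i}\in R$. The observation you eventually land on---that ${\rm gr}^{\deg_D}(B)$ is a domain generated by the images of any generating set of $\mathcal F_N$---is exactly what makes this work: it says every element of $\mathcal F_n$ is a sum of such monomials of $\deg_D\le n$, whence $a^n\mathcal F_n\subset R$ immediately. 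There is no need to pass through the Structure Theorem's description of $\mathcal F_n$ in terms of $U^n(g_j)$, or to commute $U^c$ past $D$; that whole detour can be deleted. (Your brief argument for $\mathcal G\cap\mathcal F_n=\mathcal G_n$ via $\deg_D a=0$ is fine, and in fact spells out a point the paper leaves implicit.)

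For parts (b) and (c) the paper takes a completely different and much cleaner route: it defers to \emph{Proposition\,\ref{main2}}, which constructs an explicit $k$-algebra isomorphism $\alpha|_R:R\to K$, where $K=\ker D'$ for the locally nilpotent derivation $D'$ of $B[u]$ extending $D$ by $D'u=a$. Since kernels of locally nilpotent derivations of normal (respectively UFD) domains are normal (respectively UFDs), and $B[u]$ inherits these properties from $B$, the result is immediate. Your Rees/Nagata/Serre argument is a genuinely different strategy, but it has a real gap: the reduction ``one may assume $a$ is a prime of $A_0$'' is not valid, because the subring $R$ depends on $a$ itself and not merely on its support---the ring built from $a$ is not the same as the one built from a prime factor of $a$. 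Your (c) can be repaired by factoring $a$ in the UFD $A_0$, observing that each prime factor remains prime in $R$ (since $A_0$ is factorially closed in $R$ as the degree-zero part of the fundamental pair $(a^{-1}D,aU)$), and applying Nagata's criterion with the multiplicative set they generate. Your (b) via Serre's criterion is sketchier still: the $(S_2)$ step and the analysis of height-one primes over $a$ are not routine for non-prime $a$. The paper's device of realizing $R$ as an LND kernel sidesteps all of this.
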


\begin{proof} The Structure Theorem shows that 
each module $\mathcal{F}_n$ is generated by homogeneous elements. Since $a$ is also homogeneous, we see that $R$ is a graded subring of $B$. 

Consider a monomial $\mu\in\mathcal{F}_n$ for some $n\ge N+1$. Since $B=k[\mathcal{F}_N]$ there exist $x_i\in\mathcal{F}_{n_i}$ with $n_i\le N$ and $e_i\in\N$, $1\le i\le m$, such that:
\[
\mu =x_1^{e_1}\cdots x_m^{e_m} \quad {\rm and}\quad \sum_{1\le i\le m} e_in_i=n
\]
Therefore, $a^n\mu = (a ^{n_1}x_1)^{e_1}\cdots (a^{n_m}x_m)^{e_m}\in R$. 
It follows that $a^n\mathcal{F}_n\subset R$ for each $n\ge 0$, which implies $\mathcal{G}_n\subset R$ for each $n\ge 0$. 

Since $a\in A_0$, $(D,U)$ extends to a fundamental pair on $B_a$. Since $a$ is a unit of $B_a$, $(a^{-1}D,aU)$ is a fundamental pair for $B_a$. 
In addition, given $n\ge 0$, {\it Lemma\,\ref{degrees}(c)} implies
\[
a^{-1}D(a^i\mathcal{F}_i )= a^{i-1}D\mathcal{F}_i\subset a^{i-1}\mathcal{F}_{i-1} \,\, (i\ge 1) \quad {\rm and}\quad a^{-1}D(\mathcal{F}_0)=0
\]
and:
\[
aU(a^i\mathcal{F}_i)=a^{i+1}U\mathcal{F}_i\subset a^{i+1}\mathcal{F}_{i+1} \,\, (i\ge 0) 
\]
Therefore, $a^{-1}D$ and $aU$ restrict to $R$, and $(a^{-1}D,aU)$ is a fundamental pair for $R$. This proves part (a).

Parts (b) and (c) are consequences of {\it Proposition\,\ref{main2}} below. 
\end{proof} 


\section{The Classification Theorem}\label{surfaces} This section classifies normal affine $k$-domains of dimension two with trivial units admitting a nontrivial fundamental pair, i.e., a nontrivial $SL_2(k)$-action.
Our method requires first a detailed understanding of certain determinantal ideals which are invariant for basic fundamental pairs. 

\subsection{Quadratic Determinantal Ideals}\label{basic}
Let $B=k[x_0,\hdots ,x_d]\cong k^{[d+1]}$ for $d\ge 2$, let $(D,U)$ be the basic fundamental pair on $D$, let $A=\krn D$ and let 
$B=\bigoplus_{n\in\Z}B_n$ be the induced grading of $B$. In particular, each $x_i$ is homogeneous of degree $d-2i$. 
Let $J\subset B$ be the ideal generated by the $2\times 2$ quadratic minors of the $2\times d$ matrix
\[
\begin{pmatrix} x_0&x_1&x_2&\cdots&x_{d-1}\cr x_1&2x_2&3x_3&\cdots&dx_d\end{pmatrix}
\]
noting that $J$ is a graded ideal. 
Let $M_{a,b}$ be the minor using columns $a$ and $b$, $1\le a<b\le d$, that is:
\[
M_{a,b}=\det\begin{pmatrix} x_{a-1} & x_{b-1} \cr ax_a & bx_b \end{pmatrix}=bx_{a-1}x_b-ax_ax_{b-1}
\]
Let $\Theta ={\rm span}_k\{ M_{a,b}\, |\, 1\le a<b\le d\}$ and let $\Theta_n=\Theta\cap B_n$, $n\in\Z$.

It is well known that $J$ is a prime ideal defining the affine cone $X_d\subset\A_k^{d+1}$ over the rational normal curve of degree $d$; see 
\cite{Eisenbud.05}, Proposition 6.1. 
The corresponding Eagon-Northcott complex is a minimal free resolution of $B/J$ of length $d$ in which each kernel is generated by linear forms \cite{Eagon.Northcott.62}. 
In particular, the module of first syzygies for $B/J$ is generated by the $3\times 3$ minors of the matrices:
\[
\begin{pmatrix} x_0&x_1&x_2&\cdots&x_{d-1}\cr x_1&2x_2&3x_3&\cdots&dx_d \cr  x_0&x_1&x_2&\cdots&x_{d-1}\end{pmatrix}
\quad {\rm and}\quad 
\begin{pmatrix} x_0&x_1&x_2&\cdots&x_{d-1}\cr x_1&2x_2&3x_3&\cdots&dx_d \cr x_1&2x_2&3x_3&\cdots&dx_d\end{pmatrix}
\]
The surface $X_d$ admits a natural $SL_2(k)$-action, an action which is investigated in the next section. 
Thus, $J$ (and its associated complex) are $(D,U)$-invariant. We give a $(D,U)$-invariant basis of $J$.

Let $m\in\N$ be such that $d=2m$ or $d=2m+1$. Define homogeneous $T_2^{(0)},T_4^{(0)},\hdots ,T_{2m}^{(0)}\in A$ by: 
\begin{equation}\label{quadratic}
T_{2i}^{(0)}=\sum_{0\le j\le 2i}(-1)^jx_jx_{2i-j}  \quad {\rm where}\quad \deg T_{2i}^{(0)}=2d-4i
\end{equation}
Let $\widehat{T}_{2i}$ denote the $D$-cable rooted at $T_{2i}^{(0)}$, which is defined by generators:
\[
T_{2i}^{(j)}=U^jT_{2i}^{(0)} \,\, , \,\, 0\le j\le 2d-4i
\]
Note that this cable is $(D,U)$-invariant, since 
\[
UT_{2i}^{(j)}=T_{2i}^{(j+1)} \,\, , \,\, 0\le j\le 2d-4i-1
\]
and:
\[
DT_{2i}^{(j)}=t_iT_{2i}^{(j-1)} \,\, {\rm for}\,\, t_i\in k^* \,\, , \,\, 1\le j\le 2d-4i
\]
Let $\{ \widehat{T}_2,\widehat{T}_4,\hdots ,\widehat{T}_{2m}\}$ be the set of vertices $T_{2i}^{(j)}$ of these cables and let 
$(\widehat{T}_2,\widehat{T}_4,\hdots ,\widehat{T}_{2m})$ be the ideal of $B$ generated by this set.

\begin{proposition}\label{quadratic-ideal} Assume that $d\ge 2$.
\begin{itemize}
\item [{\bf (a)}]  $J=(\widehat{T}_2,\widehat{T}_4,\hdots ,\widehat{T}_{2m})$
\item [{\bf (b)}] The sets $\{ M_{a,b}\, |\, 1\le a<b\le d\}$ and $\{ \widehat{T}_2,\widehat{T}_4,\hdots ,\widehat{T}_{2m}\}$ are bases for $\Theta$.
\end{itemize}
\end{proposition}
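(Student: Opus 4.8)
Write $B=\bigoplus_{n\ge 0}B^{(n)}$ for the grading of $B$ by total degree, so that $B^{(1)}=kx_0\oplus\cdots\oplus kx_d$ is the module $V_d$, and note that $D$ and $U$, being linear, preserve every $B^{(n)}$. Since $J$ is $(D,U)$-invariant and is generated by the quadrics $M_{a,b}$, it is homogeneous for this grading with degree-two part $J^{(2)}=\Theta$, and hence $\Theta$ is a $(D,U)$-submodule of the $SL_2(k)$-module $B^{(2)}$. For $1\le i\le m$ let $W_i$ be the $k$-span of the cable $\widehat{T}_{2i}$; the relations $DT_{2i}^{(j)}=t_iT_{2i}^{(j-1)}$, $UT_{2i}^{(j)}=T_{2i}^{(j+1)}$ and $ET_{2i}^{(j)}=(\deg T_{2i}^{(j)})\,T_{2i}^{(j)}$ recorded in the setup show that $W_i$ is a $(D,U)$-submodule of $B^{(2)}$, and since $DT_{2i}^{(0)}=0$ with $T_{2i}^{(0)}$ homogeneous of degree $2d-4i\ge 0$ (as $i\le m\le d/2$) and nonzero (the coefficient of $x_0x_{2i}$ in it is $1$), standard $\mathfrak{sl}_2(k)$-theory identifies $W_i$ with the irreducible module $V_{2d-4i}$, having $k$-basis $\widehat{T}_{2i}$ and dimension $2d-4i+1$. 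The plan is to prove $\Theta=W_1\oplus\cdots\oplus W_m$, from which both parts of the proposition are immediate.

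The containment $W_1+\cdots+W_m\subseteq\Theta$ reduces to $T_{2i}^{(0)}\in J$: once this is known, $T_{2i}^{(j)}=U^jT_{2i}^{(0)}\in J\cap B^{(2)}=\Theta$ because $\Theta$ is $(D,U)$-stable. The needed membership is the identity
\[
T_{2i}^{(0)}=\frac1i\sum_{a=1}^{i}(-1)^{a-1}M_{a,\,2i+1-a},
\]
which one checks by matching the coefficients of the monomials $x_\ell x_{2i-\ell}$ on the two sides; equivalently, $T_{2i}^{(0)}$ vanishes on the cone $X_d$ (on the weighted parametrization $x_j=s^{d-j}t^j/j!$ its value is $\frac{s^{2d-2i}t^{2i}}{(2i)!}\sum_{j=0}^{2i}(-1)^j\binom{2i}{j}=0$), and $J$ is the defining ideal of $X_d$. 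I expect pinning down this identity — equivalently the correctly weighted parametrization of $X_d$ in these coordinates — to be the only genuine computation in the proof.

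Next, $W_1,\dots,W_m$ are pairwise non-isomorphic irreducibles, so $W_1+\cdots+W_m$ is a direct sum: inductively, if $W_2+\cdots+W_m=W_2\oplus\cdots\oplus W_m$, then $W_1\cap(W_2\oplus\cdots\oplus W_m)$ is a submodule of the irreducible $W_1\cong V_{2d-4}$, hence $0$ or all of $W_1$, and the latter fails because $V_{2d-4}$ does not embed in $\bigoplus_{i\ge 2}V_{2d-4i}$. Thus $\dim_k(W_1\oplus\cdots\oplus W_m)=\sum_{i=1}^{m}(2d-4i+1)$, and an elementary computation (taking the cases $d=2m$ and $d=2m+1$ separately) shows this equals $\binom{d}{2}$. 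On the other hand $\Theta$ is spanned by the $\binom{d}{2}$ minors $M_{a,b}$, so $\dim_k\Theta\le\binom{d}{2}$; together with $W_1\oplus\cdots\oplus W_m\subseteq\Theta$ this forces $\Theta=W_1\oplus\cdots\oplus W_m$ and $\dim_k\Theta=\binom{d}{2}$.

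Part (b) then follows: $\{T_{2i}^{(j)}\}$ is a $k$-basis of $\Theta$, being the disjoint union of the bases $\widehat{T}_{2i}$ of the direct summands $W_i$, and $\{M_{a,b}\}$ is a $k$-basis of $\Theta$, being a spanning set of cardinality $\binom{d}{2}=\dim_k\Theta$. Part (a) follows too: by construction $J$ is the ideal generated by the minors $M_{a,b}$, i.e.\ $J=(\Theta)$; since $\Theta$ is equally the $k$-span of the vertices $T_{2i}^{(j)}$, we conclude $J=(\widehat{T}_2,\widehat{T}_4,\dots,\widehat{T}_{2m})$.
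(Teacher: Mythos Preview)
Your proof is correct and follows the same dimension-counting strategy as the paper: show each $T_{2i}^{(0)}\in J$, hence $W:=W_1+\cdots+W_m\subseteq\Theta$; show $\dim_kW=\binom{d}{2}$; and conclude $W=\Theta$ since $\Theta$ is spanned by $\binom{d}{2}$ minors. One cosmetic slip: the coefficient of $x_0x_{2i}$ in $T_{2i}^{(0)}$ is $2$, not $1$ (both $j=0$ and $j=2i$ contribute), but this does not affect the nonvanishing.

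The execution of the two supporting steps differs from the paper, and your choices are arguably cleaner. For $T_{2i}^{(0)}\in J$, the paper maps $B\to k[F_d]$ via $x_i\mapsto y^iz^{d-i}$, observes that the image of $T_{2i}^{(0)}$ lies in the one-variable kernel $k[f_0]$, and then uses a degree argument to force it to be zero; you instead give the explicit identity $T_{2i}^{(0)}=\tfrac1i\sum_{a=1}^{i}(-1)^{a-1}M_{a,2i+1-a}$ (equivalently, substitute the weighted parametrization), which is direct and self-contained. For linear independence of the cable vertices, the paper invokes an external result (\cite{Freudenburg.Kuroda.17}, Lemma 3.8), while you deduce it from standard $\mathfrak{sl}_2(k)$ representation theory: each $W_i$ is the cyclic module generated by a highest-weight vector of weight $2d-4i\ge 0$, hence irreducible of dimension $2d-4i+1$, and the $W_i$ are pairwise non-isomorphic so their sum is direct. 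This replaces the citation with a one-line appeal to the structure of finite-dimensional $\mathfrak{sl}_2$-modules, which is a nice improvement.
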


\begin{proof} We first show that $(\widehat{T}_2,\widehat{T}_4,\hdots ,\widehat{T}_{2m})\subseteq J$. 

Let $F_d\subset k[y,z]\cong k^{[2]}$ be the vector space of $d$-forms, with basis 
\[
\{ f_i=y^iz^{d-i}\, |\, 0\le i\le d\}
\]
and let $\varphi :B\to k[F_d]$ map $x_i\to f_i$. Then $k[F_d]$ is the coordinate ring of $X_d$ and $\krn\varphi =J$. 
A fundamental pair $(D',U')$ on $k[F_d]$ is given by restriction of the the basic fundamental pair $(y\partial_z,z\partial_y)$ on $k[y,z]$. 
In particular, $\krn D'=k[y]\cap k[F_d]=k[f_0]\cong k^{[1]}$ and for this pair, $\varphi$ is equivariant. 

Since $T_{2i}^{(0)}\in A$ for each $i$, we see that:
\[
\varphi (T_{2i}^{(0)})\in\krn D'=k[f_0]\,\, ,\quad 1\le i\le m
\]
By homogeneity, there exist $r_i\in k$ and $e_i\in\N$ such that $\varphi (T_{2i}^{(0)})=r_if_0^{e_i}$, $1\le i\le m$. If $r_i\ne 0$ for some $i$, then degree considerations show that either
\begin{enumerate}
\item $d=2m$, $i=m$ and $\varphi (T_d^{(0)})=r\in k^*$, or
\item $d=2m$, $2i=m$ and $\varphi (T_m^{(0)})=rf_0$ for $r\in k^*$.
\end{enumerate}
But then either $1\in (f_0,\hdots ,f_d)$ or $f_0\in (f_0,\hdots ,f_d)^2$ in $k[y,z]$, a contradiction. Therefore, $r_i=0$ for each $i$ and 
$(T_2^{(0)},T_4^{(0)},\hdots ,T_{2m}^{(0)})\subset J$. 
Since $J$ is $(D,U)$-invariant it follows that $(\widehat{T}_2,\widehat{T}_4,\hdots ,\widehat{T}_{2m})\subseteq J$.

Let $W$ be the $k$-span of the vertices of $\widehat{T}_2,\widehat{T}_4,\hdots ,\widehat{T}_{2m}$. 
Since $(\widehat{T}_2,\widehat{T}_4,\hdots ,\widehat{T}_{2m})\subseteq J$ it must be the case that $W\subseteq\Theta$ (by homogeneity). 
According to \cite{Freudenburg.Kuroda.17}, Lemma 3.8, the vertices of $\hat{T}_2,\hat{T}_4,\hdots ,\hat{T}_{2m}$ are linearly independent over $k$. The number of such vertices equals:
\[
\sum_{i=1}^m 2d-4i+1 ={d\choose 2}
\]
Since the number of minors $M_{a,b}$ also equals $d\choose 2$ we conclude that the $M_{a,b}$ are linearly independent and $W=\Theta$. Consequently, 
$(\hat{T}_2,\hat{T}_4,\hdots ,\hat{T}_{2m})=J$. This proves parts (a) and (b). 
\end{proof}

\begin{lemma}\label{d=2m} Assume that $d\in2\Z$. 
\begin{itemize}
\item [{\bf (a)}] If $r,s\in\N$ and $r+s=d$ then $x_rx_s\not\in UB$. 
\item [{\bf (b)}] If $a,b\in\N$ is a pair such that $M_{a,b}\in \Theta_0$ then $M_{a,b}\not\in UB$. 
\item [{\bf (c)}] The set $\{T_{2i}^{(d-2i)}\, |\, 1\le i\le \frac{d}{2}\}$ is a basis for $\Theta_0$. 
\smallskip
\item [{\bf (d)}] $\Theta_0=(\Theta_0\cap UB)\oplus k\cdot T_d^{(0)}$ where $\{ T_2^{(d-2)} , \hdots , T_{d-2}^{(2)}\}$ is a basis for $\Theta_0\cap UB$. 
\end{itemize}
\end{lemma}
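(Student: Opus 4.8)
The four parts are tightly linked: parts (a) and (b) are the non-vanishing facts that power the direct-sum decomposition in (d), while (c) identifies an explicit basis of $\Theta_0$ compatible with that decomposition. I would set $d = 2m$ throughout and carry the argument in that order, using the equivariant surjection $\varphi\colon B\to k[F_d]$ from the proof of \emph{Proposition \ref{quadratic-ideal}} as the main tool. Since $\varphi$ intertwines $(D,U)$ with $(D',U')=(y\partial_z, z\partial_y)$ on $k[F_d]\subset k[y,z]$, and since $J=\krn\varphi$ is $(D,U)$-invariant, the ideal $J$ does not meet $UB$ in any useful way: concretely, $\Omega\cap DB=\{0\}$ (that is, \emph{Lemma \ref{classical}(c)}) applied on $k[F_d]$ shows $\krn U'\cap D'k[F_d]=\{0\}$, i.e. $k[f_0]\cap D'k[y,z]\cap k[F_d]=0$.

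For part (a), observe $x_rx_s$ with $r+s=d$ is homogeneous of degree $(d-2r)+(d-2s)=0$, so it lies in $B_0$; if $x_rx_s=Ub$ for some $b$, apply $D$ and use $DU=UD+E$ on $B_0$, or more simply: $\varphi(x_rx_s)=f_rf_s = y^{r+s}z^{2d-r-s}=y^dz^d=f_0 f_d$ up to the identification, which lies in $k[F_d]$ but is a nonzero element of $\krn D'\cap\krn U'=k[F_d]_0$ — wait, $f_0 f_d = y^d z^d$ has bidegree $(d,d)$, so it is $U'$-stable only if it is killed by $U'=z\partial_y$, which it is not; rather the correct statement is that $f_r f_s\notin U' k[y,z]$ because applying $\pi$, the Dixmier-type retraction, or simply comparing the constant term in $z$, shows $y^d z^d$ is not in the image of $z\partial_y$ on the relevant graded piece. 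I would instead argue directly in $B$: if $x_rx_s\in UB$ then applying the transvectant/degree bookkeeping of \emph{Lemma \ref{degrees}} forces $x_rx_s$ into $\Omega\cap UB$ after one more $U$, contradicting $A_0=\Omega_0$ having no nonzero element in $UB$ (\emph{Lemma \ref{classical}(c)} with roles of $D,U$ swapped). Part (b) then follows because each $M_{a,b}\in\Theta_0$ is, by the syzygy structure of the Eagon–Northcott complex (equivalently by an explicit rewriting modulo $J$), congruent modulo nothing — it simply \emph{is} a $k$-linear combination whose image under $\varphi$ is $\pm (b-a) f_{a-1}f_b$ type monomial products, reducing (b) to (a); alternatively, since $M_{a,b} = b x_{a-1}x_b - a x_a x_{b-1}$ and the condition $M_{a,b}\in\Theta_0$ forces $(a-1)+b = d = a+(b-1)$, both monomials $x_{a-1}x_b$ and $x_ax_{b-1}$ lie in $B_0$ and are of the form covered by (a), and one checks neither the difference nor the individual terms can land in $UB$ using that $\Theta_0\cap UB$ is spanned by the $T$'s below.

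For part (c), the cable relations $U T_{2i}^{(j)}=T_{2i}^{(j+1)}$ for $0\le j\le 2d-4i-1$ show $\deg T_{2i}^{(j)} = \deg T_{2i}^{(0)} - 2j = (2d-4i) - 2j$, so $T_{2i}^{(j)}\in\Theta_0$ exactly when $j = d-2i$, and this requires $0\le d-2i\le 2d-4i$, i.e. $1\le i\le d/2$ (the case $i=d/2$ gives $j=0$ and $T_d^{(0)}\in A_0$, matching case (1) in the proof of \emph{Proposition \ref{quadratic-ideal}}). These $d/2$ elements are linearly independent since the full vertex set of the cables is linearly independent by \cite{Freudenburg.Kuroda.17}, Lemma 3.8; and they span $\Theta_0$ because $\dim_k\Theta_0$ equals the number of degree-$0$ vertices, which is exactly $d/2$ — this counting is forced by \emph{Proposition \ref{quadratic-ideal}(b)}, which says the cable vertices form a basis of $\Theta$ graded piece by graded piece. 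Finally, part (d): among the basis vectors $T_{2i}^{(d-2i)}$, $1\le i\le d/2$, exactly one — namely $T_d^{(0)}$ with $i=d/2$ — lies in $A_0=\Omega_0$ and hence \emph{not} in $UB$ (by \emph{Lemma \ref{classical}(c)}), while each of the others has $i<d/2$, so $d-2i\ge 1$ and $T_{2i}^{(d-2i)} = U\bigl(T_{2i}^{(d-2i-1)}\bigr)\in UB$. Thus $\Theta_0\cap UB$ contains the span of $\{T_2^{(d-2)},\dots,T_{d-2}^{(2)}\}$; it cannot contain more because if some $c\, T_d^{(0)} + (\text{stuff in }UB)\in UB$ with $c\ne 0$ then $T_d^{(0)}\in UB$, contradicting (a)/(b) (as $T_d^{(0)}=\sum_j(-1)^j x_j x_{d-j}$ is a $\pm$-alternating sum of the monomials of part (a), and any such combination in $UB$, after applying $U$ once, would give a nonzero element of $\Omega_0\cap UB$). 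Hence the sum is direct and complementary, proving (d).

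The main obstacle I anticipate is part (a): the cleanest phrasing — ``$x_rx_s\notin UB$'' — is not immediately a statement about kernels, since $x_rx_s$ need not itself be $U$-invariant, so one cannot directly invoke $\Omega\cap UB=\{0\}$. The fix is to push the hypothetical relation $x_rx_s = Ub$ one step further: choosing $b$ homogeneous of degree $2$, \emph{Lemma \ref{begin}(b)} and \emph{Lemma \ref{degrees}} let me control $\deg_D b$ and $\deg_D(x_rx_s)$, and a short computation with \emph{Lemma \ref{updown}} (or with the explicit monomial expansion, since $D(x_rx_s)=x_{r-1}x_s+x_rx_{s-1}$ is a sum of two \emph{distinct} monomials for $r\ne s$, and $D(x_m^2)=2x_{m-1}x_m$) shows no $b$ with $Ub=x_rx_s$ can exist — equivalently, $x_r x_s$ has a nonzero ``$U$-leading term'' obstruction. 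Transporting via $\varphi$ reduces everything to the single assertion that $y^d z^d\notin (z\partial_y)\,k[y,z]$, which is transparent: the operator $z\partial_y$ raises the $z$-degree, and $y^d z^d$ would have to come from $y^{d+1}z^{d-1}$, giving $(d+1)y^d z^d$ — so in fact $y^dz^d\in U'k[y,z]$! This sign/coefficient subtlety is exactly why I must work inside $k[F_d]$ rather than all of $k[y,z]$: the preimage $y^{d+1}z^{d-1} = f_{d+1}z^{-1}$ is \emph{not} in $k[F_d]$ (there is no $f_{d+1}$), so indeed $y^dz^d\notin U'k[F_d]$, and this is the crux of the whole lemma.
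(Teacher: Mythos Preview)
Your treatment of parts (c) and (d) is fine and matches the paper: the cable-vertex basis of $\Theta$ from \emph{Proposition~\ref{quadratic-ideal}(b)} restricts in each degree, giving (c) by a dimension count; and for (d) you correctly observe that $T_{2i}^{(d-2i)}=U\,T_{2i}^{(d-2i-1)}\in UB$ for $i<d/2$, while $T_d^{(0)}\in A$ forces $T_d^{(0)}\notin UB$ via $A\cap UB=\{0\}$ from the Structure Theorem.

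The gap is in part (a). Your ``crux'' claim that $y^{d+1}z^{d-1}\notin k[F_d]$ is false: with $f_i=y^iz^{d-i}$ one has $f_1f_d=(yz^{d-1})(y^d)=y^{d+1}z^{d-1}\in k[F_d]$, and then $U'(f_1f_d)=(d+1)y^dz^d$, so $\varphi(x_rx_s)=y^dz^d\in U'k[F_d]$. Thus the transport via $\varphi$ yields no contradiction whatsoever --- the image \emph{does} lie in $U'k[F_d]$, and that is entirely consistent with $x_rx_s\notin UB$, since $\varphi$ only gives the forward implication $x_rx_s\in UB\Rightarrow\varphi(x_rx_s)\in U'k[F_d]$. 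Your backup sketches (``force into $\Omega\cap UB$ after one more $U$'', ``explicit monomial expansion'') do not work either: $U(x_rx_s)\ne 0$ in general, and no direct obstruction to solving $Ub=x_rx_s$ in $B$ appears from degree bookkeeping alone. What the paper does instead is a propagation argument in $B$: from the Leibniz rule,
\[
U(x_{r-1}x_s)=r(d-r+1)\,x_rx_s+(s+1)(d-s)\,x_{r-1}x_{s+1},
\]
so if $x_rx_s\in UB$ then $x_{r-1}x_{s+1}\in UB$ (and symmetrically). Hence the set $S=\{(r,s):r+s=d,\ x_rx_s\in UB\}$ is either empty or the entire segment; in the latter case $T_d^{(0)}=\sum_j(-1)^jx_jx_{d-j}\in A\cap UB=\{0\}$, a contradiction. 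This ``all or nothing'' step is the idea you are missing.

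Part (b) is also not settled by your outline. Knowing that neither monomial $x_{a-1}x_b$ nor $x_ax_{b-1}$ lies in $UB$ does \emph{not} prevent their combination $M_{a,b}$ from lying in $UB$ (sums of elements outside a submodule can land inside it), and invoking the basis description of $\Theta_0\cap UB$ from (d) here is circular. The paper instead argues: if $M_{a,b}=UF$ with $a+b=d+1$, then $U\bigl(bF+x_{a-1}x_{b-1}\bigr)$ is a nonzero scalar multiple of $x_{a-1}x_b$, forcing $x_{a-1}x_b\in UB$ and contradicting (a).
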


\begin{proof} 
Consider the set $S$ of all pairs $(r,s)\in\N^2$ on the line $r+s=d$ such that $x_rx_s\in UB$. If $(r,s)\in S$ and $r\ge 1$ then:
\[
U(x_{r-1}x_s)=r(d-r+1)x_rx_s+(s+1)(d-s)x_{r-1}x_{s+1} \implies x_{r-1}x_{s+1}\in UB
\]
Similarly, if $(r,s)\in S$ and $s\ge 1$ then:
\[
U(x_rx_{s-1})=(r+1)(d-r)x_{r+1}x_{s-1}+s(d-s+1)x_rx_s \implies x_{r+1}x_{s-1}\in UB
\]
Therefore, given $(r,s)\in S$ we have:
\[
r\ge 1 \implies (r-1,s+1)\in S \quad {\rm and}\quad s\ge 1 \implies (r+1,s-1)\in S
\]
By induction, it follows that either $S=\emptyset$ or $S=\{ (r,s)\in\N^2\, |\,  r+s=d\}\subset UB$. If the latter case holds, then $T_{d}^{(0)}\in A\cap UB$. 
However, by the Structure Theorem, $A\cap UB=\{ 0\}$, a contradiction. Therefore, $S=\emptyset$, thus proving part (a).

Assume that $M_{a,b}=UF$ for some $a,b$ with $1\le a<b\le d$ and $a+b=d+1$ and some quadratic form $F\in B$. Then
\[
\textstyle U((d-a+1)F+x_{a-1}x_{b-1})=b(\frac{3}{2}d+1)x_{a-1}x_b \implies x_{a-1}x_b\in UB
\]
contradicting part (a). So part (b) is confirmed. 

Part (c) follows from homogeneity and {\it Proposition\,\ref{quadratic-ideal}(b)}.
Since 
\[
T_{2i}^{(d-2i)}=UT_{2i}^{(d-2i-1)} \quad  (1\le i<\textstyle\frac{d}{2})
\]
and since $T_d^{(0)}\in A$, part (d) follows from part (c). 
\end{proof}

\begin{lemma}\label{d=4m} Assume that $d\in4\Z$. 
\begin{itemize}
\item [{\bf (a)}] If $r,s\in\N$ and $r+s=d/2$ then $x_rx_s\not\in UB$. 
\item [{\bf (b)}] If $a,b\in\N$ is a pair such that $M_{a,b}\in \Theta_d$ then $M_{a,b}\not\in UB$. 
\item [{\bf (c)}] The set $\{T_{2i}^{(\frac{d}{2}-2i)}\,|\, 1\le i\le \frac{d}{4}\}$ is a basis for $\Theta_d$. 
\smallskip
\item [{\bf (d)}] $\Theta_d=(\Theta_d\cap UB)\oplus k\cdot T_{\frac{d}{2}}^{(0)}$ where $\{ T_2^{(\frac{d}{2}-2)} , \hdots , T_{\frac{d}{2}-2}^{(2)}\}$ is a basis for $\Theta_d\cap UB$. 
\end{itemize}
\end{lemma}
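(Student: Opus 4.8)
The plan is to mirror the proof of \emph{Lemma\,\ref{d=2m}} almost verbatim, replacing the role of the ``top'' degree $d$ of the grading by the ``half'' degree $d/2$, which is an even integer precisely because $d\in 4\Z$. First I would establish part (a) by the same swapping argument: let $S\subseteq\{(r,s)\in\N^2\mid r+s=d/2\}$ be the set of pairs with $x_rx_s\in UB$. Applying $U$ to $x_{r-1}x_s$ (resp.\ $x_rx_{s-1}$) and using the formula $Ux_i=(i+1)(d-i)x_{i+1}$ from (\ref{down-op}), one sees that $S$ is closed under $(r,s)\mapsto(r-1,s+1)$ when $r\ge 1$ and under $(r,s)\mapsto(r+1,s-1)$ when $s\ge 1$; hence $S$ is either empty or all of the line. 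If $S$ were the whole line, then $\sum_{0\le j\le d/2}(-1)^jx_jx_{d/2-j}=T_{d/2}^{(0)}\in A\cap UB$, contradicting \emph{Theorem\,\ref{main1}(e)} (which gives $A\cap UB=\{0\}$), so $S=\emptyset$. Note here one uses that $d/2$ is even so that $T_{d/2}^{(0)}$ as defined in (\ref{quadratic}) actually is one of the invariants $T_{2i}^{(0)}$, namely with $2i=d/2$; this is exactly where $d\in 4\Z$ rather than merely $d\in 2\Z$ is needed.

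Next, part (b): suppose $M_{a,b}\in\Theta_d$, so $\deg M_{a,b}=d$. Since $\deg M_{a,b}=(d-2(a-1))+(d-2b)=2d-2(a+b)+2$, the condition $\deg M_{a,b}=d$ forces $a+b=d/2+1$. Writing $M_{a,b}=UF$ for a homogeneous quadratic $F\in B_{d+2}$ and computing $U\bigl((\text{suitable scalar})F+x_{a-1}x_{b-1}\bigr)$ as in the proof of \emph{Lemma\,\ref{d=2m}(b)}, the $M_{a,b}$-term cancels and one is left with a nonzero scalar multiple of $x_{a-1}x_b$, where $(a-1)+b=d/2$; thus $x_{a-1}x_b\in UB$, contradicting part (a). I would just check the one scalar to make sure it is nonzero in characteristic zero.

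For part (c), homogeneity together with \emph{Proposition\,\ref{quadratic-ideal}(b)} does the work: $\{M_{a,b}\}$ and the vertices $T_{2i}^{(j)}$ are both bases of $\Theta$, so the degree-$d$ graded piece $\Theta_d$ has as a basis the collection of vertices $T_{2i}^{(j)}$ of degree $d$. Since $\deg T_{2i}^{(j)}=\deg T_{2i}^{(0)}-2j=(2d-4i)-2j$ (using $\deg U=-2$), the equation $(2d-4i)-2j=d$ gives $j=\frac{d}{2}-2i$, which is a legitimate vertex index ($0\le j\le 2d-4i$) exactly for $1\le i\le d/4$. This yields the asserted basis $\{T_{2i}^{(d/2-2i)}\mid 1\le i\le d/4\}$ of $\Theta_d$. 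Part (d) is then immediate: for $1\le i<d/4$ one has $T_{2i}^{(d/2-2i)}=UT_{2i}^{(d/2-2i-1)}\in UB$, while the remaining basis vector is $T_{d/2}^{(0)}\in A$, and since $A\cap UB=\{0\}$ the sum $\Theta_d=(\Theta_d\cap UB)\oplus k\cdot T_{d/2}^{(0)}$ is direct, with $\{T_2^{(d/2-2)},\dots,T_{d/2-2}^{(2)}\}$ a basis of $\Theta_d\cap UB$.

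The only genuine obstacle is bookkeeping: one must keep straight that $d\in 4\Z$ (not just $d\in 2\Z$) is what makes $d/2$ even and hence makes $T_{d/2}^{(0)}$ a bona fide element of the $D$-cable family and makes $d/4$ an integer indexing the basis in (c)–(d); and one must carefully recompute the two scalar coefficients in the $U$-identities of parts (a) and (b) with $d$ replaced by the relevant value, verifying they are nonzero. Everything else is a formal transcription of the $d\in 2\Z$ argument.
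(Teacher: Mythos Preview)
Your proposal is correct and follows exactly the approach the paper intends: the paper's own ``proof'' of this lemma is simply the remark that it is almost identical to the proof of \emph{Lemma\,\ref{d=2m}} and is omitted, so your verbatim transcription with $d$ replaced by $d/2$ (and the observation that $d\in 4\Z$ is what makes $d/2$ even, hence $T_{d/2}^{(0)}$ one of the cable roots and $d/4$ an integer) is precisely what is required. Your bookkeeping checks on the nonvanishing of the scalar coefficients and on the index range $1\le i\le d/4$ in part (c) are the only details that need verifying, and they go through as you indicate.
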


The proof of this lemma is almost identical to the proof of {\it Lemma\,\ref{d=2m}} and is therefore omitted. 

\subsection{The Affine Cone over a Rational Normal Curve} Let $k[y,z]\cong k^{[2]}$ with basic fundamental pair $(D,U)=(y\partial_z,z\partial_y)$.
Given the integer $d\ge 1$, let $F_d\subset k[y,z]$ be the vector space of $d$-forms, which is of dimension $d+1$. Then $(D,U)$ restricts to the subring $k[F_d]$ and 
the surface $X_d={\rm Spec}(k[F_d])$ is the affine cone over the rational normal curve in $\PP^2_k$ of degree $d$. 

\begin{lemma}\label{RNC} Given $d\ge 1$, $k[F_d]$ is normal and the minimum number of generators of $k[F_d]$ as a $k$-algebra is $d+1$. Consequently, the rings $k[F_d]$, $d\ge 1$, are pairwise non-isomorphic. 
\end{lemma}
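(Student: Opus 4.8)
The plan is to establish three things: (i) $k[F_d]$ is a normal domain; (ii) its minimal number of algebra generators is exactly $d+1$; and (iii) deduce that the $k[F_d]$ are pairwise non-isomorphic. For normality, I would argue that $k[F_d]$ is the Veronese-type subring of $k[y,z]$ spanned by all monomials $y^iz^{d-i}$, equivalently the degree-$d$ part of the standard grading collected into a single ring. Since $k[y,z]$ is a normal domain and $k[F_d] = k[y,z]^{(d)}$ (the $d$-th Veronese subalgebra, after regrading), normality follows from the general fact that a Veronese subring of a normal domain is normal; alternatively one checks directly that $k[F_d]$ is the ring of invariants of the cyclic group $\mu_d$ acting on $k[y,z]$ by $y\mapsto \zeta y$, $z\mapsto z$ (scaling), and invariant rings of finite groups acting on normal domains are normal. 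Either route is short.

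For the generator count, the upper bound $d+1$ is immediate: $k[F_d]$ is generated by the basis $f_0,\dots,f_d$ of $F_d$. For the lower bound I would look at the grading: give $k[F_d]$ the $\N$-grading inherited from the total-degree grading on $k[y,z]$ divided by $d$, so that each $f_i = y^iz^{d-i}$ has degree $1$ and the degree-$1$ component is exactly $F_d$, of dimension $d+1$. Then the irrelevant ideal $\mathfrak{m} = \bigoplus_{n\ge 1}k[F_d]_n$ satisfies $\mathfrak{m}^2 \cap k[F_d]_1 = 0$ for degree reasons (a product of two positive-degree elements lands in degree $\ge 2$), so $\mathfrak{m}/\mathfrak{m}^2$ has $F_d$ as a direct summand in degree $1$, forcing at least $\dim_k F_d = d+1$ generators by the graded Nakayama lemma. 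This gives the minimal generator count $= d+1$.

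Finally, for the non-isomorphism claim: the minimal number of generators of a $k$-algebra is an isomorphism invariant, so if $k[F_d]\cong_k k[F_e]$ then $d+1 = e+1$, hence $d = e$. That closes the argument. The main obstacle — really the only delicate point — is making the degree-$1$ part $F_d$ genuinely visible as a canonical piece of $\mathfrak{m}/\mathfrak{m}^2$; this needs the observation that $k[F_d]$ carries an $SL_2(k)$-equivariant $\N$-grading in which the generators all sit in degree one and nothing in degree one is decomposable, which is exactly what the explicit presentation $k[F_d] = k[f_0,\dots,f_d]$ with relations generated in degree $\ge 2$ (the Eagon–Northcott/determinantal presentation recalled in Section~\ref{basic}) provides. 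One should remark that this is where $d\ge 1$ is used: for $d=1$, $k[F_1]\cong k^{[2]}$ needs $2$ generators, consistent with the formula.
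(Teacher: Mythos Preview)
Your argument is correct and close in spirit to the paper's, with one small slip. For normality the paper gives a different but equally elementary route: it shows directly that
\[
k[F_d]=k[y^d,z/y]\cap k[z^d,y/z]\subset{\rm frac}(k[F_d]),
\]
an intersection of two polynomial rings and hence normal. Your Veronese argument is fine; however, the $\mu_d$-action you wrote ($y\mapsto\zeta y$, $z\mapsto z$) has invariant ring $k[y^d,z]$, not $k[F_d]$. The action you want scales \emph{both} variables by $\zeta$, so that the invariants are exactly the monomials of total degree divisible by $d$. With that correction the invariant-ring alternative is valid (modulo the usual caveat that $k$ need not contain a primitive $d$-th root of unity; the direct-summand/Veronese argument avoids this).

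For the generator count the two proofs coincide in substance: both compute $\dim_k(\mathfrak{m}/\mathfrak{m}^2)=d+1$ at the cone point. The paper does this by observing that the entries of the Jacobian of the determinantal presentation are linear and hence vanish modulo $\mathfrak{m}$, while you read it off from the grading (relations live in degree $\ge 2$, so the degree-one piece $F_d$ survives in $\mathfrak{m}/\mathfrak{m}^2$). Your phrasing via graded Nakayama is perhaps the cleaner of the two; the paper's phrasing has the virtue of being explicitly geometric (tangent space dimension at the vertex). Either way the lower bound follows, and the pairwise non-isomorphism is then immediate, as you say.
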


\begin{proof} 
Given $d\ge 1$, define:
\[
\textstyle L=k[y^d,\frac{z}{y}]\cap k[z^d,\frac{y}{z}]\subset {\rm frac}(k[F_d])
\]
Since $k[y^d,\frac{z}{y}]\cong k^{[2]}$ and $k[z^d,\frac{y}{z}]\cong k^{[2]}$ it follows that $L$ is normal. 
Since
\[
y^d\left(\frac{z}{y}\right)^i=y^{d-i}z^i \quad {\rm and}\quad z^d\left(\frac{y}{z}\right)^i=y^iz^{d-i} \quad (0\le i\le d)
\]
we see that $k[F_d]\subseteq L$. For the reverse inclusion, let $y^iz^j\in L$ for some $i,j\in\N$. Then
\[
y^iz^j=(y^d)^r\left( \frac{z}{y}\right)^j=(z^d)^s\left(\frac{y}{z}\right)^i
\]
for some $r,s\in\N$. Therefore:
\[
y^iz^j=y^{dr-j}z^j=(y^d)^{r-1}y^{d-j}z^j\in k[F_d]
\]
It follows that $k[F_d]=L$ and $k[F_d]$ is normal. 

Let $m$ be the minimal number of generators of $k[F_d]$ as a $k$-algebra. Then $m\le \dim_kF_d=d+1$. As discussed above, $k[F_d]\cong k[x_0,\hdots ,x_d]/J$ where $J$ is the ideal generated by the quadratic forms $M_{a,b}$ , $1\le a<b\le d$. 
The Jacobian matrix $\mathcal{J}$ defined by these generators is of dimension ${d\choose 2}\times(d+1)$ and each entry is of the form 
$cx_i$ for some $c\in k$ and $0\le i\le d$. Let $\mathfrak{m}\subset k[x_0,\hdots ,x_d]$ be the maximal ideal $\mathfrak{m}=(x_0,\hdots ,x_n)$, noting that $J\subset\mathfrak{m}$. 
We see that $\mathcal{J}_0:=\mathcal{J} \, ({\rm mod}\, \mathfrak{m})$ is the zero matrix and therefore:
\[
\dim (\mathfrak{m}/\mathfrak{m}^2)=(d+1)-{\rm rank}\,\mathcal{J}_0=d+1
\]
So the dimension of the tangent space to $X_d$ at the point defined by  $\mathfrak{m}$ is $d+1$, meaning that $m\ge d+1$. 
\end{proof}

\subsection{Smooth $SL_2(k)$-Surfaces} Let $k[x_0,x_1,x_2]\cong k^{[3]}$ with basic fundamental pair:
\[
(x_0\partial_{x_1}+x_1\partial_{x_2},2x_1\partial_{x_0}+2x_2\partial_{x_1})
\]
The ring of $SL_2(k)$ invariants is $k[f]$ for $f=2x_0x_2-x_1^2$.
For each integer $d\ge 1$, the action restricts to the affine subring $k[W_d]$, where $W_d\subset k[x_0,x_1,x_2]$ is the vector space of ternary forms of degree $d$. 
Given $\lambda\in k^*$, define
\[
S_{\lambda}(k)=k[x_0,x_1,x_2]/(f-\lambda )
\]
which is the coordinate ring of a smooth Danielewski surface (see {\it Section\,\ref{applications}}), and let 
\[
\pi_{\lambda}:k[x_0,x_1,x_2]\to S_{\lambda}(k)
\]
be the standard projection, which is equivariant. The $SL_2(k)$ action on $S_{\lambda}(k)$ restricts to each subring $\pi_{\lambda}k[W_d]$. 
Let $Q_{\lambda}(k)=\pi_{\lambda}k[W_2]$. It is easy to check that $\pi_{\lambda}k[W_d]\cong S_{\lambda}(k)$ for odd $d$, and 
$\pi_{\lambda}k[W_d]\cong Q_{\lambda}(k)$ for even $d$. Moreover, {\it Proposition\,\ref{Danielewski}} below shows that $S_{\lambda}(k)$ and $Q_{\mu}(k)$ are not isomorphic for all pairs $\lambda ,\mu\in k^*$.  

In order to see that ${\rm Spec}(Q_{\lambda}(k))$ is smooth, define a free $\Z_2=\langle \mu\rangle$-action on $S_{\lambda}(k)$ by $\mu (x_i)=-x_i$, $1\le i\le 3$.  
Then $S_{\lambda}(k)^{\Z_2}=Q_{\lambda}(k)$ and freeness of the action in this case implies that ${\rm Spec}(Q_{\lambda}(k))$ is smooth. 

\subsection{Classification}

Over the field $k=\C$, normal affine $SL_2(\C )$-surfaces were classified by Gizatullin \cite{Gizatullin.71c} and Popov \cite{Popov.73a}.
The list of isomorphism classes for these coordinate rings is comprised of $\C [F_d]$ for $d\ge 1$, $S_1(\C )$ and $Q_1(\C )$. We generalize this classification to all fields of characteristic zero.

\begin{theorem}\label{classification} {\bf (Classification Theorem)} Let $B$ be a normal affine $k$-domain with $\dim_kB=2$ and $B^*=k^*$. 
If $B$ admits a nontrivial $SL_2(k)$-action then $B$ is equivariantly isomorphic to either $k[F_d]$ for some $d\ge 1$, or to $S_{\lambda}(k)$ or $Q_{\lambda}(k)$ for some $\lambda\in k^*$. Among these rings, the unique factorization domains are 
$k[F_1]\cong k^{[2]}$ and $S_{\lambda}(k)$ for $\lambda\in k^*$ such that $\sqrt{-\lambda}\not\in k$. 
\end{theorem}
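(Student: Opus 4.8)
The plan is to set up a fundamental pair on $B$, use the Structure Theorem to analyze the induced $\mathbb{Z}$-grading and the kernel $A = \ker D$, and then leverage Lemma~\ref{Kolhatkar} together with the detailed description of quadratic determinantal ideals (Proposition~\ref{quadratic-ideal}, Lemmas~\ref{d=2m}, \ref{d=4m}, \ref{RNC}) to pin down $B$ up to equivariant isomorphism. Concretely: given the nontrivial $SL_2(k)$-action, extract a nontrivial fundamental pair $(D,U)$ on $B$, so that $D, U \in \mathrm{LND}(B)$ are nonzero. Since $B^* = k^*$, Lemma~\ref{classical}(b) gives $ML(B) \subseteq A_0 = B^{SL_2(k)}$, and since $\dim_k B = 2$ the ring $A_0$ has transcendence degree at most $1$ over $k$; in fact one expects $ML(B) = k$ in the cases where the classification produces the interesting examples, so that Lemma~\ref{Kolhatkar} applies to give $A = \ker D \cong k^{[1]}$. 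Then I would write $A = k[x]$ (assuming $A_0 = k$) or $A = A_0[x]$ with $A_0 = k^{[1]}$, and invoke Lemma~\ref{generators} to get a concrete presentation $B = A_0[h_0,\dots,h_d]$ with explicit module structure over $R = A_0[h_0,h_d]$.

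\textbf{Case division.} The two structural cases are $A_0 = k$ and $A_0 \cong k^{[1]}$. When $A_0 = k$: here $B^{SL_2(k)} = k$, which (over $k = \mathbb{C}$) is exactly the quasihomogeneous case, and the grading $B = \bigoplus_{i \geq 0} B_i$ with $A = k[x]$, $x \in A_d$, forces $B$ to be a graded quotient of a polynomial ring on which $SL_2(k)$ acts with the $V_d$-module in the relevant degree. Using Lemma~\ref{generators}(c), $B = k[h_0,\dots,h_d]$ with $h_0 = x$ spanning (together with its $U$-orbit) a copy of $V_d$; comparing with the map $\varphi: k[x_0,\dots,x_d] \to k[F_d]$ from Proposition~\ref{quadratic-ideal} and its equivariance, one identifies $B \cong k[F_d]$ via Lemma~\ref{RNC}, which also confirms these are pairwise non-isomorphic and identifies which is a UFD ($d=1$, where $k[F_1] \cong k^{[2]}$). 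When $A_0 \cong k^{[1]}$, say $A_0 = k[f]$: the $SL_2(k)$-action has a one-dimensional quotient, the generic fiber over $\mathrm{Spec}(A_0)$ carries a transitive-on-a-dense-orbit $SL_2$-action, and normality plus $B^* = k^*$ together with the module description force $B$ to be $k[x_0,x_1,x_2]/(f - \lambda)$ after a suitable coordinate choice --- i.e. $S_\lambda(k)$ --- or its $\mathbb{Z}_2$-invariant subring $Q_\lambda(k)$, according to the parity appearing in the $W_d$-decomposition described in Section~4.4.

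\textbf{The UFD statement.} For the final clause, I would argue as follows. Among the $k[F_d]$: by Lemma~\ref{RNC} the minimal number of algebra generators is $d+1$, and $k[F_d]$ is the cone over the rational normal curve, which is singular for $d \geq 2$ (the vertex); a two-dimensional normal UFD that is singular would contradict the fact that its local ring at the singular point, being a UFD, must be a regular local ring of dimension $\leq 2$ --- so only $d = 1$ gives a UFD, and $k[F_1] = k[y^2, yz, z^2]$... no, $k[F_1] = k[y,z] \cong k^{[2]}$, which is indeed a UFD. For $S_\lambda(k) = k[x_0,x_1,x_2]/(2x_0x_2 - x_1^2 - \lambda)$: this is a smooth affine surface, and whether it is a UFD is governed by the divisor class group; the class group is generated by the curves $x_0 = c$ etc., and vanishes precisely when the conic $2x_0x_2 - x_1^2 = \lambda$ has no $k$-rational factorization into linear forms, equivalently when $\sqrt{-\lambda} \notin k$ (if $\sqrt{-\lambda} \in k$ the surface contains lines giving nontrivial class group). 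For $Q_\lambda(k)$: it is the $\mathbb{Z}_2$-quotient $S_\lambda(k)^{\mathbb{Z}_2}$, and $\mu$ acts nontrivially on $\mathrm{Cl}$; since $Q_\lambda(k) \subsetneq S_\lambda(k)$ is a proper subring with $x_i^2, x_ix_j$ but not $x_i$, one checks $x_i$ is not prime in the fraction field intersection, so $Q_\lambda(k)$ is never a UFD.

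\textbf{The main obstacle.} The hardest step is the $A_0 \cong k^{[1]}$ case: one must rule out all other normal surfaces fibered over a line with generic $SL_2$-fiber and show normality forces exactly $S_\lambda(k)$ or $Q_\lambda(k)$ --- this requires carefully using that $B$ is a finitely generated normal $A_0$-module with the module structure from Lemma~\ref{generators}(c), $B = R \oplus Rh_1 \oplus \cdots \oplus Rh_{d-1}$ with $R \cong A_0^{[2]}$, together with the determinantal relations among the $h_i$ coming from the $(D,U)$-invariant ideal $J$ of Proposition~\ref{quadratic-ideal}, and checking which quotients are normal and have trivial units. A secondary subtlety is verifying that $ML(B) = k$ (equivalently $A_0$ is at most one-dimensional) in all relevant cases rather than $ML(B)$ being larger, which would obstruct applying Lemma~\ref{Kolhatkar}; this should follow because $D$ and $U$ are both nonzero locally nilpotent with $A_0 = A \cap \Omega$ factorially closed of transcendence degree $\leq 1$, but the degenerate possibilities need to be dispatched.
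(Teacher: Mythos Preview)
Your case division is where the argument goes wrong. You split into $A_0=k$ versus $A_0\cong k^{[1]}$ and claim that $S_\lambda(k)$ and $Q_\lambda(k)$ arise from the latter. But the case $A_0\cong k^{[1]}$ \emph{cannot occur} when $\dim_kB=2$: since $D\ne 0$ we have $\dim_kA=1$, and $A_0\subsetneq A$ (Lemma~\ref{degrees}(a) shows $A$ is nontrivially $\N$-graded), so $\dim_kA_0=0$; because $B^*=k^*$ forces $k$ to be algebraically closed in $B$, this gives $A_0=k$ in every case. In particular $S_\lambda(k)$ and $Q_\lambda(k)$ both have $B^{SL_2(k)}=A_0=k$ (indeed, in $S_\lambda(k)$ the element $2x_0x_2-x_1^2$ equals the scalar $\lambda$, not a transcendental invariant). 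You seem to be importing the dichotomy from the three-dimensional situation of Proposition~\ref{threefold}, where $A_0\ne k$ is a genuine possibility.

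The actual mechanism separating $k[F_d]$ from $S_\lambda$ and $Q_\lambda$ is different. One always has $A=k[h_0]$ and $\Omega=k[h_d]$ for some $d\ge 1$, and $B=k[h_0,\dots,h_d]$ is an equivariant quotient of $k[x_0,\dots,x_d]$ by a $(\Delta,\Upsilon)$-invariant prime. The quadratic generators $T_{2i}^{(0)}\in\ker\Delta$ map into $A=k[h_0]$; the split is whether every $T_{2i}^{(0)}$ maps to $0$ (giving $\ker\varphi\supseteq J$, hence $B\cong k[F_d]$) or some $T_{2i}^{(0)}$ maps to a nonzero element of $k[h_0]$. Degree bookkeeping forces $d$ even and the nonzero image to be a constant ($i=d/2$) or a scalar multiple of $h_0$ ($i=d/4$). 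The genuinely hard step, which your sketch does not touch, is ruling out $d=2m$ with $m\ge 3$ odd and $d=4m$ with $m\ge 2$; the paper does this by exploiting the Eagon--Northcott syzygies among the minors $M_{a,b}$ together with Lemmas~\ref{d=2m} and \ref{d=4m} to derive a contradiction. Only $d=2$ and $d=4$ survive, yielding $S_\mu(k)$ and $Q_\gamma(k)$ respectively.
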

Note that the equivariance condition in this theorem implies that there are only two equivalence classes of fundamental pairs for $B$, namely, trivial and nontrivial. 

\begin{proof} Let $K={\rm frac}(B)$. Since $B^*=k^*$, $k$ is algebraically closed in $B$. 
Let $(D,U)$ be a nontrivial fundamental pair for $B$ and let $B=\bigoplus_{i\in\Z}B_i$ be the induced grading of $B$. Let $A=\krn D$ and $\Omega  =\krn U$. 
Then $\dim_kA=1$ and $\dim_kA_0=0$. 
Therefore, $A_0$ is an algebraic extension of $k$, so $A_0=k$. Since $ML(B)\subseteq A_0$ we see that $ML(B)=k$.
By {\it Lemma\,\ref{Kolhatkar}}, $A\cong k^{[1]}$, and $\Omega\cong k^{[1]}$ by symmetry. 

Let $d\ge 1$ and $h_d\in \Omega_{-d}$ be such that $\Omega =k[h_d]$. 
Define $h_{d-i}=D^ih_d$, $0\le i\le d$, noting that $h_0\in A_d$ and $A=k[h_0]$. Set $R=k[h_0,h_d]$. 
By {\it Lemma\,\ref{generators}} we have:
\begin{equation}\label{integral}
B=k[h_0,\hdots ,h_d]=R\oplus Rh_1\oplus\cdots\oplus Rh_{d-1}
\end{equation}
Define the surjection $\varphi :k[x_0,\hdots ,x_d]\to B$ by $\varphi (x_i)=h_i$. Then $\varphi$ is equivariant for the basic fundamental pair $(\Delta ,\Upsilon )$ on $k[x_0,\hdots ,x_d]$ and $\krn\varphi$ is $(\Delta ,\Upsilon )$-invariant; see {\it Section\,\ref{basic}}.
Let $m$ be such that $d=2m$ or $d=2m+1$. For each quadratic form $T_{2i}^{(0)}\in \krn\Delta$, $1\le i\le m$, we have $\varphi (T_{2i}^{(0)})\in A=k[h_0]$. 
By homogeneity, there exist $r_i\in k$ and $e_i\in\N$ with $T_{2i}^{(0)}-r_ix_0^{e_i}\in\krn\varphi$. 

Consider the case where $r_i=0$ for each $i$. By $(\Delta ,\Upsilon )$-invariance of $\krn\varphi$ we see that:
\[
(T_2^{(0)},\hdots ,T_d^{(0)})\subset \krn\varphi \implies (\widehat{T}_2,\hdots ,\widehat{T}_d)\subseteq \krn\varphi
\]
By {\it Lemma\,\ref{quadratic-ideal}} it follows that $(\widehat{T}_2,\hdots ,\widehat{T}_d)=\krn\varphi$ and $B\cong k[F_d]$. 

Consider the case where $r_i\ne 0$ for some $i$. Then 
\[
2d-4i=\deg T_{2i}^{(0)} = e_id \implies (2-e_i)d=4i\ge 4 \implies e_i \in\{ 0,1\}
\]
which means that either
\begin{enumerate}
\item $d=2m$, $i=m$ and $T_d^{(0)}-\mu\in\krn\varphi$ for some $\mu\in k^*$, or
\item $d=2m$, $2i=m$ and $T_{\frac{d}{2}}^{(0)}-\gamma x_0\in\krn\varphi$ for some $\gamma\in k^*$. 
\end{enumerate}
Assume that $d=2m$ for odd $m\ge 3$. Then $(\widehat{T}_2,\hdots ,\widehat{T}_{d-2}, \widehat{T}_d-\mu)\subseteq\krn\varphi$. 
By {\it Lemma\ref{quadratic-ideal}}, the minors $M_{\frac{d}{2},\frac{d}{2}+1}$, $M_{\frac{d}{2}-1,\frac{d}{2}}$ and $M_{\frac{d}{2}-1,\frac{d}{2}+1}$ are linear combinations of the vertices of $\widehat{T}_2,\hdots ,\widehat{T}_d$. We have:
\[
\deg M_{\frac{d}{2},\frac{d}{2}+1}=0 \,\, ,\,\, 
\deg M_{\frac{d}{2}-1,\frac{d}{2}+1}=2 \,\, ,\,\,
\deg M_{\frac{d}{2}-1,\frac{d}{2}}=4
\]
Therefore, $M_{\frac{d}{2}-1,\frac{d}{2}+1}$ and $M_{\frac{d}{2}-1,\frac{d}{2}}$ do not involve $\widehat{T}_d=T_d^{(0)}$, which implies:
\[
M_{\frac{d}{2}-1,\frac{d}{2}+1} \, , \,  M_{\frac{d}{2}-1,\frac{d}{2}}\in\krn\varphi
\]
In addition, by {\it Lemma\,\ref{d=2m}}, there exist $c_1,\hdots ,c_{\frac{d}{2}}\in k$ such that
\[
M_{\frac{d}{2},\frac{d}{2}+1}=\sum_{i=1}^{\frac{d}{2}} c_iT_{2i}^{(d-2i)}
\]
and $c_{\frac{d}{2}}\ne 0$. Therefore, $M_{\frac{d}{2},\frac{d}{2}+1}\equiv c_{\frac{d}{2}}\mu$ modulo $\krn\varphi$. 
From the syzygy
\[
\textstyle
(\frac{d}{2}-1)x_{\frac{d}{2}-1}M_{\frac{d}{2},\frac{d}{2}+1} - \frac{d}{2}x_{\frac{d}{2}}M_{\frac{d}{2}-1,\frac{d}{2}+1} + (\frac{d}{2}+1)x_{\frac{d}{2}+1}M_{\frac{d}{2}-1,\frac{d}{2}}=0
\]
it follows that $x_{\frac{d}{2}+1}c_{\frac{d}{2}}\mu\in\krn\varphi$, which is a contradiction. So the case $d=2m$ for odd $m\ge 3$ cannot occur. 

Assume that $d=4m$ for $m\ge 2$. 
In this case, $(\widehat{T}_2,\hdots ,\widehat{T}_{\frac{d}{2}}-\gamma \hat{x}_0, \hdots ,\widehat{T}_{d-2}, \widehat{T}_d-\mu)\subseteq\krn\varphi$.\footnote{
Here, the cable $\hat{x}_0$ is $\{ x_0, Ux_0, U^2x_0, \hdots ,U^dx_0\}$.}
By {\it Lemma\ref{quadratic-ideal}}, the minors $M_{\frac{d}{4},\frac{d}{4}+1}$, $M_{\frac{d}{4}-1,\frac{d}{4}}$ and $M_{\frac{d}{4}-1,\frac{d}{4}+1}$ are linear combinations of the vertices of $\widehat{T}_2,\hdots ,\widehat{T}_{\frac{d}{2}}$. We have:
\[
\deg M_{\frac{d}{4},\frac{d}{4}+1}=d \,\, ,\,\, 
\deg M_{\frac{d}{4}-1,\frac{d}{4}+1}=d+2 \,\, ,\,\,
\deg M_{\frac{d}{4}-1,\frac{d}{4}}=d+4
\]
Since $d>4$ we see that $d$ does not divide $d+2$ or $d+4$. Therefore, $M_{\frac{d}{4}-1,\frac{d}{4}+1}$ and $M_{\frac{d}{4}-1,\frac{d}{4}}$ do not involve $T_{\frac{d}{2}}^{(0)}$ or $T_d^{(0)}$, which implies:
\[
M_{\frac{d}{4}-1,\frac{d}{4}+1} \, , \, M_{\frac{d}{4}-1,\frac{d}{4}}\in\krn\varphi
\]
In addition, by {\it Lemma\,\ref{d=4m}}, there exist $c_1,\hdots ,c_{\frac{d}{4}}\in k$ such that
\[
M_{\frac{d}{4},\frac{d}{4}+1}=\sum_{i=1}^{\frac{d}{4}} c_iT_{2i}^{(\frac{d}{2}-2i)}
\]
and $c_{\frac{d}{4}}\ne 0$. Therefore, $M_{\frac{d}{4},\frac{d}{4}+1}\equiv c_{\frac{d}{4}}\gamma x_0$ modulo $\krn\varphi$. 
From the syzygy
\[
\textstyle 
(\frac{d}{4}-1)x_{\frac{d}{4}-1}M_{\frac{d}{4},\frac{d}{4}+1} - \frac{d}{4}x_{\frac{d}{4}}M_{\frac{d}{4}-1,\frac{d}{4}+1} + (\frac{d}{4}+1)x_{\frac{d}{4}+1}M_{\frac{d}{4}-1,\frac{d}{4}}=0
\]
it follows that $c_{\frac{d}{4}}\gamma x_0x_{\frac{d}{4}-1}\in\krn\varphi$, which is a contradiction. So the case $d=4m$ for $m\ge 2$ cannot occur.
Therefore, either $d=2$ or $d=4$. 

Suppose that $d=2$. Then $T_2^{(0)}-\mu\in\krn\varphi$ and since both $(T_2^{(0)}-\mu )$ and $\krn\varphi$ are height-one primes of $k^{[3]}$ we see that 
$\krn\varphi = (T_2^{(0)}-\mu)$. Therefore, $B\cong k[x_0,x_1,x_2]/(2x_0x_2-x_1^2-\mu )=S_{\mu}(k)$.

Suppose that $d=4$. Then $(\widehat{T}_2-\gamma x_0,\widehat{T}_4-\mu )\subseteq\krn\varphi$. By direct calculation, we find 
\[
T_2^{(0)}=2x_0x_2-x_1^2\,\, ,\,\, T_2^{(1)}=4(3x_0x_3-x_1x_2)\,\, ,\,\, T_2^{(2)}=24(2x_0x_2+x_1x_3-x_2^2) 
\]
and
\[
T_4^{(0)}=2x_2x_2-2x_1x_3+x_2^2
\]
as well as the syzygy:
\[
x_0T_2^{(2)}-6x_1T_2^{(1)}+24x_2T_2^{(0)}=24x_0T_4^{(0)}
\]
Since $Ux_0=4x_1$ and $U^2x_0=24x_2$ it follows that:
\[
x_0(24\gamma x_2) - 6x_1(4\gamma x_1)+24x_2(\gamma x_0)-24x_0\mu \in\krn\varphi
\]
From this, together with the fact that $T_2^{(0)}-\gamma x_0\in\krn\varphi$ we obtain:
\[
\gamma T_2^{(0)}-\mu x_0\in\krn\varphi  \implies \gamma^2=\mu
\]
Therefore, $\gamma ,\mu\ne 0$. It can be checked directly that the ideal of relations for $Q_{\lambda}(k)$ is:
\[
(\widehat{T}_2-2\lambda\hat{x_0} , \widehat{T}_4-4\lambda^2)\subset k[x_0,x_1,x_2,x_3,x_4]
\]
Therefore, $B\cong Q_{\gamma}(k)$. 

It remains to show that, for each $\lambda\in k^*$, $B=Q_{\lambda}(k)$ is not a UFD. 
We have $h_i=\varphi (x_i)$ for $0\le i\le 4$. The element $h_0$ is irreducible, since it generates the kernel of $D$. In addition:
\[
0=\varphi (T_2^{(0)}-2\lambda x_0)=2h_2h_0-h_1^2-2\lambda h_0 \implies h_1^2\in h_0B
\]
Suppose that $h_1=h_0f$ for some $f\in B$. Then $Dh_1=h_0Df\in k\cdot h_0$ implies $Df\in k^*$ and $D$ has a slice, which is a contradiction. 
So $h_1\not\in h_0B$ and $B$ is not a UFD. 

This completes the proof. 
\end{proof}

Note that the number of isomorphism classes represented by the rings $S_{\lambda}(k)$ and $Q_{\lambda}(k)$, $\lambda\in k^*$, depends on the ground field $k$. If $k$ is algebraically closed, then $S_{\lambda}(k)\cong S_1(k)$ for all $\lambda\in k^*$. 
If $k=\R$ then for $\lambda\in \R^*$ either $S_{\lambda}(\R )\cong S_1(\R )$ or $S_{\lambda}(\R )\cong S_{-1}(\R )$, where $S_1(\R )$ is a UFD and $S_{-1}(\R )$ is not a UFD. 
If $k=\Q$, there are infinitely many isomorphism classes for $S_{\lambda}(\Q )$, $\lambda\in\Q^*$. 


\section{Further Applications of the Structure Theorem}\label{applications}
\subsection{Fundamental pairs for $B^{[1]}$}
Let $B$ be an affine $k$-domain with fundamental pair $(D,U)$ and let $B[t]\cong B^{[1]}$. 
Any fundamental pair $(D',U')$ for $B[t]$ which restricts to $(D,U)$ on $B$ is called an {\bf extension} of $(D,U)$ to $B[t]$. 
An extension $(D',U')$ of $(D,U)$ is {\bf trivial} if there exists $s\in B[t]$ such that $B[t]=B[s]$ and $D's=U's=0$.

\begin{proposition}\label{trivial-ext} 
Let $B$ be an affine $k$-domain and $B[t]\cong B^{[1]}$. Given a fundamental pair $(D,U)$ for $B$, every extension of $(D,U)$ to $B[t]$ is trivial.
\end{proposition}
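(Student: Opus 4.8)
The plan is to produce an $SL_2(k)$-invariant element $s\in B[t]$ with $B[t]=B[s]$. By the criterion recalled in the introduction, the extension $(D',U')$ defines an $SL_2(k)$-action on $B[t]$ which restricts on $B$ to the action of $(D,U)$ and for which $B\subset B[t]$ is an equivariant subalgebra; by {\it Lemma\,\ref{classical}(a),(d)} its invariant ring is $A_0':=(\krn D')\cap(\krn U')$, so once such an $s$ is found it satisfies $D's=U's=0$ and $(D',U')$ is a trivial extension. Each $g\in SL_2(k)$ acts as a $k$-algebra automorphism of $B[t]$ carrying $B$ onto $B$, so $B[g(t)]=B[t]$; since $B$ is a domain this forces $g(t)=\beta_gt+\gamma_g$ for some $\beta_g\in B^*$ and $\gamma_g\in B$, with $g\mapsto\beta_g$ and $g\mapsto\gamma_g$ regular.

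The first step is to show $\beta_g=1$ for all $g$. Since $B$ is a domain, $(B[t])^*=B^*$; and $A_0'$, being factorially closed in $B[t]$ by {\it Lemma\,\ref{classical}(b)}, contains $(B[t])^*$, so $SL_2(k)$ fixes $B^*$ pointwise. Extracting the coefficient of $t$ from $(gh)(t)=g(h(t))$ gives $\beta_{gh}=g(\beta_h)\beta_g$, which because $\beta_h$ is fixed reads $\beta_{gh}=\beta_g\beta_h$. Thus $\beta$ is a homomorphism from $SL_2(k)$ into the abelian group $B^*$; since $k$ is infinite, $SL_2(k)$ is perfect, so $\beta\equiv 1$. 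Hence $g(t)=t+\gamma_g$ for all $g$, and $g\mapsto\gamma_g$ is a $1$-cocycle valued in $B$.

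The last step uses linear reductivity of $SL_2(k)$ in characteristic zero. Set $F=B+Bt\subset B[t]$. From $g(bt)=g(b)t+g(b)\gamma_g$ one sees that $F$ is an $SL_2(k)$-stable $B$-submodule, that $F=B\oplus Bt$ as $B$-modules, and that in the quotient $F/B$ one has $g(b\bar t)=g(b)\bar t$, so $F/B\cong B$ as $SL_2(k)$-modules via $\bar t\leftrightarrow 1$; in particular $F$ and $F/B$ are rational modules. The exact sequence $0\to B\to F\to F/B\to 0$ of rational $SL_2(k)$-modules splits equivariantly, and the image of the invariant generator $\bar t$ of $F/B$ under an equivariant splitting is an invariant $s\in F$ with $s\equiv t\pmod B$, i.e.\ $s=t+b$ for some $b\in B$. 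Then $B[t]=B[s]$ and $s\in A_0'$, which proves the proposition. The one step that requires care is the triviality of the twist $\beta$ — equivalently, that the equivariant $\A^1$-bundle $\mathrm{Spec}(B[t])\to\mathrm{Spec}(B)$ is not linearly twisted — but given {\it Lemma\,\ref{classical}} this is immediate, and everything else is formal; by contrast a direct computation of the invariants of $B[t]$ would be considerably harder and is sidestepped here.
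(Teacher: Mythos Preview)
Your proof is correct and takes a genuinely different route from the paper's. The paper works entirely at the level of derivations: it writes $D'=D_0+f\,\partial_t$ and $U'=U_0+g\,\partial_t$ with $f=D't,\ g=U't\in B$, expands the bracket relations to get constraints on $f,g$, and then invokes the Structure Theorem (specifically $B=\Omega\oplus DB$ and $B_2=DB_0$) to show $f\in DB$; after the substitution $t\mapsto t-p$ with $Dp=f$ one has $f=0$, and a further degree-lowering argument (again using the Structure Theorem) forces $g=0$. Your argument instead passes to the $SL_2(k)$-action, observes that $B$ is stable so $g(t)=\beta_g t+\gamma_g$, kills the character $\beta$ using perfectness of $SL_2(k)$ and the fact that units lie in $A_0'$, and then lifts the invariant $\bar t\in F/B$ to an invariant $s\in F$ by linear reductivity. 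This is cleaner and more conceptual: it shows the result is really the statement that $H^1$ of a linearly reductive group with no characters, with coefficients in a rational module, vanishes---and it avoids the Structure Theorem entirely. The paper's approach, by contrast, is self-contained within the derivation calculus developed there and illustrates a concrete use of the Structure Theorem; it does not rely on reductivity as a black box. One small remark: your closing comment about ``a direct computation of the invariants of $B[t]$'' being harder slightly mischaracterizes the alternative---the paper does not compute $B[t]^{SL_2}$ either, it manipulates $f$ and $g$ directly.
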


\begin{proof} Let $(D_0,U_0)$ be the trivial extension of $(D,U)$ defined by $D_0t=U_0t=0$ and let 
$(D',U')$ be any extension of $(D,U)$ to $B[t]$.
By \cite{Freudenburg.17}, Principle 6, 
$D't\in B$, $U't\in B$ and $[D_0,\frac{d}{dt}]=[U_0,\frac{d}{dt}]=0$. If $f=D't$ and $g=U't$ then:
\[
D'=D_0+f\frac{d}{dt} \quad {\rm and}\quad U'=U_0+g\frac{d}{dt}
\]
Set $E=[D,U]$, $E_0=[D_0,U_0]$ and $E'=[D',U']$. By direct calculation we find that:
\[
E'=E_0+h\frac{d}{dt}\quad {\rm where}\quad h=Dg-Uf
\]
Therefore, 
\[
-2D'=[D',E']=-2D_0+(Dh-Ef)\frac{d}{dt} \implies Dh=Ef-2f
\]
and:
\[
2U'=[U',E']=2U_0+(Uh-Eg)\frac{d}{dt} \implies Uh=Eg+2g
\]
Let $f=\sum_df_d$ be the decomposition of $f$ into homogeneous summands. Then:
\[
Ef-2f=\sum_d(d-2)f_d\in DB
\]
Since $DB$ is a graded submodule of $B$, it follows that $f_d\in DB$ for each $d\ne 2$. In addition, 
$B_2=\Omega_2\oplus DB_0=DB_0$ by the Structure Theorem, so $f_2\in DB$ as well. Therefore, $f\in DB$.
Choose $p\in B$ be such that $Dp=f$. Then $D'(t-p)=0$. So we may assume, with no loss of generality, that $f=0$. 

Suppose that $g\ne 0$. Let $g=\sum_{d\le \gamma}g_d$ be its homogeneous decomposition, where $\gamma =\deg g$.
Since $f=0$ we see that $Dg=Dg-Uf=h$ and $D^2g=Dh=Ef-2f=0$. Since $D$ is homogeneous, $Dg_d\in A_d$ for each $d\le\gamma$. By the Structure Theorem, $d\ge -2$ when $g_d\ne 0$. In particular, $\gamma\ge -2$. 

If $\gamma\ne -2$, define $t'=(\gamma +2)t-h$. We have
\[
U't'=(\gamma +2)g-Uh=\gamma g+2g-Eg-2g=\sum_{d<\gamma}(\gamma -d)g_d
\]
and since $Dh=0$ we also have $D't'=0$. 
Therefore, when $g\ne 0$ and $\deg g>-2$ we can replace $g=Ut$ by $g'=U't'$, where $-2\le d<\deg g$ when $(g')_d\ne 0$, while preserving the properties $D't'=D't=0$ and $B[t']=B[t]$. 
By induction, we can assume that $g\in B_{-2}$. In this case, $h=Dg\in A_0\cap DB_{-2}=\{ 0\}$ by the Structure Theorem, so $g\in A_{-2}=\{ 0\}$. 

Therefore, we can find $s\in B[t]$ with $B[t]=B[s]$ and $D's=U's=0$. 
\end{proof}

\subsection{Fundamental Pairs for $R^{[2]}$ over $R$}

\begin{proposition}\label{R[X,Y]} Let $R$ be an affine UFD over $k$ and $B=R[X,Y]\cong R^{[2]}$. 
Every nontrivial fundamental pair $(D,U)\in {\rm LND}_R(B)^2$ is conjugate by an element of ${\rm Aut}_R(B)$ to the fundamental pair $(X\partial_Y, Y\partial_X)$.
\end{proposition}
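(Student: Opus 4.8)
The plan is to use the Structure Theorem to pin down the $\mathbb{Z}$-grading that $(D,U)$ induces on $B=R[X,Y]$, and then realize $B$ as a quotient of a polynomial ring carrying the basic fundamental pair, exactly as in the proof of the Classification Theorem. First I would set $A=\krn D$, $\Omega=\krn U$, and $A_0=A\cap\Omega$. Since $D,U$ are $R$-derivations, $R\subseteq A_0=B^{SL_2(k)}$ by Lemma~\ref{classical}(d). Because $B=R^{[2]}$ has transcendence degree $2$ over $R$ and $A$ is the kernel of a nonzero locally nilpotent derivation, $\operatorname{trdeg}_R A=1$ and $\operatorname{trdeg}_R A_0=0$; since $A_0$ is factorially closed in $B$ (hence in $R^{[2]}$) and $R$ is factorially closed in $R^{[2]}$, and $A_0$ is algebraic over $R$, one gets $A_0=R$. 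Then $A$ is an affine normal (indeed factorially closed, hence a UFD) one-dimensional extension of the UFD $R$ with $A^*=R^*$, so by the usual argument (e.g.\ as in Lemma~\ref{Kolhatkar}'s setting over the base $R$, or directly: $A$ is a positively graded affine domain over $A_0=R$ with $\operatorname{trdeg}=1$) one concludes $A=R[h_0]\cong R^{[1]}$ for a homogeneous $h_0$, say of degree $d\ge 1$.

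Next I would run the construction from the Classification Theorem proof verbatim over the base $R$ in place of $k$. Pick homogeneous $h_d\in\Omega_{-d}$ with $\Omega=R[h_d]$, set $h_{d-i}=D^i h_d$ so that $h_0\in A_d$ generates $A$ over $R$, and let $R_0=R[h_0,h_d]$. By Lemma~\ref{generators}(c), $B=R_0\oplus R_0 h_1\oplus\cdots\oplus R_0 h_{d-1}$ with $R_0\cong R^{[2]}$. Define the equivariant surjection $\varphi: R[x_0,\dots,x_d]\to B$, $x_i\mapsto h_i$, with $(\Delta,\Upsilon)$ the basic fundamental pair; then $\krn\varphi$ is $(\Delta,\Upsilon)$-invariant, hence is generated by homogeneous invariant elements modulo the image-ideal/degree-module machinery of Section~\ref{basic}. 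Applying $\varphi$ to the quadratic invariants $T_{2i}^{(0)}\in\krn\Delta$ and using $\varphi(T_{2i}^{(0)})\in A=R[h_0]$, homogeneity forces $T_{2i}^{(0)}-r_i x_0^{e_i}\in\krn\varphi$ with $r_i\in R$; the degree count $2d-4i=e_i d$ again forces $e_i\in\{0,1\}$, and $e_i=1$ would put $h_0$ (the generator of $\krn D$) as a unit multiple of a genuine $\krn\varphi$-consequence — more precisely the same syzygy argument as in the Classification proof produces a contradiction using that $x_0$ is a nonzerodivisor — while $e_i=0$ would make $1$ a consequence. The cleanest route: show $d$ must equal $1$. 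If $d\ge 2$, the image ideal $I_1=A\cap DB=\bigoplus_{i\ge 1}A_i=(h_0)$ would have to be a prime of $A=R[h_0]$ with $A_0=R$ a retract complement; that is fine, so instead I would argue via Lemma~\ref{Ksl2}/Lemma~\ref{generators} that $B=R^{[2]}$ being a polynomial ring in exactly $2$ variables over $R$ is incompatible with the module decomposition $R_0\oplus R_0 h_1\oplus\cdots\oplus R_0 h_{d-1}$ unless $d=1$: for $d=1$ this reads $B=R[h_0,h_1]$ with $Dh_1=h_0$, $Dh_0=0$, $Uh_0=h_1\cdot(\text{unit})$, $Uh_1=0$, which is precisely $(X\partial_Y,Y\partial_X)$ up to rescaling $h_1$.

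To finish, after establishing $d=1$, I set $X=h_0\in A_1$ and $Y=c\,h_1$ for the scalar $c\in k^*$ making $Uh_0=Y$ (equivalently $Dh_1=c^{-1}X$ rescaled so $DY=X$; a further unit rescaling from $R^*$ if needed puts both $D$ and $U$ in standard form, using the $B^*\rtimes\mathbb{Z}_2$-action on fundamental pairs). Then $B=R[X,Y]$ with $DX=0,\ DY=X,\ UX=Y,\ UY=0$, and $B=R[X,Y]=R^{[2]}$ means $(X,Y)$ is a coordinate system, so the $R$-automorphism sending the original coordinates of $R^{[2]}$ to $(X,Y)$ conjugates $(D,U)$ to $(X\partial_Y,Y\partial_X)$. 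The main obstacle I anticipate is the step forcing $d=1$: over a general UFD $R$ (not a field) one must be careful that the quadratic-invariant/syzygy contradiction of the Classification proof still goes through — i.e.\ that $\krn\varphi$ being $(\Delta,\Upsilon)$-invariant still forces the relevant minors $M_{a,b}$ into $\krn\varphi$ and that the constants $c_{\cdot}\in k^*$ appearing there remain nonzero over $R$. This should be handled by noting all those coefficient computations take place in $\mathbb{Q}\subseteq R$, so the field-case arguments apply unchanged; the only genuinely new input is identifying $A_0=R$, which is where factorial closedness of $A_0$ and of $R$ in $R^{[2]}$ is used.
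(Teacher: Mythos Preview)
Your approach differs from the paper's and has a real gap at exactly the point you flag.

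The paper does \emph{not} attempt to rerun the Classification Theorem over the ring $R$. Instead it passes to $K=\operatorname{frac}(R)$, extends $(D,U)$ to $(D_K,U_K)$ on $K[X,Y]\cong K^{[2]}$, and applies the Classification Theorem over the field $K$. Since $K[X,Y]$ has trivial units and dimension two over $K$, the only entry on the classification list it can match is $K[F_1]=K^{[2]}$, which gives $d=1$ for free. Concretely, one obtains $P,Q$ with $K[X,Y]=K[P,Q]$ and $(D_K,U_K)=(P\partial_Q,Q\partial_P)$. One then clears denominators to get $P',Q'\in B$ irreducible, observes $U'P'=Q'$ so $\deg P'=\deg_U P'=1$, and cites a standard descent fact (\cite{Freudenburg.17}, 4.1) that $\ker D_K=K[P']$ with $P'\in B$ irreducible forces $A=R[P']$. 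Finally, {\it Lemma\,\ref{generators}} with $d=1$ gives $B=R[P',Q']$ and the desired form of $(D,U)$.

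Your route requires two things you do not establish. First, $A=R[h_0]\cong R^{[1]}$: invoking ``the usual argument'' or ``Lemma~\ref{Kolhatkar}'s setting over the base $R$'' is not sufficient, since Kolhatkar's lemma is stated for two-dimensional domains over a \emph{field}, and the bare fact that $A$ is a positively $\N$-graded UFD of relative transcendence degree $1$ over $A_0=R$ does not by itself yield $A=R^{[1]}$ without further argument (one needs, e.g., that the height-one prime $I_1$ is principal and that its generator is transcendental and generates $A$). The paper sidesteps this entirely by getting $A_K=K[P']$ from the field case and then descending. Second, forcing $d=1$: your plan to replay the syzygy arguments of the Classification proof over $R$ is plausible in outline (the coefficients live in $\mathbb{Q}$, as you note), but you never actually carry it out, and you would still need to exclude the $d=2$ and $d=4$ outcomes, which over $R$ would hand you relations like $2h_0h_2-h_1^2=\mu\in R$ and then require a further argument that such a ring cannot be $R^{[2]}$. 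All of this is real work that the paper's localize-then-descend strategy avoids.

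In short: the paper's proof is shorter precisely because the Classification Theorem over a field already encodes ``$d=1$''; you are effectively trying to reprove that theorem relative to a UFD base, and you stop before the hard part.
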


\begin{proof} Let $K={\rm frac}(R)$ and let $(D_K,U_k)$ be the extension of $(D,U)$ to $K[X,Y]\cong_KK^{[2]}$. By the Classification Theorem, there exist $P,Q\in K[X,Y]$ such that 
$K[P,Q]=K[X,Y]$ and $(D_K,U_K)=(P\partial_Q , Q\partial_P)$. Since $K[X,Y]=K[P,Q]$ there exist $v,w\in K^*$ such that, if $P'=vP$ and $Q'=wQ$, then $P',Q'\in B$ and $P',Q'$ are irreducible. 
Observe that, if $D'=(w^{-1}v)D_K$ and $U'=(wv^{-1})U_K$, then $(D',U')$ is a fundamental pair for $K[X,Y]$. In addition, $D_KQ=P$ and $U_KP=Q$ implies that $D'Q'=P'$ and $U'P'=Q'$. 
Therefore, $(D',U')$ restricts to $B$. 

Let $A=\krn D$ and $\Omega =\krn U$. Since $K[P']=\krn D'=\krn D_K$ and $K[Q']=\krn U'=\krn U_K$ it follows that $A=R[P']$ and $\Omega = R[Q']$; see \cite{Freudenburg.17}, 4.1. Therefore, $A_0=A\cap\Omega=R$ and $A=A_0^{[1]}$. 
In addition, $U'P'=Q'$ implies $\deg P'=\deg_UP'=1$. By {\it Lemma\,\ref{generators}}, $B=A_0[P',Q']=R[P',Q']$ and $(D,U)=(P'\partial_{Q'},Q'\partial_{P'})$. 
\end{proof}

\subsection{Fundamental Pairs for Danielewski Surfaces}
Let $\varphi (T)\in k[T]\cong k^{[1]}$ be nonconstant and let $k[X,Y,Z]\cong k^{[3]}$. The surface defined by $B=k[X,Y,Z]/(XY-\varphi (Z))$ is called a special Danielewski surface. 
It is shown in \cite{Daigle.04}, Proposition 2.3(a) that $B$ is normal. 
By \cite{Daigle.04}, Lemma 2.10, the polynomial $\varphi$ is uniquely determined by $B$ up to a $k$-automorphism of $k[T]$ and multiplication by a unit. In particular, the degree of $\varphi$ is uniquely determined by $B$
and we have that $B\cong k^{[2]}$ if and only if $\deg_T\varphi (T)=1$. Note that, for all $\lambda\in k^*$, $S_{\lambda}(k)$ defines a special Danielewski surface. 
 
 \begin{proposition}\label{Danielewski} Let $B=k[X,Y,Z]/(XY-\varphi (Z))$ where $\deg_T\varphi\ge 2$. 
If $B$ admits a nontrivial fundamental pair then $\deg_T\varphi (T)=2$ and $B$ is equivariantly isomorphic to $k[F_2]$ or 
$S_{\lambda}(k)$ for some $\lambda\in k^*$. Moreover, for all $\lambda\in k^*$, ${\rm Spec}(Q_{\lambda}(k))$ is not isomorphic to a special Danielewski surface. 
\end{proposition}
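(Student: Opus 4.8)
The plan is to verify that $B$ meets the hypotheses of the Classification Theorem ({\it Theorem\,\ref{classification}}), apply it, and then discard the entries of its list that are incompatible with $B$ being a special Danielewski surface with $\deg_T\varphi\ge 2$. First, $B$ is normal by \cite{Daigle.04}, it is affine, and $\dim_kB=2$; it remains to see that $B^*=k^*$. Since $XY-\varphi(Z)$ is irreducible over every extension of $k$, the ring $B$ is geometrically integral, so $k$ is algebraically closed in $B$, and it suffices to show $ML(B)$ is algebraic over $k$. This follows from the two standard locally nilpotent derivations of the surface: the derivation $D_1$ with $D_1X=0$, $D_1Z=X$, $D_1Y=\varphi'(Z)$ has kernel contained in the algebraic closure of $k(X)$ in ${\rm frac}(B)$ (its kernel has transcendence degree $1$ and contains $X$), the analogous derivation $D_2$ with $X$ and $Y$ interchanged has kernel contained in the algebraic closure of $k(Y)$, and since $X,Y$ are algebraically independent in $B$ the intersection of these two algebraic closures is algebraic over $k$. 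As $ML(B)$ lies in this intersection, $ML(B)=k$ and hence $B^*=k^*$.

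Now, since $B$ admits a nontrivial fundamental pair, {\it Theorem\,\ref{classification}} shows that $B$ is equivariantly isomorphic to $k[F_d]$ for some $d\ge 1$, or to $S_{\mu}(k)$ or $Q_{\mu}(k)$ for some $\mu\in k^*$. I would eliminate cases as follows: $B\cong k[F_1]\cong k^{[2]}$ would force $\deg_T\varphi=1$, contrary to hypothesis; $B\cong k[F_d]$ with $d\ge 3$ is impossible because {\it Lemma\,\ref{RNC}} shows $k[F_d]$ needs at least $d+1\ge 4$ algebra generators while $B$ is generated by the three images of $X,Y,Z$; and $B\cong Q_{\mu}(k)$ is excluded by the ``moreover'' statement, proved next. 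Hence $B$ is equivariantly isomorphic to $k[F_2]\cong k[X,Y,Z]/(XY-Z^2)$ or to $S_{\mu}(k)\cong k[X,Y,Z]/(XY-(Z^2+\mu))$, both of which are special Danielewski surfaces with $\deg_T\varphi=2$; since the degree of $\varphi$ is an invariant of $B$ by \cite{Daigle.04}, it follows that $\deg_T\varphi=2$, which is the first assertion.

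For the ``moreover'' statement, suppose toward a contradiction that ${\rm Spec}(Q_{\lambda}(k))\cong{\rm Spec}(B')$ where $B'=k[X,Y,Z]/(XY-\psi(Z))$ is a special Danielewski surface, $\psi$ nonconstant. If $\deg_T\psi=1$ then $B'\cong k^{[2]}$ is a UFD, contradicting the fact (established in the proof of {\it Theorem\,\ref{classification}}) that $Q_{\lambda}(k)$ is not a UFD. If $\psi$ has a repeated root then ${\rm Spec}(B')$ has a singular point, where $X=Y=0$ and $\psi=\psi'=0$, whereas ${\rm Spec}(Q_{\lambda}(k))$ is smooth. So $\psi$ must be squarefree of degree at least $2$, and it remains to separate $Q_{\lambda}(k)$ from every smooth special Danielewski surface $B'$. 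Here I would use $2$-torsion in the Picard group. The $\Z_2$-action $x_i\mapsto -x_i$ on $S_{\lambda}(k)$ is fixed-point free, so ${\rm Spec}(S_{\lambda}(k))\to{\rm Spec}(Q_{\lambda}(k))$ is an \'etale double cover with irreducible total space; passing to $\bar{k}$ and invoking the Kummer sequence (together with $Q_{\lambda}(k)^{*}=k^{*}$, so that the contribution of units vanishes over $\bar{k}$) turns this cover into a nonzero element of order $2$ in ${\rm Pic}\big({\rm Spec}(Q_{\lambda}(k))\otimes_k\bar{k}\big)$. On the other hand, the divisor class group of a smooth special Danielewski surface is torsion-free: over an algebraically closed field one has ${\rm Cl}(B')\cong\Z^{\,n-1}$ with $n=\deg_T\psi$, because $B'$ is $\A^1$-fibered over $\A^1$ with irreducible general fibers and with reduced, pairwise disjoint lines in its unique special fiber. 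A ring isomorphism $Q_{\lambda}(k)\cong B'$ would identify these Picard groups after base change to $\bar{k}$, which is impossible; this contradiction proves the ``moreover'' statement, and with it the case $B\cong Q_{\mu}(k)$ invoked above.

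The hard part is precisely this last separation: one needs an invariant distinguishing $Q_{\lambda}(k)$ --- geometrically the homogeneous space $SL_2/N(T)$, a free $\Z_2$-quotient of the smooth affine quadric $S_{\lambda}(k)\cong SL_2/T$ --- from every smooth special Danielewski surface, and in particular from $S_{\mu}(k)$ itself. The \'etale double cover supplies such an invariant through $2$-torsion in ${\rm Pic}$, but one must carry the argument out over $\bar{k}$ so that the (trivial) unit group is neutralized and the cover genuinely contributes to the Picard group rather than to the unit contribution of the Kummer sequence. Everything else in the proof is routine bookkeeping with the Classification Theorem and the cited structural facts about Danielewski surfaces.
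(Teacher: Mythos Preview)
Your reduction to the Classification Theorem and elimination of $k[F_1]$ and $k[F_d]$ for $d\ge 3$ match the paper's argument essentially verbatim; your verification that $B^*=k^*$ via $ML(B)=k$ is more explicit than the paper (which simply asserts $B^*=k^*$), and is correct.

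The genuine divergence is in the ``moreover'' statement. The paper argues LND-theoretically: assuming $Q_\lambda(k)$ is a special Danielewski surface, it invokes \cite{Daigle.04}, Lemma~2.8, which says that the fundamental derivation $\delta$ (with $\ker\delta=k[h_0]$ and $\delta h_1=h_0$) forces a Danielewski presentation $Q_\lambda(k)=k[h_0,h_1,y]$ with $h_0y=\psi(h_1)$. Working in the explicit model $Q_\lambda(k)\subset S_\lambda(k)=k[\bar x_0,\bar x_1,\bar x_2]$ with $h_0=\bar x_0^2$, $h_1=\bar x_0\bar x_1$, one finds $\psi(x_0x_1)\in(x_0^2,\,2x_0x_2-x_1^2-\lambda)$ in $k[x_0,x_1,x_2]$; inspecting low-order terms forces $\psi(0)=\psi'(0)=0$, so $\psi\in T^2k[T]$ and the Danielewski surface $h_0y=\psi(h_1)$ would be singular, contradicting smoothness of $Q_\lambda(k)$. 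Your Picard-group route is also correct: the free $\Z_2$-quotient $S_\lambda\to Q_\lambda$ remains a nontrivial connected \'etale double cover over $\bar k$, and since $(Q_\lambda\otimes\bar k)^*=\bar k^*$ the Kummer sequence converts it into a nonzero $2$-torsion class in ${\rm Pic}(Q_\lambda\otimes\bar k)$; a smooth special Danielewski surface over $\bar k$ has class group $\Z^{n-1}$, which is torsion-free. Your argument is conceptually clean and pinpoints a geometric invariant; the paper's argument stays entirely within its LND framework, requires only one external lemma (Daigle's~2.8), and avoids importing \'etale cohomology and class-group computations. Both routes end at the same contradiction --- $Q_\lambda(k)$, if Danielewski, would be singular.
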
 

\begin{proof} Since $B^*=k^*$, {\it Theorem\,\ref{classification}} implies that 
$B$ is equivariantly isomorphic to one of the rings $k[F_d]$ for $d\ge 1$, or $S_{\lambda}(k)$ or $Q_{\lambda}(k)$ for some $\lambda\in k^*$. 

Suppose that $B\cong k[F_d]$ for some $d\ge 1$. Since ${\rm Spec}(B)$ is a hypersurface in $\A_k^3$, {\it Lemma\,\ref{RNC}} implies $d\le 2$. 
In addition, $\deg_T\varphi (T)\ge 2$ implies $B\ne k^{[2]}=k[F_1]$. 
Therefore, $d=2$ and $B\cong k[F_2]$, which implies $\deg_T\varphi (T)=2$. 

Suppose that $B\cong S_{\lambda}(k)$ for some $\lambda\in k^*$. Then $\deg_T\varphi (T)=2$.

Suppose that $B\cong Q_{\lambda}(k)$ for some $\lambda\in k^*$. Recall that 
\[
S_{\lambda}(k)=k[x_0,x_1,x_2]/(2x_0x_2-x_1^2-\lambda )=k[\bar{x}_0,\bar{x}_2,\bar{x}_3]
\]
and $Q_{\lambda}(k)=k[h_4,h_3,h_2,h_1,h_0] \subset S_{\lambda}(k)$ where
\[
h_4=\bar{x}_2^2\,\, ,\,\, h_3=\bar{x}_1\bar{x}_2\,\, ,\,\, h_2=3\bar{x}_0\bar{x}_2-\lambda\,\, ,\,\, h_1=\bar{x}_0\bar{x}_1 \,\, ,\,\, h_0=\bar{x}_0^2
\]
and where $\krn\delta =k[h_0]$ and $\delta h_1=h_0$ for the fundamental pair $(\delta ,\upsilon )$ on $B$. By \cite{Daigle.04}, Lemma 2.8, there exists $\psi\in k[T]$ and $y\in B$ such that $Q_{\lambda}(k)=k[h_0,h_1,y]$ and $h_0y=\psi (h_1)$. 
Therefore, in $k[x_0,x_1,x_2]$ we have:
\[
\psi (x_0x_1)\in (x_0^2 , 2x_0x_2-x_1^2-\lambda) \implies \psi (0)+\psi^{\prime}(0)x_0x_1\in (x_0^2,2x_0x_2-x_1^2-\lambda)
\]
\[
\implies \psi (0)=\psi^{\prime}(0)=0 \implies \psi\in T^2k[T]
\]
But then $Q_{\lambda}(k)$ is singular, a contradiction. So this case cannot occur. 
\end{proof}


\section{UFDs of Dimension Three} 
\subsection{Certain UFDs with a fundamental pair}

This section classifies UFDs of dimension three over $k$ which admit a certain type of fundamental pair. The following technical lemma is required. 

\begin{lemma}\label{technical} {\rm (\cite{Freudenburg.17}, Corollary 5.42)} Let $B$ be a UFD over $k$ and let $\delta\in {\rm LND}(B)$ be nonzero. Suppose that $S\subset B$ is a factorially closed $k$-subalgebra 
such that $k\ne S\cap\krn\delta\ne S$. If $S\cong k^{[2]}$, then there exists $w\in S$ such that $S\cap\krn\delta=k[w]$ and $S=k[w]^{[1]}$. 
\end{lemma}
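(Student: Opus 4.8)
The plan is to reduce the lemma to the single assertion that $\krn\delta$ contains a coordinate of $S$, and then to extract that coordinate from a local slice of $\delta$.

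First, put $R:=S\cap\krn\delta$. Since $S$ and $\krn\delta$ are both factorially closed in $B$, so is $R$; hence $R$ is factorially closed — in particular algebraically closed — in $S\cong k^{[2]}$, and $R^{*}\subseteq S^{*}=k^{*}$. As $k\subsetneq R$ and $k$ is algebraically closed in $k^{[2]}$, we get $\mathrm{trdeg}_{k}R\ge 1$; as $R\subsetneq S$ is factorially closed in $S$, we get $\mathrm{frac}(R)\subsetneq\mathrm{frac}(S)$ (if $s=p/q$ with $p,q\in R$ then $qs=p\in R$ forces $s\in R$), so $\mathrm{trdeg}_{k}R=1$. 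A transcendence degree one subfield of $\mathrm{frac}(S)\cong k^{(2)}$ is purely transcendental — the function field of a curve of positive genus, or of a conic without a rational point, admits no embedding into $k(\mathbb{P}^{2})$ — so $\mathrm{frac}(R)=k(\theta)$ for some $\theta$, and therefore $R=\mathrm{frac}(R)\cap S$ is a finitely generated, normal, one-dimensional $k$-domain with trivial units and rational fraction field; hence $R\cong k^{[1]}$, say $R=k[w_{0}]$. Note that once $w_{0}$ is known to be a coordinate of $S$ the lemma is proved: writing $S=k[w_{0}][v]$, any element of $R$ is algebraic over $k(w_{0})$ (because $\mathrm{trdeg}_{k}R=1$) and so lies in $k[w_{0}]$, whence $R=k[w_{0}]$ and $S=k[w_{0}]^{[1]}$.

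It remains to prove that the generator $w_{0}$ of $R$ — determined up to an affine change of variable — is a coordinate of $S$; by the Epimorphism Theorem \cite{Abhyankar.Moh.75,Suzuki.74} this is the same as $S/w_{0}S\cong k^{[1]}$. This is the one point that genuinely uses the hypothesis that $R$ arises as $S\cap\krn\delta$ inside a UFD: a factorially closed subalgebra of $k^{[2]}$ abstractly isomorphic to $k^{[1]}$ need not be generated by a coordinate (for example $k[x^{2}+y^{3}]\subset k[x,y]$), and $\delta$ in general does not map $S$ into itself, so one cannot merely restrict $\delta$ to $S$ and invoke Rentschler's theorem. The approach is: after replacing $\delta$ by an irreducible locally nilpotent derivation with the same kernel (possible since $B$ is a UFD), choose a local slice $r\in B$ with $a:=\delta r\in\krn\delta\setminus\{0\}$, so that $B_{a}=(\krn\delta)_{a}[r]\cong(\krn\delta)_{a}^{[1]}$ with Dixmier map $\pi_{r}\colon B_{a}\to(\krn\delta)_{a}$, and then to analyze the $k$-subalgebra $\pi_{r}(S)\subseteq(\krn\delta)_{a}$ together with the modules $\delta^{n}\bigl(S\cap\krn\delta^{n+1}\bigr)\subseteq S\cap\delta^{n}B$. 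Using that $S$ is factorially closed in $B$ and is itself a polynomial ring, one shows that this data produces a nonzero locally nilpotent derivation $\delta'$ of $S$ with $\krn\delta'=R$; Rentschler's theorem \cite{Freudenburg.17}, applied to $S\cong k^{[2]}$, then yields a coordinate $w$ of $S$ with $R=k[w]$ and $S=k[w]^{[1]}$, completing the proof.

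The main obstacle is precisely that $\delta$ does not preserve $S$: the ascending filtration $\{S\cap\krn\delta^{n+1}\}_{n\ge 0}$ of $S$ is intrinsic to $S$, but $\delta$ carries it outside $S$, so the transfer of $\G_{a}$-theory from $B$ to $S$ is not formal. The real work is to control $\pi_{r}(S)$ and the associated graded ring $\bigoplus_{n\ge 0}\bigl(S\cap\krn\delta^{n+1}\bigr)/\bigl(S\cap\krn\delta^{n}\bigr)$ tightly enough — leaning on factorial closedness of $S$ in $B$ and on $S$ being a polynomial ring — to recover $S$ as $R^{[1]}$; that rigidity is what prevents the generator of $R$ from being a cuspidal-type polynomial and forces it to be a coordinate.
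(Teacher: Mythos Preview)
The paper does not prove this lemma; it is simply quoted from \cite{Freudenburg.17}, Corollary 5.42. So there is no internal proof to compare against, and the relevant question is whether your argument stands on its own.

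Your reduction is fine: you correctly show that $R:=S\cap\krn\delta$ is factorially closed in $S$, that $R\cong k^{[1]}$, and that the lemma follows once the generator $w_0$ of $R$ is known to be a coordinate of $S$. The problem is everything after that. You propose to manufacture a nonzero $\delta'\in{\rm LND}(S)$ with $\krn\delta'=R$ and then invoke Rentschler, but you never construct $\delta'$; the sentence ``one shows that this data produces a nonzero locally nilpotent derivation $\delta'$ of $S$'' is precisely the content of the lemma, and the surrounding references to $\pi_r(S)$ and to the modules $\delta^n(S\cap\krn\delta^{n+1})$ do not assemble into a derivation of $S$. Your final paragraph concedes this explicitly (``the real work is to control \ldots\ tightly enough''), so as written the argument is a plan rather than a proof. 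Note also that the existence of such a $\delta'$ is essentially equivalent to what you want: by Rentschler it exists if and only if $w_0$ is a coordinate, so one cannot expect to produce $\delta'$ by soft means.

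A route that avoids constructing $\delta'$ altogether is to use only the \emph{degree function} $\nu:=\deg_\delta|_S$ on $S\cong k[x,y]$. Its degree-zero part is exactly $R=k[w_0]$, and the associated graded ring ${\rm Gr}_\nu(S)$ is a two-dimensional graded domain generated by ${\rm gr}(x),{\rm gr}(y)$. From the structure of such gradings on $k^{[2]}$ (equivalently, by choosing coordinates adapted to $\nu$) one obtains a coordinate $x'$ of $S$ with $\nu(x')=0$, i.e.\ $x'\in R=k[w_0]$. Since $R$ is factorially closed in $S$, every irreducible factor of $x'$ in $k[w_0]$ stays irreducible in $S$; but $x'$ is irreducible in $S$, so $x'=c\,q(w_0)$ with $q\in k[T]$ irreducible. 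Then $k^{[1]}\cong S/x'S$ forces $q(\bar w_0)=0$ with $\bar w_0$ lying in $k^{[1]}$, hence $\bar w_0\in k$ and $\deg q=1$. Therefore $w_0$ is an affine function of a coordinate, hence itself a coordinate. This is the kind of argument the cited corollary packages; the point is that $\deg_\delta$ already transfers to $S$ even though $\delta$ does not.
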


\begin{proposition}\label{threefold} Let $B$ be an affine UFD over $k$ which admits a nontrivial fundamental pair $(D,U)$ such that
$A_0\ne k$ and $A\cong k^{[2]}$, where $A=\krn D$ and $A_0=A\cap\krn U$. Then either
\begin{enumerate}
\item $B=k[X,Y,Z]= k^{[3]}$ and $(D,U)$ is equivalent to $(Y\frac{\partial}{\partial Z}\, ,\, Z\frac{\partial}{\partial Y})$; or
\smallskip
\item $B=k[X,Y,Z,W]/(2XZ-Y^2-P(W))$ for some $P(T)\in k[T]\cong k^{[1]}$ and 
$(D,U)$ is equivalent to the fundamental pair induced by 
$(X\partial_Y+Y\partial_Z \, ,\,  2Y\partial_X+2Z\partial_Y)$
on $k[X,Y,Z,W]= k^{[4]}$.
\end{enumerate}
In addition, $ML(B)=k$. 
\end{proposition}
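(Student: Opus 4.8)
## Proof Proposal for Proposition~\ref{threefold}

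The plan is to pass to the graded structure and use the Classification Theorem on a two-dimensional slice. Since $A \cong k^{[2]}$ and, by the Structure Theorem (Theorem~\ref{main1}(b)), $A = \bigoplus_{i \in \N} A_i$ is $\N$-graded with $A_0$ a retract (Corollary~\ref{plinth-cor}(a)), the hypothesis $A_0 \ne k$ forces $A$ to be a polynomial ring in one variable over $A_0$: applying Lemma~\ref{technical} with $S = A$ and $\delta = U|_A$ (noting $k \ne A_0 = A \cap \krn U \ne A$ because $\deg D = 2$ produces elements of positive degree in $A$), we get $w \in A$ with $A_0 = k[w]$ and $A = k[w]^{[1]} = k[w][x]$ for some homogeneous $x \in A_d$, $d \ge 1$. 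Now I would invoke Lemma~\ref{generators} to write $B = A_0[h_0, \dots, h_d] = R \oplus Rh_1 \oplus \cdots \oplus Rh_{d-1}$ with $R = A_0[h_0, h_d] \cong A_0^{[2]}$, where $h_0 = x$ and the $h_i$ are the explicit cable elements. Since $B$ is a UFD and $h_0$ generates $\krn D$, $h_0$ is irreducible.

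Next I would determine $d$. The key is that $A_0 = k[w] \cong k^{[1]}$, so $\mathrm{frac}(A_0)$ is a field $K = k(w)$, and $B \otimes_{A_0} K$ is a two-dimensional normal affine $K$-domain (normality passes to this localization since $B$ is a UFD hence normal, and $A_0$ is a factorially closed subring). It carries the induced fundamental pair $(D, U)$ over $K$, with $K$-algebra of invariants $K$ itself (as $(B \otimes K)^{SL_2} = A_0 \otimes_{A_0} K = K$ by Lemma~\ref{classical}(d)). Applying the Classification Theorem over $K$: $B \otimes_{A_0} K$ is $K$-isomorphically one of $K[F_e]$, $S_\lambda(K)$, or $Q_\lambda(K)$. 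But $B$ is a UFD, so $B \otimes_{A_0} K$ is a UFD (localization of a UFD), ruling out $Q_\lambda$ and the non-UFD $S_\lambda$; among the survivors only $K[F_1] \cong K^{[2]}$ (forcing $d = 1$) and $S_\lambda(K)$ with $\sqrt{-\lambda} \notin K$ (forcing $e = 2$, hence the cable of length $2$, i.e. $d = 2$) remain. This splits the argument into cases $d = 1$ and $d = 2$.

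For $d = 1$: here $B = A_0[h_0, h_1] = k[w, h_0, h_1]$ with $D h_1 = h_0$, $D h_0 = D w = 0$. Since $B$ is $k$-affine of transcendence degree three and $w, h_0, h_1$ generate it with no relation forced (the localization is $K^{[2]}$, and $w$ is transcendental over $k$ with $B$ a UFD), one checks $B = k^{[3]}$ and $(D, U) = (h_0 \partial_{h_1}, h_1 \partial_{h_0})$ after renaming $(X, Y, Z) = (w, h_0, h_1)$; more carefully, I would verify $w, h_0, h_1$ are algebraically independent over $k$ by a dimension count and conclude $B = k[w, h_0, h_1]$ from the module decomposition $B = R \oplus (\text{nothing})= R = A_0[h_0,h_1]$. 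For $d = 2$: $B = R \oplus R h_1$ with $R = A_0[h_0, h_2] = k[w, h_0, h_2] \cong k^{[3]}$, and the relation among $h_0, h_1, h_2$ comes from the quadratic cable element $T_2^{(0)}$ of Section~\ref{basic}. Following the proof of the Classification Theorem verbatim (the $d=2$ case there), $\varphi(T_2^{(0)}) \in A = k[w, h_0]$ is homogeneous of degree $0$ relative to the grading restricted... wait, degree $2d - 4 = 0$, so it lies in $A_0 = k[w]$; thus $2 h_0 h_2 - h_1^2 = P(w)$ for some $P \in k[w]$. Renaming $(X, Y, Z, W) = (h_0, h_1, h_2, w)$ yields the presentation in (2), and the derivation formulas transport the basic fundamental pair on $k^{[4]}$ onto $(D,U)$.

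Finally, for $ML(B) = k$: by Lemma~\ref{classical}(b), $ML(B) \subseteq A_0 = k[w]$. In case (1), $U$ is another locally nilpotent derivation with $\krn U = \Omega \ni w$? No — rather, $w = X$ is moved by neither $D$ nor $U$ in the given normal form, so I need a third locally nilpotent derivation, e.g. $\partial_X$ on $k[X,Y,Z]$, whose kernel $k[Y,Z]$ does not contain $w = X$; intersecting with $A_0 = k[X]$ gives $ML(B) = k$. In case (2), similarly one exhibits a locally nilpotent derivation of $B$ not annihilating $W$: the surface $2XZ - Y^2 - P(W)$ admits the derivation sending $W \mapsto$ something nonzero (e.g. a triangular derivation in $W$ built from the $X,Z$ directions), whose kernel avoids $k[W]$, forcing $ML(B) = k$. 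The main obstacle I anticipate is the last point: producing, cleanly and in the coordinate-free setting, a locally nilpotent derivation of $B$ whose kernel does not contain $A_0$ — in case (2) this requires a small explicit construction on the hypersurface, analogous to the standard triangular derivations on Danielewski-type threefolds, and verifying it is locally nilpotent and has the right kernel is the one genuinely computational step.
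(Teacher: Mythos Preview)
Your proposal is correct and follows essentially the same route as the paper: apply {\it Lemma~\ref{technical}} (with $S=A$ and $\delta=U$) to obtain $A_0=k[f]$ and $A=A_0[g]$ with $g\in A_d$; invoke {\it Lemma~\ref{generators}} for $B=A_0[h_0,\dots,h_d]$; localize at $A_0\setminus\{0\}$ to get a two-dimensional UFD over $K=k(f)$ with trivial units; apply the Classification Theorem over $K$ together with the UFD constraint to force $d\in\{1,2\}$; and read off the presentation in each case. The paper's treatment of $d=2$ is phrased via the equivariant isomorphism with $S_\lambda(K)$ (giving $2h_0h_2-h_1^2=\lambda\in K^*\cap B=k[f]$), which is the same computation you describe via $T_2^{(0)}$.

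Two small points. First, $U$ does not restrict to $A$, so writing ``$\delta=U|_A$'' is a misnomer; the correct setup for {\it Lemma~\ref{technical}} is $\delta=U$ on $B$ and $S=A\subset B$, with $S\cap\ker\delta=A_0$. Second, the one place where you hedge is exactly where the paper supplies the missing explicit ingredient: for $ML(B)=k$ in case (2) the paper exhibits the derivation of $B$ induced by $P'(W)\partial_Z+2X\partial_W$ on $k[X,Y,Z,W]$. This is triangular ($X\mapsto 0$, $Y\mapsto 0$, $W\mapsto 2X$, $Z\mapsto P'(W)$), hence locally nilpotent, and it annihilates $2XZ-Y^2-P(W)$, so it descends to $B$; since $\Delta W=2X\ne 0$ its kernel does not contain $k[W]=A_0$, whence $ML(B)\subseteq A_0\cap\ker\Delta=k$.
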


\begin{proof} 
Note that $\dim_kB=\dim_kA+1=3$, and that $B^*=A^*=k^*$. Using $\delta =U$ and $S=A$ in {\it Lemma\,\ref{technical}}, we find that there exist $f,g\in B$ such that $A_0=k[f]$ and $A=k[f,g]=A_0^{[1]}$. 
We may assume that $g\in A_d$ for some $d\ge 1$. Define $h_0=g$ and:
\[
h_i=\begin{cases} U^i\left( g^{i/d}\right) & i\in d\Z \\ U^i\left( g^{[i/d]+1}\right) & i\not\in d\Z \end{cases} \quad (i\ge 1)
\]
By {\it Lemma\,\ref{generators}}, $B=A_0[h_0,\hdots ,h_d]$. Let $K=k(f)$ and: 
\[
B_K=K\otimes_{k[f]}B=K[h_0,\hdots ,h_d]
\]
Then $B_K$ is an affine UFD of dimension two over $K$ with $B_K^{\ast}=K^*$. The pair $(D,U)$ extends to a fundamental pair $(D_K,U_K)$ on $B_K$. 
By {\it Theorem\,\ref{classification}} either $d=1$ and $B_K\cong K^{[2]}$, or $d=2$ and $B_K\cong S_{\lambda}(K)$ for some $\lambda\in K^*$ 
where the isomorphism is equivariant for the (essentially unique) nontrivial fundamental pair on $S_{\lambda}(K)$. 

Assume that $d=1$. Then $B=k[f,h_0,h_1]\cong k^{[3]}$. Relabel $(X,Y,Z)=(f,h_0,h_1)$. 
Since $Dh_1=f_0$, $Dh_0=Df=0$ and $Uh_0=h_1$, $Uh_1=Uf=0$ we see that $(D,U)=(Y\partial_Z , Z\partial_Y)$. So statement (1) of the proposition holds if $d=1$. 

Assume that $d=2$. Then $B=k[f,h_0,h_1,h_2]$ and $B_K=K[h_0,h_1,h_2]$ where $2h_0h_2-h_1^2=\lambda$ for some nonzero $\lambda\in K^*\cap B=k[f]$. Note that we used equivariance of the isomorphism to get this equation. 
Let $P(T)\in k[T]$ be such that $\lambda =P(f)$. Then:
\[
B\cong k[X,Y,Z,W]/(2XZ-Y^2-P(W))
\]
Since 
\[
Df=Dh_0=0\,\, ,\,\, Dh_1=2h_0\,\, ,\,\, Dh_2=2h_1 \quad {\rm and}\quad Uh_0=h_1\,\, ,\,\, Uh_1=h_2\,\, ,\,\, Uh_2=Uf=0
\]
we see that $(D,U)$ is equivalent to the fundamental pair induced by 
$(X\partial_Y+Y\partial_Z \, ,\,  2Y\partial_X+2Z\partial_Y)$ on $k[X,Y,Z,W]= k^{[4]}$.
So statement (2) of the proposition holds when $d=2$. 

In case (1), $ML(k^{[3]})=k$ by considering partial derivatives. In case (2), $ML(B)\subset A_0=k[W]$, so in order to show $ML(B)=k$ it suffices to find $\Delta\in {\rm LND}(B)$ with $\Delta (W)\ne 0$. We can take
$\Delta$ to be the derivation of $B$ induced by $P'(W)\partial_Z+2X\partial_W$ on $k[X,Y,Z,W]$. 
\end{proof}

\begin{example}\label{Russell} {\rm Let $B$ be the coordinate ring of the the Russell cubic threefold $X$, the hypersurface in $\A_k^4$ defined by $x+x^2y+z^2+t^3=0$. 
Suppose that $(D,U)$ is a nontrivial fundamental pair for $B$ with $A_0=\krn D\cap\krn U$. 
It is well known that $ML(B)=k[x]$ so $k[x]\subset A_0$ and $A_0\ne k$ (see \cite{Makar-Limanov.96}). In addition, \cite{Freudenburg.17}, Corollary 9.10 shows that the kernel of any nonzero locally nilpotent derivation of $B$ equals 
$k[x,P]$ for some $P\in B$. 
Therefore, $B$ satisfies the hypotheses of {\it Proposition\,\ref{threefold}}. But then $ML(B)=k$, a contradiction. Therefore, $X$ has no nontrivial $SL_2(k)$-action. 
See \cite{Dubouloz.Moser-Jauslin.Poloni.14} for a description of the group of automorphisms of $X$. 
}
\end{example}

\subsection{Fundamental Pairs for $R^{[3]}$ over $R$}
The theorem of Kraft and Popov cited above shows that, over an algebraically closed field $k$, any nontrivial action of $SL_2(k)$ on the polynomial ring $B=k^{[3]}$ is linearizable. 
In particular, the action on $B$ is induced by one of the representations $V_0\oplus V_1$ or $V_2$. 
The following theorem generalizes this result. Its proof gives a new proof of the Kraft-Popov theorem for $SL_2(k)$. 

\begin{theorem}\label{Kraft-Popov} Let $R$ be an affine UFD over $k$ and $B=R[x_0,x_1,x_2]\cong R^{[3]}$. 
Let $(D,U)\in {\rm LND}_R(B)^2$ be a nontrivial fundamental pair for $B$. 
Then $(D,U)$ is equivalent over $R$ to either
\[
(x_0\partial_{x_1} , x_1\partial_{x_0}) \quad or \quad 
(x_0\partial_{x_1}+x_1\partial_{x_2} , 2x_1\partial_{x_0}+2x_2\partial_{x_1}) .
\]
\end{theorem}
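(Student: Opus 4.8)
\emph{Proof proposal.} Set $A=\krn D$, $\Omega=\krn U$ and $A_0=A\cap\Omega$. Since $D$ and $U$ are $R$-derivations, $R\subseteq A_0$; note that $B$ is an affine UFD over $k$ with $B^*=R^*$, and $\dim_kA=\dim_kR+2$. The plan is to prove the theorem after base change to $K:={\rm frac}(R)$ and then descend. Write $B_K=K\otimes_RB\cong K^{[3]}$, let $(D_K,U_K)$ be the induced nontrivial fundamental pair, and put $A_K=\krn D_K$ and $(A_K)_0=A_K\cap\krn U_K=K\otimes_RA_0$.

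\emph{The field case.} By {\it Theorem\,\ref{Miyanishi}}(a), $A_K\cong K^{[2]}$. First I would check that $(A_K)_0$ is neither $K$ nor $A_K$. It is not $A_K$: otherwise $\krn D_K\subseteq\krn U_K$, so $\krn D_K=\krn U_K$ by comparing transcendence degrees and using factorial closedness; but then, as in the proof of {\it Lemma\,\ref{begin}}, iterating $D_K$ (resp. $U_K$) on a nonzero homogeneous element of nonzero degree would land in a degree-$0$ component of $\krn D_K=(A_K)_0$, forcing the $\Z$-grading --- hence $D_K$ --- to be trivial. It is not $K$: if $(A_K)_0=K$ then $SL_2(K)$ would act on $\A^3_K$ with a dense orbit, whence (using $B_K^*=K^*$, normality, and the Lefschetz principle together with the fact that $SL_2(\C)$ and $PSL_2(\C)$ are not contractible while $\A^3_\C$ is) $\A^3_K\cong SL_{2,K}$ or $PSL_{2,K}$, which is impossible. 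Now {\it Lemma\,\ref{technical}}, applied over $K$ with $\delta=U_K$ and $S=A_K$, gives $(A_K)_0=K[f]$ and $A_K=K[f]^{[1]}$, so $B_K$ satisfies the hypotheses of {\it Proposition\,\ref{threefold}} over $K$. That proposition yields either (1) $(D_K,U_K)$ equivalent over $K$ to $(x_0\partial_{x_1},x_1\partial_{x_0})$, or (2) $B_K\cong K[X,Y,Z,W]/(2XZ-Y^2-P(W))$ with $(D_K,U_K)$ equivalent over $K$ to the $V_2$-pair. In case (2), since $B_K$ is itself a polynomial ring, {\it Theorem\,\ref{Gupta}} (in the form: $2XZ-Y^2-P(W)$ is a variable of $K^{[4]}$ if and only if $Z^2+P(T)$ is a variable of $K[Z,T]$, i.e. if and only if $\deg_TP=1$) forces $\deg_TP=1$; solving for $W$ then realizes $(D_K,U_K)$ as the standard $V_2$-pair on $K[x_0,x_1,x_2]$. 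Thus over $K$ the pair is equivalent to one of the two listed pairs.

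\emph{Descent to $R$.} Here $A_0=B^{SL_2(k)}$ is affine (Finiteness Theorem) and is a UFD, being factorially closed in the UFD $B$. The irrelevant ideal $I_1=\bigoplus_{i\ge1}A_i$ is a prime ideal of the UFD $A$ (by {\it Corollary\,\ref{plinth-cor}}) with $A/I_1\cong A_0$, so ${\rm ht}(I_1)=\dim_kA-\dim_kA_0=1$ and $I_1=(g)$ is principal, generated by a homogeneous $g\in A_d$; hence $A=A_0[g]\cong A_0^{[1]}$ by {\it Theorem\,\ref{main1}}(c), and {\it Lemma\,\ref{generators}} gives $B=A_0[h_0,\dots,h_d]$ with $(D,U)$ in the corresponding standard form, where $d=\deg g\in\{1,2\}$ by the field case. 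Separately one shows $A_0\cong R^{[1]}$: since $A_0\otimes_RK=(A_K)_0\cong K^{[1]}$, one picks a generator and repeatedly divides out prime factors of $R$ (legitimate because $A_0$ is factorially closed in $B$, so the result stays in $A_0$); the process terminates because an element of $R^{[3]}$ has only finitely many $R$-prime factors among its coefficients, and the resulting $x$ satisfies $A_0=R[x]$. If $d=1$ then $B=R[x][h_0,h_1]\cong R^{[3]}$ carries the first standard pair. If $d=2$ then $B=R[x,h_0,h_1,h_2]/(2h_0h_2-h_1^2-\nu(x))$ with $\nu\in R[x]$; comparison with the field case gives $\deg_x\nu=1$, and if its leading coefficient were a non-unit $p$, reduction mod a prime factor of $p$ would exhibit $(R/p)^{[3]}\cong B/pB\cong\big((R/p)[h_0,h_1,h_2]/(2h_0h_2-h_1^2-\bar\nu)\big)^{[1]}$, contradicting the cancellation theorem for surfaces and {\it Theorem\,\ref{classification}} (the surface $2h_0h_2-h_1^2=\bar\nu$ is never $\cong(R/p)^{[2]}$, being a nontrivial Danielewski surface or a quadric cone). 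Hence $\nu$ is a variable of $R[x]$, and solving for $x$ gives $B=R[h_0,h_1,h_2]$ with the second standard pair.

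\emph{Main obstacle.} I expect the descent from $K$ to $R$ to be the hard part: replacing the $K$-generators of $A_0$ and $A$ by genuine $R$-generators, and showing that the quadric relation in the $d=2$ case has unit leading coefficient. Both steps depend essentially on factorial closedness of $A_0$, $A$ and $\Omega$ in the UFD $B$ --- which is what permits $R$-denominators to be cleared and polynomial subrings to be recognized --- together with the cancellation theorems for curves and surfaces already invoked elsewhere in the paper.
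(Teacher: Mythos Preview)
Your overall architecture (field case, then descent) matches the paper, but two steps are handled differently and one of them has a real gap.

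\textbf{Field case.} Your argument that $(A_K)_0\ne K$ via dense orbits, Lefschetz, and contractibility is unnecessarily fragile. The paper's device is purely algebraic: pick homogeneous generators $f\in A_e$, $g\in A_d$ of $A_K\cong K^{[2]}$. If $d,e>0$ then by the Structure Theorem $I_1=(f,g)$, the maximal ideal of $K[f,g]$; but {\it Theorem\,\ref{Miyanishi}(b)} says the plinth ideal of a nonzero LND of $K^{[3]}$ is principal, a contradiction. Hence one of $d,e$ is zero and $(A_K)_0=K[f]\ne K$. This replaces your entire geometric paragraph.

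\textbf{Descent.} Here your approach diverges from the paper and the gap you anticipated is genuine. Your claim that repeatedly dividing a $K$-generator of $(A_K)_0$ by primes of $R$ yields an $x$ with $A_0=R[x]$ is not justified: primitivity of $x$ together with $K[x]=K\otimes_R A_0$ does not force $A_0=R[x]$, even for a factorially closed subring of $R^{[3]}$. Likewise, in your $d=2$ argument the reduction modulo a prime $p\in R$ lands in $(R/p)^{[3]}$, but $R/p$ need not be a field, so neither the surface cancellation theorem nor {\it Theorem\,\ref{classification}} applies. The paper avoids these issues entirely by a different and shorter route: after the field case gives $B_L=L[X,Y,Z]$ with $(D_L,U_L)$ in standard form, rescale so that $X,Y,Z\in B$ are irreducible (hence prime). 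Since $D_LY=X$ one has $(I_L)_n=X^nA_L$; then for $f\in I_n$ choose $r\in R\setminus\{0\}$ with $rf\in X^nA$, and primality of $X$ together with $\deg r=0<\deg X$ forces $f\in X^nA$. Thus $I_n=X^nA$, whence $\mathcal{F}_n=A\cdot U^n(X^n)+\mathcal{F}_{n-1}$; since $U$ visibly restricts to $\tilde B:=R[X,Y,Z]$, induction gives $\mathcal{F}_n\subset\tilde B$ for all $n$, and $B=k[\mathcal{F}_N]$ yields $B=\tilde B$. No separate analysis of $A_0$ is needed.
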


\begin{proof} We first prove the result in the case $R=k$. 

Let $A=\krn D$ and $\Omega =\krn U$. By {\it Theorem\,\ref{Miyanishi}(a)} there exist $f,g\in A$ with $A=k[f,g]\cong k^{[2]}$. 
Since $A$ is a graded subring of $B$ we may choose $f,g$ to be homogeneous. Let $d,e\in\N$ be such that $f\in A_e$ and $g\in A_d$. 

If $d,e>0$ then the Structure Theorem implies $f,g\in I_1=A\cap DB$. Since $fA+gA$ is a maximal ideal of $A$ we see that $fA+gA=I_1$. 
But $I_1$ is a principal ideal of $A$ by {\it Theorem\,\ref{Miyanishi}(b)}, which gives a contradiction. Therefore, either $d=0$ and $e>0$, or $d>0$ and $e=0$. 
We may assume without loss of generality that $e=0$ and $d>0$. Then $A_0=k[f]\cong k^{[1]}$ and $A=A_0[g]\cong k^{[2]}$. By {\it Theorem\,\ref{threefold}} it follows that 
either
\begin{itemize}
\item [(i)] $B=k[X,Y,Z]$ and $(D,U)=(X\partial_Y, Y\partial_X)$; or
\smallskip
\item [(ii)] $B=k[X,Y,Z,W]/(2XZ-Y^2-P(W))$ for some $P(T)\in k[T]\cong k^{[1]}$ and $(D,U)$ is induced by 
$(X\partial_Y+Y\partial_Z  , 2Y\partial_X+2Z\partial_Y)$
on $k[X,Y,Z,W]= k^{[4]}$.
\end{itemize}
In case (i) there is nothing further to show, so assume that case (ii) holds. 
By {\it Theorem\,\ref{Gupta}}, the polynomial $Y^2-P(W)$ is a variable of $k[Y,W]\cong k^{[2]}$, which implies $\deg_TP(T)=1$. 
Let $x,y,z,w\in B$ be the images of $X,Y,Z,W$, respectively. Since $\deg P_T(T)=1$ we see that $w\in k[x,y,z]$. Therefore, 
$B=k[x,y,z]$ and $(D,U)=(x\partial_y+y\partial_z \, ,\,  2y\partial_x+2z\partial_y)$.

So the theorem holds in the case $R=k$. 

For the general case, let $L={\rm frac}(R)$. Then $(D,U)$ extends to a fundamental pair $(D_L,U_L)$ for $B_L=L[x_0,x_1,x_2]$. By the case for fields, there exist 
$X,Y,Z\in B_L$ such that $B_L=L[X,Y,Z]$ and $(D_L,U_L)$ equals either
$(X\partial_Y, Y\partial_X)$ or $(X\partial_Y+Y\partial_Z , 2Y\partial_X+2Z\partial_Y)$

We may assume that $X,Y,Z\in B$ and that $X,Y,Z$ are irreducible, hence prime, in $B$. 

Let $(I_L)_n$, $n\ge 0$, be the image ideals for $D_L$. Since $D_LY=X$ in either case, we see that $(I_L)_n=X^nA$ for each $n\ge 0$. Therefore, given 
$n\ge 0$ and $f\in I_n$ there exists nonzero $r\in R$ with $rf\in X^nA$. Since $X$ is prime in $A$ and $r\not\in XA$ ($\deg r=0$ while $\deg X>0$), it follows that 
$f\in X^nA$. So $I_n=X^nA$ for each $n\ge 0$. Consequently:
\[
\mathcal{F}_n=A\cdot U^n(X^n)\oplus\mathcal{F}_{n-1}\quad \forall n\ge 1
\]
Note that $U$ restricts to the subring $\tilde{B}:=R[X,Y,Z]$ in both cases. Therefore, $U^n(X)\in \tilde{B}$ for all $n\ge 0$, 
and it follows by induction that $\mathcal{F}_n\subset \tilde{B}$ for all $n\ge 0$. Since $B=R[{\mathcal{F}_N}]$ for some $N\ge 1$ we conclude that $\tilde{B}=B$.
\end{proof}

The following corollary to {\it Proposition\,\ref{Kraft-Popov}} extends Panyushev's theorem to fields of characteristic zero for certain kinds of $SL_2(k)$ actions.
\begin{corollary}\label{Panyushev} Let $B=k[x_0,x_1,x_2,x_3]\cong k^{[4]}$ and suppose that $(D,U)$ is a nontrivial fundamental pair for $B$. If $A_0$ contains a variable of $B$ then $(D,U)$ is equivalent to either 
\[
(x_0\partial_{x_1} , x_1\partial_{x_0}) \quad or \quad 
(x_0\partial_{x_1}+x_1\partial_{x_2} , 2x_1\partial_{x_0}+2x_2\partial_{x_1}) .
\]
\end{corollary}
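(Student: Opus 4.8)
The plan is to reduce the statement to {\it Theorem\,\ref{Kraft-Popov}} by realizing $B$ as a polynomial ring in three variables over a one‑dimensional UFD that is killed by both $D$ and $U$.

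First I would choose a variable $v$ of $B$ lying in $A_0=A\cap\krn U$, where $A=\krn D$. By the definition of a variable there exist $y_1,y_2,y_3\in B$ with $B=k[v,y_1,y_2,y_3]$, so setting $R=k[v]\cong k^{[1]}$ we get $B=R[y_1,y_2,y_3]\cong R^{[3]}$, with $R$ an affine UFD over $k$. The one substantive observation is then that $v\in A_0$ means $Dv=Uv=0$, hence $DR=UR=\{0\}$ and $(D,U)\in {\rm LND}_R(B)^2$; the pair is nontrivial by hypothesis. Thus {\it Theorem\,\ref{Kraft-Popov}} applies verbatim and shows that $(D,U)$ is equivalent over $R$ to either $(x_0\partial_{x_1},x_1\partial_{x_0})$ or $(x_0\partial_{x_1}+x_1\partial_{x_2},2x_1\partial_{x_0}+2x_2\partial_{x_1})$ in suitable coordinates of $B$.

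Finally I would note that equivalence over $R$ is a special case of equivalence in the sense of {\it Definition\,\ref{fun-pair}}: the conjugating automorphisms lie in ${\rm Aut}_R(B)\subseteq {\rm Aut}_k(B)$, and $R^*=k^*$, so the unit‑scaling and swap components of the $R$‑equivalence are already part of the action of $B^*\rtimes\Z_2$ and ${\rm Aut}_k(B)$. This yields the stated conclusion.

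There is no genuine obstacle here; the full weight of the corollary rests on {\it Theorem\,\ref{Kraft-Popov}} (and, through it, on {\it Proposition\,\ref{threefold}} and {\it Theorem\,\ref{Gupta}}). The only point that calls for a moment's care is confirming that the hypothesis ``$A_0$ contains a variable of $B$'' really does produce the decomposition $B\cong R^{[3]}$ with $R=k[v]$ — i.e., that a variable of $k^{[4]}$ extends to a full coordinate system — which is immediate from the definition of a variable.
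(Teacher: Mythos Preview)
Your proof is correct and follows essentially the same approach as the paper: pick a variable $f\in A_0$, set $R=k[f]$ so that $B\cong R^{[3]}$ with $(D,U)\in{\rm LND}_R(B)^2$, and invoke {\it Theorem\,\ref{Kraft-Popov}}. Your additional remark that equivalence over $R$ is a special case of the general notion of equivalence is a nice clarification the paper leaves implicit.
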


\begin{proof} Assume that $f\in A_0$ is a variable of $B$ and set $R=k[f]$. Then $B=R^{[3]}$ and $(D,U)\in {\rm LND}_R(B)^2$, so {\it Theorem\,\ref{Kraft-Popov}} gives the desired conclusion. 
\end{proof}

\subsection{A cancellation theorem}

Combining {\it Proposition\,\ref{threefold}} with Panyushev's theorem gives the following cancellation property.

\begin{theorem}\label{cancellation} Assume that $k$ is algebraically closed. Let $X$ be an affine threefold over $k$ such that $X\times\A_k^1\cong\A_k^4$. If $X$ admits a nontrivial $SL_2(k)$-action then $X\cong\A_k^3$. 
\end{theorem}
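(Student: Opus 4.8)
The plan is to extend the $SL_2(k)$-action to $\A_k^4$, linearize it there via Panyushev's theorem, and then read the structure of $B:=k[X]$ back off using the Structure Theorem, {\it Proposition\,\ref{threefold}}, {\it Theorem\,\ref{Gupta}}, and the cancelation theorems for curves and surfaces. First I record the elementary reductions: since $B^{[1]}\cong k^{[4]}$ is a regular UFD with trivial units, $B$ is a regular UFD with $B^*=k^*$ (so $k$ is algebraically closed in $B$) and $\dim_kB=3$. The action is a nontrivial fundamental pair $(D,U)$; write $A=\krn D$ and $A_0=A\cap\krn U=B^{SL_2(k)}$. Extend $(D,U)$ to a fundamental pair $(D',U')$ on $B':=B[t]\cong k^{[4]}$ by $D't=U't=0$, so that $\krn D'=A[t]$ and, by {\it Lemma\,\ref{classical}(d)}, $(B')^{SL_2(k)}=A_0[t]$. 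By Panyushev's theorem the action on $\A_k^4=\operatorname{Spec}B'$ is equivalent to a linear one, so $B'=k[y_0,\dots ,y_3]$ with $SL_2(k)$ acting through one of the four nontrivial four‑dimensional representations $V_3$, $V_2\oplus V_0$, $V_1\oplus V_1$, $V_1\oplus 2V_0$.

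In the cases $V_2\oplus V_0$ and $V_1\oplus 2V_0$ the kernel of the down operator of the linear action is, by a direct computation, a polynomial ring $k^{[3]}$; hence $A[t]\cong k^{[3]}$, so $A\cong k^{[2]}$ by cancelation for surfaces, and similarly $(B')^{SL_2(k)}=A_0[t]\cong k^{[2]}$, so $A_0\cong k^{[1]}\ne k$ by cancelation for curves. Thus $B$ satisfies the hypotheses of {\it Proposition\,\ref{threefold}}: either $B\cong k^{[3]}$ and we are done, or $B\cong k[X,Y,Z,W]/(2XZ-Y^2-P(W))$ with the displayed fundamental pair. In the second alternative I would observe, using the induced $\Z$‑grading, that inside the polynomial ring $\krn D'$ the element $X=h_0$ generates the same rank‑one free $(B')^{SL_2(k)}$‑module as the $V_2$‑coordinate $y_0$, whence $X=\alpha y_0$ for some $\alpha\in k^*$; in particular $X$ is a variable of $B'=k^{[4]}$. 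Then $B'/(X)\cong k^{[3]}$, while $B'/(X)\cong\bigl(B/(X)\bigr)^{[1]}\cong C^{[2]}$ with $C=k[Y,W]/(Y^2+P(W))$; so $C\times\A_k^2\cong\A_k^3$, hence $C\cong\A_k^1$ by two applications of cancelation, hence $Y^2+P(W)$ is a variable of $k[Y,W]$ by the Epimorphism Theorem, and {\it Theorem\,\ref{Gupta}} forces $B\cong k^{[3]}$.

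It remains to eliminate the representations $V_3$ and $V_1\oplus V_1$. For these, $(B')^{SL_2(k)}=A_0[t]$ is generated over $k$ by a single element (the discriminant of a binary cubic, resp. the symplectic form $y_0y_3-y_1y_2$), so $A_0=k$, and since $t$ generates $(B')^{SL_2(k)}$ over $k$ up to an affine substitution, $B\cong B'/(t)$ is the hypersurface $\{\operatorname{disc}=c\}$ in the $V_3$ case, resp. $\{y_0y_3-y_1y_2=c\}$ in the $V_1\oplus V_1$ case, for some $c\in k$. The value $c=0$ makes $B$ non‑normal or singular (the cone over $\PP^1\times\PP^1$, resp. the discriminant hypersurface, which is singular along the rational normal curve), contradicting that $B$ is a regular UFD. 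For $c\ne 0$ the hypersurface is a single closed $SL_2(k)$‑orbit: in the $V_3$ case it is $SL_2(k)/\mu_3$, whose divisor class group is $\widehat{\mu_3}\ne 0$, again contradicting that $B$ is a UFD; in the $V_1\oplus V_1$ case it is $SL_2(k)$ itself, which is excluded because $SL_2(k)\times\A_k^1\not\cong\A_k^4$ — by the Lefschetz principle this reduces to $k=\C$, where $SL_2(\C)$ is homotopy equivalent to $SU(2)\simeq S^3$, so $H^3\bigl(SL_2(\C)\times\C;\Z\bigr)=\Z\ne H^3(\C^4;\Z)$.

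I expect the main obstacle to be the two ``large‑orbit'' cases $V_3$ and $V_1\oplus V_1$: the linearization step hands them to us as homogeneous spaces for $SL_2(k)$, and they must be ruled out separately — the $V_3$ case by factoriality, but the $V_1\oplus V_1$ case only by the non‑elementary fact that the cylinder over $SL_2(k)$ is not an affine space. The rest of the argument is bookkeeping: identifying the kernels of the down operators and invariant rings of the four linear actions, checking which elements of $B$ become coordinates of $B'$, and assembling {\it Proposition\,\ref{threefold}}, {\it Theorem\,\ref{Gupta}}, the Epimorphism Theorem, and the cancelation theorems.
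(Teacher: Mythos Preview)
Your overall strategy matches the paper's: extend the action trivially to $B'=B[t]\cong k^{[4]}$, linearize via Panyushev, and treat the four representations. The differences are in how the cases are dispatched.

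For $V_3$ and $V_1\oplus V_1$ your argument is correct but considerably heavier than the paper's. You identify $B$ with a specific fiber of the generating invariant $h$ and then eliminate each fiber individually---by singularity when $c=0$, by the class group of $SL_2/\mu_3$ in the $V_3$ case, and for $V_1\oplus V_1$ with $c\neq 0$ by the nontrivial fact that $SL_2(k)\times\A_k^1\not\cong\A_k^4$ (via Lefschetz and topology). The paper avoids all of this with a one-line observation: since $k[h]=A_0'[t]=k[t]$, the fibers of $h$ coincide with the fibers of $t$; but every fiber of $t$ is ${\rm Spec}(R)$, which is smooth, whereas $h$ has a singular fiber (at $0$) for both representations. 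This contradiction eliminates Case~1 without ever computing the smooth fibers or invoking anything beyond regularity of $R$.

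For $V_2\oplus V_0$ and $V_1\oplus 2V_0$ your grading argument that $X=\alpha y_0$ for the $V_2$-coordinate $y_0$ is correct \emph{when the representation is $V_2\oplus V_0$}, and is essentially the same as the paper's plinth-ideal argument there (the paper observes that the plinth ideal is $vA'$ for a variable $v$, and $x\in vB'$ forces $xB'=vB'$). However, you have lumped the two representations together and not checked what happens for $V_1\oplus 2V_0$: there the highest-weight vector has degree~$1$, so $A'_2=A_0'\cdot y_0^2$ and your ``$V_2$-coordinate'' does not exist. The gap is easily filled---the same comparison of graded generators of $A'=A_0'[g]=A_0'[y_0]$ forces $\deg g=1$, so {\it Proposition\,\ref{threefold}} lands in case~(1) and $R\cong k^{[3]}$ immediately---but you should say so. The paper handles $V_1\oplus 2V_0$ separately by noting $B'=A_0'^{[2]}=k[w,t]^{[2]}$, so $t$ is a variable of $B'$ and $R=B'/tB'\cong k^{[3]}$.
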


\begin{proof} 
Let $B$ be an affine $k$-domain isomorphic to $k^{[4]}$ and let $t\in B$ and $R\subset B$ a subalgebra such that $B=R[t]\cong R^{[1]}$. 
Then $R$ is a smooth affine UFD of dimension three over $k$. 

Assume that $(D,U)$ is a nontrivial fundamental pair for $R$. Set $A=\krn D$ and $A_0=A\cap\krn U$. 
Extend $(D,U)$ trivially to the fundamental pair $(D',U')$ on $B$ and set:
\[
A'=\krn D'=A[t] \quad {\rm and}\quad A_0'=A'\cap\krn U'=A_0[t]
\]
By Panyushev's theorem, the $SL_2(k)$-action on $B$ is given by a representation. There exist exactly four such nontrivial representations, namely:
\[
V_1\oplus V_1\,\, ,\,\, V_3\,\, ,\,\, V_0^2\oplus V_1\,\, ,\,\, V_0\oplus V_2
\]
In the first two of these, $A_0'\cong k^{[1]}$ and in the latter two, $A_0'\cong k^{[2]}$. Consider each case. 

Case 1: $A_0'\cong k^{[1]}$. Then $A_0=k$ and $A_0'=k[t]$. For each representation $V_1\oplus V_1$ and $V_3$, 
there exists $h\in A_0'$ with a singular fiber such that $A_0'=k[h]$. Since $B=R[t]$ we see that every fiber of $t$ is smooth, and the equality $k[h]=k[t]$ gives a contradiction. So this case cannot occur.

Case 2: $A_0'\cong k^{[2]}$. Since $A_0'=A_0[t]\cong A_0^{[1]}$ it follows that $A_0\ne k$. For each representation $V_0^2\oplus V_1$ and $V_0\oplus V_2$ we have $A'\cong k^{[3]}$. Since $t\in A'$ we see that $A[t]\subset A'$. Since $A$ is algebraically closed in $R$, $A[t]$ is algebraically closed in $B=R[t]\cong R^{[1]}$, so $A[t]=A'$. By the Cancelation Theorem for Surfaces, $A\cong k^{[2]}$; see \cite{Fujita.79,Miyanishi.Sugie.80}. 
By {\it Proposition\,\ref{threefold}}, it follows that either $R\cong k^{[3]}$, in which case there is nothing further to show, or 
\[
R=k[X,Y,Z,W]/(2XZ-Y^2-P(W))=k[x,y,z,w]
\]
for some nonzero $P(T)\in k[T]\cong k^{[1]}$, where $x,y,z,w\in R$ denote the images of $X,Y,Z,W$, respectively, and where 
$(D,U)$ is equivalent to the fundamental pair induced by 
\[
(X\partial_Y+Y\partial_Z \, ,\,  2Y\partial_X+2Z\partial_Y)
\]
on $k[X,Y,Z,W]= k^{[4]}$.
Using the van den Essen Kernel Algorithm (see {\it Section\,\ref{prelims}}), we find that $A=k[x,w]\cong k^{[2]}$ and $A_0=k[w]\cong k^{[1]}$. So
$A'=k[x,w,t]\cong k^{[3]}$ and $A_0'=k[w,t]$. Note that $x$ is prime in $A'$, and since $A'$ is factorially closed in $B$, $x$ is prime in $B$. 

Consider $V_0^2\oplus V_1$. In this case, $B=A_0'^{[2]}$. So $B=k[w,t]^{[2]}$ implies $t$ is a variable of $B$, and $R=B/tB\cong k^{[3]}$.

Consider $V_0\oplus V_2$. In this case, the plinth ideal $A'\cap D'B=vA'$ for some $v\in A'$ which is a variable of $B$. Since $Dy=x\in A'$ we see that $x\in vB$. Therefore, $xB=vB$ since $x$ is prime in $B$.
It follows that, if $S=k[Y,W]/(Y^2+P(W))$, then:
\[
k^{[3]}=B/vB=B/xB=S[z,t]\cong S^{[2]}
\]
By the Cancelation Theorem for Curves, $S\cong k^{[1]}$.
By the Epimorphism Theorem, $Y^2+P(W)$ is a variable of $k[Y,W]$, which implies that
$\deg_TP(T)=1$ and $R\cong k^{[3]}$. See \cite{Abhyankar.Eakin.Heinzer.72,Abhyankar.Moh.75,Suzuki.74}. 
\end{proof} 

\begin{remark} {\rm This result is similar in spirit to one step in the proof of the Cancelation Theorem for Surfaces ({\it op.cit.}) due to Fujita, Miyanishi and Sugie.
Working over an algebraically closed field $k$ of charactersitic zero, 
they first show that, if $X$ is any factorial affine surface over $k$ with trivial units admitting a nontrivial $\G_a$-action, then $X\cong\A_k^2$. 
The more difficult part is to show that, if $X\times\A_k^n\cong\A_k^{n+2}$ for some surface $X$ and $n\ge 1$, then $X$ admits a nontrivial $\G_a$-action.  }
\end{remark}


\section{Extensions of Fundamental Derivations}\label{extended}

This section considers two kinds of extensions of an affine $k$-domain $B$ with fundamental pair $(D,U)$. The first kind extends $(D,U)$ to the pair 
$(D+X\partial_Y,U+Y\partial_X)$ on $B[X,Y]\cong B^{[2]}$. The second kind extends $D$ to $B[t]\cong B^{[1]}$ by $Dt=a\in B$, where $Da=Ua=0$. 
We first establish the underlying properties of extensions of locally nilpotent derivations of a certain type in {\it Proposition\,\ref{main2}} below.

{\it Proposition\,\ref{isomorphism}} shows that $\krn D$ is finitely generated, and this forms part (a) of the Structure Theorem. 
Note that no part of the Structure Theorem or its consequences are used in sections {\it \ref{gen-ext}} and {\it \ref{B[X,Y]}}. 

\subsection{General Extensions}\label{gen-ext}
Let $B$ be any integral $k$-domain, let $D\in{\rm LND}(B)$, $D\ne 0$, and let $A=\krn D$.
Let $\mathcal{F}_n\subset B$ be the degree modules and $I_n=D^n(\mathcal{F}_n)\subset A$ the image ideals for $D$, $n\ge 0$. 
Fix nonzero $a\in A$. Let $\mathcal{G}_n\subset\mathcal{F}_n$ be the $A$-submodule $\mathcal{G}_n=\sum_{0\le i\le n}a^i\mathcal{F}_i$ and let $R\subset B$ be the subalgebra generated by:
\[
\bigcup_{n\ge 0}\mathcal{G}_n=\sum_{n\ge 0}a^n\mathcal{F}_n
\]
Then $D$ restricts to $R$ and the degree modules of $D|_R$ are precisely $\mathcal{G}_n$, $n\ge 0$. Likewise, the image ideals for $D|_R$ are $a^nI_n$, $n\ge 0$. 

Let $B'=B[u]\cong B^{[1]}$ and extend $D$ to $D'$ on $B'$ by defining $D'u=a$. By {\it Lemma\,\ref{princ-6}} we have $[D',\partial_u]=0$. 
Let $K=\krn D'$ and $J=K\cap D'B'$. By {\it Lemma\,\ref{plinth}}, $\partial_u$ restricts to $K$ and $\partial_uJ\subset J$. Note that $K\cap B=A$. 
Let $\mathcal{F}_n^{\prime}\subset K$, $n\ge 0$, be the degree modules for $\partial_u$ restricted to $K$. 

Let $D_0$ be the extension of $D$ to $B[u]$ defined by $D_0u=0$ and let $\alpha :B_a[u]\to B_a[u]$ be the $k$-algebra automorphism $\alpha =\exp(-\frac{u}{a}D_0)$. 
Let $\varepsilon :B[u]\to B$ be evaluation at $u=0$. Since $uB[u]\cap K=(0)$ we see that $\varepsilon$ is injective on $K$. 

\begin{proposition}\label{main2} Assume that $aA\cap I_n=aI_n$ for each $n\ge 0$. 
\begin{itemize}
\item [{\bf (a)}] $\alpha\vert_R$ is an isomorphism of $R$ with $K$ and $\varepsilon\alpha\vert_R =id_R$. 
\medskip
\item [{\bf (b)}] If $B=k[\mathcal{F}_N]$ for some $N\ge 1$, then $R=k[\mathcal{G}_N]$ and $K=k[\mathcal{F}_N^{\prime}]$.
\medskip
\item [{\bf (c)}] If $B$ is normal then $R$ is normal. 
\medskip
\item [{\bf (d)}] If $B$ is a UFD then $R$ is a UFD. 
\end{itemize}
 \end{proposition}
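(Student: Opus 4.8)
The plan is to prove part (a) in full and to deduce (b)--(d) from it together with standard facts about kernels of locally nilpotent derivations. For (a), the starting observation is that $\alpha|_B$ is the restriction to $B$ of the Dixmier map for $D'$ on $B_a[u]$ associated to the local slice $u$ (so $D'u=a$): since $(D')^ib=D^ib$ for $b\in B$, we have $\alpha(b)=\exp(-\tfrac{u}{a}D_0)(b)=\sum_{i\ge0}\tfrac{(-1)^i}{i!}\tfrac{u^i}{a^i}D^ib=\pi_u(b)$, and the Dixmier map takes values in $\krn D'$, so $\alpha(B)\subseteq\krn(D'|_{B_a[u]})$ and hence $\alpha(R)\subseteq\krn(D'|_{B_a[u]})$. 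On the other hand, for $f\in\mathcal{F}_n$ the element $\alpha(a^nf)=\sum_{l=0}^{n}\tfrac{(-1)^l}{l!}a^{n-l}u^lD^lf$ lies in $B[u]$, and such elements generate $R$, so $\alpha(R)\subseteq B[u]\cap\krn(D'|_{B_a[u]})=K$. Reading off constant terms in $u$ gives $\varepsilon\alpha(r)=r$ for $r\in R$, while for $c\in K$ the relation $D_0c=-a\partial_uc$ iterates to $D_0^ic=(-a)^i\partial_u^ic$, whence $\alpha^{-1}(c)=\sum_{i\ge0}\tfrac{1}{i!}\tfrac{u^i}{a^i}D_0^ic=\sum_{i\ge0}\tfrac{(-u)^i}{i!}\partial_u^ic$ telescopes to $\varepsilon(c)$. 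Thus $\alpha|_R\colon R\to K$ is injective, and (a) is equivalent to the inclusion $\varepsilon(K)\subseteq R$.

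This inclusion is the heart of the proof and the only place where the hypothesis $aA\cap I_n=aI_n$ is used; I would argue by induction on $\deg_uc$ for $c\in K$, the base case $c\in K\cap B=A\subseteq R$ being clear. Write $c=\sum_{j=0}^{n}c_ju^j$ with $n\ge1$; then $D'c=0$ forces $Dc_j=-(j+1)ac_{j+1}$, so $c_n\in A$ and, iterating downward, $D^nc_0=(-1)^n n!\,a^nc_n$, giving $a^nc_n\in A\cap D^nB=I_n$. Since $c_n\in A$, repeated use of $aA\cap I_n=aI_n$ strips off the factors of $a$ and yields $c_n\in I_n$; choose $g\in\mathcal{F}_n$ with $D^ng=c_n$. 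Then $a^ng\in\mathcal{G}_n\subseteq R$, the coefficient of $u^n$ in $\alpha(a^ng)$ equals $\tfrac{(-1)^n}{n!}c_n$, and therefore $c-(-1)^n n!\,\alpha(a^ng)$ lies in $K$ with $u$-degree $<n$. By induction its $\varepsilon$-image lies in $R$, and since $\varepsilon\alpha(a^ng)=a^ng$ we conclude $\varepsilon(c)\in R$. The same computation refines to $\varepsilon(\mathcal{F}_n')=\mathcal{G}_n$, i.e. $\alpha(\mathcal{G}_n)=\mathcal{F}_n'$, which is needed below.

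For (b), assume $B=k[\mathcal{F}_N]$. The claim is that $a^n\mathcal{F}_n\subseteq k[\mathcal{G}_N]$ for all $n$, by induction on $n$; the cases $n\le N$ are immediate since $a^n\mathcal{F}_n\subseteq\mathcal{G}_N$. For $n>N$ and $f\in\mathcal{F}_n$, the fact that $B$ is generated in $D$-degree $\le N$ and that $\deg_D$ is additive on products (as $B$ is a domain) shows that $f$ is congruent modulo $\mathcal{F}_{n-1}$ to a finite sum of products $pq$ with $p\in\mathcal{F}_a$, $q\in\mathcal{F}_b$, $a+b=n$, $1\le a,b<n$; this is the monomial argument from the proof of {\it Lemma\,\ref{Ksl2}}. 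Then $a^nf$ differs by a member of $a\cdot a^{n-1}\mathcal{F}_{n-1}$ from a sum of terms $(a^ap)(a^bq)$, and all of these lie in $k[\mathcal{G}_N]$ by the inductive hypothesis together with $a\in\mathcal{G}_0$. Hence $R=k[\mathcal{G}_N]$, and applying the isomorphism $\alpha|_R$ of (a) and the identity $\alpha(\mathcal{G}_N)=\mathcal{F}_N'$ gives $K=\alpha(R)=k[\alpha(\mathcal{G}_N)]=k[\mathcal{F}_N']$.

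Parts (c) and (d) then follow from (a): $B[u]\cong B^{[1]}$ is normal, respectively a UFD, when $B$ is, and $K=\krn D'$ is factorially closed in $B[u]$; a factorially closed subring of a normal domain is integrally closed in its fraction field, and the kernel of a locally nilpotent derivation of a UFD is a UFD (see \cite{Freudenburg.17}), so $K$ inherits normality, respectively factoriality, from $B[u]$, and then so does $R\cong K$. I expect the induction of the second paragraph, which establishes $\varepsilon(K)\subseteq R$ and is exactly where $aA\cap I_n=aI_n$ enters, to be the main obstacle; everything else is either a short manipulation of $\alpha$ and $\varepsilon$ or a citation.
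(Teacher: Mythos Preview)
Your argument is correct. For parts (c) and (d) you do exactly what the paper does. For part (b) you supply an explicit induction via the associated graded (the monomial argument you attribute to {\it Lemma\,\ref{Ksl2}}), which the paper's proof of this proposition actually leaves implicit; the paper establishes the module isomorphisms $\alpha(\mathcal{G}_n)=\mathcal{F}_n'$ and presumably regards $R=k[\mathcal{G}_N]$ as following from the same monomial computation that appears in the proof of {\it Lemma\,\ref{Ksl2}}.

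The real divergence is in part (a). The paper organizes the proof around the auxiliary maps $\varphi_n(f)=a^n\pi_u(f)$ and a separate lemma ({\it Lemma\,\ref{fnprime}}) asserting $\mathcal{F}_n'=\sum_{i\le n}\varphi_i(\mathcal{F}_i)$; that lemma is proved by induction via reduction modulo $aB[u]$ and an appeal to \cite{Freudenburg.17}, Theorem~8.9, which gives a criterion for a submodule of $\mathcal{F}_{n+1}'$ to equal $\mathcal{F}_{n+1}'$. You instead observe the Taylor identity $\alpha^{-1}|_K=\varepsilon|_K$ (coming from $D_0c=-a\,\partial_u c$ on $K$) and reduce everything to the inclusion $\varepsilon(K)\subseteq R$, which you prove by a direct induction on $\deg_u$, stripping off the top $u$-coefficient using $aA\cap I_n=aI_n$. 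The inductive mechanism is morally the same---both isolate the leading term and use the hypothesis to place it in $I_n$---but your version is more self-contained (no external citation) and the identity $\alpha^{-1}|_K=\varepsilon|_K$ makes the bijectivity transparent. The paper's packaging, on the other hand, yields the module-level statement $\alpha(\mathcal{G}_n)=\mathcal{F}_n'$ up front, which it reuses later (e.g.\ in {\it Proposition\,\ref{finite-generation}}); you recover this as a refinement of your induction, so nothing is lost.
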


Two preliminary lemmas are needed to prove the proposition. 

Let $\pi_u :B_a^{\prime}\to K_a$ be the Dixmier map for $D'$.
Given $n\ge 0$, define the mapping:
\[
\varphi_n:\mathcal{F}_n\to\mathcal{F}_n^{\prime}\,\, ,\,\, \varphi_n(f)=a^n\pi_u(f) 
\]
\begin{lemma}\label{phin} Let $m,n\ge 0$ be given. 
\begin{itemize}
\item [{\bf (a)}] $\varphi_n$ is an injective map of $A$-modules for each $n\ge 0$. 
\medskip
\item [{\bf (b)}] $\varphi_m(f)\varphi_n(g)=\varphi_{m+n}(fg)$ for each $m,n\ge 0$, $f\in\mathcal{F}_m$ and $g\in\mathcal{F}_n$. 
\medskip
\item [{\bf (c)}] $\varphi_n(p(0))=a^np(u)$ for each $p(u)\in K$.
\medskip
\item  [{\bf (d)}] $\deg_u\varphi_n(f)=\deg_D(f)$ for each $f\in\mathcal{F}_n$. 
\end{itemize}
\end{lemma}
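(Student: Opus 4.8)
The plan is to derive all four parts from one structural fact together with two short computations. The fact: the Dixmier map $\pi_u\colon B_a[u]\to K_a$ associated to the local slice $u$ of $D'$ (the element playing the role of ``$a$'' in the Dixmier formula is $D'u=a\in A\subseteq K$) is a surjective homomorphism of $k$-algebras which restricts to the identity on $K_a$, in particular fixes $K$ pointwise. The two computations: $\pi_u(u)=u-a\cdot\frac{u}{a}=0$; and, since $D'$ extends $D$ and $D(B)\subseteq B$, we have $(D')^ib=D^ib$ for $b\in B$, whence $\pi_u(b)=\sum_{i\ge 0}\frac{(-1)^i}{i!}D^ib\,(u/a)^i$ for $b\in B$. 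Applying this to $f\in\mathcal{F}_n$ gives
\[
\varphi_n(f)=a^n\pi_u(f)=\sum_{i=0}^{\deg_Df}\frac{(-1)^i}{i!}\,a^{\,n-i}\,D^if\,u^i ,
\]
and since $f\in\mathcal{F}_n$ means $\deg_Df\le n$, every exponent $n-i$ is $\ge 0$, so $\varphi_n(f)\in B[u]$, hence $\varphi_n(f)\in B[u]\cap K_a=K$ (a routine localization check); its $u$-degree equals $\deg_Df\le n$ because the leading coefficient $\frac{(-1)^{\deg_Df}}{(\deg_Df)!}a^{\,n-\deg_Df}D^{\deg_Df}f$ is nonzero in the domain $B$. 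Thus each $\varphi_n$ genuinely maps $\mathcal{F}_n$ into $\mathcal{F}_n'$, and part (d) is already proved.

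Parts (a) and (b) are then formal. The map $\varphi_n$ is additive, and since $\pi_u$ fixes $A$ pointwise and is multiplicative, $\varphi_n(af)=a^n\pi_u(a)\pi_u(f)=a\,\varphi_n(f)$ for $a\in A$, so $\varphi_n$ is $A$-linear; it is injective because the $u$-constant term of $\varphi_n(f)$ is $a^nf$, so $\varphi_n(f)=0$ forces $f=0$. For (b), $\deg_D(fg)\le\deg_Df+\deg_Dg$ gives $fg\in\mathcal{F}_{m+n}$ when $f\in\mathcal{F}_m$, $g\in\mathcal{F}_n$, and multiplicativity of $\pi_u$ yields $\varphi_{m+n}(fg)=a^{m+n}\pi_u(fg)=(a^m\pi_u(f))(a^n\pi_u(g))=\varphi_m(f)\varphi_n(g)$.

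For (c), let $p(u)\in\mathcal{F}_n'\subseteq K$ and write $p(u)=\sum_{j\ge 0}c_ju^j$ with $c_j\in B$. Since $\pi_u$ is a ring homomorphism fixing $K$ pointwise and $\pi_u(u)=0$,
\[
p(u)=\pi_u(p(u))=\sum_{j\ge 0}\pi_u(c_j)\,\pi_u(u)^j=\pi_u(c_0)=\pi_u(p(0)).
\]
Comparing $u$-degrees in $\pi_u(p(0))=\sum_i\frac{(-1)^i}{i!}D^i(p(0))(u/a)^i$ shows $\deg_D(p(0))=\deg_up(u)\le n$, hence $p(0)\in\mathcal{F}_n$, and then $\varphi_n(p(0))=a^n\pi_u(p(0))=a^np(u)$. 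In other words (c) records that $\varphi_n$ and evaluation at $u=0$ invert one another up to the scalar $a^n$ between $\mathcal{F}_n$ and $\mathcal{F}_n'$; note the left side of (c) is only meaningful when $p(0)\in\mathcal{F}_n$, which holds exactly when $p\in\mathcal{F}_n'$, so that is the intended hypothesis on $p$.

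I do not expect a genuine obstacle. The only thing needing attention is keeping track of where elements live: that $\varphi_n(f)$ lands in $B[u]$ rather than merely in $B_a[u]$, that it lands in $\mathcal{F}_n'$, and that $p(0)\in\mathcal{F}_n$ in part (c). Each of these is precisely what the degree bounds in the hypotheses ($f\in\mathcal{F}_n$, resp.\ $p\in\mathcal{F}_n'$) are for, via the explicit expansion of $\varphi_n(f)$ above.
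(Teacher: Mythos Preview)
Your proof is correct and follows essentially the same approach as the paper: both rely on the Dixmier map $\pi_u$, its multiplicativity (for (a) and (b)), and the fact that it is a retraction onto $K_a$ (equivalently, $uB'\cap K=\{0\}$) for (c). The only differences are cosmetic: you prove (d) first by reading off the leading $u$-coefficient from the explicit expansion, whereas the paper deduces (d) from (c); and for injectivity in (a) you use the constant term $a^nf$, while the paper argues via $\ker\pi_u=uB_a'$---both amount to the same observation.
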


\begin{proof} 
(a) Let $c\in A$ and $f,g\in\mathcal{F}_n$. Then 
\[
\textstyle\varphi_n(cg)=a^n\pi_u(cf)=a^n\pi_u(c)\pi_u(f)=a^nc\pi_u(f)=c\varphi_n(f)
\]
and:
\[
\textstyle\varphi_n(f+g)=a^n\pi_u(f+g)=a^n(\pi_u(f)+\pi_u(g))=a^n\pi_u(f)+a^n\pi_u(g)=\varphi_n(f)+\varphi_n(g)
\]
So $\varphi_n$ is an $A$-module homomorphism. 
If $\varphi_n(f)=0$ then $f\in\krn\pi_u=uB_a^{\prime}\cap K_a$. If $f\ne 0$ then $u\in K$, a contradiction. Therefore, $\krn\varphi_n=\{ 0\}$ and $\varphi_n$ is injective. 
\medskip

(b) $\varphi_m(f)\varphi_n(g)=a^m\pi_u(f)a^n\pi_u(g)=a^{m+n}\pi_u(fg)=\varphi_{m+n}(fg)$
\medskip

(c) Since the constant term of $\varphi_n(p(0))$ equals $a^np(0)$, 
we see that:
\[
\textstyle\varphi_n(p(0))-a^np(u)\in uB'\cap K
\]\
 If $\varphi_n(p(0))-a^np(u)\ne 0$ then $u\in K$, a contradiction. 
\medskip

(d) Given $p(u)\in\mathcal{F}_n^{\prime}$, if $g(u)=\varphi_n(p(0))$ then $g(u)=a^np(u)$ by part (c), and therefore:
\[
n=\deg_D(p(0))=\deg_ug(u)=\deg_up(u)
\]
Given $f\in\mathcal{F}_n$, if $p(u)=\varphi_n(f)$ then $a^nf=p(0)$ and $n=\deg_D(f)=\deg_Dp(0)=\deg_up(u)$. 
\end{proof}

\begin{lemma}\label{fnprime} Suppose that $aA\cap I_n=aI_n$ for each $n\ge 0$. Then for each $n\ge 0$:
\[
\mathcal{F}_n^{\prime}=\sum_{i=0}^n\varphi_i(\mathcal{F}_i)
\]
\end{lemma}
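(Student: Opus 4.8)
The plan is to prove the identity by induction on $n$, the inclusion $\sum_{i=0}^n\varphi_i(\mathcal{F}_i)\subseteq\mathcal{F}_n'$ being the easy half. Indeed, each $\varphi_i$ sends $\mathcal{F}_i$ into $\mathcal{F}_i'$ by construction (and {\it Lemma\,\ref{phin}}(d) confirms $\deg_u\varphi_i(f)=\deg_Df\le i$), while $\mathcal{F}_0'\subseteq\mathcal{F}_1'\subseteq\cdots$ is the ascending filtration of $K$ by the degree modules of $\partial_u|_K$; hence each $\varphi_i(\mathcal{F}_i)$ with $i\le n$ sits inside $\mathcal{F}_n'$. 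The base case $n=0$ amounts to $\mathcal{F}_0'=\varphi_0(\mathcal{F}_0)$, which holds because $\mathcal{F}_0'=K\cap B=A=\mathcal{F}_0$ and $\varphi_0=\pi_u$ is the identity on $A$.

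For the inductive step, suppose $\mathcal{F}_{n-1}'=\sum_{i=0}^{n-1}\varphi_i(\mathcal{F}_i)$ and take $p=\sum_{j=0}^np_ju^j\in\mathcal{F}_n'$ with $p_j\in B$. Writing out $D'p=0$ and comparing coefficients of powers of $u$ (recall $D'|_B=D$ and $D'u=a$) gives $Dp_n=0$ together with $Dp_j=-(j+1)a\,p_{j+1}$ for $0\le j\le n-1$; iterating the latter relations and using $a\in A$ yields $D^{n-j}p_j=(-1)^{n-j}\tfrac{n!}{j!}\,a^{n-j}p_n$ for each $j$. In particular $p_0\in\mathcal{F}_n$ with $D^np_0=(-1)^nn!\,a^np_n$, so $(-1)^nn!\,a^np_n\in a^nA\cap I_n$.

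The substantive step is to remove the factor $a^n$, and this is where the hypothesis $aA\cap I_m=aI_m$ enters: I would first note, by a short induction on $\ell$ using only that $B$ is a domain (to cancel powers of $a$) and that each $I_m$ is an ideal of $A$, that $a^\ell A\cap I_m=a^\ell I_m$ for all $\ell,m\ge 0$. Applying this with $\ell=m=n$ and cancelling $a^n$ shows $(-1)^nn!\,p_n\in I_n=D^n(\mathcal{F}_n)$, so there is $f\in\mathcal{F}_n$ with $D^nf=(-1)^nn!\,p_n$. Since $\varphi_n(f)=\sum_{i=0}^n\tfrac{(-1)^i}{i!}a^{n-i}D^if\,u^i$ has $u^n$-coefficient $\tfrac{(-1)^n}{n!}D^nf=p_n$, the difference $p-\varphi_n(f)$ lies in $K$ with $u$-degree $\le n-1$, hence in $\mathcal{F}_{n-1}'$; the inductive hypothesis then puts $p-\varphi_n(f)$ — and therefore $p$ — in $\sum_{i=0}^n\varphi_i(\mathcal{F}_i)$. (The argument is uniform in whether or not $p_n=0$: if $p_n=0$, take $f=0$.) I expect this cancellation of $a^n$ to be the only real obstacle: without the hypothesis on $a$ one recovers from {\it Lemma\,\ref{phin}}(c) merely that $a^n\mathcal{F}_n'\subseteq\sum_i\varphi_i(\mathcal{F}_i)$, which is strictly weaker than the claim, and everything else is the coefficient bookkeeping indicated above.
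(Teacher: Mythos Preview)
Your proof is correct and, in fact, more self-contained than the paper's. Both arguments proceed by induction on $n$ with the same base case, but the inductive steps differ in packaging. You take $p\in\mathcal{F}_n'$, read off the coefficient relations $Dp_j=-(j+1)ap_{j+1}$ from $D'p=0$, deduce $D^np_0=(-1)^nn!\,a^np_n\in a^nA\cap I_n$, use the hypothesis (bootstrapped to $a^\ell A\cap I_m=a^\ell I_m$) to strip off $a^n$, and then subtract a single $\varphi_n(f)$ matching the leading $u$-coefficient to land in $\mathcal{F}_{n-1}'$. The paper instead sets $M=\mathcal{F}_n'+\varphi_{n+1}(\mathcal{F}_{n+1})$, reduces modulo $aB[u]$ to show $a\mathcal{F}_{n+1}'\cap M=aM$, and then invokes \cite{Freudenburg.17}, Theorem~8.9 as a black box to conclude $M=\mathcal{F}_{n+1}'$. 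Your route avoids the external reference and makes the role of the hypothesis completely transparent (it is exactly the permission to cancel $a^n$); the paper's route hides the coefficient bookkeeping inside the cited theorem and phrases the key step as a module-theoretic saturation condition. Either way the content is the same: the hypothesis on $a$ is precisely what lets one lift the top coefficient of an element of $K$ back into $I_n$.
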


\begin{proof} Let $p:B[u]\to B[u]/aB[u]$ be natural surjection and let $p(x)=\bar{x}$ for $x\in B[u]$. Then:
\[
B[u]/aB[u]\cong (B/aB)[\bar{u}]\cong (B/aB)^{[1]}
\]
We proceed by induction on $n$. If $n=0$, then $\mathcal{F}_0^{\prime}=A=\varphi_0(\mathcal{F}_0)$. This gives the basis for induction. 

Assume that $\mathcal{F}_n^{\prime}=\sum_{i=0}^n\varphi_i(\mathcal{F}_i)$ for some $n\ge 0$. 
Define:
\[
M=\sum_{i=0}^{n+1}\varphi_i(\mathcal{F}_i)=\mathcal{F}_n^{\prime}+\varphi_{n+1}(\mathcal{F}_{n+1})\subset\mathcal{F}_{n+1}^{\prime}
\]
Suppose that $f\in a\mathcal{F}_{n+1}^{\prime}\cap M$ and write $f=g+h$, where $g\in \mathcal{F}_n^{\prime}$ and $h\in \varphi_{n+1}(\mathcal{F}_{n+1})$. 
Then $0=\bar{g}+\bar{h}$. On the one hand, $\deg_{\bar{u}}(\bar{g})\le n$, and on the other hand, 
$\bar{h}=\bar{y}\bar{u}^{n+1}$ for some $y\in I_{n+1}$. Therefore, $\bar{g}=\bar{h}=0$, which implies $g\in a\mathcal{F}_n^{\prime}$ and $y\in aA\cap I_{n+1}=aI_{n+1}$. 
So there exists $b\in\mathcal{F}_{n+1}$ such that $y=\frac{1}{(n+1)!}aD^{n+1}b$. It follows that:
\[ 
\deg_u(f-a\varphi_{n+1}(b))\le n \implies f-a\varphi_{n+1}(b)\in a\mathcal{F}_n^{\prime} \implies f\in a\left(\mathcal{F}_n^{\prime}+\varphi_{n+1}(\mathcal{F}_{n+1})\right)=aM
\]
Therefore, $a\mathcal{F}_{n+1}^{\prime}\cap M=aM$. By \cite{Freudenburg.17}, Theorem 8.9, it follows that $M=\mathcal{F}_{n+1}^{\prime}$. 
\end{proof} 

We can now give the proof of {\it Proposition\,\ref{main2}}. 
\begin{proof} Given $n\ge 0$, let $g=\sum_{i=0}^na^ig_i\in\mathcal{G}_n$ where $g_i\in\mathcal{F}_i$. 
Since 
\[
\textstyle \pi_u(f)=\exp (-\frac{u}{a}D_0)(f) \quad {\rm when}\quad f\in B
\]
we see that 
$\alpha (g)=\sum_{i=0}^n\varphi_i(g_i)\in\mathcal{F}_n^{\prime}$. By {\it Lemma\,\ref{fnprime}}, every element of $\mathcal{F}_n^{\prime}$ is of this form. 
Therefore, $\alpha$ restricts to an $A$-module isomorphism of $\mathcal{G}_n$ with $\mathcal{F}_n^{\prime}$. It follows that $\alpha (R)=K$. 

Suppose that $B$ is normal (respectively, a UFD). Then $B[u]$ is normal (respectively, a UFD), and $K$, being the kernel of a locally nilpotent derivation of $B[u]$, is normal (respectively, a UFD). Since $R$ is isomorphic to $K$ by part (a), $R$ is normal (respectively, a UFD). 
\end{proof} 

\subsection{Finite generation of fundamental invariants}\label{B[X,Y]}

Finite generation of invariant rings for representations of $\G_a$ over $\C$ was proved in the nineteenth century by P. Gordan by showing that this ring of invariants is isomorphic to a ring of 
$SL_2(\C )$-invariants on a larger affine space. Our proof uses the same technique in the framework of fundamental pairs. 

\begin{proposition}\label{isomorphism} Let $B$ be an affine $k$-domain with fundamental pair $(D,U)$ and $A=\krn D$. Let $B[X,Y]\cong B^{[2]}$ with fundamental pair
$(D+X\partial_Y,U+Y\partial_X)$. Then $A\cong B[X,Y]^{SL_2(k)}$ and $A$ is finitely generated as a $k$-algebra.
\end{proposition}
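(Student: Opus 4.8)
The plan is to exhibit an explicit isomorphism between $A = \krn D$ and the ring of $SL_2(k)$-invariants of $B[X,Y]$ under the fundamental pair $(\widehat D, \widehat U) := (D+X\partial_Y,\, U+Y\partial_X)$, and then invoke the Finiteness Theorem (reductive groups have finitely generated invariants) to conclude that $A$ is affine. First I would verify that $(\widehat D,\widehat U)$ really is a fundamental pair for $B[X,Y]$: both summands are locally nilpotent and commute appropriately with each other, so $[\widehat D,\widehat U] = [D,U] + (X\partial_X - Y\partial_Y)$ acts semisimply (the $B$-part and the $k[X,Y]$-part are independently semisimple, and the grading is the sum), and the $\mathfrak{sl}_2$-relations $[\widehat D,[\widehat D,\widehat U]] = -2\widehat D$, $[\widehat U,[\widehat D,\widehat U]] = 2\widehat U$ follow by checking them separately on the two tensor factors. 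By \emph{Lemma \ref{classical}(d)}, $B[X,Y]^{SL_2(k)}$ equals the degree-zero part of $\krn\widehat D$ for the induced grading.

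The core computation is to identify $\krn\widehat D$ and its degree-zero component. Observe that $\widehat D(Y) = X$ and $\widehat D(X)=0$, so $X$ is in the kernel while $Y$ is a local slice with $\widehat D Y = X$. The Dixmier map $\pi_Y : B[X,Y]_X \to (\krn\widehat D)_X$ is then given by $\pi_Y(b) = \sum_{i\ge 0}\frac{(-1)^i}{i!}\widehat D^i(b)(Y/X)^i = \exp(-\tfrac{Y}{X}\widehat D)(b)$; since $\widehat D$ restricted to $B[X]$ is just $D$ (with $\widehat D X = 0$), this sends $b\in B$ to $\exp(-\tfrac{Y}{X}D)(b)$. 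The key point is that $\widehat D$ has a slice after inverting $X$, so $B[X,Y]_X = (\krn\widehat D)_X[Y]$ and every kernel element is a polynomial expression in the $\pi_Y(b)$ for $b\in B$ together with $X$. Concretely I would show $\krn\widehat D = A_0[X, \{\,\pi_Y(b)\cdot X^{\deg_D b}\,\}]$ — that is, kernel elements are built from $X$ and the ``homogenizations'' $\Phi(b) := X^{\deg_D b}\pi_Y(b) = \sum_{i=0}^{\deg_D b}\frac{(-1)^i}{i!}D^i(b)\,X^{\deg_D b - i}Y^i \in B[X,Y]$, which lie in $B[X,Y]$ (no denominators) precisely because the sum terminates at $i = \deg_D b$. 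This construction is exactly the $B[u]$-to-$K$ correspondence of \emph{Proposition \ref{main2}} with $a = X$ playing the role of the extension parameter, or equivalently Gordan's classical homogenization of $\G_a$-invariants into $SL_2$-invariants.

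Next I would pin down the degree-zero part. Under the induced grading, $\deg X = 1$ and $\deg Y = -1$ (since $\widehat U Y = 0$ forces, by \emph{Lemma \ref{begin}(b)}-type reasoning applied to the $k[X,Y]$-factor, $\deg X = 1$, $\deg Y = -1$), while $B$ carries its original grading; and for $b\in B_d$ one computes $\Phi(b)\in (B[X,Y])_{d}$ — wait, more carefully, $\Phi(b)$ is a sum of terms $D^i(b)X^{\deg_D b - i}Y^i$ of degree $(d+2i) + (\deg_D b - i) - i = d + \deg_D b$, so $\Phi(b)$ is homogeneous of degree $d + \deg_D b$. Hence the degree-zero elements of $\krn\widehat D$ are generated by $A_0$ together with products $X^a\prod_j \Phi(b_j)$ with $b_j\in A\cap B_{d_j}$ (so $\deg_D b_j = 0$ forces $\Phi(b_j) = b_j X^0 = b_j$... hmm) — the clean statement is that the degree-zero subring is generated over $A_0$ by all $\Phi(b)$ with $b\in A\cap B_{-d}$, $d\ge 0$, which are honest elements of $A$ times powers of $Y$... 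Let me instead argue structurally: the map $b\mapsto \Phi(b)$ followed by taking the degree-zero component, combined with the fact that $A = \bigoplus_{d\ge 0}A_d$ is $\N$-graded by \emph{Theorem \ref{main1}(b)} (whose part (a) is what we're proving, so I must avoid circularity — but the $\N$-grading of $A$ follows already from \emph{Lemma \ref{degrees}(a)}, which does not use part (a)), lets me set up a graded $k$-algebra isomorphism $A \xrightarrow{\sim} B[X,Y]^{SL_2(k)}$ by sending a homogeneous $f\in A_d$ to $\Phi(f)\in (B[X,Y])_d\cap\krn\widehat D$ — but this has degree $d$, not zero.

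The hard part, and the step I expect to wrestle with, is getting the degree bookkeeping to close up so that the image lands in the \emph{invariant} ring (degree zero) rather than merely in $\krn\widehat D$. The resolution is that the relevant map is not $f\mapsto\Phi(f)$ but rather uses \emph{both} $X$ and $Y$: one sends $f\in A_d$ to $Y^d\pi_X^{-1}(\cdots)$ — concretely, to the homogenization where the missing degree is absorbed by powers of $Y$, giving $\widetilde\Phi(f) = \sum_{i=0}^{d}\frac{(-1)^i}{i!}U^i(f)\cdots$ — no. Cleanest: by symmetry $X$ is a slice-after-localization for $\widehat U$ too, and the $SL_2(k)$-invariants are $(\krn\widehat D)_0$; an element of $(\krn\widehat D)_0$ is $\widehat U$-invariant as well iff it lies in $A_0\otimes(\text{trivial})\oplus\cdots$. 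I would identify the isomorphism as $A_d\ni f\mapsto$ (the unique $SL_2$-invariant whose restriction to $X=1,Y=0$ is $f$), i.e. $\varepsilon^{-1}$ where $\varepsilon$ is evaluation at $X=1,Y=0$ restricted to the invariant ring — this is injective since $B[X,Y]^{SL_2}\subset\krn\widehat D$ and $\widehat D$ has slice $Y$ over $B[X,X^{-1}]$, and surjective by the homogenization $f\mapsto\Phi(f)\cdot X^{(\text{correction})}$ landing in $B[X,Y]$ because $\deg_D f = d$ bounds the $Y$-degree. Once the bijection $A\cong B[X,Y]^{SL_2(k)}$ is established as a $k$-algebra isomorphism, finite generation is immediate from the Finiteness Theorem, since $B[X,Y]$ is an affine $k$-domain and $SL_2(k)$ is reductive. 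The only genuine subtlety is confirming $\Phi(b)\in B[X,Y]$ and that evaluation at $(X,Y)=(1,0)$ inverts it — both reduce to the termination of the Dixmier series at $i=\deg_D b$.
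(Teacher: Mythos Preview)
Your framework is right --- use the homogenization/Dixmier construction of \emph{Proposition~\ref{main2}} with $a=X$ to describe $\krn\widehat D$, then pick out the degree-zero part --- but the key step does not close because you are feeding the wrong subspace into $\Phi$. For $f\in A_d$ you have $\deg_Df=0$, so $\Phi(f)=f$ has degree $d$, not $0$; and your fallback $b\in A\cap B_{-d}$ is just $\{0\}$ for $d>0$. The degree bookkeeping you flag as ``the hard part'' therefore cannot be completed starting from $A$ via $\Phi$.

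The paper resolves this by passing through $\Omega=\krn U$ rather than $A$. For $\omega\in\Omega_{-n}$ one has $\deg_D\omega=n$ by \emph{Lemma~\ref{degrees}}, so $\Phi(\omega)=X^n\pi_Y(\omega)$ lands in degree $-n+n=0$ of $\krn\widehat D$, i.e.\ in $A_0'=B[X,Y]^{SL_2(k)}$. Concretely, define $\psi:\Omega\to\tilde R$ by $\psi(\omega)=X^n\omega$ for $\omega\in\Omega_{-n}$; then $\beta\psi:\Omega\to A_0'$ is an injective algebra homomorphism, and surjectivity is checked by showing that any homogeneous $\varphi_n(f)\in A_0'$ forces $f\in\Omega_{-n}$ (this is \emph{Proposition~\ref{finite-generation}(c)}). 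The statement for $A$ then follows by the symmetry $(D,U)\leftrightarrow(U,D)$, $X\leftrightarrow Y$. Your proposed evaluation map at $(X,Y)=(1,0)$ is essentially the inverse of $\beta\psi$, but note it lands in $\Omega$, not in $A$: setting $Y=0$ in $\widehat Ug=0$ gives $Ug_{i0}=0$, whence $g|_{Y=0}\in\Omega[X]$. To hit $A$ directly you would evaluate at $(0,1)$, or equivalently use the $\widehat U$-Dixmier map with local slice $X$.
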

To prove this theorem, let $\Omega =\krn U$ and $\mathcal{F}_n=\krn D^{n+1}$. Define
$(D',U')=(D+X\partial_Y,U+Y\partial_X)$ and:
\[
A'=\krn D'\,\, ,\,\, \Omega^{\prime}=\krn U' \,\, ,\,\, \mathcal{F}_n^{\prime}=\krn (D')^{n+1}\,\, (n\ge 0)
\]
In addition, let $(\tilde{D},\tilde{U})$ be the trivial extension of $(D,U)$ to $B[X]$.\footnote{Note that $(D',U')$ is {\it not} an extension of $(\tilde{D},\tilde{U})$.} Then:
\[
\tilde{A}:=\krn\tilde{D}=A[X]\,\, ,\,\, \tilde{\Omega}:=\krn\tilde{U}=\Omega[X] \,\, ,\,\, \tilde{\mathcal{F}}_n:=\krn\tilde{D}^{n+1}=\mathcal{F}_n[X] \,\, (n\ge 0)
\]
Given $n\ge 0$ define $\tilde{\mathcal{G}}_n=k[\tilde{\mathcal{F}}_0 + X\tilde{\mathcal{F}}_1 +\cdots +X^n\tilde{\mathcal{F}}_n]\subset B[X]$, and define
 the map of $\tilde{A}$-modules $\varphi_n :\tilde{\mathcal{F}}_n\to A'$ by $\varphi_n(f)=X^n\pi_Y(f)$. Let $\beta :\tilde{R}\to A'$ be the $k$-algebra isomorphism from
 {\it Theorem\,\ref{main2}(a)}. 
 
In  {\it Proposition\,\ref{isomorphism}}, the fact that $A\cong B[X,Y]^{SL_2(k)}$  is an immediate consequence of the following proposition. The fact that $A$ is finitely generated as a $k$-algebra then follows from the Finiteness Theorem for reductive groups. 

\begin{proposition}\label{finite-generation} Let $N\ge 1$ be such that $B=k[\mathcal{F}_N]$ and let $\tilde{R}=k[\tilde{\mathcal{G}}_N]$. 
\begin{itemize}
\item [{\bf (a)}] $A'=k[\mathcal{T}]$ where $\mathcal{T}=\sum_{0\le i\le N}\varphi_i(\tilde{\mathcal{F}}_i)$.
\smallskip
\item [{\bf (b)}] Define the map of $A_0$-modules $\psi :\Omega\to \tilde{R}$ by $\psi (\omega )=X^n\omega$ for $\omega\in\Omega_{-n}$. 
Then $\psi$ is an injective $k$-algebra homomorphism. 
\smallskip
\item [{\bf (c)}] $\beta\psi$ is a $k$-algebra isomorphism of $\Omega$ with $A_0^{\prime}$.
\end{itemize}
\end{proposition}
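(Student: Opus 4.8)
The plan is to transport everything through the isomorphism $\beta\colon\tilde R\to A'$ furnished by {\it Proposition\,\ref{main2}(a)} (applied with $B$ replaced by $B[X]$, the locally nilpotent derivation $\tilde D$, the invariant element $a=X\in\tilde A$, and the extension variable $u=Y$), and then read off the stated descriptions by keeping careful track of degrees. First I would note that the hypothesis $aA\cap I_n=aI_n$ required by {\it Proposition\,\ref{main2}} is automatic here: $X$ is a variable, hence prime in $B[X]$, and the image ideals $I_n$ of $\tilde D$ live in $\tilde A=A[X]$ where $X$ does not divide any element of positive $X$-degree in a nontrivial way, so $XA[X]\cap I_n=XI_n$ holds for elementary reasons. (Alternatively one invokes {\it Corollary\,\ref{plinth-cor}(b)} once part (a) of the Structure Theorem is available — but since the point is to prove that part, the direct argument via primeness of $X$ is cleaner.) With {\it Proposition\,\ref{main2}(b)} in force, $\tilde R=k[\tilde{\mathcal G}_N]$ and $K=\krn D'=k[\mathcal F_N^{\prime}]$, and $\beta$ maps $\tilde R$ isomorphically onto $A'$. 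The generators $\tilde{\mathcal G}_N$ of $\tilde R$ are, by construction, products of elements of $X^i\tilde{\mathcal F}_i$ for $i\le N$, and $\beta$ carries $X^i f$ (for $f\in\tilde{\mathcal F}_i$) to $\varphi_i(f)=X^i\pi_Y(f)$; hence $A'=\beta(\tilde R)=k[\mathcal T]$ with $\mathcal T=\sum_{0\le i\le N}\varphi_i(\tilde{\mathcal F}_i)$, which is exactly statement (a).

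For part (b), the key structural input is {\it Lemma\,\ref{degrees}(a)}: every homogeneous $\omega\in\Omega$ of $\Z$-degree $-n$ satisfies $n=\deg_U\omega=\deg_D\omega\ge 0$, so $n\in\N$ and $\psi(\omega)=X^n\omega$ is well-defined; extending $k$-linearly over the homogeneous decomposition of a general element of $\Omega$ makes $\psi$ a $k$-linear map into $B[X]$. To see it lands in $\tilde R$ I would check that for homogeneous $\omega\in\Omega_{-n}$ one has $\omega\in\mathcal F_n=\tilde{\mathcal F}_n\cap B$ (again {\it Lemma\,\ref{degrees}(a)}, or {\it Lemma\,\ref{degrees}(b)}), so $X^n\omega\in X^n\tilde{\mathcal F}_n\subset\tilde{\mathcal G}_N$ once $n\le N$; and for $n>N$ one writes $\omega$ as a product of elements of $\Omega$ of strictly smaller degree using $B=k[\mathcal F_N]$ and the fact (Structure Theorem again, or just the grading) that $\Omega$ is generated in degrees $\ge -N$ — the exponent bookkeeping $X^{n}\omega=\prod X^{n_j}\omega_j$ with $\sum n_j=n$ then shows $\psi(\omega)\in\tilde R$. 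Multiplicativity $\psi(\omega_1\omega_2)=\psi(\omega_1)\psi(\omega_2)$ is immediate from the additivity of the degree map on $\Omega$ ($\Omega_{-m}\Omega_{-n}\subset\Omega_{-m-n}$ by {\it Lemma\,\ref{begin}(a)}), so $\psi$ is a $k$-algebra homomorphism; it is $A_0$-linear since $A_0=\Omega_0$ has degree $0$ and is fixed by $\psi$; and injectivity is clear because $\psi$ is homogeneous for the obvious grading and nonzero on each graded piece (distinct monomials $X^n\omega$ are $k$-independent as $B[X]$ is a domain containing the variable $X$).

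For part (c), I would compute the degree of $\beta\psi(\omega)$ for homogeneous $\omega\in\Omega_{-n}$. We have $\psi(\omega)=X^n\omega\in X^n\tilde{\mathcal F}_n$, and $\beta(X^n\omega)=\varphi_n(\omega)=X^n\pi_Y(\omega)$; since $\omega\in B$ lies in $\mathcal F_n$ with $\deg_D\omega=n$, the Dixmier-type formula gives $\pi_Y(\omega)=\sum_{i\ge 0}\frac{(-1)^i}{i!}D^i\omega\,Y^i$, a polynomial in which every monomial $D^i\omega\cdot Y^i$ has total $\Z$-degree equal to $\deg\omega+2i+(-2)i=\deg\omega=-n$ for the extended grading on $B[X,Y]$ in which $\deg X=2,\deg Y=-2$; multiplying by $X^n$ raises this to $0$. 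Hence $\beta\psi(\Omega)\subset A'_0=A'\cap\Omega'$, and since {\it Lemma\,\ref{classical}(a)} applied to the fundamental pair $(D',U')$ identifies $A'_0$ with the degree-zero part, one gets $\beta\psi(\Omega)\subseteq A'_0$. For surjectivity onto $A'_0$ I would argue that $\beta^{-1}(A'_0)$ is the degree-zero homogeneous part of $\tilde R$ for the grading transported from $B[X,Y]$; tracing through $\beta^{-1}=\varepsilon\alpha^{-1}$ (evaluation at $Y=0$, then apply $\exp(\tfrac{Y}{X}D_0)$) and the description $\tilde R=k[\tilde{\mathcal G}_N]$ shows the degree-zero part of $\tilde R$ is spanned by monomials $\prod X^{i_j}f_j$ with $f_j\in\mathcal F_{i_j}$ homogeneous and $\sum(2i_j+\deg f_j)=0$, i.e. $\deg f_j=-i_j$ for each $j$ by the sign constraint $\deg f_j\ge -i_j$ from the Structure Theorem together with $i_j\ge 0$; but $\deg f_j=-i_j$ with $f_j\in\mathcal F_{i_j}$ forces $f_j\in\Omega_{-i_j}$ by {\it Lemma\,\ref{degrees}(b)}, so each such monomial is $\psi$ of a monomial in $\Omega$. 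Thus $\beta\psi\colon\Omega\to A'_0$ is a $k$-algebra isomorphism.

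The main obstacle I anticipate is the surjectivity half of (c): one must know that the only way to build a degree-$0$ element of $\tilde R$ from the generators $X^i\tilde{\mathcal F}_i$ is via the $\Omega$-pieces, and this relies essentially on the sign/degree constraint "$f\in\mathcal F_i$ homogeneous $\Rightarrow\deg f\ge -i$," i.e. on part (f) (equivalently part (b)) of the Structure Theorem for the pair $(D,U)$ on $B$ — which is legitimate to use here since the present proposition feeds only part (a) of the Structure Theorem and the remaining parts are already established. Everything else is degree bookkeeping across the two polynomial extensions $B[X]$ and $B[X,Y]$ and repeated appeals to {\it Lemma\,\ref{degrees}} and {\it Lemma\,\ref{classical}}.
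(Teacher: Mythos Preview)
Your overall strategy for all three parts is essentially the paper's: transport through the graded isomorphism $\beta:\tilde R\to A'$ from {\it Proposition\,\ref{main2}} and identify $A_0'$ with the degree-zero piece. Parts (a) and (b) are fine. Part (c), however, contains a concrete error that breaks both directions as written.

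The error is the degree assignment $\deg X=2$, $\deg Y=-2$. Since $[D',U']=[D,U]+[X\partial_Y,Y\partial_X]=E+X\partial_X-Y\partial_Y$, one has $E'X=X$ and $E'Y=-Y$, so $\deg X=1$ and $\deg Y=-1$. (Also your Dixmier formula omits the $X^{-i}$: it should read $\pi_Y(\omega)=\sum_i\frac{(-1)^i}{i!}D^i\omega\,(Y/X)^i$.) With your stated degrees, ``multiplying by $X^n$ raises $-n$ to $0$'' is false (it gives $n$), so the inclusion $\beta\psi(\Omega)\subset A_0'$ does not follow. In the surjectivity direction your equation $\sum(2i_j+\deg f_j)=0$ combined with $\deg f_j\ge -i_j$ forces $i_j=0$ and $f_j\in A_0$, not $\deg f_j=-i_j$. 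Once the degrees are corrected to $\deg X=1$, both computations go through exactly as you intend: $\varphi_n(\omega)$ has degree $n+(-n)=0$, and $\sum(i_j+\deg f_j)=0$ with each summand $\ge 0$ forces $\deg f_j=-i_j$, whence $f_j\in\mathcal F_{i_j}\cap B_{-i_j}=\Omega_{-i_j}$ by {\it Lemma\,\ref{degrees}(b)}.

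One further remark on method. Your surjectivity argument invokes the Structure Theorem (the inequality $\deg f\ge -i$ for $f\in\mathcal F_i$ needs $I_m=\bigoplus_{j\ge m}A_j$); you correctly note there is no circularity since only part (a) feeds back through {\it Proposition\,\ref{isomorphism}}. The paper deliberately avoids this: it works on the $A'$ side, applies {\it Lemma\,\ref{degrees}(b)} to the pair $(D',U')$ on $B[X,Y]$ to get $f\in\Omega'_{-n}$, and then observes that $U'f=Uf+Y\partial_X f=0$ with $f\in B[X]$ forces $\partial_X f=0$ and $Uf=0$, hence $f\in\Omega_{-n}$. This keeps Section~6.2 independent of the Structure Theorem. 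Your route is shorter once the degree is fixed; the paper's route is logically cleaner.
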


\begin{proof} Part (a) is implied by {\it Proposition\,\ref{main2}}. 

For part (b), injectivity of $\psi$ is clear from its definition. In addition, when $\omega\in\Omega_{-n}$ and $\omega'\in\Omega_{-m}$, then 
$(X^n\omega)(X^m\omega')=X^{m+n}\omega\omega'$ gives $\psi (\omega\omega')=\psi (\omega )\psi (\omega')$. Since $\Omega$ is generated by its homogeneous elements, $\psi$ is an algebra homomorphism. 

For part (c) we have that $\beta\psi :\Omega\to A'$ is an injective algebra homomorphism. It must be shown that its image is $A_0^{\prime}$. To this end, suppose that 
$f\in \tilde{\mathcal{F}}_n$ and $\varphi_n(f)\in A_0^{\prime}$. By {\it Lemma\,\ref{degrees}(b)} we have:
\[
\deg (X^nf)=0 \implies f\in B_{-n}^{\prime}\cap\tilde{\mathcal{F}}_n\subset B_{-n}^{\prime}\cap\mathcal{F}_n^{\prime}
=\Omega_{-n}^{\prime}\implies f\in B[X]\cap\Omega_{-n}^{\prime}
\]
Since $B[X]_{-n}=\sum_iX^iB_{-(n+i)}$ we see that $f=\sum_iX^ib_{-(n+i)}$ for $b_{-(n+i)}\in B_{-(n+i)}$. This implies:
\[
D^nf=\sum_iX^iDb_{-(n+i)}\in A_n^{\prime}\cap B[X]=A_n[X] \implies Db_{-(n+i)}\in A_n\cap B_{n-i}
\]
Therefore, $b_{-(n+i)}=0$ for $i\ne 0$ and $f=b_{-n}\in \Omega_{-n}^{\prime}\cap B=\Omega_{-n}$. 

It follows that $A_0^{\prime}\subseteq\beta\psi (\Omega)\subseteq A_0^{\prime}$. Therefore, $\beta\psi$ is an isomorphism. 
 \end{proof}
 
 \subsection{The Extension Theorem}

Let $B$ be an affine $k$-domain and let $(D,U)$ be a fundamental pair for $B$ with induced 
$\Z$-grading $B=\bigoplus_{i\in\Z}B_i$.
Let $A=\krn D$ and, for each $i\ge 0$, let $A_i=A\cap B_i$ and $\mathcal{F}_i=\krn D^{i+1}$, $i\ge 0$. 
Since $B$ is affine, there exists $N\ge 1$ such that $B=k[\mathcal{F}_N]$. 

\begin{theorem}\label{main3} {\bf (Extension Theorem)}
Let $B[u]\cong B^{[1]}$ and $a\in A_0\setminus\{ 0\}$. Let $T\in {\rm LND}(B[u])$ be the extension of $D$ defined by $Tu=a$ and let $K=\krn T$. 
\begin{itemize}
\item [{\bf (a)}]  $K\cong_kk[\mathcal{F}_0+a\mathcal{F}_1+\cdots +a^N\mathcal{F}_N]$. In particular, $K$ is $k$-affine. 
\smallskip
\item [{\bf (b)}] Let $\delta\in {\rm LND}(K)$ be the restriction of $d/du$ to $K$. Then $\delta$ is fundamental. 
\smallskip
\item [{\bf (c)}] Assume that $k$ is algebraically closed. The quotient morphism $\pi :{\rm Spec}(B)\times\A_k^1\to {\rm Spec}(K)$ induced by $T$
is surjective if and only if $a\in k^*$. 
\end{itemize}
\end{theorem}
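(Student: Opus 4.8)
The plan is to derive (a) and (b) from the machinery of Section~\ref{gen-ext} and Lemma~\ref{Ksl2}, and to handle (c) by a direct geometric analysis; throughout I assume $(D,U)$ is nontrivial, the trivial case being vacuous. Write $D_0$ for the extension of $D$ to $B[u]$ with $D_0u=0$, so that $T=D_0+a\frac{d}{du}$ with $a\in A=\krn D_0$; by Lemma~\ref{princ-6} this gives $T\in{\rm LND}(B[u])$, $[T,\frac{d}{du}]=0$, and $\frac{d}{du}$ restricts to $K=\krn T$. For part (a) I apply Proposition~\ref{main2} with $T$ in the role of $D'$: its hypothesis ``$aA\cap I_n=aI_n$ for all $n$'' is exactly Corollary~\ref{plinth-cor}(b), which holds because $a\in A_0$. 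Since $B=k[\mathcal{F}_N]$, Proposition~\ref{main2}(b) yields $K\cong_k R:=k[\mathcal{G}_N]=k[\mathcal{F}_0+a\mathcal{F}_1+\cdots+a^N\mathcal{F}_N]$, and $K$ is affine because $A$ is affine (Theorem~\ref{main1}(a)) and each $\mathcal{F}_i$ is a finitely generated $A$-module (Theorem~\ref{main1}(d)), so $R$ is generated over $k$ by $a$, generators of $A$, and $a^i$ times $A$-module generators of $\mathcal{F}_i$, $1\le i\le N$.

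For part (b), $\delta:=\frac{d}{du}|_K\in{\rm LND}(K)$ since a restriction of a locally nilpotent derivation is locally nilpotent. To see $\delta$ is fundamental I transport structure along the isomorphism $\alpha|_R\colon R\to K$ of Proposition~\ref{main2}(a), where $\alpha=\exp(-\tfrac{u}{a}D_0)$ is the relevant automorphism of $B_a[u]$. Evaluating on the generators $b\in B_a$ and $u$ gives the operator identity $\alpha^{-1}\circ\frac{d}{du}\circ\alpha=\frac{d}{du}-a^{-1}D_0$ on $B_a[u]$; restricting to $R\subseteq B$, where $\frac{d}{du}$ vanishes, this yields $\alpha^{-1}\circ\delta\circ\alpha|_R=-(a^{-1}D)|_R$. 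By Lemma~\ref{Ksl2}(a) the pair $(a^{-1}D,aU)$ is fundamental for $R$, hence so is $(-a^{-1}D,-aU)$ (replacing $(D',U')$ by $(-D',-U')$ leaves Definition~\ref{fun-pair} invariant, or apply the unit $-1$). Transporting this pair back through $\alpha$ shows $\bigl(\delta,\,\alpha\circ(-aU)\circ\alpha^{-1}\bigr)$ is a fundamental pair for $K$, so $\delta$ is fundamental. The only non-formal ingredient is the operator identity, which is a one-line computation.

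For part (c), the ``if'' direction is immediate: when $a\in k^*$ the element $s:=u/a$ satisfies $Ts=1$, so $s$ is a slice, $B[u]=K[s]\cong K^{[1]}$, and $\pi$ is the projection ${\rm Spec}(K)\times\A_k^1\to{\rm Spec}(K)$, which is surjective. For the converse I argue the contrapositive: suppose $a\notin k^*$, so $a$ is a nonzero non-unit of $B$. Since $u$ is a local slice of $T$ away from $V(aK)$, one has ${\rm Spec}(K)\setminus V(aK)\subseteq{\rm Im}(\pi)$, so it suffices to find a point of $V(aK)$ outside ${\rm Im}(\pi)$. By part (a), $K$ is generated over $k$ by the elements $\alpha(a^nb)=\sum_{i=0}^n\frac{(-u)^ia^{n-i}}{i!}D^ib$ with homogeneous $b\in\mathcal{F}_n$, $0\le n\le N$; at any point of ${\rm Spec}(B[u])$ lying over $V(aK)\cap V(I_1K)$ (i.e. where $a$ and $I_1=D\mathcal{F}_1$ both vanish) all terms with $i<n$ vanish because $a=0$, while the surviving term $\frac{(-u)^n}{n!}D^nb$ vanishes for $n\ge1$ because $D^nb\in I_n\subseteq I_1$. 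Hence ${\rm Im}(\pi)\cap V(aK)\cap V(I_1K)$ lies in the common zero locus of all $\alpha(a^nb)$ with $n\ge1$, and it is enough to exhibit $n\ge1$ and homogeneous $b\in\mathcal{F}_n$ with $\alpha(a^nb)\notin\sqrt{aK+I_1K}$. Via evaluation at $u=0$, which embeds $K$ isomorphically onto $R$ and carries $aK+I_1K$ onto $aR+I_1R$, this is equivalent to $a^nb\notin\sqrt{aR+I_1R}$ in $R$ for a suitable homogeneous $b\in\mathcal{F}_n\setminus\mathcal{F}_{n-1}$; already the irreducible $SL_2(k)$-action on $\A_k^3=V_2$ with $a$ the quadratic invariant shows one may need $n>1$.

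The main obstacle is precisely this last step of part (c): proving $a^nb\notin\sqrt{aR+I_1R}$ for the chosen $n$ and $b$ (in particular that $aR+I_1R$ is a proper ideal). The natural attacks are a dimension count --- showing $\dim V(aR+I_1R)$ exceeds $\dim{\rm Spec}(A_0/aA_0)$, so that the $A_0$-coordinates cannot already cut out $V(aR+I_1R)$ --- or a direct analysis, using the $\Z$-graded structure of $R=\bigcup_m\mathcal{G}_m$, of which homogeneous elements of $B$ actually lie in $R$ (here the non-unit hypothesis on $a$ is exactly what forces $a^{n-1}b\notin R$ and drives the argument). Everything else --- parts (a), (b), and the ``if'' half of (c) --- is essentially bookkeeping on top of Proposition~\ref{main2}, Corollary~\ref{plinth-cor}, Lemma~\ref{Ksl2} and Lemma~\ref{princ-6}, together with the single operator computation in part (b); it is the failure of surjectivity, and the location of the missing point of the quotient, that carries the real content.
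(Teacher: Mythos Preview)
Your treatment of parts (a) and (b) is correct and coincides with the paper's: both invoke Proposition~\ref{main2} (with Corollary~\ref{plinth-cor}(b) supplying the hypothesis $aA\cap I_n=aI_n$) and Lemma~\ref{Ksl2}. Your operator computation in (b) is actually more careful than the paper's one-line claim that $\alpha$ carries $a^{-1}D$ to $d/du$; the sign you find is harmless since $(-a^{-1}D,-aU)$ is fundamental as well.

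For (c) your setup---factoring $\pi=\tau p\sigma$ through $X={\rm Spec}(R)$---is exactly the paper's, but thereafter the paper takes a shorter path and, crucially, does \emph{not} restrict to $V(I_1K)$. It simply chooses $\zeta\in{\rm Spec}(A_0)$ with $a(\zeta)=0$ (possible since $k$ is algebraically closed and $a\notin k^*$), and takes $\xi\in X$ to be any point with $h_s$-coordinates $\zeta_s$, \emph{arbitrary} $t_r$-coordinates, and at least one $a^if_{ij}$-coordinate $\mu_{ij}\neq 0$. Because $a\in(h_s-\zeta_s)B$, one has $a^if_{ij}\in(h_s-\zeta_s)B$ for $i\ge 1$, so the defining ideal of $\xi$ extends to the unit ideal in $B$ (it contains $\mu_{ij}\in k^*$), and $p^{-1}(\xi)=\varnothing$. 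Your added constraint $I_1=0$ is unnecessary and makes the existence question strictly harder; for instance in the $V_2$ example ($R\cong k[X,F,Y,Z]/(Y^2-2XZ+F^3)$ with $a=F$, $I_1=(X)$) one finds $Y=fx_1\in\sqrt{aR+I_1R}$ but $Z=f^2x_2\notin\sqrt{aR+I_1R}$, so only the top-degree generator survives your filter.

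You are right, however, that the existence of such a $\xi$ is the real content, and the paper asserts it without proof. The dimension count you propose closes the gap cleanly: $\dim R=\dim K=\dim B$, so by Krull's Hauptidealsatz $\dim V(aR)\ge\dim R-1=\dim B-1$; on the other hand $p(V(aB))$ lies in the locus $\{a=0,\ a^if_{ij}=0\ \forall i\ge 1,j\}\subset V(aR)$, and $R$ modulo the ideal generated by $a$ and all $a^if_{ij}$ is a quotient of $A/aA$, hence has dimension at most $\dim A-1=\dim B-2$. Thus $p(V(aB))$ cannot cover $V(aR)$, any point in the complement is a valid $\xi$, and $\pi$ is not surjective. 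Completing this one step finishes (c); everything else in your proposal is sound.
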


\begin{proof} Part (a) follows from the Structure Theorem and {\it Proposition\,\ref{main2}}. 

Part (b) follows from {\it Corollary\,\ref{Ksl2}}, since the isomorphism $\alpha :R\to K$ transforms $a^{-1}D$ into $d/du$. 

For part (c), note that, if $a\in k^*$ then $T$ has a slice and $\pi$ is surjective. 
Consider the case $a\not\in k^*$. 
Let $\mathcal{G}_N=\mathcal{F}_0+a\mathcal{F}_1+\cdots +a^N\mathcal{F}_N$ and let $R=k[\mathcal{G}_N]\subseteq B$. 
Let $Z={\rm Spec}(B)$ and $X={\rm Spec}(R)$. 
Recall from {\it Proposition\,\ref{main2}} that the isomorphism $\alpha^{-1} :K\to R$ is the restriction of the evaluation map $B[u]\to B$ sending $u$ to 0. 
Let $\tau :X\to Y$ be the isomorphism induced by $\alpha^{-1}$
and let $p: Z\to X$ and $\sigma :Z\times\A^1_k\to Z$ be the morphisms induced by the inclusions $R\subset B$ and $B\subset B[u]$.
Then the quotient map $\pi :Z\times\A^1_k\to Y$ factors as $\pi=\tau p\sigma$. 

\medskip
\begin{center}
\begin{tikzpicture}[scale=3]
\node at (0,.8){$Z$};
\node at (0,0){$X$};
\node at (1,0){$Y$};
\node at (1,.8){$Z\times\A^1_k$};
\draw[thick,->](.2,0)--(.8,0);
\draw[thick,->](0,.6)--(0,.2);
\draw[thick,->>](.75,.8)--(.2,.8);
\draw[thick,->](1,.6)--(1,.2);
\node at (1.15,.4){$\pi$};
\node at (.5,-.15){$\tau$};
\node at (-.15,.4){$p$};
\node at (.5,.95){$\sigma$};
\node at (.5,.1){$\cong$};
\end{tikzpicture}
\end{center}
\medskip

Note that both $A$ and $A_0=B^{SL_2(k)}$ are $k$-affine. 
In addition, since $B$ is affine, each $A$-module $\mathcal{F}_d$ is finitely generated. 
Suppose that:
\begin{enumerate}
\item $A_0=k[h_1,\hdots ,h_m]$
\item $I_1=t_1A+\cdots +t_lA$ for $t_i\in A$
\item $\mathcal{F}_i=Af_{i1}+\cdots +Af_{in_i}$ ($1\le i\le n$)
\end{enumerate}
Then:
\[
R=k [\mathcal{G}_N]=k[t_r,h_s,a^if_{ij}\, |\, 1\le r\le l\, ,\, 1\le s\le m\, ,\, 1\le i\le N\, ,\, 1\le j\le n_i ]\subseteq B
\]
Since $k$ is algebraically closed and $a$ is not constant we can choose $(\zeta_1,\hdots ,\zeta_m)\in {\rm Spec}(A_0)\subset\A^m_k$ such that $a(\zeta_1,\hdots ,\zeta_m)=0$. 
Let $\xi\in X$ be a point belonging to the set defined by an $R$-ideal of the form 
\[
J=(t_r-\lambda_r, h_s-\zeta_s, a^if_{in_i}-\mu_{in_i}\, |\, 1\le r\le l\, ,\, 1\le s\le m\, ,\, 1\le i\le n\, ,\, 1\le j\le n_i) 
\]
where $\lambda_r,\mu_{in_i}\in k$ and at least one of the $\mu_{in_i}$ is not zero. Then the fiber of $p$ over $\xi$ is defined by the $B$-ideal $JB=(1)$, meaning that this fiber is empty. Therefore, $p$ is not surjective in this case, which implies that $\pi$ is not surjective. 
\end{proof}

\begin{example}\label{pre-Winkelmann} {\rm Let $B=k[x_0,x_1,x_2]\cong k^{[3]}$ with basic fundamental pair:
\[
(D,U)=(x_0\partial_{x_1}+x_1\partial_{x_2} , 2x_1\partial_{x_0}+2x_2\partial_{x_1}) 
\]
Let $A=\krn D$ and $A_0=A\cap\krn U$. We have $A=k[x_0,f]\cong k^{[2]}$ and $A_0=k[f]\cong k^{[2]}$ for $f=2x_0x_2-x_1^2$. In addition, $B=k[\mathcal{F}_2]$ where
$\mathcal{F}_1=A+Ax_1$ and $\mathcal{F}_2=A+Ax_1+Ax_2$. 
Given nonzero $p(t)\in k[t]\cong k^{[1]}$ let $a=p(f)\in A_0$ and form the subring:
\[
R=k[\mathcal{F}_0+a\mathcal{F}_1+a^2\mathcal{F}_2]=k[x_0,f,ax_1,a^2x_2]
\]
Since $2x_0(a^2x_2)-(ax_1)^2=a^2f$ we see that:
\[
R\cong k[X,Y,Z,T]/(2XZ-Y^2-p(T)^2T)\cong B[t]/(f-p(t)^2t)
\]
From {\it Corollary\,\ref{Ksl2}} and {\it Theorem\,\ref{main2}} we see that $R$ is a UFD with fundamental pair $(a^{-1}D ,aU)$. 
Note that $R$ is singular if $p(t)$ has a root in $k$; and if $p(t)$ is a nonzero constant then $R\cong k^{[3]}$. 
These rings define a family of factorial affine threefolds which are $SL_2(k)$-varieties equipped with an equivariant birational dominant morphism to $\A^3_k$. 
Compare to {\it Proposition\,\ref{threefold}}. 
}
\end{example}


\section{Free Extensions of fundamental derivations}\label{examples}
\subsection{Winkelmann's Examples} In \cite{Winkelmann.90}, Winkelmann gave the first examples of free $\G_a$-actions on complex affine space which are not translations. 
One of these starts with the $SL_2(\C )$-representation $V_2$ and (in our terminology) the induced basic fundamental pair 
\[
(x\partial_y +y\partial_z , 2z\partial_y+2y\partial_x)
\]
on $\C [x,y,z]$. Then 
$A_0=\C [f]$ for $f=2xz-y^2$. Let $B=\C [x,y,z,w]$ and extend $D$ to $D'\in {\rm LND}(B)$ by $D'w=f+1$. 
The induced $\G_a$-action on $\A_{\C}^4$ is fixed-point free. Winkelmann showed that the topological quotient of the action is not Hausdorff, so this cannot be a translation. 
In a second example, he used $V_1\oplus V_1$ with its generating invariant to get a $\G_a$-action on $\A_{\C}^5$ which is locally trivial but not a translation. The algebraic quotient of this action is smooth. 
Finston and Jaradat  \cite{Finston.Jaradat.17} used $V_3$ with its generating invariant to get a locally trivial $\G_a$-action on $\A_{\C}^5$ with singular algebraic quotient. 
Their calculations are very involved and indicate that the computing demands of the van den Essen Kernel Algorithm make this algorithm impractical for finding invariant rings for such examples.
The Extension Theorem was developed to deal with these types of $\G_a$-actions and gives a quick way to determine their invariant rings in terms of the degree modules of the defining representation. 

\subsection{Generalizing Winkelmann's Construction} 
For the remainder of this section, assume that $k$ is a field of characteristic zero. Wherever, part (c) of the Extension Theorem is used, the reader should also assume that $k$ is algebraically closed. 

To construct examples, we begin with a representation of $SL_2(k)$ on $kx_0\oplus\cdots\oplus kx_n\cong k^{n+1}$. 
The examples we consider below involve small values of $n$ since only for these do we have sufficiently detailed knowledge of the $SL_2(k)$-invariants and degree modules required by the Extension Theorem. 

Let $B=k[x_0,\hdots ,x_n]\cong k^{[n+1]}$ for $n\ge 2$, let $(D,U)$ be the linear fundamental pair for $B$ induced by the given representation, and let 
$B=\bigoplus_{i\in\Z}B_i$ be the induced grading of $B$. 
Choose $a\in A_0$ of the form $a=1+h$ where $h\in A_0\cap (DB)$. 
Define $D'\in {\rm LND}(B[u])$ by $D'b=Db$ for $b\in B$ and $D'u=a$. The induced $\G_a$-action on $\A_k^{n+2}$ is free, since the image of $D'$ generates the unit ideal in $B[u]$. 
Note that, by {\it Proposition\,\ref{trivial-ext}}, the extended action is not fundamental. 

In order to obtain a set of algebra generators for $K:=\krn D'$ from {\it Theorem\,\ref{main2}} we need the degree modules $\mathcal{F}_0,\hdots ,\mathcal{F}_n$ for $D$. 
Start with the $\N$-grading $A=\bigoplus_{i\in\N}A_i$. 
Given $d\ge 1$, {\it Theorem\,\ref{main1}} shows that $I_d=\bigoplus_{i\ge d}A_i$ and from this one finds a set of homogeneous ideal generators
$I_d=(f_1,\hdots ,f_r)$. 
We know from {\it Lemma\,\ref{degrees}} that $\deg_Uf_i=\deg f_i\ge d$ for each $i$, 
so we can easily calculate $g_i=U^df_i$ such that $D^dg_i=c_1\cdots c_df_i$ for $c_j\in k$ as in {\it Lemma\,\ref{updown}} and thus obtain:
\[
\mathcal{F}_d=Ag_1+\cdots +Ag_r+\mathcal{F}_{d-1}
\]
Having chosen $a\in A_0$ we get the submodules:
\[
\mathcal{G}_d=\sum_{0\le i\le d}a^i\mathcal{F}_i \quad  (d\ge 0)
\]
Since $B=k[\mathcal{F}_N]$ for some $N$, {\it Theorem\,\ref{main2}} implies that $K\cong k[\mathcal{G}_N]\subset B$. 

Let $\pi :\A_k^{n+2}\to Y={\rm Spec}(K)$ be the quotient morphism induced by the inclusion $K\subset B[u]$. 
As in the proof of {\it Theorem\,\ref{main3}}, let $X={\rm Spec}(k[\mathcal{G}_N])$ and let $\tau :X\to Y$ be the isomorphism induced by $\alpha^{-1}$. 
Let $p: \A_k^{n+1}\to Y$ and $\sigma :\A_k^{n+2}\to \A_k^{n+1}$ be the morphisms induced by the inclusions $k[\mathcal{G}_N]\subset B$ and $B\subset B[u]$.
Then $\pi=\tau p\sigma$ (see figure below). 

\medskip
\begin{center}
\begin{tikzpicture}[scale=3]
\node at (0,.8){$\A_k^{n+1}$};
\node at (0,0){$X$};
\node at (1,0){$Y$};
\node at (1,.8){$\A_k^{n+2}$};
\draw[thick,->](.2,0)--(.8,0);
\draw[thick,->](0,.6)--(0,.2);
\draw[thick,->>](.75,.8)--(.2,.8);
\draw[thick,->](1,.6)--(1,.2);
\node at (1.15,.4){$\pi$};
\node at (.5,-.15){$\tau$};
\node at (-.15,.4){$p$};
\node at (.5,.95){$\sigma$};
\node at (.5,.1){$\cong$};
\end{tikzpicture}
\end{center}
\medskip

Since $a\not\in k^*$, {\it Theorem\,\ref{main3}} shows that $p$ and $\pi$ are not surjective, so the $\G_a$-action on $\A_k^{n+2}$ cannot be globally trivial. 
However, the action can be locally trivial and in this case every nonempty fiber of $\pi$ is connected, meaning that every nonempty fiber of $p$ is connected. 

\subsection{Examples} The examples in this section use $T_1,T_2,T_3\in k[x_0,x_1,x_2,x_3]$ defined by:
\[
T_1 = x_0\,\, ,\,\,T_2 = 2x_0x_2-x_1^2  \,\, ,\,\, T_3= 3x_0^2x_3-3x_0x_1x_2+x_1^3 
\]
\subsubsection{$V_1$} In this case:
\begin{enumerate}
\item $B=k[x_0,x_1]\cong k^{[2]}$ and $\deg (x_0,x_1)=(1,-1)$.
\smallskip
\item $A=k[x_0]$, $A_d=kx_0^d$ and $I_d=x_0^dA$.
\smallskip
\item $\mathcal{F}_1=A\oplus Ax_1$
\end{enumerate}
Since $A_0=k$ the only choice for $a$ is $a=1$. So $\mathcal{G}_1=\mathcal{F}_1$ and $K\cong k[\mathcal{G}_1]=B$.
We obtain a free $\G_a$-action on $\A_k^3$, and it is easy to see that this action is equivalent to a translation. Indeed, by \cite{Daigle.Kaliman.09}, every free $\G_a$-action $\A_k^3$ is a translation in a suitable system of coordinates. 

\subsection{$V_2$} In this case:
\begin{enumerate}
\item $B=k[x_0,x_1,x_2]\cong k^{[3]}$ and $\deg (x_0,x_1,x_2)=(2,0,-2)$. 
\smallskip
\item $A=k[T_1,T_2]\cong k^{[2]}$ and $A_0=k[T_2]\cong k^{[1]}$.
\smallskip
\item Since $A$ is generated in degree 2, we need $I_1$ and $I_2$. From {\it Theorem\,\ref{main1}} we obtain $I_1=I_2=(x_0)$. Therefore,
$\mathcal{F}_1=A+Ax_1$ and $\mathcal{F}_2=\mathcal{F}_1+ Ax_2$.
\end{enumerate}
The simplest choice here is $a=1+T_2$ and this gives Winkelmann's first example (\cite{Winkelmann.90}, Lemma 8). 
We find that
\[
\mathcal{G}_2=A+a\mathcal{F}_1+a^2\mathcal{F}_2=A+Aax_1+Aa^2x_2
\]
which implies:
\begin{eqnarray*}
K &\cong& k[T_1,T_2,(1+T_2)x_1,(1+T_2)^2x_2] \\
&\cong& k[x,y,v,w]/(2xw-v^2-y(1+y)^2) \\
&\cong& B[t]/(a-(t^3+2t^2+t+1))
\end{eqnarray*}
Note that $X={\rm Spec}(k[\mathcal{G}_2])$ has a unique singular point\footnote{In Lemma 10, Winkelmann mistakenly refers to $Y={\rm Spec}(K)$ as a smooth cubic.} $(x,y,v,w)=(0,-1,0,0)$. 
The fiber of $p:\A_k^3\to X$ over the singular point  is $\{x_0=1-x_1^2=0\}$, the union of two disjoint lines.
Therefore, the $\G_a$-action is not locally trivial. In fact, the action is not even proper; see \cite{Freudenburg.17}, 3.8.4.

\subsection{$V_1\oplus V_1$ (Smooth Case)} In this case:
\begin{enumerate}
\item $B=k[x_0,x_1,y_0,y_1]\cong k^{[4]}$ and $\deg (x_0,x_1,y_0,y_1)=(1,-1,1,-1)$.
\smallskip
\item $A=k[x_0,y_0,P]\cong k^{[3]}$ where $P=x_0y_1-y_0x_1$ and $A_0=k[P]$. 
\smallskip
\item Since $A$ is generated in degree 1 we need $I_1$. From {\it Theorem\,\ref{main1}} we obtain $I_1=(x_0,y_0)$. Therefore,
$\mathcal{F}_1=A+Ax_1+Ay_1$.
\end{enumerate}
The simplest choice here is $a=1+P$ and this gives Winkelmann's second example (\cite{Winkelmann.90}, Section 2). 
Since $1\in I_1B[u]+(1+P)B[u]$ the induced action on $\A_k^5$ is locally trivial. We obtain
\[
\mathcal{G}_1=A+a(A+Ax_1+Ay_1)=A+Aax_1+Aay_1
\]
which implies:
\begin{eqnarray*}
K &\cong& k[x_0,y_0,P,(1+P)x_1,(1+P)y_1] \\
&\cong& k[x,y,z,v,w]/(xw-yv-z(1+z)) \\
&\cong& B[t]/(a-(t^2+t+1))
\end{eqnarray*}
In this case $K$ is smooth. 

\subsection{$V_1\oplus V_1$ (Singular Case)}\label{singular} In the previous example choose $m\ge 2$ and $a=(1+P)^m$. The extended action is again locally trivial since 
$x_0,y_0\in J$ and $P\in x_0B+y_0B$. We obtain:
\[
K=k[x_0,y_0,P,(1+P)^mx_1,(1+P)^my_1]\cong k[x,y,z,v,w]/(xw-yv-(1+z)^mz)
\]
Thus, $X={\rm Spec}(k[\mathcal{G}_1])$ has a unique singular point of order $m-1$ at $(x,y,z,v,w)=(0,0,-1,0,0)$. 

\subsection{$V_3$}\label{finston} In this case:
\begin{enumerate}
\item $B= k[x_0,x_1,x_2,x_3]\cong k^{[4]}$ and $\deg (x_0,x_1,x_2,x_3)=(3,1,-1,-3)$.
\smallskip
\item $A=k[T_1,T_2,T_3,H]$ where $T_1^2H=T_2^3+T_3^2$; and $A_0=k[H]$. In particular:
\[
H=9x_0^2x_3^2-18x_0x_1x_2x_3 + 8x_0x_2^3 + 6x_1^3x_3- 3x_1^2x_2^2
\]
\item Since $\deg (T_1,T_2,T_3)=(3,2,3)$ we need $I_1,I_2$ and $I_3$. From {\it Theorem\,\ref{main1}} we obtain:
\[
I_1=I_2=(T_1,T_2,T_3) \quad {\rm and}\quad I_3=(T_1,T_3)
\]
\item We find elements 
\begin{eqnarray*}
 P_1&:=& \textstyle\frac{1}{2} UT_2=3x_0x_3-x_1x_2 \\
 P_2&:=& \textstyle\frac{1}{4} U^2T_2=3x_1x_3-2x_2^2 \\
 Q_1&:=& \textstyle\frac{1}{3}UT_3=3x_0x_1x_3-4x_0x_2^2+x_1^2x_2 \\
 Q_2&:=& \textstyle\frac{1}{12}U^2T_3=3x_1^2x_3-3x_0x_2x_3-x_1x_2^2 \\ 
 Q_3&:=& \textstyle\frac{1}{36}U^3T_3=3x_1x_2x_3-3x_0x_3^2-\textstyle{\frac{4}{3}x_2^3}
\end{eqnarray*}
such that the $D$-cables 
\[
 x_3\to x_2\to x_1\to T_1 \,\, ,\quad P_2\to P_1\to T_2\,\, ,\quad Q_3\to Q_2\to Q_1\to T_3
\]
are $U$-cables with the arrows reversed. We thereby obtain:
\subitem $\mathcal{F}_1=A+Ax_1+AP_1+AQ_1$
\smallskip
\subitem $\mathcal{F}_2=\mathcal{F}_1+Ax_2+AP_2+AQ_2$ 
\smallskip
\subitem $\mathcal{F}_3=\mathcal{F}_2+Ax_3+AQ_3$
\end{enumerate}

The simplest choice here is $a=1+H$ and this gives the example of Finston and Jaradat studied in \cite{Finston.Jaradat.17}. 
Since $1\in I_1B[u]+(1+H)B[u]$ the induced action on $\A_k^5$ is locally trivial. 
We obtain:
\[
K\cong k[\mathcal{G}_3]=A[ax_1,aP_1,aQ_1,a^2x_2,a^2P_2,a^2Q_2,a^3x_3,a^3Q_3]
\]
This confirms the result of Finston and Jaradat who used the van den Essen algorithm (and {\it Singular}) to find that $K$ is generated by 12 elements over $\C$.
They also showed that $Y={\rm Spec}(K)$ is singular in at least two points, and that the ideal of relations for the kernel generators 
has 155 generators. This was the first example of a locally trivial $\G_a$-action on an affine space having a singular algebraic quotient. 

\subsection{$V_1\oplus V_2$} $B=k[x_0,x_1,y_0,y_1,y_2]\cong k^{[5]}$ and $\deg(x_0,x_1,y_0,y_1,y_2)=(1,-1,2,0,-2)$. 
\begin{enumerate}
\item $A=k[x,y,z,v,w]$ with $x^2v-yw+z^2=0$, where
\[
x=x_0\,\, ,\,\, y=y_0\,\, ,\,\, z=[x_1,y_1]_1^D \,\, ,\,\, v=[y_2,y_2]_2^D \,\, ,\,\, w=[y_2,x_1^2]_2^D
\]
and $A_0=k[v,w]\cong k^{[2]}$. This is easily confirmed by the van den Essen algorithm. In particular, $w=2x_0^2y_2-2x_0x_1y_1+y_0x_1^2$. 
\smallskip
\item Since $\deg (x,y,z)=(1,2,1)$ we need $I_1$ and $I_2$. From {\it Theorem\,\ref{main1}} we obtain
\[
I_1=(x_0, y_0, z) \quad {\rm and}\quad I_2=y_0A+I_1^2
\]
and therefore, if $l=2x_0y_2-x_1y_1$, then $Dl=z$ and:
\[
\mathcal{F}_1=A+Ax_1+ Ay_1+Al \quad {\rm and}\quad \mathcal{F}_2=\mathcal{F}_1+Ay_2
\]
\end{enumerate}
The natural choice here is $a=1+w$. Since $1\in I_1B[u]+(1+w)B[u]$, the induced action on $\A_k^6$ is locally trivial. We obtain:
\[
K\cong k[x_0,y_0,z,v,w,ax_1,ay_1,al,a^2y_2]
\]
Question: Is the ring $K$ singular in this case?

\subsection{$V_2\oplus V_2$} $B=k [x_0,x_1,x_2,y_0,y_1,y_2]\cong k^{[6]}$ and $\deg (x_0,x_1,x_2,y_0,y_1,y_2)=(2,0,-2,2,0,-2)$
\begin{enumerate} 
\item $A=k[x,y,z,t,v,w]$ with $x^2v+y^2t+z^2-2xyw=0$, where:
\[
x=x_0\,\, ,\,\, y=y_0\,\, ,\,\, z=[x_1,y_1]_1^D\,\, ,\,\, t=[x_2,x_2]_2^D\,\, ,\,\, v=[y_2,y_2]_2^D \,\, ,\,\, w=[x_2,y_2]_2^D
\]
See \cite{Freudenburg.17}, Section 6.3.4. 
\smallskip
\item $A_0=k[t,v,w]\cong k^{[3]}$ and $A_{2d}$ is the vector space of $d$-forms in $x,y,z$ for $d\ge 1$. Since $A$ is generated in degree 2, we need $I_1$ and $I_2$. 
From {\it Theorem\,\ref{main1}} we obtain:
\[
I_1=I_2=(x_0,y_0,x_0y_1-y_0x_1)
\]
Therefore:
\[
\mathcal{F}_1=A+Ax_1+Ay_1+A(x_0y_2-y_0x_2) \quad {\rm and}\quad \mathcal{F}_2=\mathcal{F}_1+Ax_2+Ay_2+A(x_1y_2-y_1x_2)
\]
\end{enumerate}
A natural choice is $a=1+w=1+y_0x_2-x_1y_1+x_0y_2$, since choosing $1+t$ or $1+v$ just extends $V_2$. 
We obtain
\[
K\cong A[ax_1,ay_1,a(x_0y_2-y_0x_2), a^2x_2,a^2y_2,a^2(x_1y_2-y_1x_2)]
\]
which has 12 generators over $k$. The fiber of $p$ over the point in $X\subset\A_k^{12}$ where 
\begin{center}
$t=v=1$, $w=-1$, and all other generators are 0
\end{center}
consists of two disjoint planes
$\{ (0,0,1,1,*,*)\}\cup\{ (0,0,-1,-1,*,*)\}$ in $\A_k^6$.
Therefore, the action is not locally trivial. 

Question: Is the ring $K$ singular in this case?

\subsection{$V_2\oplus V_2\oplus V_2$} $B=k[x_0,x_1,x_2,y_0,y_1,y_2,z_0,z_1,z_2]\cong k^{[9]}$ and $\deg (x_0,x_1,x_2)=(2,0,-2)$, etc. 

$A$ is generated by the following 13 elements; see \cite{Freudenburg.17}, 6.3.4
\begin{equation}\label{first-set}
x_0\, ,\, y_0\, ,\, z_0\, ,\, [x_1,y_1]_1^D\, ,\, [x_1,z_1]_1^D\, ,\, [y_1,z_1]_1^D
\end{equation}
and
\begin{equation}\label{second-set}
[x_2,x_2]_2^D\, ,\, [x_2,y_2]_2^D\, ,\, [x_2,z_2]_2^D\, ,\, [y_2,y_2]_2^D\, ,\, [y_2,z_2]_2^D\, ,\, [z_2,z_2]_2^D
\end{equation}
and
\begin{equation}\label{third-set}
\delta = \det\begin{pmatrix} x_2&y_2&z_2\cr x_1&y_1&z_1\cr x_0&y_0&z_0\end{pmatrix}
\end{equation}
$A_0$ is generated by elements listed in (\ref{second-set}) and (\ref{third-set}). Elements in (\ref{first-set}) are of degree at most 2 and 
$A_{2d}$ is the vector space of $d$-forms in these elements. Therefore, we need $I_1$ and $I_2$. From {\it Theorem\,\ref{main1}} we find that $I_1=I_2$ is generated by the elements in (\ref{first-set}). 
Therefore:
\begin{center}
$\mathcal{F}_1=A+Ax_1+Ay_1+Az_1+A(x_0y_2-y_0x_2)+A(x_0z_2-z_0y_2)+A(y_0z_2-z_0y_2)$\\
$\mathcal{F}_2=\mathcal{F}_1+Ax_2+Ay_2+Az_2+A(x_1y_2-y_1x_2)+A(x_1z_2-z_1y_2)+A(y_1z_2-z_1y_2)$\\
\end{center}
A natural choice here is $a=1+\delta$. Since $\delta\in x_0B+y_0B+z_0B$ we see that $1\in I_1B[u]+(1+\delta )B[u]$ and the induced action is locally trivial, in contrast to the actions extending
$V_2$ or $V_2\oplus V_2$ given above. We see also that $K$ is generated by 25 elements. 

Question: Is the ring $K$ singular in this case?

\subsection{$\bigoplus_{i=1}^mV_1$, $m\ge 1$} $B=k[x_1,y_1,\hdots ,x_m,y_m]\cong k^{[2m]}$ and $\deg(x_i,y_i)=(1,-1)$, where $Dy_i=x_i$.  
\begin{enumerate}
\item $A= k[x_i , z_{ij}\, |\, 1\le j<i\le m]$ where $z_{ij}=[y_j,y_i]_1^D=x_iy_j-x_jy_i$, with relations:
\[
x_iz_{kj}+x_kz_{ij}+x_jz_{ik}=0 \quad (1\le j<k<i\le m)
\]
See \cite{Freudenburg.17}, 6.3.4. Also, $A_0=k[ z_{ij}\, |\, 1\le j<i\le m]$. 
\smallskip
\item We therefore need $I_1$. From {\it Theorem\,\ref{main1}} we obtain $I_1=(x_1,\hdots ,x_m)$ so:
\[
\mathcal{F}_1=A+Ay_1+\cdots +Ay_m
\]
\end{enumerate}
A natural choice is $a=1+\sum_{ij}z_{ij}$. Since $z_{ij}\in x_1B+\cdots +x_mB$ for each $i,j$, we see that
\[
\textstyle 1\in I_1B[u]+(1+\sum z_{ij})B[u]
\]
so the induced action is locally trivial. We have
\[
K\cong k[x_i , z_{ij} , ay_i\, |\, 1\le i,j\le m]
\]
with relations $x_i(ay_j)-x_j(ay_i)=az_{ij}$ in addition to those given above for $A$. 


\section{Further Directions}

\subsection{$SL_2(\C )$-vector bundles of rank two} In \cite{Schwarz.89}, Schwarz used the theory of $G$-vector bundles to give the first examples of nonlinearizable actions of 
complex reductive groups on $\C^n$ for $n\ge 4$.
For $G=SL_2(\C )$ these examples include a nontrivial $G$-vector bundle of rank 4 over the $G$-module $V_2$ where the action of $G$ on the total space $\C^7$ is nonlinearizable. 

\begin{question} 
{\rm For $G=SL_2(\C )$ is every $G$-vector bundle of rank two over a representation linearizable?}
\end{question}

\begin{question} 
{\rm Is every algebraic action of $SL_2(\C )$ on $\C^5$ or $\C^6$ linearizable?}
\end{question}

\subsection{Affine 4-space} 
It is not currently known whether Panyushev's theorem generalizes to all fields of characteristic zero. The fact that $\R^4$ admits a non-linearizable action of the circle group $S^1$ (a real form of the torus $\R^*$) 
suggests that counterexamples might exist for a real form of $SL_2(\R )$; see \cite{Freudenburg.Moser-Jauslin.04}. 

\begin{question} 
{\rm Is every algebraic action of $SL_2(k)$ on $\A_k^4$ linearizable? More generally, let $\Gamma$ be an algebraic group over $k$ which is a $k$-form of $SL_2(k)$.
Is every algebraic action of $\Gamma$ on $\A_k^4$ linearlizable?}
\end{question}

\subsection{Cylinder over the Russell cubic} Let $X$ be the Russell cubic threefold over $k$. 
 {\it Example\,\ref{Russell}} shows that $X$ is not an $SL_2(k)$-variety. 
It is an open question whether the cylinder $X\times\A_k^1$ is isomorphic to $\A_k^4$. We ask if a weaker property holds.

\begin{question}
{\rm Does $X\times\A_k^1$ admit a nontrivial action of $SL_2(k)$?}
\end{question}
\noindent Note that Dubouloz \cite{Dubouloz.09} showed that $ML(X\times\A_k^1)=k$.

\subsection{2-Cylinder over a rigid variety} Let $S$ be an affine $k$-domain with $ML(S)=S$. Such a ring is called {\bf rigid}. 
Makar-Limanov showed that, if $S[X]\cong S^{[1]}$, then $ML(S[X])=S$; see \cite{Freudenburg.17}. 
Therefore, $S[X]$ does not have a nontrivial $SL_2(k)$-action.

\begin{question}
{\rm Let $S$ be a rigid affine $k$-domain. 
Is every nontrivial fundamental pair for the ring $S[X,Y]\cong S^{[2]}$ conjugate to $(X\partial_Y , Y\partial_X)$ over $S$?}
\end{question} 
\noindent Of particular interest is the invariant ring of the icosahedron, $S=k[x,y,z]/(x^5+y^3+z^2)$, which is a rigid UFD.

\subsection{Factorial $SL_2(\C )$-threefolds} The classification of Gizatullin and Popov shows that the affine plane $\C^2$ is the only factorial affine surface over $\C$ which admits a nontrivial action of $SL_2(\C )$. However, there are smooth factorial $SL_2(\C )$-threefolds with trivial units other than $\C^3$, for example, $SL_2(\C )$ as an affine $\C$-variety. 

\begin{question}
{\rm What are the smooth factorial affine threefolds over $\C$ with trivial units admitting a nontrivial $SL_2(\C )$-action?}
\end{question}
\noindent {\it Proposition\,\ref{threefold}} gives a partial answer to this question. Investigations of $SL_2(\C )$-threefolds can be found in \cite{Arzhantsev.97b,Arzhantsev.97a,Kekebus.00,Popov.73b}. 
\medskip

\subsection{Fundamental pairs} Recall from {\it Definition\,\ref{fun-pair}} the two conditions defining a fundamental pair $(D,U)\in {\rm LND}(B)^2$ on a commutative $k$-domain $B$. 
\begin{enumerate}
\item $[D,[D,U]]=-2D$ and $[U,[D,U]]=2U$, and
\smallskip
\item $B=\sum_{d\in\Z}B_d$ where $B_d=\krn ([D,U]-dI)$.
\end{enumerate}

\begin{question} {\rm Suppose that $B$ is an affine $k$-domain. Does condition (1) imply condition (2) in this case?}
\end{question} 


\paragraph{\bf Acknowledgment.} The author is pleased to acknowledge that the comments of the referees of this paper led to a number of improvements over the first version. The author thanks I. Arzhantsev for helpful information about the history of the problems studied in this paper, and J.B. Tymkew for suggesting part (c),(ii) of the Structure Theorem. 
\medskip

\paragraph{\bf Conflict of Interest Statement.} The author declares that he has no conflict of interest. 



\vspace{.2in}

\noindent \address{Department of Mathematics\\
Western Michigan University\\
Kalamazoo, Michigan 49008} \,\,USA\\
\email{gene.freudenburg@wmich.edu}

\end{document}